\documentclass[aos]{imsart}

\RequirePackage{amsthm,amsmath,amsfonts,amssymb}
\RequirePackage[authoryear]{natbib}
\RequirePackage[colorlinks,citecolor=blue,urlcolor=blue]{hyperref}
\RequirePackage{graphicx}
\usepackage{stmaryrd}
\usepackage[ruled,vlined]{algorithm2e}
\usepackage{enumerate}

\usepackage{tikz}
\usetikzlibrary{shapes ,arrows ,positioning, automata}

\startlocaldefs
\theoremstyle{plain}

\newtheorem{theorem}{Theorem}[section]
\newtheorem{corollary}[theorem]{Corollary}
\newtheorem{lemma}[theorem]{Lemma}

\newtheorem{proposition}[theorem]{Proposition}
\theoremstyle{remark}
\newtheorem{remark}[theorem]{Remark}


\def \E{I\!\!E}
\def \P{I\!\!P}

\def \Z{\mathbb Z}

\newcommand{\indiq}{{{\mathbf 1}}}
\def \equaldistrib{\,{\buildrel d \over =}\,}


\newcommand{\R}{\mathbb {R}}

\newcommand{\vp}{\!\dagger\!}
\newcommand{\tn}{\vert\kern-0.25ex\vert\kern-0.25ex\vert}

\DeclareMathOperator{\var}{\mathop{Var}}
\DeclareMathOperator{\cov}{\mathop{Cov}}

\newcommand{\changes}[1]{#1}
\endlocaldefs

\begin{document}

\begin{frontmatter}
\title{Inferring the dependence graph density of binary graphical models in high dimension}
\runtitle{Dependence graph density inference in high dimension}

\begin{aug}
\author[A]{\fnms{Julien} \snm{Chevallier}\ead[label=e1]{julien.chevallier1@univ-grenoble-alpes.fr}\orcid{0000-0002-0736-8487}},
\author[B]{\fnms{Eva} \snm{L\"ocherbach}\ead[label=e2]{Eva.Locherbach@univ-paris1.fr}\orcid{0000-0003-4436-2532}}
\and
\author[C]{\fnms{Guilherme} \snm{Ost}\ead[label=e3]{guilhermeost@im.ufrj.br}\orcid{0000-0003-0887-9390}}
\address[A]{Univ. Grenoble Alpes, CNRS, Inria, Grenoble INP\footnote{Institute of Engineering Univ. Grenoble Alpes}, LJK, 38000 Grenoble, France,
\printead{e1}
}

\address[B]{Université Paris 1 Panthéon Sorbonne,
\printead{e2}
}

\address[C]{Institute of Mathematics, Federal University of Rio de Janeiro,
\printead{e3}}

\end{aug}

\begin{abstract}
We consider a system of binary interacting chains describing the dynamics of a group of $N$ components that, at each time unit, either send some signal to the others or remain silent otherwise. 
The interactions among the chains are encoded by a directed Erd\"os-R\'enyi random graph with unknown parameter $ p \in  (0, 1) .$
Moreover, the system is structured within two populations (excitatory chains versus inhibitory ones) which are coupled
via a mean field interaction on the underlying Erd\"os-R\'enyi graph.
In this paper, we address the question of inferring the connectivity parameter $p$ based only on the observation of the interacting chains over $T$ time units.
In our main result, we show that the connectivity parameter $p$ can be estimated with rate $N^{-1/2}+N^{1/2}/T+(\log(T)/T)^{1/2}$ through an easy-to-compute estimator. 
Our analysis relies on a precise study of the spatio-temporal decay of correlations of the interacting chains. This is done through the study of coalescing random walks defining a backward regeneration representation of the system. 
Interestingly, we also show that this backward regeneration representation allows us to perfectly sample the system of interacting chains (conditionally on each realization of the underlying Erd\"os-R\'enyi graph) from its stationary distribution. These probabilistic results have an interest in its own. 
\end{abstract}

\begin{keyword}[class=MSC]
\kwd[Primary ]{62M05}
\kwd[; secondary ]{60J10}
\kwd{60K35}
\kwd{62F12}
\end{keyword}

\begin{keyword}
\kwd{Dependence graph inference}
\kwd{Markov chain}
\kwd{mean field limit}
\kwd{perfect simulation}
\end{keyword}

\end{frontmatter}

\tableofcontents

\section{Introduction}

Understanding how to infer and interpret interactions between measured components of large scale systems is a central question in many scientific fields, including Neuroscience, Statistical Physics, and Social Networks \citep{strogatz2001exploring}. This has been done traditionally in the framework of probabilistic graphical models \citep{lauritzen1996graphical}, in which the interaction among the components of the system is encoded through a graph, sometimes referred to as the graph of conditional dependencies. For such models, a fundamental question is that of estimating the underlying graph of conditional dependencies or some function of it from data. 
Over the past three decades, this question has been extensively investigated not only under the assumption that the data is a set of independent and identically distributed observations and the underlying graphical model has pairwise interactions \citep{bresler2015efficiently,Montanari2009which,ravikumar2010highdimensional}, but also for time-dependent data \citep{Eichler2012graphical,duarte2019estimating,ReynaudBouret2013inference} or graphical models with high-order interactions \citep{Basu2015regularized,lerasle2016sharp}.
In recent years, understating the theoretical guarantees of the proposed methods (e.g., consistency, computational complexity) in the high-dimension setting became a major issue, as now the simultaneous activity of many components can be routinely recorded.
In high-dimension, most of the works assume that the underlying graph of dependencies is sparse in a suitable sense, showing that under this assumption the proposed method performs reasonably well. The cases in which the graphs of dependencies are dense, on the other hand, are much less studied. In this setting we are only aware of the works of \citep{kim2021twosample,delattre2016statistical} and  \citep{liu2020statistical}. 
In these works the focus is not on the estimation of the graph of dependencies of the corresponding graphical model, as we detail in what follows. 

In \citep{kim2021twosample}, particularly motivated by applications in Genetics and Neuroscience, the authors propose a density ratio approach to estimate the ``difference network'', the difference between two graphs of dependencies. The key assumption in their approach is that this difference network is sparse and, in particular, their method can also be applied when each individual graph is dense as long as their difference is sparse. \citep{delattre2016statistical} consider a graphical model with pairwise interactions for which the graph of dependencies is a realization of a weighted Erd\H{o}s-Rényi random graph with edge probability $p$. Specifically, \citep{delattre2016statistical} work with $N$ Hawkes point processes on $[0,T]$ - a class of point processes in which the intensity at any given time is a linear function of the past events - where each pair of these point processes are independently coupled with probability $p$. 
Moreover, the existing interactions are excitatory and of mean-field type (i.e., the occurrence of an event increases the chance of a new event to occur and the increment scales as $1/N$). 
The main goal of the paper is not to estimate the random graph of dependencies but rather the density of connections of this graph (the parameter $p$) based only on the information present in the $N$ point processes on $[0,T]$.    
Under some few extra assumptions, the authors show that this can be done with a precision of the order $N^{-1/2}+N^{1/2}/m_T$ (up to some correction factor) where $m_T$ denotes the mean number of events per point process.
In \citep{liu2020statistical}, the author complements the analysis started in \citep{delattre2016statistical} by investigating the problem of estimating the density of connections $p$ in the same setting with the difference being that now one has access only to the information present in $K$ Hawkes point processes for some large $K\leq N$. Under this constraint,  \citep{liu2020statistical} proposes an estimator of the parameter $p$ with rate of convergence $K^{-1/2}+N/(K^{1/2}m_T)+N/(Km^{1/2}_T).$   

In the present paper, we consider a graphical model which is defined in terms of a system of $N$ interacting $\{0,1\}$-valued chains $X=\{X_{i,t},t\in\Z, 1\leq i\leq N\}$ and a random matrix $\theta=(\theta_{ij})_{1\leq i,j\leq N}$ of i.i.d entries distributed as $\text{Ber}(p)$. The event $X_{i,t}=1$ indicates that the $i$-th chain sends some signal at time $t$, and $X_{i,t}=0$ otherwise. Let $X_{t}=(X_{1,t},\ldots, X_{N,t})$ be the configuration of the system $X$ at time $t$. The model is defined as follows. Conditionally on $\theta$, the system $X$ evolves as a stationary Markov chain on the state space $\{0,1\}^N$ in which the conditional distribution of $X_t$ given that $X_{t-1}=x$ is that of $N$ independent Bernoulli random variables with parameter $p_{\theta,i}(x)$, $i=1,\ldots,N$, where
\begin{equation}
\label{def:transition_prob_2}
p_{\theta,i}(x)= \mu +(1-\lambda)\left(\frac{1}{N} \sum_{j\in\mathcal{P}_+} \theta_{i j}x_{j}+\frac{1}{N}\sum_{j\in\mathcal{P}_-} \theta_{i j}(1-x_{j})\right), \ x=(x_1,\ldots,x_N).
\end{equation}
Here, $0 <  \lambda < 1 $ and $0\leq \mu \leq \lambda$  are parameters, and $\mathcal{P}_+$, $\mathcal{P}_-$ are subsets of $[N]=\{1,\ldots, N\}$ forming a partition. Neither the parameters $\mu$ and $\lambda$ nor the subsets $\mathcal{P}_+$ and $\mathcal{P}_-$ depend on the random matrix $\theta$.

The function $p_{\theta,i}(x)$ appearing in \eqref{def:transition_prob_2} models the probability of observing a new signal for the $i$-th chain given that the configuration of the system at the precedent time is $x$.  
The parameter $\mu$ models the baseline activity of each interacting chain. Note that the probability $p_{\theta,i}(x)$ depends on the past $x$ only through the values $x_j$ for which $\theta_{ij}=1$, so that the random matrix $\theta$ encodes the interaction among the chains of the system. In light of this, $\theta$ can be considered as the proxy for the graph of conditional dependencies of the model. 
Note also that each chain is either excitatory or inhibitory in the model; the set $\cal{P}_+$ denotes the set of excitatory chains and $\cal{P}_-$ the set of inhibitory ones. An excitatory chain increases the probability of future signals to occur whenever it sends any signal, whereas the signals sent by any inhibitory chain reduce the probability of occurrence of new signals. The increment of this changing is $(1-\lambda)/N$, implying that the interactions are of mean-field type and that $1-\lambda$ parametrizes the strength of the interactions. 

In what follows, we denote $r^N_+=|\mathcal{P}_+|/N$ and $r^N_-=|\mathcal{P}_-|/N$ the fraction of excitatory and inhibitory components of the model, respectively, and suppose that there are values $0<r_+,r_-<1$ satisfying $r_++r_-=1$ such that 
$|r^N_+-r_+|\vee |r^N_--r_-|\leq KN^{-1}$ for some universal constant $K$.  
One can easily check that a choice satisfying this assumption with $K=1$ is $|\mathcal{P}_+|=\lceil r_+N\rceil$ and $|\mathcal{P}_-|=N-|\mathcal{P}_+|$.

\paragraph*{Problem formulation} 
The goal of this paper is to address the following statistical question.
By observing a sample $X_{1},\ldots, X_{T}$ of the system $X$, can we estimate the asymptotic density of connections $p$ of the random matrix $\theta$, knowing only the size of the system $N$ and the asymptotic fraction of excitatory and inhibitory components $r_+$ and $r_-$?
This means that we do not assume any prior knowledge on $\mu, \lambda, p, \theta, \mathcal{P}_+, \mathcal{P}_-$, i.e., they are all unknown.

Although the estimation problem investigated here is similar to the ones considered in \citep{delattre2016statistical, liu2020statistical}, our work is different from theirs in at least two substantial ways. First, in their model the interaction between each pair of components can only be excitatory, whereas it can be either excitatory or inhibitory in ours, making the analysis of the estimation problem (especially the study of the random environment) more challenging. Second, a crucial step in the analysis is to study the decay of correlation between $X_{i,t}$ and $X_{j,s}$ (and of products of these), for each fixed realization of the random matrix $\theta$. In our case, this is done through the study of coalescing random walks defining a backward regeneration representation of the system $X$.    
Interestingly, we show that this backward regeneration representation allows us to 
perform perfect simulation of the system $X$. That is, one can
exactly sample the system $X$ (conditionally on $\theta$) from its unique stationary distribution. These probabilistic results have an interest in its own. 
In \citep{delattre2016statistical} and \citep{liu2020statistical}, on the other hand, the analysis of the model for each realization of the graph of conditional dependence follows a rather different approach, fundamentally based on martingale arguments. Moreover, the question of how to perfectly sample the underlying process is not discussed. 

As will become apparent in our results, we can not only estimate the parameter $p$ but also the parameters $\mu$ and $\lambda$. In our main result, we show that the parameters $(\mu,\lambda,p)$ can be simultaneously estimated with rate $N^{-1/2}+N^{1/2}/T +(\log(T)/T)^{1/2}$, under some extra conditions on the asymptotic fraction $r_+$ of excitatory components of the model. 
\changes{
This convergence rate is consistent with the one found in \citep{delattre2016statistical} (up to a $\log(T)$ factor), because the quantity $m_T$ (adapted to our setting) increases linearly in $T$. 
In Appendix \ref{app:lower:bound}, we briefly discuss the optimality of our estimation rate by analyzing two related statistical settings in which the observations follow a binomial mixture model.
Taken together, the results in Appendix \ref{app:lower:bound} suggest that our estimation rate may be near optimal. Establishing rigorously such a result in the statistical setting considered in this paper remains an interesting open problem.
}

We are particularly motivated by the statistical analysis of neuronal networks. Neurons communicate with each other by firing short electrical pulses, often called spikes.
The spiking times of a network of neurons depend on its graph of interaction, a combinatorial structure (typically unknown) encoding the type of interaction (excitatory or inhibitory) between each pair of neurons in the network. An important question in Neuroscience is to understand which features of this graph can be inferred from the spiking times of the recorded neurons. Our results suggest that one such feature is the density of connections in the graph, as long as we know the true fraction $r_+$ of excitatory neurons in the network. In many situations, this turns out to be the case. For example, in humans and many other mammalian species the value $r_+$ is known and ranges from $0.7$ to $0.8$.


\changes{
Analogously to \cite{delattre2016statistical}, our results also hold in the symmetric interaction case, namely when $(\theta_{ij})_{1\leq i\leq j\leq N}$ are i.i.d. and $\theta_{ij} = \theta_{ji}$ for all $1\leq i,j \leq N$. Only a few steps in the proofs need to be adapted when working in this new framework, and all these modifications are located in Appendix \ref{app:annealed:rates}. We have gathered the details at the beginning of that section.
}

\changes{Throughout the paper, the parameter $p$ does not scale with the number of components $N$ of the system $X$, ensuring that the random matrix $\theta$ remains  typically dense. An interesting open question is whether our results extend to sparse regimes, where $p=c/N$ for some positive constant $c>0$ or $p\sim \log(N)/N$.
In these sparse regimes, the model must be suitably modified, as the normalization factor $N^{-1}$ is no longer appropriate. Furthermore, many of the results concerning the coalescing random walks associated with the backward regeneration representation of the system $X$ 
(e.g. Proposition \ref{prop:1}) do not directly apply and require substantial extensions. Such extensions are beyond the scope of this work.}    


\changes{
Finally, let us highlight two recent works \citep{gaitonde2024bypassing} and \citep{chevallier2024community}, which appeared during the revision of this paper.
 The work of \citep{gaitonde2024bypassing} deals with the problem of estimating order $k$ graphical models with $N$ sites and bounded degree from dependent trajectories generated by Glauber dynamics. Among their results, the authors show that given $O(N \log N)$ total site updates from the Glauber trajectory, it is possible to correctly output the conditional dependency structure of the underlying graphical model in time $O(N^2 \log N)$, overcoming the known $N^{\Theta(k)}$ computational barrier of the i.i.d. setting. 
 Our framework differs from theirs in two key aspects. First, the dynamics of their model is reversible, whereas ours is not. Second,
 their analysis assumes that the chosen site to be updated is always observed at each time step, even when the configuration of the system remains unchanged, an assumption that may be difficult to verify in some real datasets.  
The work of \citep{chevallier2024community} complements the analysis done in the present paper by
 addressing the 
 problem of estimating the sets ${\cal P_+}$ and ${\cal P_-}$ without prior knowledge of the other model parameters. We believe these results could enable us to estimate edge probability $p$ even without knowing the asymptotic fraction of excitatory and inhibitory components $r_+$ and $r_-$ of the system $X$. For sake of simplicity, we do not pursue this extension in the present work.   
}

In the next section we define our model rigorously,  introduce some general notation used throughout the paper, state our main results and give some heuristics behind them.
At the end of that section, we also provide the organization of the rest of the paper.

\section{Model definition, notation and main results}
\label{sec:results}

\paragraph*{Model definition}
We consider a system of $N$ interacting chains $X=\{X_{i, t} , t \in \Z , 1 \le i \le N\} $ taking values in $ \{0, 1 \} $ denoting the presence or the absence of a signal at a given time. This system evolves in a random environment which is given by the realization of a directed Erd\"os-R\'enyi random graph via the selection of $N^2 $ i.i.d.\! Bernoulli random variables $ \theta_{ ij } \sim \text{Ber}(p), 1 \le i, j \le N$ with $0\leq p\leq 1$.
Conditionally on the realization of $\theta=(\theta_{ij})_{1\leq i,j\leq N}$, the evolution of the system $X$ is that of a stationary Markov chain on the state space $\{0,1\}^N$ with transition probabilities (which depend on $\theta$) given as follows.
Writing $X_t=(X_{1,t},\ldots, X_{N,t})$ and $x,y\in \{0,1\}^{N}$, we have, for all $t\in\Z$,
\begin{equation}
\label{def:transition_prob_1}
\P_{\theta}(X_{t}=y  | X_{t-1}=x) =\prod_{i=1}^{N}(p_{\theta,i}(x))^{y_i}(1-p_{\theta,i}(x))^{(1-y_i)}, 
\end{equation}
where $p_{\theta,i}(x)=\P_{\theta}( X_{i,t}=1| X_{t-1}=x)$ is defined in \eqref{def:transition_prob_2}.
The existence and uniqueness of a stationary version of the Markov chain having transition probabilities as defined in \eqref{def:transition_prob_1} and \eqref{def:transition_prob_2} is granted by Theorem \ref{thm:perfect_sampling} presented in Section \ref{sec:3}.

\paragraph*{Notation}
Throughout the paper, the letters $t,s$ denote some time values whereas the letters $i,j$ denote some spatial values, that is the index of the corresponding component. For a vector $v$ in $\mathbb{R}^N$, the notation $\overline{v}$ denotes the spatial mean $\overline{v} = N^{-1} \sum_{i=1}^N v_i$. In agreement with the left-hand side of \eqref{def:transition_prob_1}, we write $\P_{\theta}$ to denote the probability measure under which the environment $\theta$ is kept fixed and the process $X$ is distributed as the unique stationary version of the Markov chain having transition probabilities as defined in \eqref{def:transition_prob_1} and  \eqref{def:transition_prob_2}. The expectation taken with respect to $\P_{\theta}$ is denoted by $\E_{\theta}$. The variance and covariance computed from $\E_{\theta}$ are denoted $\var_{\theta}$ and $\cov_{\theta}$ respectively.   
Moreover, we write $\P$ to denote the probability measure under which the random environment $\theta=(\theta_{ij})_{1\leq i,j\leq N}$ is distributed as a collection of i.i.d. random variables with distribution $\text{Ber}(p)$ and
the conditional distribution of the process $X$ given $\theta$ is that of the process $X$ under $\P_{\theta}$, i.e., the following identity holds $\P(X\in \cdot|\theta)=\P_{\theta}(X\in \cdot)$.  Finally, we denote $\E$ the expectation taken with respect to the probability measure $\P$, and $\var$ and $\cov$ the variance and covariance, respectively, computed from $\E$.

\paragraph*{Main results}

We are given a sample $X_{1},\ldots, X_{T}$ of a stationary Markov chain with transition probabilities  given by \eqref{def:transition_prob_1} and \eqref{def:transition_prob_2}, associated to some realization of the random environment $\theta$ which is not observed.
The goal is to estimate the parameter $p$, the asymptotic density of connections in the underlying random environment. We consider an estimator of $p$ which is a function of three other estimators defined below. In what follows, let $\overline{X}_t=N^{-1}\sum_{i=1}^{N}X_{i,t}$ denote the average number of signals emitted by the system at time $t$; $Z_{i,t}=\sum_{s=1}^t X_{i,s}$ denote the number of signals emitted by the $i$-th component of the system in the discrete interval $\{1,\ldots, t\}$ and $\overline{Z}_{t}=N^{-1}\sum_{i=1}^{N}Z_{i,t}$ denote the average number of signals emitted by the system in the interval $\{1,\ldots, t\}$. For notational convenience, we set $\overline{Z}_{0}:=0$.    
With this notation, our three main estimators are defined as follows:
\begin{equation}\label{eq:def:m:hat:v:hat}
\hat{m}=\frac{\overline{Z}_{T}}{T}, \ \ \
 \hat{v}=\frac{(T+1)N}{T^3}\left[\frac{1}{N}\sum_{i=1}^{N}\left(Z_{i,T}\right)^2-\frac{T}{(T+1)}\left(\overline{Z}_{T}+\left(\overline{Z}_{T}\right)^2\right)\right] \ \text{and} \ 
\end{equation}
\begin{equation}\label{eq:def:w:hat}
\hat{w}=2\mathcal{W}_{2\Delta}-\mathcal{W}_{\Delta}, \ \text{with} \  
\mathcal{W}_{\Delta}=\frac{N}{T}\sum_{k=1}^{\lfloor T/\Delta \rfloor}\left(\overline{Z}_{k\Delta}-\overline{Z}_{(k-1)\Delta}-\Delta\hat{m}\right)^2,
\end{equation}
where $\Delta\in \{1,\ldots, \lfloor T/2 \rfloor \}$ is a tuning parameter. The estimator $\hat{m}$ is the \emph{spatio-temporal mean} of signals emitted by the system in the observed interval $\{1,\ldots, T\}$.
As we explain in Section \ref{subsection:spatial_var}, 
the estimator $\hat{v}$ is related to the empirical variance of the random variables $Z_{1,T},\ldots, Z_{N,T}$, i.e., a variance over the spatial components of the system. 
The estimator $\hat{w}$ is computed from the empirical variance of the random variables $\overline{Z}_{\ell\Delta}, \overline{Z}_{2\ell\Delta}-\overline{Z}_{\ell\Delta}, \ldots, \overline{Z}_{T}-\overline{Z}_{T-\ell\Delta}$, i.e., the empirical variance of the mean number of signals emitted by the system over different time intervals. For these reasons, the estimators $\hat{v}$ and $\hat{w}$ are called \emph{spatial variance} and \emph{temporal variance} respectively.

Obviously, the three estimators $\hat{m}, \hat{v},\hat{w}$ depend on $N$ and $T$ (as well as the tuning parameter $\Delta$ for $\hat{w}$). We chose not to specify this dependence in the notation to keep notation as light as possible.

\begin{theorem}\label{theo:1}
There exists a constant $K>0$ depending only on $\lambda$ such that for all $\varepsilon\in (0,1)$, $N\geq 1$, $T\geq 2$ and $1\leq \Delta\leq \lfloor T/2\rfloor$,
\begin{gather}
\P(|\hat{m}-m|\geq \varepsilon)\leq \frac{K}{\varepsilon}\left(\frac{1}{\sqrt{TN}}+\frac{1}{N}\right) \label{conver_spatio_temporal_mean} ,\\
\P(|\hat{v}-v|\geq \varepsilon)\leq \frac{K}{\varepsilon}\left(\frac{\sqrt{N}}{T} + \frac{1}{\sqrt{N} } \right) \label{conver_spatial_variance},\\
\P(|\hat{w}-w|\geq \varepsilon)\leq \frac{K}{\varepsilon}\left(\frac{1}{N}+\frac{(1-\lambda)^{\Delta}}{\Delta}+\sqrt{\frac{\Delta}{T}}\right) \label{conver_temporal_variance},
\end{gather}
where the vector $(m,v,w)$ is given by
\begin{equation}\label{eq:definition:m:v:w}
\begin{cases}
    m=\frac{\mu+(1-\lambda)pr_-}{1-p(1-\lambda)(r_+-r_-)},\\
    v=(1-\lambda)^2 p(1-p)((m-r_-)^2+r_+r_-) ,\\
    w= m(1-m)\frac{1+4(1-\lambda)^2p^2r_+r_-}{(1-p(1-\lambda)(r_+-r_-))^2}.
\end{cases}
\end{equation}
\end{theorem}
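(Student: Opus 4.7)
The common strategy for the three bounds is Markov's inequality:
\[
\P\bigl(|\hat{\alpha}-\alpha|\geq \varepsilon\bigr) \leq \varepsilon^{-1}\bigl|\E[\hat{\alpha}]-\alpha\bigr| + \varepsilon^{-1}\sqrt{\text{Var}(\hat{\alpha})},
\]
applied to each $\hat{\alpha}\in\{\hat{m},\hat{v},\hat{w}\}$; it then suffices to control the bias and the variance of each estimator separately. The crucial probabilistic input, obtained from the backward regeneration representation announced in Section \ref{sec:3}, is a quantitative bound on the covariance $\text{Cov}(X_{i,s},X_{j,t})$: at fixed $\theta$ the chain is geometrically mixing at rate $1-\lambda$, producing a factor $(1-\lambda)^{|s-t|}$, and after integrating over $\theta$ an additional $1/N$ appears when $i\neq j$ because two distinct chains only become jointly active through a common coalescing ancestor, an event of mean-field probability $O(1/N)$.

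\textbf{Bias.} Stationarity and the invariance of the law of $\theta$ under index permutations yield $\E[X_{i,t}]=m_N$ independent of $i,t$. Taking expectations in \eqref{def:transition_prob_2} and approximating $\E[\theta_{ij}X_{j,t-1}]$ by $p\,\E[X_{j,t-1}]$ up to an $O(1/N)$ error (bounded by the probability that the coalescing walk rooted at $(i,t)$ uses the edge $\theta_{ij}$) leads to a linear fixed-point equation whose unique solution coincides with the expression of $m$ in \eqref{eq:definition:m:v:w} up to an $O(1/N)$ correction. For $\hat{v}$, one expands $Z_{i,T}^2-\E[Z_{i,T}^2]=2\E[Z_{i,T}](Z_{i,T}-\E[Z_{i,T}])+(Z_{i,T}-\E[Z_{i,T}])^2-\text{Var}(Z_{i,T})$, identifies $\text{Var}(Z_{i,T})$ with $T$ times the asymptotic variance of the chain at fixed $\theta$, and observes that the specific coefficients defining $\hat{v}$ make $\E[\hat{v}]$ equal $N$ times the spatial $\theta$-variance of $m_\theta^{(i)}:=\P_\theta(X_{i,0}=1)$; a second-order mean-field expansion of the latter yields $v$ up to $O(1/\sqrt{N}+1/T)$. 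Finally, a block computation shows $\E[\mathcal{W}_\Delta]=w+c\,(1-\lambda)^\Delta/\Delta+O(1/N)$, and the Richardson-like combination $\hat{w}=2\mathcal{W}_{2\Delta}-\mathcal{W}_\Delta$ is tuned to cancel the leading finite-$\Delta$ correction while preserving the limit $w$.

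\textbf{Variance.} We use the decomposition $\text{Var}(\hat{\alpha})=\E[\text{Var}_\theta(\hat{\alpha})]+\text{Var}(\E_\theta[\hat{\alpha}])$. For $\hat{m}$, geometric mixing bounds the conditional part by $O(1/(NT))$ and a second-order mean-field expansion yields $\text{Var}(N^{-1}\sum_i m_\theta^{(i)})=O(1/N^2)$, so $\sqrt{\text{Var}(\hat{m})}\lesssim 1/\sqrt{NT}+1/N$. For $\hat{v}$, the coefficients $(T+1)N/T^3$ and $N/T^2$ are chosen so that the dominant linear fluctuation $2Tm_N(\bar{Z}_T-Tm_N)$ common to $N^{-1}\sum_i Z_{i,T}^2$ and $\bar{Z}_T+\bar{Z}_T^2$ cancels; only the quadratic fluctuations $(Z_{i,T}-Tm_N)^2$ then remain, and a CLT-type bound $\text{Var}_\theta((Z_{i,T}-Tm_N)^2)=O(T^2)$ gives $\text{Var}(\hat{v})=O(N/T^2)$. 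For $\hat{w}$, the $\lfloor T/\Delta\rfloor$ blocks $(Y_k-\Delta\hat{m})^2$, with $Y_k=\bar{Z}_{k\Delta}-\bar{Z}_{(k-1)\Delta}$, are weakly correlated with correlations decaying geometrically in the block separation, which gives $\text{Var}(\hat{w})=O(\Delta/T)$. The main technical obstacle throughout is the quantitative control of the off-diagonal covariances $\text{Cov}(X_{i,s},X_{j,t})$ for $i\neq j$ after integrating over $\theta$: this demands a joint analysis of two backward coalescing walks and of the probability that they share ancestors in the common environment; once this is established, the three stated rates follow by combining the bias and standard-deviation estimates through Markov's inequality.
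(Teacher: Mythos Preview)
Your high-level architecture (Markov's inequality applied to a bias/variance split) is sound and matches the paper's skeleton, and your identification of the final rates is correct. However, two of your key technical claims misidentify where the work actually lies, and as stated they would not close the argument.

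\textbf{First gap: the covariance bound is quenched, and two walks are not enough.} You write that the $1/N$ in $\mathrm{Cov}(X_{i,s},X_{j,t})$ appears ``after integrating over $\theta$'', and that the main obstacle is ``a joint analysis of two backward coalescing walks''. In the paper the bound $|\mathrm{Cov}_\theta(X_{i,s},X_{j,t})|\le K(1-\lambda)^{|s-t|}/N$ (Lemma~\ref{lem:covariance:Y}) holds for every fixed environment $\theta$: the backward walks $I^z$ are driven by the $J$-variables alone, so the coalescence probability is $O((1-\lambda)^{|s-t|}/N)$ uniformly in $\theta$. More importantly, controlling $\mathrm{Var}_\theta(\hat v)$ and $\mathrm{Var}_\theta(\hat w)$ requires bounding fourth-order quantities $\mathrm{Cov}_\theta(Y_{z_1}Y_{z_2},Y_{z_3}Y_{z_4})$. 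This is the content of Lemma~\ref{lem:covariance:produit} and Proposition~\ref{prop:1}, which analyse the coalescence pattern of \emph{four} backward walks (with a delicate case analysis and the blockwise coupling of Lemma~\ref{lem:final:independent}). A two-walk bound gives you $\mathrm{Var}_\theta(\hat m)$, but is insufficient to obtain $\mathrm{Var}_\theta((\overline U_\Delta)^2)\le K\Delta^2/N^2$ (Lemma~\ref{lemma:control_var_barU_squared}) or the analogous estimate behind Proposition~\ref{prop:control:up121}.

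\textbf{Second gap: the environment fluctuations require the random-matrix analysis.} Your ``second-order mean-field expansion'' stands in for all of Section~\ref{sec:random_matrix_results}. The paper does not compute $\E[\hat\alpha]$ directly under $\P$; it introduces $\theta$-measurable intermediates $m^N_\infty=\overline{m^N}$, $v^N_\infty=\|m^N-\overline{m^N}1_N\|_2^2$, $w^N_\infty=N^{-1}\sum_i (c^N_i)^2 m^N_i(1-m^N_i)$ and proves separately (Proposition~\ref{interro}) that $\E|\overline{m^N}-m|^2\le K/N^2$, $\E|v^N_\infty-v|\le K/\sqrt N$, $\E|w^N_\infty-w|^2\le K/N^2$. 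These estimates hinge on controlling the row/column sums of $Q^N=(I_N-(1-\lambda)A^N)^{-1}$ and their population-wise centred versions (Lemma~\ref{lem:ell:and:c}), which is where the $1/\sqrt N$ in \eqref{conver_spatial_variance} actually originates. Relatedly, your appeal to ``invariance of the law of $\theta$ under index permutations'' to get $\E[X_{i,t}]=m_N$ independent of $i$ is not available: the model distinguishes $\mathcal P_+$ from $\mathcal P_-$, so only within-population exchangeability holds, and the paper never uses such an identity.
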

The proof of Theorem \ref{theo:1} is given at the end of Section \ref{sec:key:steps:and:proof}.
The constant $K$ depends only on the parameter $\lambda$. Let us mention that this constant diverges when $\lambda \to 0$ (because of the inversion of the linear system, see for instance the bound of Lemma \ref{lem:bound:infty:ell:c}) and $\lambda \to 1$ (because the dependence between the components of the model becomes too weak, see for instance the bounds of Proposition \ref{prop:1}).

Once the parameters $(m,v,w)$ are estimated from the sample, one wants to deduce some estimators of the parameters $(\mu,\lambda,p)$. Here is how one can proceed.

Let $\Lambda = \{(\mu, \lambda, p) \in (0,1)^3: 0< \mu < \lambda\}$ be the open set of admissible parameters. For all $(\mu, \lambda, p) \in \Lambda$, let us define 
\begin{equation}
\label{def_denominator_as_function_of_lambda_and_p}
    D(\lambda, p) = 1 - (1-\lambda)p(r_+ - r_-) >0,
\end{equation}
the denominator that appears in the expressions of $m$ and $w$. Then, for $k=1,2,3$, let $\Psi_k : \Lambda \to \mathbb{R}$ be defined by
\begin{equation}\label{eq:definition:Psi}
    \begin{cases}
        \Psi_1(\mu, \lambda, p) = (\mu + (1 - \lambda)pr_-) / D(\lambda,p)\\
        \Psi_2(\mu, \lambda, p) = (1-\lambda)^2 p(1-p)[(\Psi_1(\mu, \lambda, p)-r_-)^2+r_+r_-]\\
        \Psi_3(\mu, \lambda, p) = \Psi_1(\mu, \lambda, p)[1 - \Psi_1(\mu, \lambda, p)] [1+4(1-\lambda)^2p^2r_+r_-] / D(\lambda,p)^2.
    \end{cases}
\end{equation}
Finally, let $\Psi : \Lambda \to \mathbb{R}^3$ be defined by the three coordinate functions above so that Equation \eqref{eq:definition:m:v:w} rewrites as $(m,v,w) = \Psi(\mu,\lambda,p)$. Finally, let us remark that, whatever the value of $r_+$, the image $\Psi(\Lambda)$ is included in $(0,1)\times (0,\infty)^2$ (see Proposition \ref{prop:preliminary:D:Psi} stated and proved in Appendix \ref{app:inversion}).

The following proposition gives some information about the inversion of the function $\Psi$, by means of an auxiliary function $\kappa : (0,1)\times (0,\infty) \to (0,\infty)$ which is defined by 
\begin{equation}\label{eq:definition:kappa}
    \kappa(m,w) = (r_+ - r_-)^2 \frac{w}{m(1-m)}.
\end{equation}
\begin{proposition}\label{prop:there:exists:inversion}
    Whatever the value of $r_+$, there exist two explicit functions $\Phi^{(+)}$ and $\Phi^{(-)}$ such that the following results hold.
    \begin{enumerate}
        \item For all $(\mu, \lambda, p)\in \Lambda$, $(\mu, \lambda, p) \in \{ \Phi^{(+)}\circ \Psi(\mu, \lambda, p), \Phi^{(-)}\circ \Psi(\mu, \lambda, p)\}$,
        \item Moreover, if $r_+ \geq 1/2$ or 
        $$
        \kappa(\Psi_1(\mu, \lambda, p), \Psi_3(\mu, \lambda, p)) \geq 4r_+r_-,
        $$
        then $(\mu, \lambda, p) = \Phi^{(-)}\circ \Psi(\mu, \lambda, p)$.
    \end{enumerate}
\end{proposition}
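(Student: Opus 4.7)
The plan is to explicitly invert $\Psi$ by reducing the problem to a single quadratic equation in the auxiliary scalar $\alpha := (1-\lambda)p \in (0,1)$, and then to identify the correct root under each of the two sufficient conditions.

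First I reduce the inversion to that of $\alpha$. Introducing $\beta := (1-\lambda)^2 p(1-p) \in (0, \infty)$, one checks that $(\alpha, \beta)$ determines $(\lambda, p)$ uniquely via $1 - \lambda = \alpha + \beta/\alpha$ and $p = \alpha^2/(\alpha^2 + \beta)$. The second line of \eqref{eq:definition:m:v:w} reads $v = \beta\,[(m-r_-)^2 + r_+r_-]$, with a strictly positive denominator since $r_+r_- > 0$; hence $\beta$ is recovered from $(m, v)$. Finally, the first line of \eqref{eq:definition:m:v:w} gives $\mu = m\,D(\lambda, p) - \alpha r_-$, which recovers $\mu$ once $\alpha$, $\lambda$ and $p$ are known. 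The whole inversion thus reduces to extracting $\alpha$ from $(m,w)$.

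Setting $W := w/[m(1-m)]$ and $D := 1 - \alpha(r_+-r_-)$, the third line of \eqref{eq:definition:m:v:w} rewrites as $WD^2 = 1 + 4\alpha^2 r_+r_-$. Expanding and using $(r_+-r_-)^2 + 4 r_+ r_- = 1$, this is equivalent to the quadratic
\begin{equation*}
(\kappa - 4 r_+ r_-)\,\alpha^2 - 2W(r_+-r_-)\,\alpha + (W-1) = 0,
\end{equation*}
whose discriminant equals $4(W - 4r_+r_-)$. The identity
\begin{equation*}
W - 4 r_+ r_- = \frac{\bigl[(r_+-r_-) + 4 r_+ r_- \alpha\bigr]^2}{D^2},
\end{equation*}
checked by direct expansion, shows that this discriminant is non-negative, so the quadratic has two real roots $\alpha^{(\pm)}$ (with the convention that both agree when $\kappa = 4r_+r_-$ and the equation degenerates to a linear one). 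Composing each root with the cascade $\alpha \mapsto \beta \mapsto (\lambda,p) \mapsto \mu$ from the previous paragraph yields the two explicit functions $\Phi^{(\pm)}$; since the true $\alpha$ satisfies the quadratic, it coincides with one of the two roots, proving item~1.

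For item~2, I study the map $\alpha \mapsto W(\alpha) = [1 + 4\alpha^2 r_+r_-]/D(\alpha)^2$ on the admissible interval $\alpha \in (0,1)$ (on which $D > 0$ automatically since $r_+ - r_- < 1$). A direct computation yields $W'(\alpha) = 2[4 r_+r_-\alpha + (r_+-r_-)]/D^3$, which is strictly positive on $(0, 1)$ as soon as $r_+ \geq r_-$, i.e., $r_+ \geq 1/2$. Hence in that case $W(\cdot)$ is strictly increasing and at most one of the two roots lies in $(0,1)$; moreover $W \geq 1$ since $D \leq 1$, and an elementary case analysis on the sign of $\kappa - 4r_+r_-$ identifies this admissible root as $\alpha^{(-)}$ under the chosen labelling. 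For the alternative condition $\kappa \geq 4r_+r_-$, the leading coefficient of the quadratic is non-negative, and combining this with the sign of the numerator in the quadratic formula (controlled by the factorization of the discriminant above) again pins down the admissible root as $\alpha^{(-)}$. The main obstacle is precisely this sign selection: the algebraic inversion and the non-negativity of the discriminant are routine, but making the labelling convention $\Phi^{(-)}$ consistent with both sufficient conditions requires a careful case analysis on the signs of $r_+-r_-$, $\kappa - 4r_+r_-$ and the numerators of the quadratic formula.
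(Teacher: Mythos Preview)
Your reduction to a quadratic and your handling of item~1 are correct and essentially the same as the paper's approach; the only cosmetic difference is that you solve for $\alpha = (1-\lambda)p$ whereas the paper solves for $D(\lambda,p) = 1 - \alpha(r_+-r_-)$, which is an affine change of variable. Your discriminant identity and the cascade $\alpha \mapsto (\lambda,p) \mapsto \mu$ via $\beta$ are fine.

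The genuine gap is in item~2. The statement asks for \emph{explicit} functions $\Phi^{(\pm)}$, and item~2 is a claim about the specific function labelled $\Phi^{(-)}$. You never write down which root of your quadratic is $\alpha^{(-)}$; you only say that ``under the chosen labelling'' the admissible root is $\alpha^{(-)}$, while no labelling has been chosen. Without an explicit formula for $\alpha^{(-)}$, the assertion ``the admissible root is $\alpha^{(-)}$'' is not a mathematical statement. Your monotonicity argument for $r_+\geq 1/2$ is correct and is a pleasant alternative to the paper's direct bound, but it only shows that at most one root lies in $(0,1)$; it does not tell you whether that root carries the label~$(-)$ or~$(+)$ in the quadratic formula. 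The same problem occurs in your treatment of the case $\kappa \geq 4r_+r_-$: saying ``the sign of the numerator pins down the admissible root as $\alpha^{(-)}$'' is circular until you have fixed what $\alpha^{(-)}$ means.

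The paper resolves this by writing the roots explicitly as $d^{(a)}(m,w) = \bigl[4r_+r_- + a\sqrt{(4r_+r_-)^2 - 4r_+r_- + \kappa}\bigr]/(4r_+r_- - \kappa)$ for $a\in\{+,-\}$, and then arguing directly: if $\kappa > 4r_+r_-$ the denominator is negative, so $d^{(+)}<0$ while $D>0$, forcing $D=d^{(-)}$; if $r_+>1/2$ and $\kappa<4r_+r_-$, one checks $d^{(+)} \geq 4r_+r_-/(4r_+r_-\!-\!\kappa) > 1$ while $D<1$, again forcing $D=d^{(-)}$. You need an analogous explicit definition and an analogous two-line sign check to close your argument.
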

Proposition \ref{prop:there:exists:inversion} is deduced as a corollary of Proposition \ref{prop:inversion} stated and proved in the Appendix \ref{app:inversion}. Furthermore, the expressions of the functions $\Phi^{(+)}$ and $\Phi^{(-)}$ are also given in Appendix \ref{app:inversion}.

\changes{
\begin{remark}
    
    Note that the system of equations defining $X$ is not symmetric with respect to the sets $\mathcal{P}_+$ and $\mathcal{P}_-$. One source of asymmetry is the term $N^{-1} \sum_{j\in \mathcal{P}_-} \theta_{ij}$ which appears in \eqref{def:transition_prob_2} and is denoted by $L^{N, \bullet -}_i$ below. This asymmetry partly explains why the inversion of \eqref{eq:definition:m:v:w} is always feasible when $r_+\geq 1/2$, but not when $r_+ <1/2$.

\end{remark}
}

We are now in position to deduce the following corollary of Theorem \ref{theo:1}.

\begin{corollary}
\label{cor:consistency_of_the_estimator}
    Assume that the condition 2 of Proposition \ref{prop:there:exists:inversion} is satisfied. Then there exists a constant $K>0$ depending only on $\lambda$ such that for all $\varepsilon\in (0,1)$, $N\geq 1$, $T\geq 2$ and $1\leq \Delta\leq \lfloor T/2\rfloor$,
    \begin{equation}
    \label{infor_statemente_of_the_convergece2}
    \P(\| \Phi^{(-)}(\hat{m}, \hat{v}, \hat{w}) - (\mu, \lambda, p)\|_{\infty} \geq \varepsilon)
    \leq \frac{K}{\varepsilon}\left(\frac{\sqrt{N}}{T}+\frac{1}{\sqrt{N}}
    +\frac{(1-\lambda)^{\Delta}}{\Delta}+\sqrt{\frac{\Delta}{T}}\right).
    \end{equation}
\end{corollary}
\begin{proof}
    By definition of $\Psi$, remark that Theorem \ref{theo:1} gives a control of $\| (\hat{m}, \hat{v}, \hat{w}) - \Psi(\mu, \lambda, p)\|_{\infty}$. Hence, the result follows from the facts that $\Phi^{(-)}$ is locally Lipschitz (see Proposition \ref{prop:Phi:smooth}) and $\Phi^{(-)}\circ \Psi(\mu, \lambda, p) = (\mu, \lambda, p)$.
\end{proof}

In practice, a value must be chosen for the tuning parameter $\Delta$. The question of how to choose $\Delta$ with respect to $T$ is discussed in Section \ref{sec:simulation}.
In what follows we present some heuristic arguments leading to the convergence stated in
Theorem \ref{theo:1}.

\subsection{Heuristics for the spatio-temporal mean}
\label{subsec_heuristics_first_estimator}

\changes{
Let us remind that $X$ follows the stationary version of a Markov chain having transition probabilities as defined in \eqref{def:transition_prob_1} and \eqref{def:transition_prob_2}, conditionally on the environment $\theta$.
}
Denote $m^N:=(m^N_{1},\ldots, m^N_{N})= \E_{\theta}\left[X_0\right]$. Notice that $m^N$ depends on the realization of $\theta$, but we omit to specify this dependence to keep concise notation. Throughout this section, the notation $a \approx b$ is used to express the fact that $a$ and $b$ are expected to be close to each other as either $N$ or $T$ are large enough.
By ergodicity, we must have as $T\to\infty$, 
$$
\frac{1}{T}\sum_{t=1}^TX_{t}\to m^N, \ \ \P_{\theta}-a.s.,
$$
so that we expect for $T$ large enough that 
\begin{equation}
\label{eq_1_heuristics}
\hat{m}=\frac{1}{N}\sum_{i=1}^{N}\frac{1}{T}\sum_{t=1}^TX_{i,t}\approx \frac{1}{N}\sum_{i=1}^{N}m^N_{i}:=\overline{m^N}.
\end{equation}
Hence, to find the limit of $\hat{m}$ one needs to understand the asymptotic behavior of $\overline{m^N}.$ To that end, observe that
by taking first the expectation in both sides of \eqref{def:transition_prob_2} and then using the stationarity, one can check that, for all $1\leq i\leq N$,
\begin{equation}
\label{equation_for_m_theta_i}
m^N_{i}=\mu+(1-\lambda)\left(\frac{1}{N}\sum_{j\in\mathcal{P}_+}\theta_{ij}m^N_{j}+\frac{1}{N}\sum_{j\in\mathcal{P}_-}\theta_{ij}(1-m^N_{j})\right).
\end{equation}
Denoting $A^N(i,j)=N^{-1}\theta_{ij}$ for all $1\leq i\leq N$ and $j\in\mathcal{P}_+$ and $A^N(i,j)=-N^{-1}\theta_{ij}$ for all $1\leq i\leq N$ and $j\in\mathcal{P}_-$, we can then write the above system of equations in matrix form as follows:
$$
m^N=\mu 1_{N}+(1-\lambda)\left(A^Nm^N-L^{N,\bullet -}\right),
$$ 
where $1_{N}$ denotes the $N$-dimensional vector with all entries equal to 1 and $L^{N,\bullet -}=(L^{N,\bullet -}_{1},\ldots, L^{N,\bullet -}_{N})$ is given by $L^{N,\bullet -}_{i}=\sum_{j\in\mathcal{P}_-}A^N(i,j).$ Therefore, we deduce that
$$
\left(I_{N}-(1-\lambda)A^N\right)m^N=\mu 1_{N}-(1-\lambda)L^{N,\bullet -},
$$
where $I_{N}$ denotes the identify matrix of size $N$.
Let us denote $Q^N=\left(I_{N}-(1-\lambda)A^N\right)^{-1}$.  The random matrix $Q^N$ is well-defined whenever $\lambda>0$ (see Appendix \ref{app:annealed:rates}). Granted that $Q^N$ is well-defined, we then have that
\begin{equation}\label{eq:expression:m:theta}
    m^N=\mu Q^N1_{N} - (1-\lambda) Q^NL^{N,\bullet -}.
\end{equation}
Then, remark that $L^{N,\bullet -} = A^N 1_{\mathcal{P}_-}$, where $1_{\mathcal{P}_-}$ is the $N$-dimensional vector with value 1 for the coordinates in $\mathcal{P}_-$ and value 0 otherwise. Using the series expansion $Q^N = \sum_{k=0}^\infty (1-\lambda)^k(A^N)^k$, one can deduce that $Q^N A^N = (Q^N - I_N)/(1-\lambda)$. Hence, $(1-\lambda) Q^NL^{N,\bullet -} = \ell^{N, \bullet -} - 1_{\mathcal{P}_-}$ with $\ell^{N, \bullet -} = Q^N 1_{\mathcal{P}_-}$. Similarly, we denote $\ell^{N} = Q^N 1_{N}$, so that Equation \eqref{eq:expression:m:theta} rewrites as 
\begin{equation*}
    m^N=\mu \ell^N + 1_{\mathcal{P}_-} - \ell^{N, \bullet -} .
\end{equation*}
Now, let us give some heuristics on the asymptotic behavior of $\ell^N$ and $\ell^{N, \bullet -}$.
To that end, denote $L^N=A^N1_{N}$ and observe that by the definition of $\ell^N$, using once more the series expansion of $ Q^N$,
$$
\ell^N=1_{N}+(1-\lambda)L^N+\sum_{k=2}^{\infty}(1-\lambda)^k(A^N)^k1_{N}.
$$
\changes{
Now, by law of large numbers, we expect that, whenever $N$ is large enough,}
$L^N_{i} = \sum_{j=1}^{N}A^N(i,j)\approx p(r_+-r_-)$ 
for each $1\leq i\leq N$, and by induction, we expect that
$$
\left((A^N)^k1_{N}\right)_i \approx \left[p(r_+-r_-)\right]^{k-1} L^N_{i},
$$
for each $1\leq i\leq N$. Hence, for $N$ sufficiently large
$$
\ell^N \approx 1_{N} + \frac{1}{D(\lambda,p)} (1-\lambda) L^N,
$$
where $D(\lambda,p)$ is defined in \eqref{def_denominator_as_function_of_lambda_and_p}.
Similarly, by definition of $\ell^{N, \bullet -}$, we have
$$
\ell^{N,\bullet -} = 1_{\mathcal{P}_-} + (1-\lambda)L^{N,\bullet -} + \sum_{k=2}^{\infty}(1-\lambda)^k(A^N)^k1_{\mathcal{P}_-}.
$$
\changes{Using the law of large numbers once more, we expect that $L^{N,\bullet -} \approx -pr_- 1_N$ whenever $N$ is large enough, so that}
by the previous induction, we expect that
$$
(A^N)^k1_{\mathcal{P}_-} \approx -pr_-\left[p(r_+-r_-)\right]^{k-2} L^N.
$$ 
Hence,
$$
\ell^{N,\bullet -} \approx 1_{\mathcal{P}_-} + (1-\lambda)L^{N,\bullet -} - \frac{(1-\lambda)^2 pr_-}{D(\lambda,p)} L^N.
$$
All in all, we expect that
\begin{equation}\label{eq_2_heuristics}
m^N \approx \mu \left( 1_N + \frac{1-\lambda}{D(\lambda,p)}L^N \right) - (1-\lambda)L^{N,\bullet -} + \frac{(1-\lambda)^2 pr_-}{D(\lambda,p)} L^N.
\end{equation}
Now, remind that $L^N \approx p(r_+ - r_-) 1_N$ and $L^{N,\bullet -} \approx -pr_- 1_N$, so that 
\begin{equation}\label{eq_3_heuristics}
    \overline{m^N} \approx \mu \left(1 + \frac{(1-\lambda)p(r_+ - r_-)}{D(\lambda,p)}\right) + (1-\lambda) pr_- + \frac{(1-\lambda)^2p^2(r_+ - r_-)r_-}{D(\lambda,p)} = m,
\end{equation}
where $m$ is defined in \eqref{eq:definition:m:v:w}.
Combining \eqref{eq_1_heuristics} and \eqref{eq_3_heuristics}, we expect that for $T$ and $N$ large enough, 
$$
\hat{m}\approx \frac{\mu+(1-\lambda)pr_-}{D(\lambda,p)}=m.
$$
This heuristics is made precise in Equations \eqref{eq:quenched:m} and \eqref{eq:convergence:m}.

\subsection{Heuristics for the spatial variance}
\label{subsection:spatial_var}
First note that we can always write that
$$
\var(Z_{i,T}) - \E\left[\var_{\theta}(Z_{i,T})\right] = \var\left(\E_{\theta}\left[Z_{i,T}\right]\right).
$$
The three terms above are considered from left to right.

First, \changes{since the spatial covariance of the model vanishes sufficiently fast (see Lemma \ref{lem:covariance:Y} for a precise statement)},
the term $\var(Z_{i,T}) $ can be estimated from the data via an empirical variance, that is,
\begin{equation}\label{eq:heuristics:v:1}
 \var(Z_{i,T}) \approx \frac{1}{N}\sum_{i=1}^{N}(Z_{i,T}-\overline{Z}_T)^2.   
\end{equation}

Second, the term $\E\left[\var_{\theta}(Z_{i,T})\right]$ can also be estimated from the data. Indeed, we expect that the temporal covariance of the model vanishes sufficiently fast (see Lemma \ref{lem:covariance:Y} for a precise statement) so that, for $T$ large enough,
\begin{equation}\label{eq:heuristics:v:2}
\E\left[\var_{\theta}(Z_{i,T})\right] \approx T\E\left[\var_{\theta}(X_{i,0})\right] .
\end{equation}
Given the random matrix $\theta$, we know that $X_{i,0}$ is a Bernoulli variable with parameter $m_i^N$ by stationarity of the process, so that
$$
\var_{\theta}(X_{i,0}) = m_i^N - (m_i^N)^2.
$$
Yet, we expect that $T^{-1} Z_{i,T} \approx m_i^N$ which implies that we can estimate $\E\left[\var_{\theta}(X_{i,0})\right]$ and get
\begin{equation}\label{eq:heuristics:v:3}
\frac{1}{N}\sum_{i=1}^{N} \frac{Z_{i,T}}{T}-\left(\frac{Z_{i,T}}{T}\right)^2 = \frac{\overline{Z}_{T}}{T} - \frac{1}{N}\sum_{i=1}^{N} \left(\frac{Z_{i,T}}{T}\right)^2 \approx \E\left[\var_{\theta}(X_{i,0})\right].
\end{equation}

Finally, we may express the limit of the last term. Since $\E_{\theta}\left[Z_{i,T}\right]=Tm^N_{i}$, we can start from the heuristics \eqref{eq_2_heuristics} and look at its variance. It is easy to check that $\var(L^N_{i}) = N^{-1}p(1-p)$ and $\var(L^{N,\bullet -}_{i}) = \cov(L^N_{i}, L^{N,\bullet -}_{i}) = N^{-1}p(1-p) r_-$. Hence, we expect that
\begin{eqnarray*}
    \var(m^N_{i}) &\approx& N^{-1}p(1-p) \left[ \left( \frac{\mu(1-\lambda) + (1-\lambda)^2 p r_-}{D(\lambda,p)} \right)^2 \right. \\
    && \left. \quad - 2 \left( \frac{\mu(1-\lambda) + (1-\lambda)^2 p r_-}{D(\lambda,p)} \right) (1-\lambda) r_- + (1-\lambda)^2 r_-\right]\\
    &=& N^{-1}(1-\lambda)^2 p(1-p) (m^2 - 2 m r_- + r_-) = N^{-1}v,
\end{eqnarray*}
where $m$ and $v$ are defined in \eqref{eq:definition:m:v:w} and $D(\lambda,p)$ is defined in \eqref{def_denominator_as_function_of_lambda_and_p}. Therefore,
\begin{equation}\label{eq:heuristics:v:4}
\frac{N}{T^2}\var\left(\E_{\theta}\left[Z_{i,T}\right]\right)\approx v.
\end{equation}
Combining Equations \eqref{eq:heuristics:v:1}-\eqref{eq:heuristics:v:4}, we have
\begin{equation*}
\hat{v} = \frac{1}{T^2}\sum_{i=1}^{N}(Z_{i,T}-\overline{Z}_T)^2-\frac{N}{T^2}\overline{Z}_T+\frac{1}{T^3}\sum_{i=1}^{N}(Z_{i,T})^2 \approx v.
\end{equation*}
This heuristics is made precise in Equations \eqref{eq:quenched:v} and \eqref{eq:convergence:v}.

\subsection{Heuristics for the temporal variance}
\label{subsection:temporal_var}
Let us remark that 
$$
\frac{\Delta}{N} {\cal W}_{\Delta} = \frac{\Delta}{T} \sum_{k=1}^{T/\Delta}\left[(\overline{Z}_{k\Delta}-\overline{Z}_{(k-1)\Delta}) - \Delta T^{-1}\overline{Z}_{T}\right]^2.
$$ 
Because we expect that the temporal covariance of the model vanishes sufficiently fast, we should have (supposing $\Delta\ll T$) that for $T$ large, 
\begin{equation}
\label{eq_4_heuristics}
\frac{\Delta}{N} {\cal W}_{\Delta} \approx \var_{\theta}\left[\overline{Z}_{\Delta}\right]=\var_{\theta}\left[\overline{U}_{\Delta}\right],
\end{equation}
where ${U}_{t}=({U}_{1,t},\ldots, {U}_{N,t})$ for each $t\geq 1$, with 
\begin{equation}\label{eq:defu}
{U}_{i,t}=Z_{i,t}-\E_{\theta}[Z_{i,t}] \ \text{and} \ \overline{U}_{t}=N^{-1}\sum_{i=1}^{N}{U}_{i,t}.
\end{equation}  
Next, \eqref{def:transition_prob_2} implies that for all $1\leq i\leq N$,
$$
U_{i,t}=M_{i,t}+(1-\lambda)\sum_{j=1}^{N}A^N(i,j)\sum_{s=1}^t\left(X_{j,s-1}-\E_{\theta}\left[X_{j,s-1}\right]\right),
$$
where $M_{i,t}=\sum_{s=1}^t(X_{i,s}-p_{\theta,i}(X_{s-1}))$ is a martingale. 
From the above identity, one can deduce that
\begin{equation}
\label{eq_for_U_t_1}
U_{t}=M_{t}+(1-\lambda)A^NU_{t}+R^N_{t},
\end{equation}
where $M_{t}=(M_{1,t},\ldots, M_{N,t})$ and $R^N_{t}=(1-\lambda)A^N(X_0-X_t)$. 
From our analysis, we have that the term $R^N_t$ is negligible (see Lemma \ref{Lemma_2_finer_control_of_the_first_term_of_exp_Ut_squared}), 
so that it follows from \eqref{eq_for_U_t_1} that
$
U_{t}\approx M_{t}+(1-\lambda)A^NU_{t},
$
or equivalently, 
$$
U_{t}\approx Q^N M_{t}.
$$ 
Hence, introducing the vector $(Q^N)^{\intercal}1_N=c^N=(c^N_{1},\ldots, c^N_{N})$ where $(Q^N)^{\intercal}$ denotes the transpose of $Q^N$,
we deduce that 
$$
\overline{U}_{t}\approx \overline{Q^NM_{t}} = \frac{1}{N}\sum_{i=1}^{N} \sum_{j=1}^N Q^N(i,j) M_{j,t} = \frac{1}{N}\sum_{j=1}^{N}c^N_{j}M_{j,t},
$$
which, in turn, implies that
$$
\var_{\theta}\left[\overline{U}_{t}\right]\approx \var_{\theta}\left[\overline{Q^N M_{t}}\right]=\frac{1}{N^2} \sum_{i=1}^{N}\sum_{j=1}^{N} c_{i}^Nc_{j}^N\E_{\theta}\left[M_{i,t}M_{j,t}\right].
$$
Since the martingales $(M_{i,t})_{t}$ and $(M_{j,t})_t$ are orthogonal for any $i\neq j$, i.e. $\E_{\theta}\left[M_{i,t}M_{j,t}\right]=0$, one obtains that
\begin{equation}
\label{eq_5_heuristics}
\var_{\theta}\left[\overline{U}_{t}\right]\approx \frac{1}{N^2}\sum_{i=1}^{N}(c^N_{i})^2\E_{\theta}\left[\left(M_{i,t}\right)^2\right].
\end{equation}
Then, by means of the quadratic variation and the stationarity of the process, we have
\begin{equation}
\label{eq_6_heuristics}
\E_{\theta}\left[\left(M_{i,t}\right)^2\right]=\E_{\theta}\left[ \sum_{s=1}^t \left(X_{i,s} - p_{\theta,i}(X_{s-1})\right)^2\right] = t\E_{\theta}\left[ \left(X_{i,1} - p_{\theta,i}(X_{0})\right)^2\right].
\end{equation}
\changes{Observing that, conditionally on $\theta$, $X_{i,1}$ is distributed as a Bernoulli random variable with parameter $m_i^N$, and using the fact that  $p_{\theta,i}(X_{0}) \approx m_i^N,$ since the spatial covariances are weak,
we can deduce (see Lemma \ref{lemma:exp_martingale_differece}) that}
 \begin{equation}\label{eq_7_heuristics}
    \E_{\theta}\left[ \left(X_{i,1} - p_{\theta,i}(X_{0})\right)^2\right] \approx m_i^N - (m_i^N)^2.
\end{equation}
Combining \eqref{eq_4_heuristics} and \eqref{eq_5_heuristics}-\eqref{eq_7_heuristics}, 
we expect that 
$$
{\cal W}_{\Delta} \approx \frac{1}{N}\sum_{i=1}^{N}(c^N_{i})^2(m_i^N - (m_i^N)^2).
$$
Now, as $N$ is large, we have $m_i^N - (m_i^N)^2\approx m(1-m)$ for all $1\leq i\leq N$. Finally, to get the asymptotics of $N^{-1}\sum_{i=1}^{N}(c^N_{i})^2 $, one can follow the same lines as in Section \ref{subsec_heuristics_first_estimator} to check that 
$$
c^N \approx 1_{N} + \frac{(1-\lambda)}{D(\lambda,p)}C^N,
$$
with $C^N=(A^N)^{\intercal}1_N$ and $D(\lambda,p)$ as defined in \eqref{def_denominator_as_function_of_lambda_and_p}. Then, by observing that $C^N_{k}\approx p$ for $k\in \mathcal{P}_+$ and $C^N_{k}\approx -p$ for $k\in \mathcal{P}_-$, one can check that $(c^N_{k})^2 \approx (1+a)^2$ for $k\in \mathcal{P}_+$ and $(c^N_{k})^2 \approx (1-a)^2$ for $k\in \mathcal{P}_-$, with
$$
a = \frac{(1-\lambda)p}{D(\lambda,p)}.
$$
Hence,
\begin{eqnarray*}
    \frac{1}{N}\sum_{k=1}^{N}(c^N_{k})^2 
    &\approx& 1 + a^2+ 2a(r_+-r_-)\\
    &=& \frac{1+(1-\lambda)^2p^2(1-(r_+-r_-)^2)}{(D(\lambda,p))^2}
     = \frac{1+4(1-\lambda)^2p^2r_+r_-}{(D(\lambda,p))^2} .
\end{eqnarray*}
All in all, we find that
$$
{\cal W}_{\Delta} \approx m(1-m)\frac{1+4(1-\lambda)^2p^2r_+r_-}{(D(\lambda,p))^2}=w.
$$
This heuristics is made precise in Equations \eqref{eq:quenched:w} and \eqref{eq:convergence:w}.

By relying on the above heuristics now replacing $2\Delta$ by $\Delta$, we also deduce that ${\cal W}_{2\Delta}$ should converge to $w$ so that
$$
\hat{w}=2{\cal W}_{2\Delta}-{\cal W}_{\Delta} \approx w. 
$$
The reason of estimating $w$ using $\hat{w}$ instead of ${\cal W}_{\Delta}$ (or ${\cal W}_{2\Delta}$) alone is to improve the convergence rate (see the beginning of Appendix \ref{app:proof:quenched:w}).

\paragraph*{Organization of the rest of the paper}
\changes{
    The rest of this paper is devoted to the proof of Theorem \ref{theo:1}. Its proof is an immediate consequence of Propositions \ref{prop:control:quenched} and \ref{prop:control:N:infty} and is presented at the end of Section \ref{sec:key:steps:and:proof}. The proof of Proposition \ref{prop:control:quenched} is given in Appendix \ref{app:quenched:rates}. It relies on a precise study of the decay of correlation of $ X_{i, t } $ and $ X_{j, s } $ (and of products of these) conditionally on a fixed realization of the environment $\theta$. This is done via a backward regeneration scheme presented in Section \ref{sec:3}.
    The proof of Proposition \ref{prop:control:N:infty} is given in Appendix \ref{app:annealed:rates}. It relies on a study of the random matrices $A^N$ and $ Q^N$ and their associated row and column sums. In Section \ref{sec:simulation}, we illustrate the performance of our estimators through simulations. 
    Finally, many other auxiliary results used throughout the paper are proved in appendices.
}

\section{Backward regeneration scheme}\label{sec:3}
Recall formula \eqref{def:transition_prob_2}. First of all, since $ \mu \le \lambda, $ let us introduce $ \beta := \mu/ \lambda $ such that $ 0 \le \beta \le 1.$ Then we may interpret formula \eqref{def:transition_prob_2} in the following way.
At any given time $t$, the $i-$th component first decides to update independently of anything else with probability $ \lambda  . $ If it does so, then it decides to send a signal ($X_{i,t}=1$) with probability $ \beta , $ else it does not send a signal.  Moreover, if the component does not update independently of anything else, then it chooses one of its $N$ neighbors (including $i$ itself) randomly according to the uniform distribution. Suppose it has chosen $j$ as its neighbor. Then there are three possibilities:
\begin{itemize}
    \item if $\theta_{ij}=0$, then $X_{i,t} = 0$,
    \item if $\theta_{ij}=1$ and $j\in\cal{P}_+$, then $X_{i,t}$ copies the value of $X_{j,t-1}$,
    \item if $\theta_{ij}=1$ and $j\in\cal{P}_-$, then $X_{i,t}$ copies $1-X_{j,t-1}$.
\end{itemize}
This interpretation is formalized in Equation \eqref{eq:regeneration:representation} below, where the process is constructed via a backward regeneration scheme.

Models like this are called {\it imitation models} and have been studied in the one-dimensional frame in \citep{deSantis2015onedimensional}.
Since $ \lambda > 0, $ at each time step the probability of making an update which is independent of anything else is strictly positive, and this implies the existence of a unique invariant measure of the system as well as the possibility of perfectly sampling from it, which is formalized in the following section.

\subsection{Backward regeneration representation}
\label{sec:backward:regeneration}
\paragraph*{Notation}
In what follows, we denote by $z=(i,t)$ some space-time coordinate in $\mathcal{Z}=\{1,\dots, N\}\times \mathbb{Z}$ and we write $X_z$ instead of $X_{i, t}$ for all $ z = (i, t) \in \mathcal{Z}$.

To construct the Markov chain $X=\{X_{i, t} , t \in \Z , 1 \le i \le N\} $ via a backward regeneration representation, we introduce the following random variables. To each space-time coordinate $z\in \mathcal{Z}$, we associate a couple of independent random variables $(J_{z}, \xi_{z })  $ taking values in $  \{0,1, \ldots , N\} \times \{ 0, 1 \} $ such that 
$ \P (J_{z} = 0 ) = \lambda,  \P (J_{z} = k) = (1 - \lambda) \frac{1}{N}$   for $ 1 \le k \le N,$ and $ \P (\xi_{z} = 1 ) = 1 -\P (\xi_{z} = 0 ) = \beta $.  Moreover, we suppose that the couples $ ((J_{z}, \xi_{z }), z \in {\cal Z}) $ are independent. By convention, let us define $J_{(0, t )}=0$ for all $t\in \Z$.

Then, for any $z=( i, t)\in \mathcal{Z}$, let us define a backward random walk $I^z=(I^{z}_s)_{ s\in \Z}$ taking values in the state space $\{0,1,\dots,N,\infty\}$,  where $ I^{z}_s = \infty$ for all $s> t$, $I^{z}_{t} = i$ and, as $s$ decreases, $I^z$ follows the space coordinates given by the $J$ variables, that is
\begin{equation}\label{eq:def:backward:RW}
    I^{z}_{s-1} = J_{(I^{z}_{s}, s)}, \text{ for all $s\leq t$.}
\end{equation}
Because of the convention $J_{(0, t )}=0$, the state $0$ is a cemetery for the process $I^z$. Let us denote $\tau^R_z$ the last time that the random walk $I^z$ is not in the cemetery state $0$:
$$
\tau^R_z = \inf\{ s\in \Z :   I_s^{z} > 0\}.
$$
Reaching the cemetery happens after a finite time, almost surely for all the sites.
\begin{proposition}
    \label{prop:cemetery:as}
    For any $\theta$, it holds that $\P_\theta(\forall z\in \mathcal{Z}, \tau^R_z>-\infty)=1$.
\end{proposition}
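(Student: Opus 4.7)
The plan is to show that for each fixed $z = (i,t) \in \mathcal{Z}$, the backward random walk $I^z$ reaches the cemetery state $0$ in finite time almost surely, and then to conclude by countable subadditivity.

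Fix $z = (i,t)$ with $i \geq 1$. Define $K = \inf\{k \geq 1 : I^z_{t-k} = 0\}$, so that $\tau^R_z = t - K + 1$ and the event $\{\tau^R_z = -\infty\}$ coincides with $\{K = \infty\}$. The idea is that at each backward step from time $s$ to $s-1$, the walk uses the variable $J_{(I^z_s, s)}$, which is independent of all variables used in previous steps, because those previous steps involve $J$ variables indexed by time coordinates strictly greater than $s$. More precisely, for $k \geq 1$, on the event $\{K > k-1\}$ we have $I^z_{t-k+1} > 0$, and the value $I^z_{t-k+1}$ depends only on $(J_{(\cdot, r)})_{r > t-k+1}$; hence $J_{(I^z_{t-k+1}, t-k+1)}$ is independent of $\mathcal{F}_{k-1} := \sigma(J_{(\cdot, r)} : r > t-k+1)$, and its marginal distribution assigns mass $\lambda$ to $0$. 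It follows that
\begin{equation*}
    \P_\theta(K > k \mid \mathcal{F}_{k-1})\, \mathbf{1}_{\{K > k-1\}} = (1-\lambda)\, \mathbf{1}_{\{K > k-1\}}.
\end{equation*}

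By induction, this yields $\P_\theta(K > k) \leq (1-\lambda)^k$ for every $k \geq 1$. Since $\lambda > 0$, letting $k \to \infty$ gives $\P_\theta(K = \infty) = 0$, i.e.\ $\P_\theta(\tau^R_z = -\infty) = 0$ for each fixed $z$.

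Finally, since $\mathcal{Z} = \{1,\ldots,N\} \times \Z$ is countable, countable subadditivity implies
\begin{equation*}
    \P_\theta\left( \exists z \in \mathcal{Z}: \tau^R_z = -\infty \right) \leq \sum_{z \in \mathcal{Z}} \P_\theta(\tau^R_z = -\infty) = 0,
\end{equation*}
which is the desired statement. There is no real obstacle here: the argument rests entirely on the observation that distinct backward steps of $I^z$ consume distinct $J$ variables, so the hitting time of $0$ is stochastically dominated by a geometric random variable with parameter $\lambda$.
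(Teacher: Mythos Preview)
Your proof is correct and follows essentially the same approach as the paper: reduce to a single site by countability of $\mathcal{Z}$, then bound $\P_\theta(\tau^R_z \leq t-k) \leq (1-\lambda)^k$ using the fact that successive backward steps use $J$ variables with distinct time indices, each equal to $0$ with probability $\lambda$. Your version is slightly more explicit about the filtration and the independence mechanism, but the argument is the same.
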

\begin{proof}
Since $\mathcal{Z}$ is countable, it suffices to check that $\P_\theta(\tau^R_z>-\infty)=1$ for all $z\in \mathcal{Z}$.

For each $k\geq 1$,
$$
\P(\tau^R_z\leq t-k)=\P(I^z_{t-k}\neq 0,\ldots, I^z_{t-1}\neq 0)\leq (1-\lambda)^k, 
$$
where we used the independence between the random variables $J_{\tilde{z}}$ and the fact that $\P(J_{\tilde{z}}\neq 0)=1-\lambda. $ It follows that $\P(\tau^R_z=-\infty)=\lim_{k\to \infty}\P(\tau^R_z\leq t-k)=0$.
\end{proof}
The time $\tau^R_z$ is called a regeneration time for state $z$. Let us then define the set of regenerating sites by
$$ 
{\mathcal R} = \{ z\in {\cal Z} : J_{z} = 0 \}.
$$
\begin{remark} 
    The sites $z=(i,t)$ such that $\theta_{iJ_z}=0$ and $J_{z}\geq 1$ could also be considered as regenerating sites. Since we are interested in results valid for any realization of $\theta$, we did not include those sites in the definition of $\mathcal{R}$.
\end{remark}
Observe that since $I^z_{\tau^R_z-1}=J_{(I^z_{\tau^R_z},\tau^R_z)}=0$, we have that $(I^z_{\tau^R_z},\tau^R_z)\in \mathcal{R}$  is a regenerating site. 

In summary, the random walk $I^z$ equals $+\infty $ for all times $ u > t ,$ that is, before it ``starts to live''. Then, the random walk lives in $\{1,\dots,N\}$ until it reaches $0$ and thus regenerates. It then remains in state $0$ forever.

For all $z=(i,t)\in \mathcal{Z}$, we define $X_z$ as follows: 
\begin{equation}\label{eq:regeneration:representation}
    X_z = \xi_{z}\indiq_{z\in \mathcal{R}}+\theta_{i J_z} \left( X_{J_z,t-1} \mathbf{1}_{J_z\in\cal{P}_+} + (1-X_{J_z,t-1}) \mathbf{1}_{J_z\in\cal{P}_-} \right)\indiq_{z\notin \mathcal{R}}.
\end{equation}
Note that, if $z\in \mathcal{R}$, then $X_z$ is sampled according to the Bernoulli random variable $\xi_z$. When $z\notin \mathcal{R}$,   the random variables $X_z$ are defined recursively.  
On  $\Omega_0 = \left\{ \forall z\in \mathcal{Z}, \tau^R_z>-\infty \right\}$, this recursion ends in finite time for every site and so the process $X$ constructed via \eqref{eq:regeneration:representation} is well-defined. Proposition \ref{prop:cemetery:as} ensures that $\P_{\theta}(\Omega_0)=1$, implying that the process $X$ is well-defined almost surely.
\changes{Since the process $X$ is defined in \eqref{eq:regeneration:representation} via the i.i.d. couples $(J_z,\xi_z)$, one can prove that it is a stationary version of the studied Markov chain.}
\begin{theorem}
\label{thm:perfect_sampling}
Let $X=(X_z)_{z\in\mathcal{Z}}$ be the process defined through \eqref{eq:regeneration:representation}. Then, conditionally on $\theta$, $X$ is a stationary Markov chain with transition probability given by \eqref{def:transition_prob_1} and \eqref{def:transition_prob_2}.     
\end{theorem}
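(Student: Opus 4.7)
The plan is to exploit the translation invariance and independence properties of the auxiliary family $(J_z,\xi_z)_{z\in\mathcal{Z}}$ to deduce, in turn, stationarity, the Markov property with conditional independence of the coordinates at a fixed time, and finally the precise form of the one-step transition. A preliminary observation is that, by Proposition \ref{prop:cemetery:as}, every backward random walk $I^z$ reaches the cemetery $0$ in finite time almost surely, so the recursion \eqref{eq:regeneration:representation} terminates after finitely many backward steps at each $z$; consequently $X_{(i,t)}$ is a measurable function of the innovations $\{(J_{(j,s)},\xi_{(j,s)}) : 1 \leq j \leq N,\ s \leq t\}$. This measurability statement is the only structural fact needed from the construction.

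First, I would prove stationarity. The family $(J_z,\xi_z)_{z\in\mathcal{Z}}$ is i.i.d., hence for any integer $h$ the time-shifted family $(J_{(i,t+h)},\xi_{(i,t+h)})_{(i,t)}$ has the same joint law as the original. The recursion \eqref{eq:regeneration:representation} at site $(i,t)$ only involves the innovation at $(i,t)$ and the value of the process at $(J_{(i,t)},t-1)$, so it is translation-equivariant in time. Applying \eqref{eq:regeneration:representation} to the shifted family produces the time-shifted process, and by uniqueness of the output the shifted process has the same law as $X$.

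Next, I would establish the Markov property together with the product form of the transition. Set $\mathcal{G}_{t-1} := \sigma\bigl((J_{(j,s)},\xi_{(j,s)}) : 1 \leq j \leq N,\ s \leq t-1\bigr)$; by the measurability observation above, $\sigma(X_s, s\leq t-1) \subset \mathcal{G}_{t-1}$, and in particular $X_{t-1}$ is $\mathcal{G}_{t-1}$-measurable. The innovations $\{(J_{(i,t)},\xi_{(i,t)}) : 1 \leq i \leq N\}$ at time $t$ are independent of $\mathcal{G}_{t-1}$ and mutually independent across $i$. Since \eqref{eq:regeneration:representation} writes each $X_{(i,t)}$ as a deterministic function of $X_{t-1}$ and the single pair $(J_{(i,t)},\xi_{(i,t)})$, standard conditioning yields
\[
\P_{\theta}(X_t = y \mid \mathcal{G}_{t-1}) = \prod_{i=1}^N \P_{\theta}(X_{i,t}=y_i \mid X_{t-1}).
\]
Projecting onto the sub-$\sigma$-field $\sigma(X_s, s\leq t-1)$ yields the same identity, which simultaneously proves the Markov property and the conditional independence of the coordinates of $X_t$ given $X_{t-1}$.

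Finally, I would compute $p_{\theta,i}(x) := \P_{\theta}(X_{i,t}=1 \mid X_{t-1}=x)$ by partitioning on the value of $J_{(i,t)}$: with probability $\lambda$, $(i,t)\in \mathcal{R}$ and $X_{(i,t)}=\xi_{(i,t)}$, contributing $\lambda\beta=\mu$; with probability $(1-\lambda)/N$, $J_{(i,t)}=j$ for a given $j\in\{1,\ldots,N\}$, and then $X_{(i,t)} = \theta_{ij}\bigl(x_j\mathbf{1}_{j\in\mathcal{P}_+} + (1-x_j)\mathbf{1}_{j\in\mathcal{P}_-}\bigr)$. Summing the contributions recovers exactly \eqref{def:transition_prob_2}, which combined with the previous display gives \eqref{def:transition_prob_1}. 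There is no deep obstacle in the argument — the backward scheme was designed precisely to realise this chain — but the one point that deserves explicit mention is the measurability of $X_{t-1}$ with respect to the innovations strictly before time $t$, which is what makes the filtration argument in the Markov step rigorous and which is supplied by the almost-sure finiteness of the regeneration times.
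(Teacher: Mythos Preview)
Your proposal is correct and follows essentially the same route as the paper's proof: both argue stationarity from the time-shift invariance of the i.i.d.\ innovation family, then use that $X_{i,t}$ is a deterministic function of $X_{t-1}$ and the single pair $(J_{(i,t)},\xi_{(i,t)})$ to obtain the Markov property with conditionally independent coordinates, and finally compute $p_{\theta,i}(x)$ by partitioning on $J_{(i,t)}$. The paper phrases the Markov step via the random-recursion representation $X_t=f_\theta(X_{t-1},\xi_t,J_t)$, whereas you use an explicit filtration-and-projection argument; these are equivalent, and your explicit mention of the measurability of $X_{t-1}$ with respect to the pre-$t$ innovations (hinging on the a.s.\ finiteness of the regeneration times) is exactly the point the paper leaves implicit.
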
 

The proof of Theorem \ref{thm:perfect_sampling} is given in Appendix \ref{app:perfect:sampling}. 

\changes{
\begin{remark}
The representation \eqref{eq:regeneration:representation} is an example of a more general construction that allows to perfectly sample from a process using a {\it clan of ancestors method}. This method works under certain structural conditions on the transition probabilities of the process; that is, supposing that it is possible to decompose the transition probabilities into a mixture of more elementary transitions. These ideas go back at least to \cite{pablo}, followed by \cite{Ferrari} and \cite{Comets}. We also refer to \cite{Fernandez} for a comprehensive introduction to this approach.      
\end{remark}
}

The regeneration representation presented in \eqref{eq:regeneration:representation} above induces a natural (random) partitioning of the state space $\mathcal{Z}$ via the equivalence relation of coalescence defined by
$$ z_1\leftrightsquigarrow z_2  \mbox{ if and only if } \exists s\in \mathbb{Z}, I^{z_1}_s=I^{z_2}_s\notin \{0,\infty\}.$$ 

The next section is devoted to a study of this equivalence relation.

\subsection{Coalescence of the backward random walks}
For any $z_1,z_2\in \mathcal{Z}$, let us define the random time 
\begin{equation}\label{eq:coalescencetime}
\tau^c_{z_1,z_2} = \sup\left\{s\in\Z: I^{z_1}_s=I^{z_2}_s\notin \{0,\infty\}\right\},
\end{equation}
namely the time at which $z_1$ and $z_2$ coalesce (with the convention that $\tau^c_{z_1,z_2} = -\infty$ if they do not coalesce, denoted by $z_1 \not \leftrightsquigarrow z_2$). 
\changes{
The coalescence property is related to the dependence between the coordinates of the process $X$. Loosely speaking, if $z_1$ does not coalesce with $z_2$ then $X_{z_1}$ behaves as a copy of itself which is independent of $X_{z_2}$.
This idea is formalized via a coupling argument in Appendix \ref{app:coalescence:coupling}.
}

In the next result, we provide upper bounds for the probability of events involving the coalescence of two or more backward random walks.  

\begin{proposition}\label{prop:1}
Let $z_k=(i_k,t_k)\in \mathcal{Z}$, for $k=1,\dots, 4$, be four different sites.
There exists a constant $K$ only depending on $\lambda$ such that the following inequalities hold. 
\begin{enumerate}[(i)]
    \item $ \displaystyle \P_\theta \left( \{ z_1 \leftrightsquigarrow z_2 \}\right) 
\leq \frac{(1-\lambda)^{|t_1-t_2|\vee 1}}{1-(1-\lambda)^2}N^{-1}.$
  \smallskip
    \item $ \displaystyle \P_\theta ( \{ z_1 \leftrightsquigarrow z_2 \leftrightsquigarrow z_3  \} ) \le 
    K (1- \lambda)^{(\overline{t} - \underline{t})} N^{- 2},$
    where $\overline{t} = t_1 \vee t_2 \vee t_3$ and $\underline{t} = t_1 \wedge t_2 \wedge t_3$.
    \smallskip
    \item $ \displaystyle \P_\theta ( \{ z_1 \leftrightsquigarrow z_2  \} \cap \{z_3 \leftrightsquigarrow z_4 \} ) \le 
    K \left((1- \lambda)^{|t_1 - t_2| + |t_3- t_4|} N^{- 2} + (1-\lambda)^{(\overline{ t}  - \underline{ t})}N^{-3}\right),$\newline
    where $ \overline{t} = t_1 \vee t_2 \vee t_3 \vee t_4$ and $\underline{t} = t_1 \wedge t_2 \wedge t_3 \wedge t_4.$
    \item If $t_1 \wedge t_2 \geq t_3\vee t_4$, 
    $$\P_\theta ( \{ z_1 \leftrightsquigarrow z_3  \leftrightsquigarrow z_4 \not\leftrightsquigarrow z_2 \} \cap \{\tau^R_{z_2} < t_3 \vee t_4\} ) \le 
    K (1- \lambda)^{t_1 + t_2 - t_3- t_4} N^{- 2}.$$
\end{enumerate}
\end{proposition}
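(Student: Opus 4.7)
The proof will rest on three structural facts about the backward walks introduced in Section~\ref{sec:backward:regeneration}. (a) Each walk $I^{z}$ is, independently at every backward step, either killed (sent to $0$) with probability $\lambda$ or sent to a uniformly chosen site of $\{1,\dots,N\}$ with probability $1-\lambda$; in particular, conditional on being alive at a time strictly less than its birth time, its position is uniform on $\{1,\dots,N\}$. (b) Two walks at distinct sites at time $s$ invoke different (hence independent) variables $J_{(\cdot,s)}$, so before coalescence they behave as independent killed random walks. (c) Once two walks meet they stay together and regenerate simultaneously, so $\tau^c_{z_a,z_b}$ is a genuine last meeting time and $\leftrightsquigarrow$ is an equivalence relation on the surviving part of the process.

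For (i), assuming $t_1 \le t_2$ by symmetry, a union bound on the coalescence time gives
\begin{equation*}
\P_\theta(z_1 \leftrightsquigarrow z_2) \;\le\; \sum_{k \ge 0}\, \P_\theta\bigl(I^{z_1}_{t_1-k} = I^{z_2}_{t_1-k} \ne 0\bigr).
\end{equation*}
Each summand decomposes by (a)--(b) into the survival probabilities $(1-\lambda)^{k}$ and $(1-\lambda)^{t_2-t_1+k}$ of the two walks times a factor at most $1/N$ for coincidence (by uniformity). The geometric series then yields the announced bound, with the $\vee 1$ accounting for the fact that when $t_1=t_2$ the $k=0$ term vanishes because $i_1 \ne i_2$.

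For (ii), transitivity of $\leftrightsquigarrow$ makes $\{z_1 \leftrightsquigarrow z_2 \leftrightsquigarrow z_3\}$ equivalent to all three pairs coalescing. I will pick any two pairs, split on which coalesces last, and apply the single-pair argument of (i) twice---two independent factors of $1/N$, hence $N^{-2}$---with the survival exponent dictated by the longest-lived backward excursion, namely $\overline{t} - \underline{t}$. For (iii), I distinguish whether the coalescence classes of $\{z_1,z_2\}$ and $\{z_3,z_4\}$ are disjoint---in which case the two pairs use disjoint $J$-variables so (i) applies independently giving $(1-\lambda)^{|t_1-t_2|+|t_3-t_4|}N^{-2}$---or not, in which case all four walks belong to one class and a four-way analogue of (ii) yields the extra $(1-\lambda)^{\overline{t}-\underline{t}}N^{-3}$.

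For (iv), bounding the non-coalescence event by one reduces matters to controlling $\P_\theta(\{z_1 \leftrightsquigarrow z_3 \leftrightsquigarrow z_4\}\cap\{\tau^R_{z_2} < t_3 \vee t_4\})$. To decouple $I^{z_2}$ from $I^{z_1},I^{z_3},I^{z_4}$ I will use a coupling argument in the spirit of Lemma~\ref{lem:final:independent}: replace $I^{z_2}$ by an independent copy built from fresh $J$-variables, bounding the discrepancy by the probability that the fresh copy meets one of the other walks (itself dominated by coalescence probabilities treated in (i)--(ii)). Once decoupled, (ii) gives $(1-\lambda)^{t_1 - (t_3 \wedge t_4)}N^{-2}$ for the triple coalescence (using $t_1 \ge t_3 \vee t_4$), while $\{\tau^R_{z_2} < t_3 \vee t_4\}$ forces $I^{z_2}$ to survive from $t_2$ down to $t_3 \vee t_4 - 1$, contributing $(1-\lambda)^{t_2 - (t_3 \vee t_4)+1}$; multiplying and using $t_3 \wedge t_4 + t_3 \vee t_4 = t_3 + t_4$ produces the claimed bound. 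The main obstacle throughout will lie in (iii) and (iv): the naive ``disjoint clusters'' factorisation is not an honest product of independent events, and handling it rigorously will require either the coupling sketched above or a careful conditioning on the trajectory of one cluster combined with the uniform law of the others on survival events.
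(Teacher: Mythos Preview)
Your approach is genuinely different from the paper's. The paper encodes the joint backward dynamics of the $k$ walks as a single time-inhomogeneous Markov chain $(P_t)_{t\le t_1}$ taking values in decorated partitions of $\{1,\dots,k\}$ (each label carrying a subscript $\infty$ before the walk is born, $0$ after it regenerates) and bounds the probability of reaching the fully-merged absorbing state by tracking transition rates through an explicit graph of relevant states. This handles all dependence between walks automatically at the price of bookkeeping. Your survival-times-coincidence strategy, combined with the trajectory-conditioning (``path-summing'') argument you sketch at the end, is more elementary and does give the correct orders in (ii)--(iv); in particular, for (iii) summing over trajectories $(\gamma^3,\gamma^4)$ first and then dropping the avoidance constraint on $(\gamma^1,\gamma^2)$ yields $\P_\theta(z_1\leftrightsquigarrow z_2)\P_\theta(z_3\leftrightsquigarrow z_4)$ rigorously.

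There is, however, a concrete error in your argument for (i). The summand $\P_\theta(I^{z_1}_{t_1-k}=I^{z_2}_{t_1-k}\ne 0)$ is \emph{not} bounded by $(1-\lambda)^{k}(1-\lambda)^{t_2-t_1+k}N^{-1}$: once the walks have coalesced at some backward time $t_1-k'>t_1-k$ they survive \emph{jointly} at rate $1-\lambda$ per step rather than $(1-\lambda)^2$, so the summand can strictly exceed your claimed bound (take $N=2$, $\lambda=1/2$, $t_1=t_2$, $k=2$). The fix is to replace the union bound by the exact decomposition $\P_\theta(z_1\leftrightsquigarrow z_2)=\sum_{k}\P_\theta(\tau^c_{z_1,z_2}=t_1-k)$; on $\{\tau^c=t_1-k\}$ the walks occupy distinct sites at time $t_1-k+1$ and hence genuinely use independent $J$-variables up to that step, so the per-term bound $(1-\lambda)^{t_2-t_1+2k}N^{-1}$ does hold and the geometric sum gives exactly the constant $(1-(1-\lambda)^2)^{-1}$. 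A smaller point on (iv): dropping $\{z_2\not\leftrightsquigarrow\cdot\}$ and then restoring independence via a coupling is roundabout; if you keep the non-coalescence constraint and sum over trajectories of $I^{z_2}$ first, the remaining triple uses disjoint $J$-variables on the avoidance event and you obtain $\P_\theta(\tau^R_{z_2}<t_3\vee t_4)\cdot\P_\theta(z_1\leftrightsquigarrow z_3\leftrightsquigarrow z_4)$ directly.
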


\changes{The proof of Proposition \ref{prop:1} is given in Appendix \ref{app:coalescence_of_2_or_more_BRW}.
}

\changes{
\begin{remark}
    Here are two simple remarks to intuitively understand the upper bounds of Proposition \ref{prop:1}.
    \begin{enumerate}
        \item The exponent of the factor $(1-\lambda)$ is the minimal waiting time before coalescence. For instance, in Item (ii), the first-born site (with birth time $\overline{t}$, remind that time runs backwards) must wait (hence survive) at least $\overline{t} - \underline{t}$ time steps before coalescing with the last-born site. 
        
        In that respect, Item (iv) is a bit different because of the event $\{\tau^R_{z_2} < t_3 \vee t_4\}$ which enforces the site $z_2$ to survive even if it does not coalesce.
        \item Each coalescence event adds a factor $N^{-1}$. For instance, in Items (ii)-(iv), at least two coalescences are needed.
    \end{enumerate}
\end{remark}
}

For all $z\in \mathcal{Z}$, let us denote $Y_{z} = X_{z}- \E_\theta ( X_{z})$ the centered version of $X_{z}$ and observe that $|Y_{z}|\leq 1$ almost surely. 
\changes{
    Since (the absence of) coalescence is related with independence, it is natural to expect that the rates of decay of coalescence probabilities in Proposition \ref{prop:1} can be transferred to rates of decay of covariances for the process.
Indeed, using Proposition \ref{prop:1} and the coupling introduced in Appendix \ref{app:coalescence:coupling}, we obtain several upper bounds on the covariance of the variables $Y_z$'s (Lemma \ref{lem:covariance:Y}) and the products of these variables (Lemma \ref{lem:covariance:produit}). 
This last result is the main result of this section. }

\begin{lemma}
    \label{lem:covariance:Y}
    Let $z_1 = (i_1,t_1)$ and $z_2=(i_2,t_2)$ two points in $\mathcal{Z}$. There exists a constant $K$ only depending on $\lambda$ such that: if $z_1\neq z_2$,
    $$
    \left| \cov_\theta \left[ X_{z_1}, X_{z_2} \right] \right| = \left| \cov_\theta \left[ Y_{z_1}, Y_{z_2} \right] \right| \leq K(1-\lambda)^{|t_1-t_2|\vee 1}N^{-1}.
    $$
    Otherwise, the quantity above is obviously bounded by $1$.
\end{lemma}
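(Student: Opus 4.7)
The plan is to leverage Lemma \ref{lem:independent:construction:ksites} with $k=2$ to replace the (dependent) pair $(X_{z_1}, X_{z_2})$ by an independent pair $(\tilde X_{z_1}, \tilde X_{z_2})$ with the same marginals, and then bound the resulting error by the probability of the coalescence event. The covariance bound in Proposition \ref{prop:1}(i) will then close the argument.

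First, I would apply Lemma \ref{lem:independent:construction:ksites} to the two points $z_1, z_2$. This gives random variables $\tilde X_{z_1}$ and $\tilde X_{z_2}$ satisfying (i) $\tilde X_{z_i} \equaldistrib X_{z_i}$, (ii) $\tilde X_{z_1}$ and $\tilde X_{z_2}$ are independent (as a special case of the joint independence statement), and (iii) $\{\tilde X_{z_i} \ne X_{z_i}\} \subset \{z_1 \leftrightsquigarrow z_2\}$. Using (i) and (ii), I can rewrite the product of the means as
\begin{equation*}
\E_\theta[X_{z_1}]\,\E_\theta[X_{z_2}] = \E_\theta[\tilde X_{z_1}]\,\E_\theta[\tilde X_{z_2}] = \E_\theta[\tilde X_{z_1}\,\tilde X_{z_2}],
\end{equation*}
so that
\begin{equation*}
\cov_\theta[X_{z_1}, X_{z_2}] = \E_\theta\!\left[X_{z_1}X_{z_2} - \tilde X_{z_1}\tilde X_{z_2}\right].
\end{equation*}

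Next, on the complement of $\{z_1 \leftrightsquigarrow z_2\}$, property (iii) forces $X_{z_i}=\tilde X_{z_i}$ for both $i=1,2$, hence the integrand vanishes there. Since $X_{z_i}, \tilde X_{z_i} \in \{0,1\}$, the difference of products is bounded by $1$ in absolute value, so
\begin{equation*}
\left|\cov_\theta[X_{z_1}, X_{z_2}]\right| \le \P_\theta\!\left(z_1 \leftrightsquigarrow z_2\right).
\end{equation*}

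Finally, I would invoke Proposition \ref{prop:1}(i) to obtain $\P_\theta(z_1 \leftrightsquigarrow z_2) \le (1-\lambda)^{|t_1-t_2|\vee 1}/(1-(1-\lambda)^2)\cdot N^{-1}$, which yields the claimed bound with $K = 1/(1-(1-\lambda)^2)$. The equality $\cov_\theta[X_{z_1},X_{z_2}] = \cov_\theta[Y_{z_1},Y_{z_2}]$ is by definition of $Y_z$ as the centered version of $X_z$. There is no real obstacle here: once Lemma \ref{lem:independent:construction:ksites} and Proposition \ref{prop:1}(i) are in hand, this is essentially a one-paragraph coupling argument. The hard work has been done upstream, in the construction of the coupling and in the coalescence time estimate.
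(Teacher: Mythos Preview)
Your proof is correct and follows essentially the same approach as the paper, which defers to the argument for item~3 of Lemma~\ref{lem:covariance:produit}: use the coupling of Lemma~\ref{lem:independent:construction:ksites} with $k=2$ to reduce the covariance to an expectation supported on the coalescence event, then apply Proposition~\ref{prop:1}(i). The only cosmetic difference is that the paper replaces just one of the two variables (writing $\cov_\theta[Y_{z_1},Y_{z_2}]=\E_\theta[(Y_{z_1}-\tilde Y_{z_1})Y_{z_2}]$ using that $\tilde Y_{z_1}$ is independent of $Y_{z_2}$ and centered), whereas you replace both; either variant gives the same bound $\P_\theta(z_1\leftrightsquigarrow z_2)$.
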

\begin{proof}
    The proof is the same as the one for item 3 of Lemma \ref{lem:covariance:produit} below.
\end{proof}

\begin{lemma}
    \label{lem:covariance:produit}
    Let $z_k = (i_k,t_k) \in \mathcal{Z},$ $k=1,\dots, 4$, and denote $B = \cov_\theta \left[ Y_{z_1}Y_{z_2}\,,\, Y_{z_3}Y_{z_4} \right]$ and $E = \{z_k: k=1,\dots, 4\}$.  There exists a constant $K$ only depending on $\lambda$ such that:
    \begin{enumerate}
       \item If $ \# E = 1,$ then
        $
        |B|\leq 1.
        $
        \item If $ \# E = 2$, $z_1\neq z_2$ and $z_3\neq z_4$, then
        $ 
        |B|\leq 1.
        $
         \item If $ \# E = 2$, $z_1=z_2$ and $z_3=z_4$, then
        $$ 
        |B|\leq K(1-\lambda)^{|t_1 - t_3|}N^{-1}.
        $$
        \item If $ \# E = 3$ and $z_1=z_2$ or $z_3=z_4$, 
        then
        $$ 
        |B|\leq K (1-\lambda)^{|s_3-s_2|+|s_2-s_1|}N^{-2},
        $$
        where $s_1\leq s_2\leq s_3$ is an ordering of $\{t_1,t_2,t_3,t_4\}$.

        \item If $ \# E = 3$, $z_1\neq z_2$ and $z_3\neq z_4$, for instance assume that $z_1=z_3$ (and so $z_2\neq z_4$), then
        $$ 
        |B|\leq K (1-\lambda)^{|t_2-t_4|}N^{-1}.
        $$
        \item If $ \# E = 4 $ and $t_1 \wedge t_2 \geq  t_3\vee t_4$, then
        $$
        |B| \le K (1-\lambda)^{t_1 \vee t_2 - t_3 \wedge t_4}N^{-2} .
        $$
        \item If $ \# E = 4 $, then
$$
    |B| \le K (1-\lambda)^{|s_2 - s_1| + |s_3 - s_4|}N^{-2},
$$
where $s_1\leq \dots \leq s_4$ is an ordering of $\{t_1,\dots,t_4\}$.
       \end{enumerate}
\end{lemma}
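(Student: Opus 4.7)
The plan is to run a case-by-case analysis on the geometry of $\{z_1,z_2,z_3,z_4\}$, combining two main ingredients: (a) the all-independent and blockwise-independent couplings of Lemmas \ref{lem:independent:construction:ksites} and \ref{lem:final:independent}, and (b) the coalescence bounds of Proposition \ref{prop:1} (with Lemma \ref{lem:covariance:Y} used for pair covariances). Throughout, the Bernoulli reduction
\begin{equation*}
Y_z^2 = (1-2m^N_{i_z})\, Y_z + m^N_{i_z}(1-m^N_{i_z}),
\end{equation*}
valid since $X_z\in\{0,1\}$, is systematically used to replace a squared factor by a linear one plus a constant, lowering the order of the moments to be controlled.

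Items 1 and 2 follow immediately from $|Y_z|\le 1$. Item 3 follows by applying the Bernoulli identity on both squared factors: $B=(1-2m^N_{i_1})(1-2m^N_{i_3})\cov_\theta[Y_{z_1},Y_{z_3}]$, controlled by Lemma \ref{lem:covariance:Y}. For items 4 and 5, after the Bernoulli reduction one is left with a triple expectation $\E_\theta[Y_{z_a}Y_{z_b}Y_{z_c}]$ (plus, in item 5, a residual product of two pair covariances that is automatically $O(N^{-2})$ and a pair covariance $\E_\theta[Y_{z_b}Y_{z_c}]$ that delivers the dominant $N^{-1}$ term). To control the triple expectation I will use a \emph{full decoupling} trick: Lemma \ref{lem:independent:construction:ksites} with $k=3$ provides jointly independent mean-zero copies $\tilde Y_{z_a},\tilde Y_{z_b},\tilde Y_{z_c}$, so every monomial in the expansion of $\prod_j(Y_{z_j}-\tilde Y_{z_j})$ containing at least one tilde factor has vanishing expectation, and hence
\begin{equation*}
\E_\theta\!\left[\prod_{j\in\{a,b,c\}}(Y_{z_j}-\tilde Y_{z_j})\right] = \E_\theta[Y_{z_a}Y_{z_b}Y_{z_c}].
\end{equation*}
The left-hand side is at most $8\,\P_\theta(\forall j,\,\tilde X_{z_j}\ne X_{z_j})$; intersecting the three pairwise inclusions of property (iii), the transitivity of coalescence on three points forces the triple-coalescence event $\{z_a\leftrightsquigarrow z_b\leftrightsquigarrow z_c\}$, to which Proposition \ref{prop:1}(ii) applies and yields the claimed $(1-\lambda)^{|s_3-s_2|+|s_2-s_1|}N^{-2}$ bound.

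Items 6 and 7 are the most delicate. Here I plan to use Lemma \ref{lem:final:independent} together with the full decoupling strategy to represent $B$ as $\E_\theta[(Y_{z_1}Y_{z_2}-\hat Y_{z_1}\hat Y_{z_2})(Y_{z_3}Y_{z_4}-\tilde Y_{z_3}\tilde Y_{z_4})]$, where $(\hat Y_{z_1},\hat Y_{z_2})$ is blockwise independent from $(Y_{z_3},Y_{z_4})$ and $(\tilde Y_{z_3},\tilde Y_{z_4})$ are independent centered copies. Bounding the probability that both factors are nonzero, every surviving configuration is analysed via the inclusions (iii)--(vi) of Lemma \ref{lem:final:independent} and split into contributions controlled by one of Proposition \ref{prop:1}(ii), (iii), or (iv). In item 6, the temporal separation $t_1\wedge t_2\ge t_3\vee t_4$ allows Proposition \ref{prop:1}(iv) to absorb the regeneration-assisted mixed terms into the stated $(1-\lambda)^{t_1\vee t_2-t_3\wedge t_4}N^{-2}$ decay.

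The main obstacle I anticipate is item 7. First, since the product-pairing $\{z_1,z_2\}\cup\{z_3,z_4\}$ need not align with the time-ordering $s_1\le s_2\le s_3\le s_4$, the correct exponent $|s_2-s_1|+|s_3-s_4|$ only emerges after identifying it with the minimum of the three pair-matching sums $|t_{\sigma(1)}-t_{\sigma(2)}|+|t_{\sigma(3)}-t_{\sigma(4)}|$ via a short combinatorial check. Second, a naive bound on the intersection event via the inclusions (iii)--(vi) leaves pair-coalescence terms such as $\{z_1\leftrightsquigarrow z_3\}$ of order $O(N^{-1})$, so the argument must carefully track the cancellation of these terms against the subtracted correction $\E_\theta[Y_{z_1}Y_{z_2}]\E_\theta[Y_{z_3}Y_{z_4}]$ (equivalently, showing that the ``diagonal matching'' contribution to $\E_\theta[\prod_i Y_{z_i}]$ is exactly the product of pair covariances plus a genuinely $O(N^{-2})$ remainder). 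Combining this cancellation with Proposition \ref{prop:1}(iii)--(iv) applied to the cross-matching and triple-coalescence contributions is what ultimately recovers the $N^{-2}$ rate with the correct exponent.
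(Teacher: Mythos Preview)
Your approach to items 1--5 is correct and in fact cleaner than the paper's: the Bernoulli identity $Y_z^2=(1-2m^N_{i_z})Y_z+m^N_{i_z}(1-m^N_{i_z})$ reduces the squared factors to linear ones, after which item~3 collapses directly to Lemma~\ref{lem:covariance:Y}, and items~4--5 reduce to a triple moment $\E_\theta[Y_{z_a}Y_{z_b}Y_{z_c}]$ handled by the full-decoupling identity you state. The paper keeps the squares and works with differences $Y_{z_1}^2-\tilde Y_{z_1}^2$, which is slightly heavier but equivalent.

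The gap is in item 6. Your double-difference representation
\[
B=\E_\theta\bigl[(Y_{z_1}Y_{z_2}-\hat Y_{z_1}\hat Y_{z_2})(Y_{z_3}Y_{z_4}-\tilde Y_{z_3}\tilde Y_{z_4})\bigr]
\]
is correct, but ``bounding the probability that both factors are nonzero'' only yields $O(N^{-1})$, not $O(N^{-2})$. Concretely, on the configuration $\{z_1\leftrightsquigarrow z_3,\ z_2\text{ isolated},\ z_4\text{ isolated}\}$ both factors can be nonzero (the first via $\hat X_{z_1}\neq X_{z_1}$ since $z_1\leftrightsquigarrow z_3$; the second via $\tilde X_{z_3}\neq X_{z_3}$ since $z_3\leftrightsquigarrow z_1$), and this single cross-pair event has probability $O((1-\lambda)^{|t_1-t_3|}N^{-1})$ by Proposition~\ref{prop:1}(i). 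None of Proposition~\ref{prop:1}(ii)--(iv) covers this configuration. The paper uses only the single difference $(Y_{z_1}Y_{z_2}-\hat Y_{z_1}\hat Y_{z_2})Y_{z_3}Y_{z_4}$ and, crucially, does \emph{not} pass to absolute values: on each bad configuration such as $\{13|2|4\}$ it replaces the isolated variable $Y_{z_4}$ by $\tilde Y_{z_4}$, writes $\indiq_{\{13|2|4\}}=\indiq_{\{13|2\}}-\indiq_{\{13|24\}}-\indiq_{\{134|2\}}$, and uses that $\indiq_{\{13|2\}}$ is $\sigma(I^{z_1},I^{z_2},I^{z_3})$-measurable so that the centred $\tilde Y_{z_4}$ factors out and kills the main term. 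What remains is genuinely controlled by Proposition~\ref{prop:1}(ii)--(iv). This cancellation in the signed expectation is the missing ingredient in your plan.

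For item 7, your diagnosis of the obstacle is also off-target. The $O(N^{-1})$ terms you worry about are cross-coalescences like $\{z_1\leftrightsquigarrow z_3\}$; they are not cancelled by subtracting $\E_\theta[Y_{z_1}Y_{z_2}]\E_\theta[Y_{z_3}Y_{z_4}]$ (that subtraction handles the \emph{diagonal} matching $\{12|34\}$, which is a different configuration). The paper's route is much simpler: when $(t_1,t_2)$ and $(t_3,t_4)$ interlace, use the elementary identity
\[
\cov_\theta[Y_{z_1}Y_{z_2},Y_{z_3}Y_{z_4}]=\cov_\theta[Y_{z_1}Y_{z_3},Y_{z_2}Y_{z_4}]+\cov_\theta[Y_{z_1},Y_{z_3}]\cov_\theta[Y_{z_2},Y_{z_4}]-\cov_\theta[Y_{z_1},Y_{z_2}]\cov_\theta[Y_{z_3},Y_{z_4}]
\]
to repackage into a pairing that \emph{does} have the temporal separation required for item~6, and then bound the two products of pair covariances by Lemma~\ref{lem:covariance:Y}. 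Your combinatorial remark that $|s_2-s_1|+|s_4-s_3|$ is the minimal pair-matching sum is exactly what makes this repackaging give the stated exponent.
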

\begin{remark}
\label{rmk:case_4}
Observe that we cannot have $z_1=z_2$ and $z_3=z_4$ simultaneously in Item 4 of Lemma \ref{lem:covariance:produit}. In particular, in the case $z_1=z_2$ (resp. $z_3=z_4$), the set of time indices $\{t_1,t_2,t_3, t_4\}$ reduces to the set $\{t_1,t_3,t_4\}$ (resp. $\{t_1,t_2,t_3\}$).       
\end{remark}

\changes{
The proof of Lemma \ref{lem:covariance:produit} is given in Appendix \ref{app:covariance:decay}.
}

\section{Key steps and proof of Theorem \ref{theo:1}}
\label{sec:key:steps:and:proof}

In order to quantify the convergence stated in Theorem \ref{theo:1}, we chose first to study the convergence of our estimators as $T\to \infty$ for a fixed environment and then to study the convergence of these temporal limits with respect to the random environment as $N\to \infty$.

Hereafter, $1_N$ denotes the $N$ dimensional vector full of ones and, for any vector $v\in\R^{N}$, $\overline{v}=N^{-1}\sum_{i=1}^N v_i$ denotes the arithmetic mean of the coordinates of $v$. Remind our three estimators $\hat{m},\hat{v},\hat{w}$ defined in \eqref{eq:def:m:hat:v:hat}-\eqref{eq:def:w:hat}. Their respective limits when $T\to \infty$ while $\theta$ is fixed, as proved below, are:
\begin{equation}\label{eq:def:m:v:w:infty}
    \begin{cases}
        \displaystyle m_\infty^N := \overline{m^N} = \mu \overline{\ell^N} - (1-\lambda)\overline{Q^NL^{N,\bullet-}},\\
        \displaystyle v^N_\infty := \left\| m^N - m_\infty^N1_{N} \right\|_2^2,\\
        \displaystyle w^N_{\infty} := \frac{1}{N}\sum_{i=1}^{N}(c^N_{i})^2\left(m^N_{i}-(m^N_i)^2\right),
    \end{cases}
\end{equation}
where $m^N$ is defined in Section \ref{subsec_heuristics_first_estimator} and the second equality of the first line comes from Equation \eqref{eq:expression:m:theta}. The rates of these temporal convergences are given below.

\begin{proposition}
\label{prop:control:quenched}
There exists a constant $K$ depending only on $ \lambda$ such that for all $T\geq 1$, $N\geq 1$, 
\begin{equation}\label{eq:quenched:m}
    \E_\theta \left[ |\widehat{m} - m_\infty^N|^2 \right] = \var_\theta( \widehat{m} ) \leq K(TN)^{-1},
\end{equation}
\begin{equation}\label{eq:quenched:v} 
    \E_\theta \left[ | \hat v - v^N_\infty  | \right] \le K \left( 1 + \sqrt{v^N_\infty} \right) \left( \frac{N}{T^{2}} +\frac{1}{\sqrt{T}}  +  \frac{\sqrt{N}}{T}  \right). 
    \end{equation}
\begin{equation}\label{eq:quenched:w}
    \E_\theta \left[ \Big|\widehat{w} - w^N_{\infty}\Big| \right]\leq K\left( \frac{\Delta}{T} + \frac{1}{N} + \frac{(1-\lambda)^{\Delta}}  {\Delta} \right),
\end{equation}
\end{proposition}
\changes{The proof of Proposition \ref{prop:control:quenched} is given in Appendix \ref{app:quenched:rates}.}

The last ingredient needed to prove Theorem \ref{theo:1} are the rates of convergence of the temporal limits $m_\infty^N, v_\infty^N, w_\infty^N$ as $N\to \infty$.

\begin{proposition}\label{prop:control:N:infty}
    Assume that $1\geq \lambda>0$. There exists a constant $K>0$ which depends on $\lambda$ such that for all $N\geq 1$, it holds that 
    \begin{gather}
    \E\left[\left|m_\infty^N - m \right|^2\right] \leq \frac K {N^2}, \label{eq:convergence:m}\\
    \E\left[\left| v^N_\infty - v \right| \right] \leq \frac K {\sqrt{N}}, \label{eq:convergence:v}\\
    \E\left[\left|w^N_{\infty} - w\right|^2\right]\leq \frac{K}{N^2}
    \label{eq:convergence:w},
    \end{gather}
    where $m,v,w$ are defined in \eqref{eq:definition:m:v:w}
\end{proposition}
\changes{
The proof of Proposition \ref{prop:control:N:infty} is given in Appendix \ref{app:annealed:rates}.
}

\medskip
Finally, the proof of Theorem \ref{theo:1} is merely the combination of Propositions \ref{prop:control:quenched} and \ref{prop:control:N:infty}.

\begin{proof}[Proof of Theorem \ref{theo:1}]
Let us prove Inequality \eqref{conver_spatial_variance}. The proof of the two other inequalities are similar and even simpler (the only subtlety is to remark that $\Delta/T< \sqrt{\Delta/T}$ for Inequality \eqref{conver_temporal_variance}).

First, observe that \eqref{conver_spatial_variance} holds automatically when $N^{1/2}/T>1$. Suppose now that $N^{1/2}/T^2\leq 1$. By the triangle inequality, we have
\begin{equation*}
    \E\left[ |\hat{v} - v| \right] \leq \E\left[ |\hat{v} - v_\infty^N| \right] + \E\left[ |v_\infty^N - v| \right].
\end{equation*}
By the sub-additivity of $ x \mapsto \sqrt{x},$
\begin{multline}\label{eq:annealedcontrol}
    \E \left[ \sqrt{v_\infty^N} \right] \le \left( \E \left[ \left\| m^N - \overline{m^N}1_{N} \right\|^2_2 \right] \right)^{1/2}\le 
\\  \left(\E\left[\left| \left\| m^N - \overline{m^N}1_{N} \right\|_2^2 - v \right| \right]\right)^{1/2} + \sqrt{v} \le K.
\end{multline}
Thus, integrating \eqref{eq:quenched:v} against $ \P$ and using the upperbound above, we obtain 
$$ \E \left[ | \hat v - v^N_\infty  | \right] \le K  \left( \frac{N}{T^{2}} +\frac{1}{\sqrt{T}}  +  \frac{\sqrt{N}}{T}  \right). $$
In turn, thanks to \eqref{eq:convergence:v}, we have
\begin{equation*}
    \E \left[ | \hat v - v  | \right] \le K \left( \frac{N}{T^{2}} + \frac{1}{\sqrt{T}} +  \frac{\sqrt{N}}{T} + \frac{1}{\sqrt{N} } \right).
\end{equation*}
Combining this with Markov inequality and then using that $N/T^2\leq N^{1/2}/T$ and  $T^{-1/2}=N^{1/4}T^{-1/2}N^{-1/4}\leq N^{1/2}T^{-1}+N^{-1/2}$,
we obtain that for any $\epsilon\in (0,1)$, 
$$
\P(|\hat{v}-v|\geq \varepsilon)\leq \frac{K}{\varepsilon}\left(\frac{N^{1/2}}{T}+N^{-1/2}\right),
$$ 
proving \eqref{conver_spatial_variance}.
\end{proof}

\section{Simulation study}
\label{sec:simulation}

Before showing some numerical results, the practical implementation of the method (especially the inversion of the function $\Psi$ defined by \eqref{eq:definition:Psi}) and the choice of $\Delta$ are thoroughly discussed.

\subsection{Practical implementation}
\label{sec:practical:implementation}
The computation of the three estimators $\hat{m},\hat{v},\hat{w}$ is obvious. However, the choice and computation of the two inverse functions $\Phi^{(+)}$ and $\Phi^{(-)}$, defined in Appendix \ref{app:inversion}, need further explanations.

First, one has to compute $d^{(a)}$ defined in Equation \eqref{eq:d(a)}. To avoid numerical errors, we replaced the condition $\kappa(m,w) = 4r_+r_-$ by the condition $|\kappa - 4r_+r_-| < 10^{-4}$. Moreover, we replaced $(4r_+r_-)^2-4r_+r_-+\kappa(m,w)$ by $\max\{0, (4r_+r_-)^2-4r_+r_-+\kappa(m,w)\}$ to ensure that the square-root stays in $\mathbb{R}_+$. Remark that in those two cases, we have $d^{(+)}=d^{(-)}$. In particular, there is no issue regarding the choice of $\Phi^{(+)}$ or $\Phi^{(-)}$.
Out of those two cases, we use $\Phi^{(-)}$ whenever 
\begin{equation*}
    r_+ \geq 1/2 \quad\text{or}\quad \kappa(m,w) \geq 4r_+r_- \quad\text{or}\quad d^{(+)}(m,w) > 2r_-.
\end{equation*}
The first two conditions are chosen according to Proposition \ref{prop:inversion}. The last one is supported by the fact that, if $r_+ < 1/2$ and $\kappa(m,w) < 4r_+r_-$ then $D(\lambda,p) < 2r_-$ (see Proposition \ref{prop:preliminary:D:Psi}) and $d^{(-)}(m,w) < d^{(+)}(m,w)$.
Finally, if all the conditions above are not satisfied then the choice between $a=+$ and $a=-$ is arbitrarily made.

Second, one has to compute $\phi_1^{(a)}$. On the one hand, to avoid numerical errors and the choice between $a=+$ and $a=-$, we replaced the condition $r_+ = 1/2$ by $|r_+ - r_-| < 10^{-3}$. On the other hand, when $\phi_1^{(a)}$ is applied to the estimators $\hat{m}$ and $\hat{w}$, one may end up with a negative value. Yet, $\phi_1^{(a)}$ should be non negative. Hence, we replaced $\phi_1^{(a)}(m,w)$ by $|\phi_1^{(a)}(m,w)|$ (for instance in Equation \eqref{eq:definition:Phi}). 

Finally, when $\Phi^{(+)}$ or $\Phi^{(-)}$ are applied to the estimators $(\hat{m},\hat{v},\hat{w})$, one may end up with estimators $(\hat{\mu},\hat{\lambda},\hat{p})$ which do not belong to the admissible set $\Lambda$ defined above Equation \eqref{def_denominator_as_function_of_lambda_and_p}. In that case we chose to clip the values. For instance, clip the value of $\lambda$ to $0$ if negative, or to $1$ if strictly larger than $1$.

The numeric experiments were made using Julia programming language and a package should be available for the next revision of the paper.

\subsection{Choice of the tuning parameter}
From the bound obtained in Equation \eqref{infor_statemente_of_the_convergece2}, the optimal choice of $\Delta$ is $\Delta = - \log(T)/ (2\log(1-\lambda))$. In that case, 
\begin{equation*}
    \frac{(1-\lambda)^{\Delta}}{\Delta}+\sqrt{\frac{\Delta}{T}} = \frac{-2\log(1-\lambda)}{T^{1/2}\log(T)} + \frac{\sqrt{\log(T)}}{T^{1/2}}.
\end{equation*}
Since the value of $\lambda$ is not known, the choice $\Delta = \log(T)$ is made by default. However, the choice $\Delta=1$ seems to give the best results among several tests and across several choices of parameters (see Figure \ref{fig:choice:Delta}). This is the reason why most of the plots of the next section are made with $\Delta=1$.

\subsection{Numerical results}
\label{sec:numerical:results}

Let us first give the framework of our simulations. If not specified otherwise, the following values are used:
\begin{equation*}
    N = 500, \quad 
    r_+ = .5, \quad 
    \beta = .5, \quad 
    \lambda = .5, \quad 
    p = .5, \quad 
    \Delta = 1, \quad 
    N_{\rm simu} = 1000.
\end{equation*}
Furthermore, the fractions of excitatory and inhibitory components are chosen as $r_+^N = \lceil r_+N\rceil$ and $r_-^N=N-|\mathcal{P}_+|$.

In all the plots, the performance of one (or several) of our estimators is displayed. Let us denote $\hat{\vartheta}$ one of these estimators and $\vartheta$ its corresponding true parameter value. The solid lines correspond to the median of $|\hat{\vartheta} - \vartheta|$ computed over $N_{\rm simu}$ simulations and plotted as a function of the time horizon $T$. Furthermore, we add marks (circles or horizontal bars) on the right end of the plot. These marks correspond to the median of $|\hat{\vartheta}_{\infty} - \vartheta|$ computed over $N_{\rm simu}$ simulations, where $\hat{\vartheta}_{\infty}$ is the theoretical limit of $\hat{\vartheta}$ as $T\to \infty$ while the environment $\theta$ is fixed (remind Equation \eqref{eq:def:m:v:w:infty}).
Of course, these quantities are unknown in practice but we are able to compute them here since the parameters used for the simulation are known. Finally, the corresponding limit estimators of $\mu$, $\lambda$ and $p$ are defined by:
\begin{equation*}
    \left( \hat{\mu}_{\infty}, \hat{\lambda}_{\infty}, \hat{p}_{\infty} \right) = \Phi^{(a)}\left( \hat{m}_{\infty}, \hat{v}_{\infty}, \hat{w}_{\infty} \right),
\end{equation*}
where the choice of $a\in \{+,-\}$ is made according to Section \ref{sec:practical:implementation}.

\begin{figure}[t]
    \caption{\label{fig:choice:Delta}
    Absolute estimation error for the six estimators and their theoretical limits. The $y$-axis is in log-scale. Each line or mark correspond to a median computed over $N_{\rm simu}=100$ simulations. The panels correspond to the choices $\Delta = \log(T)$ and $\Delta = 1$ from left to right.
    }
    \includegraphics[width=.49\textwidth]{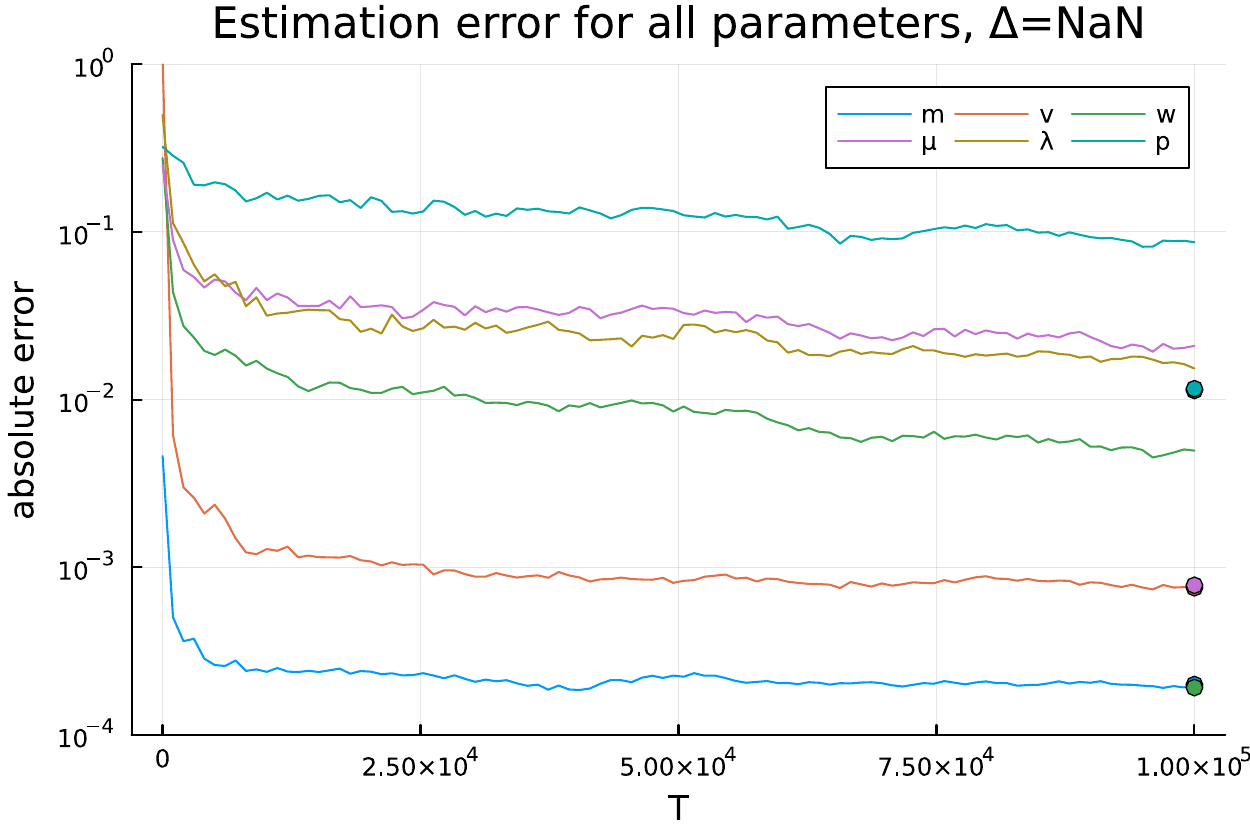}
    \includegraphics[width=.49\textwidth]{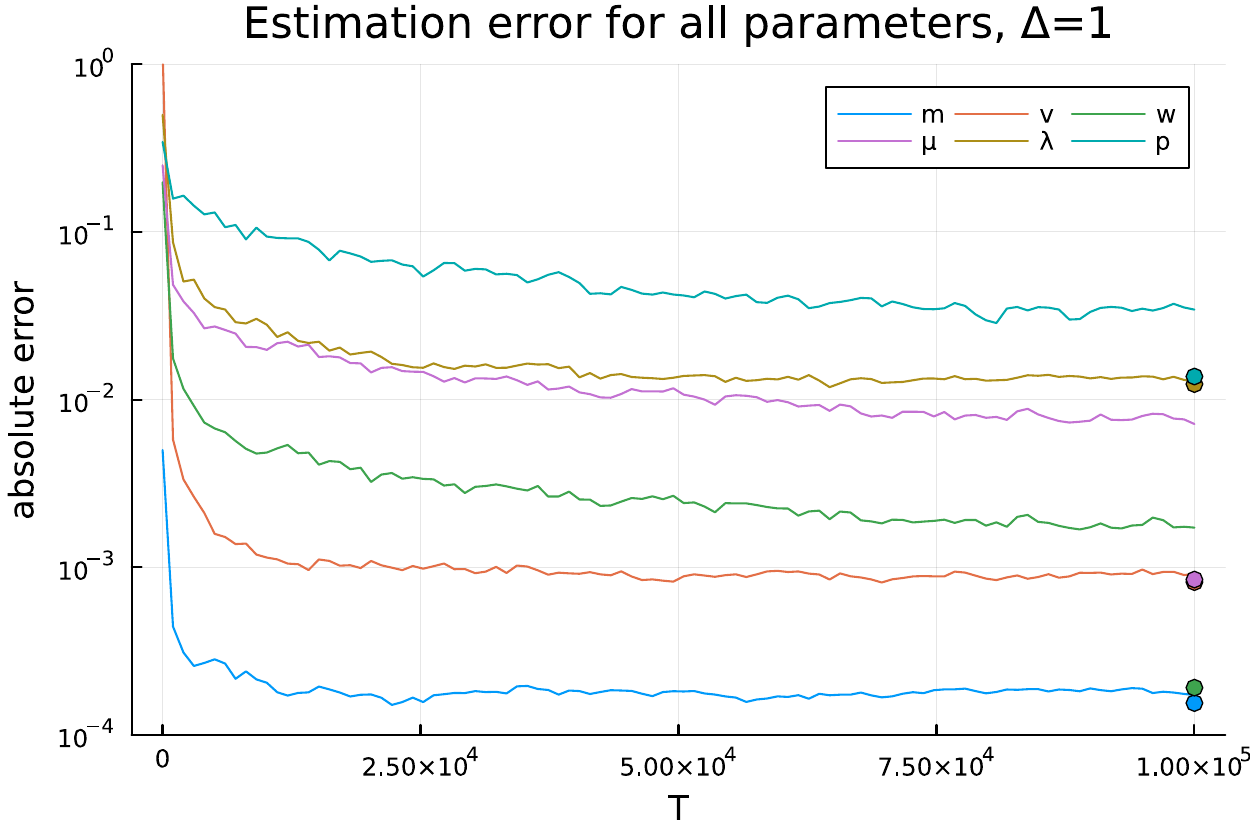}
\end{figure}

\begin{figure}[t]
    \caption{\label{fig:error:p}
    Absolute estimation error of $\hat{p}$ and its theoretical limit. Each line or mark correspond to a median computed over $N_{\rm simu}=1000$ simulations. The panels correspond to different choices of varying parameter (the non-varying parameters are chosen according to the default values given in Section \ref{sec:numerical:results}). The values of the varying parameter are given by the color legends.
    }
    \includegraphics[width=.49\textwidth]{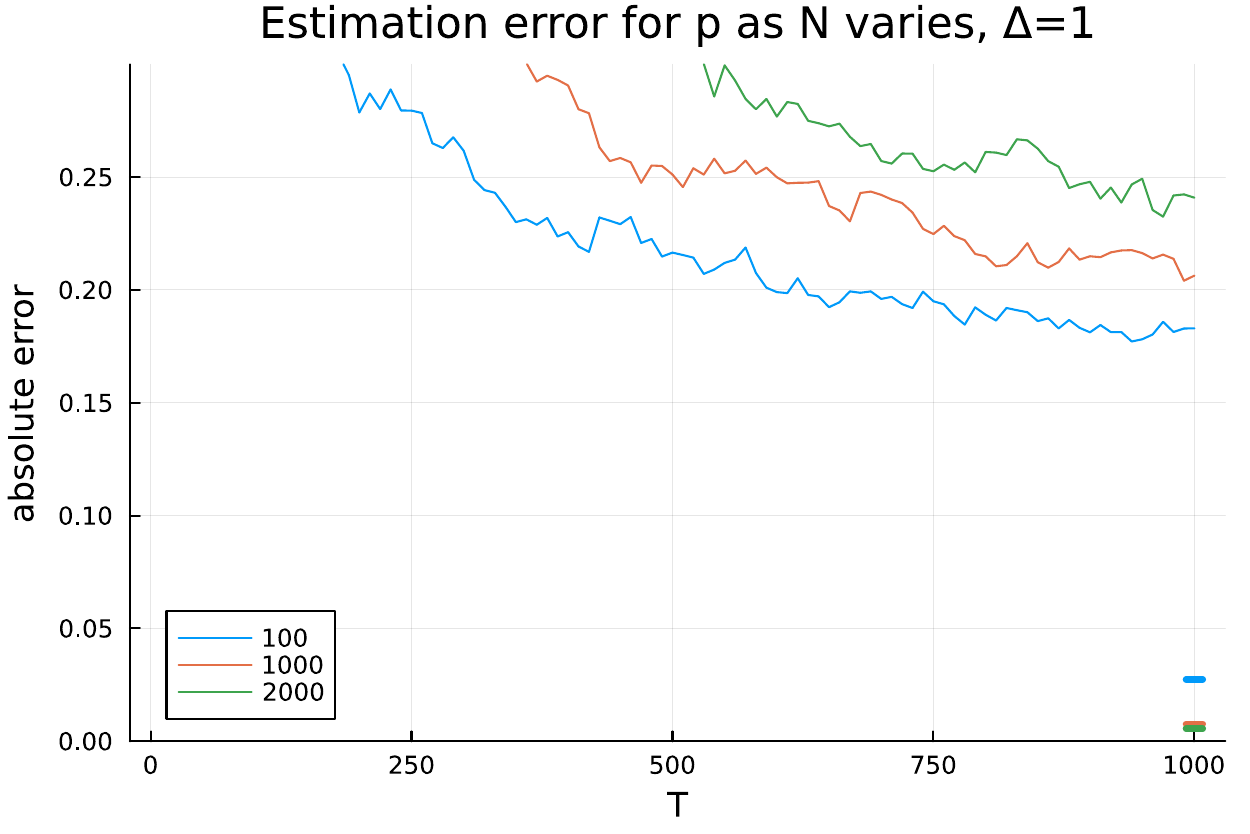}
    \includegraphics[width=.49\textwidth]{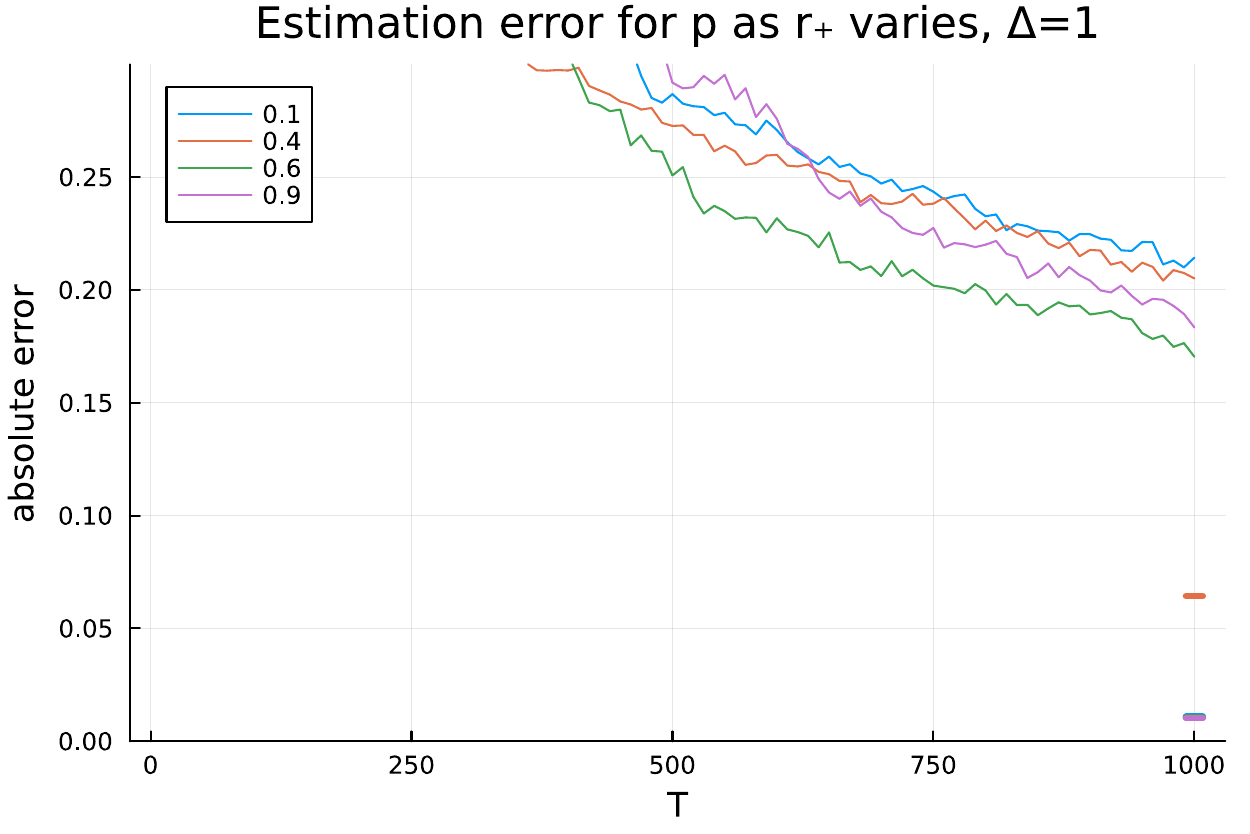}
    \includegraphics[width=.49\textwidth]{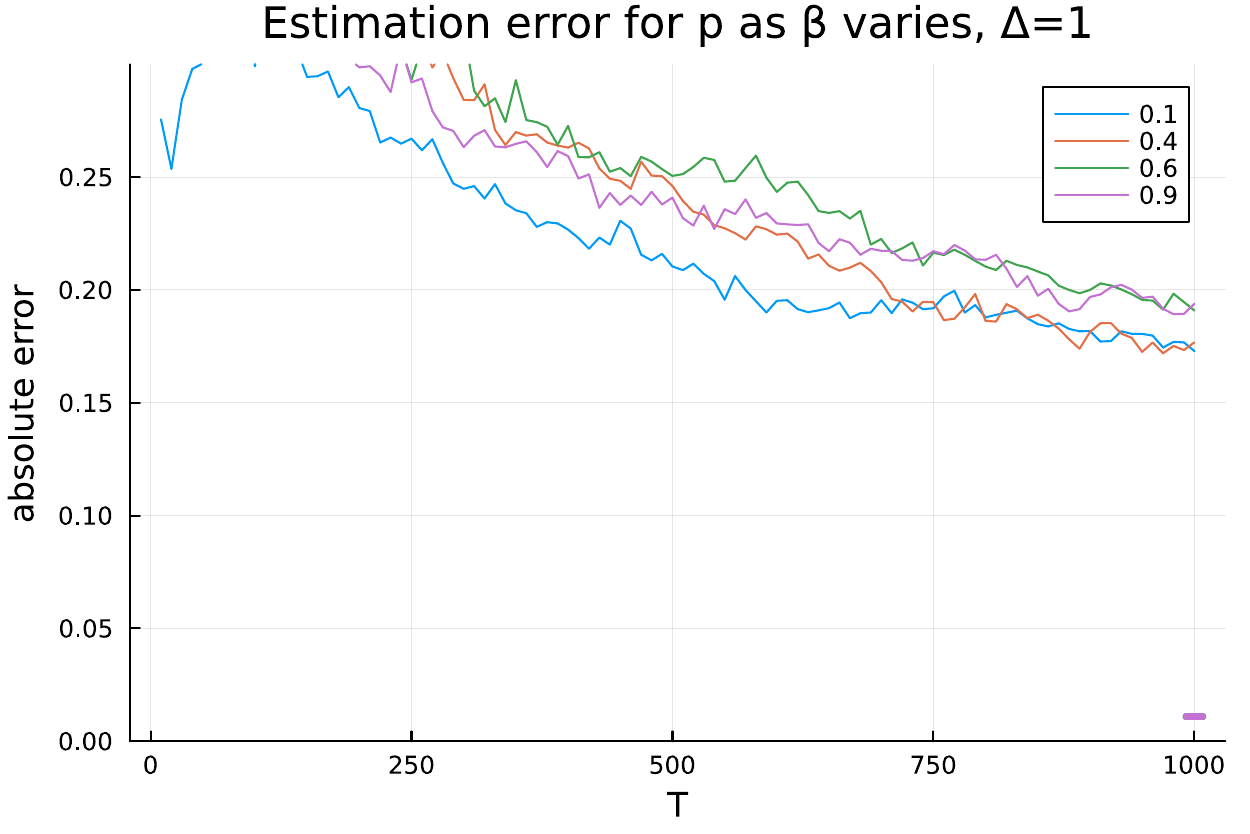}
    \includegraphics[width=.49\textwidth]{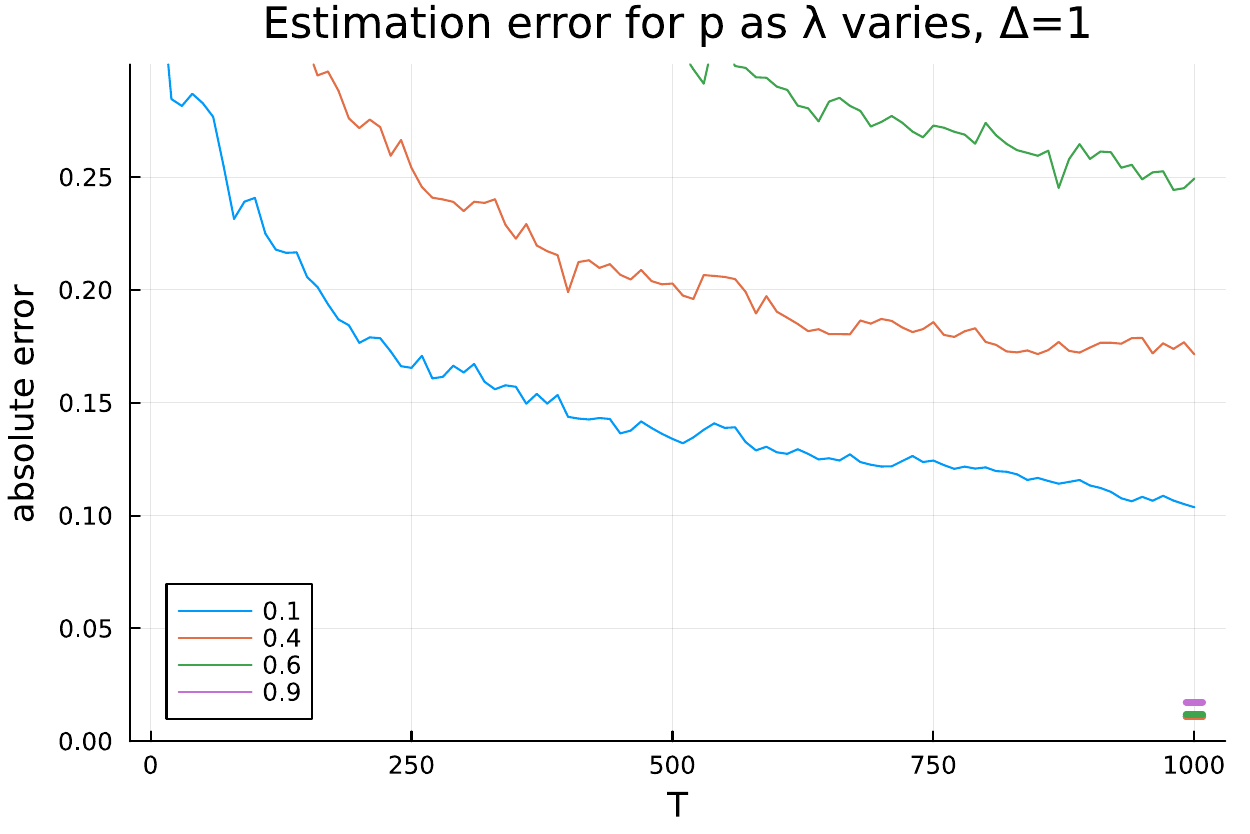}
    \includegraphics[width=.49\textwidth]{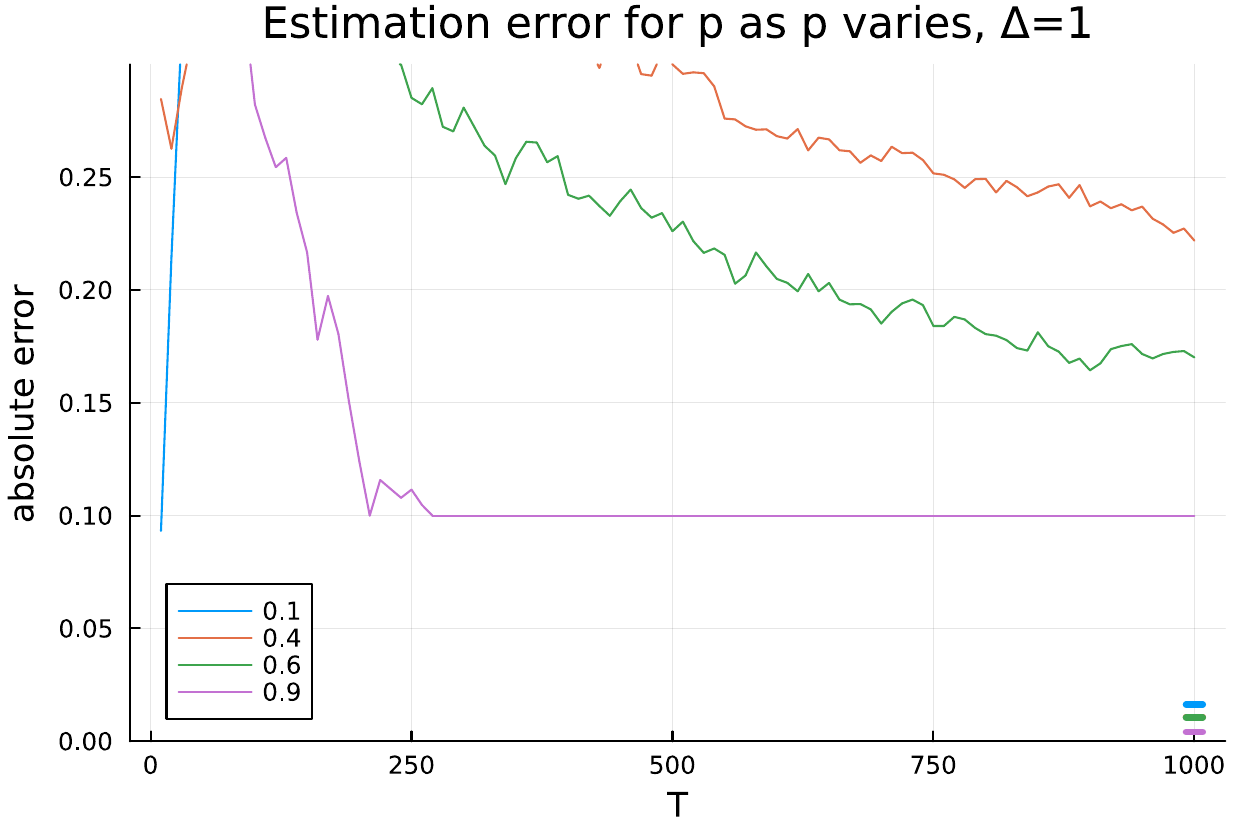}
\end{figure}

Figure \ref{fig:choice:Delta} compares the performance of all the estimators. The two panels correspond to two choices of $\Delta$ that will be discussed below. On the two panels, it is clear that the convergence of $\hat{m}$ is faster than all the others (which is expected from our analysis). Furthermore, for large $T$, $\hat{m}$, $\hat{v}$ and $\hat{\lambda}$ are really close to their theoretical limits $\hat{m}_\infty$, $\hat{v}_\infty$ and $\hat{\lambda}_\infty$ respectively. 
Note that this is not the case for $\hat{w}$, $\hat{\mu}$ and $\hat{p}$. 
In particular, it seems that $\hat{p}$ has the slowest convergence rate.

Figure \ref{fig:choice:Delta} gives also a comparison of the performance of our estimators with respect to the tuning parameter $\Delta$. Either $\Delta$ is chosen as a function of $T$ by $\Delta = \log(T)$ or $\Delta$ is fixed to the value $1$. Obviously, the estimators $\hat{m}, \hat{v}$ for finite $T$ and all the limit estimators $\hat{\vartheta}_{\infty}$ do not depend on $\Delta$ so the differences between the two plots are due to randomness only. However, it seems that the convergence of $\hat{w}$ is significantly faster when $\Delta=1$. In turn, it implies the same improvement for the triplet $(\hat{\mu},\hat{\lambda},\hat{p})$.

Figure \ref{fig:error:p} gives an overview of the performance of the estimator $\hat{p}$ as one of the parameters ($N$, $r_+$, $\beta$, $\lambda$ or $p$) varies. The choice to vary $\beta$ instead of $\mu = \lambda \beta$ is made because the set of admissible values of $\beta$ is independent of $\lambda$ (which is not the case for $\mu$). Remark that when $p=0.5$, which is the case for most of the panels in Figure \ref{fig:error:p}, the value $0.25$ correspond to the median of the absolute error of the most naive estimator: estimate $p$ by a random uniform value in $[0,1]$. Overall, the estimation error of the limit $\hat{p}_{\infty}$ is usually of the order of $0.01$ and that of the estimator $\hat{p}$ computed for $T=1000$ is approximately 20 times larger. 
As $N$ increases, the performance of $\hat{p}$ for $T$ fixed deteriorates. This phenomenon is encoded in the factor $\sqrt{N}/T$ appearing in Theorem \ref{theo:1} for example.
Also, the marks corresponding to the estimation errors of the theoretical limit $\hat{p}_\infty$ for different values of $N$ are ordered as expected: the estimation error goes to $0$ as $N$ goes to infinity.
As $r_+$ varies, the performance of $\hat{p}$ seems not to vary too much. However, it is important to note that the case $r_+ = 0.4$ corresponds to a case where the choice of $a\in\{+,-\}$ is not obvious and so it is arbitrary (see Section \ref{sec:practical:implementation}). Half of the time, the wrong $a$ is chosen. In particular, it is the reason why the mark corresponding to the limit estimator is around $0.06$ instead of $0.01$ for all the other cases. 
As $\beta$ varies, the performances of $\hat{p}$ and $\hat{p}_\infty$ seem not to vary too much. This is quite expected.
As $\lambda$ increases towards $1$, the performances of $\hat{p}$ and $\hat{p}_\infty$ decreases. This is expected since $\lambda \to 1$ implies that the strength of interactions in the system goes to $0$. In particular, it is more and more difficult to estimate $p$ which is closely related to the interactions. Nevertheless, contrarily to our upper-bounds which goes to infinity as $\lambda\to 0$ (see the remark below Theorem \ref{theo:1}), the performance is good for $\lambda=0.1$.
Finally, the performance of $\hat{p}$ as $p$ varies is more complex to analyze. For instance, for $p=0.9$, our method gives $\hat{p}=1$ most of the time whence the eventually constant violet curve at $y=0.1$. In the case $p=0.1$, the performance of $\hat{p}$ seems to be poor despite the fact that its limit $\hat{p}_\infty$ (blue mark) achieves good results.
This phenomenon is also expected because as $p\to 0$ the interactions in the system vanishes.

\begin{acks}[Acknowledgments]
This research has been conducted while J.C. was in Statify team at Centre Inria de l'Université Grenoble Alpes.
\end{acks}

\begin{funding}
G.O. was supported by the Serrapilheira Institute (grant number Serra – 2211-42049), FAPERJ (grants E-26/201.397/2021 and E-26/204.532/2024) and CNPq (grants 303166/2022-3). J.C. and E.L. were supported by ANR-19-CE40-0024 (CHAllenges in MAthematical NEuroscience).
\end{funding}

\bibliographystyle{imsart-nameyear}
\bibliography{Ref}

  \newpage

\begin{appendix}

\section{Proof of Theorem \ref{thm:perfect_sampling}}
\label{app:perfect:sampling}

The stationarity of the process $X$ follows from the fact that its construction is invariant under time shift. Hence, to conclude the proof, it remains to show that, conditionally on each realization of the random environment $\theta$, $X$ is Markovian and has the transition probabilities given by \eqref{def:transition_prob_1} and \eqref{def:transition_prob_2}. 
\changes{
The Markovianity of the process and the conditional independence of its coordinates follow from its representation as a function of an i.i.d. sequence. The compatibility of the transition probabilities follows from simple computations.
}

\paragraph*{Markovianity of the process}
Observe that \eqref{eq:regeneration:representation} implies that for each $i\in\{1,\ldots,N\}$, there exists a deterministic function $f_{\theta,i}:\{0,1\}^{N}\times \{0,1\}\times\{0,1,\ldots,N\}\to \{0,1\}$ such that for all $t\in\mathbb{Z}$,   
$$
X_{i,t}=f_{\theta,i}(X_{t-1},\xi_{i,t},J_{i,t}).
$$
Then defining the function   
$$
f_{\theta}(x,\xi,J)=(f_{\theta,1}(x,\xi_1,J_1),\ldots, f_{\theta,N}(x,\xi_N,J_N)), 
$$
where $x\in\{0,1\}^N$, $\xi=(\xi_1,\ldots,\xi_N)\in\{0,1\}^N$ and $J=(J_1,\ldots,J_N)\in\{0,1,\ldots, N\}^N$, we can write for all $t\in\mathbb{Z}$,   
$$
X_{t}=f_{\theta}(X_{t-1},\xi_{t},J_{t}),
$$
where $\xi_{t}=(\xi_{1,t},\ldots, \xi_{N,t})$ and $J_{t}=(J_{1,t},\ldots, J_{N,t})$. This representation together with the fact the sequence $(\xi_t,J_t)_{t\in\mathbb{Z}}$ is i.i.d ensures that $(X_{t})_{t\in\mathbb Z}$ is a Markov chain. Moreover, since the pairs $(\xi_{1,t},J_{1,t}),\ldots, (\xi_{N,t},J_{N,t})$ are independent, the random variables $X_{i,t}=f_{\theta,i}(X_{t-1},\xi_{i,t},J_{i,t})$, $i\in\{1,\ldots, N\}$, are clearly conditionally independent given $X_{t-1}$. 

\paragraph*{Transition probability}
It remains to show that $\P_{\theta}(X_{i,t}=1|X_{t-1}=x)=p_{\theta,i}(x)$ for all $x\in\{0,1\}^N$, where $p_{\theta,i}(x)$ is given by \eqref{def:transition_prob_2}.   
To see that, write $z=(i,t)$ and observe that 
\eqref{eq:regeneration:representation} allows us to write,
\begin{multline*}
\P_{\theta}(X_{z}=1|X_{t-1}=x)=\P_{\theta}(J_{z}=0,\xi_{z}=1)+\sum_{j\in\mathcal{P}_+}\theta_{ij}x_j\P_{\theta}(J_{z}=j|X_{t-1}=x)\\
+\sum_{j\in\mathcal{P}_-}\theta_{ij}(1-x_j)\P_{\theta}(J_{z}=j|X_{t-1}=x).
\end{multline*}
Using that $X_{t-1}$ is measurable with respect to $\sigma(\xi_{k,s},J_{k,s},s\leq t-1,k\in\{1,\ldots, N\})$ and the fact that both
$J_z$ and $\xi_z$ are independent of all random variables $J_{k,s}$, $\xi_{k,s}$ for $s\leq t-1$ and $k\in\{1,\ldots,N\}$, we have that 
$$
\P_{\theta}(J_{z}=0,\xi_{z}=1|X_{t-1}=x)=\P_{\theta}(J_{z}=0,\xi_{z}=1),
$$
and also that for each $j\in\{1,\ldots,N\},$ 
$$
\P_{\theta}(J_{z}=j|X_{t-1}=x)=\P_{\theta}(J_{z}=j)=\frac{(1-\lambda)}{N}.
$$
As a consequence, it follows that
\begin{multline*}
\P_{\theta}(X_{}=1|X_{t-1}=x)=\P_{\theta}(J_{z}=0,\xi_{z}=1)+\frac{(1-\lambda)}{N}\sum_{j\in\mathcal{P}_+}\theta_{ij}x_j
+\sum_{j\in\mathcal{P}_-}\theta_{ij}(1-x_j).
\end{multline*}
Since the random variables $J_{z}$ and $\xi_{z}$ are also independent, and  $\P_{\theta}(J_{z}=0)=\lambda,$ $\P_{\theta}(\xi_{z}=1)=\beta$ and $ \mu = \lambda \beta $, it follows that $\P_{\theta}(X_{i,t}=1|X_{t-1}=x)=p_{\theta,i}(x)$, concluding the proof.

\section{Coalescence couplings}
\label{app:coalescence:coupling}

\changes{
This section is devoted to the proof of two lemmas which relate independence and coalescence. It is structured as follows. First, the two lemmas are stated. Next, the construction of the coupling is detailed. After that, some properties of the coupling are provided. Finally, the proofs of the two lemmas are given.
}

\subsection{Two main lemmas}
\changes{
    Here are two lemmas that relate coalescence with independence via couplings. The tilde and hat versions of $X$ defined below respect some additional spatial independence properties. The objective of the two lemmas is to prove that these versions can be coupled in such a way that they are equal except on an event related with coalescence (which in turn we expect to be of small probability).
}

\begin{lemma}[All independent coupling]
    \label{lem:independent:construction:ksites}
    Let $k\geq 2$ and consider $k$ different points $z_1,\dots,z_k$ in $\mathcal{Z}$. There exist random variables $\tilde{X}_{z_1},\dots,\tilde{X}_{z_k}$ satisfying the following properties: for each $i\in \{1,\ldots, k\}$,
    \begin{enumerate}[(i)]
      \item $\tilde{X}_{z_i}\equaldistrib X_{z_i}$ (i.e., $\tilde{X}_{z_i}$ and $X_{z_i}$ have the same distribution);
        \item $\tilde{X}_{z_i}$ is independent of $(X_{z_j},\tilde{X}_{z_j})_{j\in \{1,\ldots, k\}\setminus\{i\}};$
        \item  $\{\tilde{X}_{z_i}\neq X_{z_i}\}\subset\cup_{j\in \{1,\ldots, k\}\setminus\{i\}}\{z_i\leftrightsquigarrow z_j\}.$
    \end{enumerate}
\end{lemma}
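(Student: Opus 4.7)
The plan leverages the backward regeneration representation of Section~\ref{sec:backward:regeneration}. For any $z=(y,t)\in \mathcal{Z}$, the value $X_z$ is a deterministic function (given $\theta$) of the noise $(J_w,\xi_w)$ restricted to the random backward path
$\pi^z=\{(I^z_s,s):\tau^R_z\leq s\leq t\}$
from $z$ to its regeneration site, and two sites $z,z'$ coalesce precisely when $\pi^z\cap \pi^{z'}\ne \emptyset$. The construction should therefore exploit this locality: each auxiliary random variable $\tilde X_{z_i}$ is built from a noise field that agrees with the original driving noise at coordinates not visited by the backward walks from the other indexed sites, and is resampled independently on the sites those walks do visit.

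Concretely, I would enlarge the probability space by adjoining, for each $i\in \{1,\dots,k\}$, an i.i.d.\ copy $(J^{(i)}_w,\xi^{(i)}_w)_{w\in \mathcal{Z}}$ of the driving noise, with the $k$ copies jointly independent and independent of the original noise. Setting $\Pi_{-i}:=\bigcup_{j\neq i}\pi^{z_j}$, define the modified noise
\[
(\hat J^{(i)}_w,\hat\xi^{(i)}_w)=(J_w,\xi_w)\indiq_{w\notin \Pi_{-i}}+(J^{(i)}_w,\xi^{(i)}_w)\indiq_{w\in \Pi_{-i}},
\]
and let $\tilde X_{z_i}$ be the outcome of the backward regeneration scheme of Section~\ref{sec:backward:regeneration} started at $z_i$ and driven by $(\hat J^{(i)},\hat\xi^{(i)})$.

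Properties (i) and (iii) should then fall out of the construction. For (i), the event $\{\Pi_{-i}=S\}$ constrains only the values of $J$ at sites of $S$ (it forces $J$ to reproduce the backward paths from the $z_j$, $j\ne i$), so conditionally on this event the original noise off $S$ retains its i.i.d.\ distribution; pasting it with the independent i.i.d.\ fresh copy on $S$ yields $(\hat J^{(i)},\hat \xi^{(i)})$ distributed as the original driving noise and, since this conditional distribution does not depend on $S$, independent of $\Pi_{-i}$. Hence $\tilde X_{z_i}\stackrel{d}{=}X_{z_i}$. Moreover, since $(X_{z_j})_{j\neq i}$ is measurable with respect to $(J,\xi)|_{\Pi_{-i}}$, the combination of this independence from $\Pi_{-i}$ with the conditional independence of $(\hat J^{(i)}, \hat \xi^{(i)})$ from $(J,\xi)|_{\Pi_{-i}}$ given $\Pi_{-i}$ already gives $\tilde X_{z_i}\perp (X_{z_j})_{j\neq i}$. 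For (iii), on $\bigcap_{j\neq i}\{z_i\not\leftrightsquigarrow z_j\}$ the path $\pi^{z_i}$ lies in $\Pi_{-i}^c$, so the modified noise agrees with the original along $\pi^{z_i}$, the backward walk driven by $\hat J^{(i)}$ coincides with $I^{z_i}$, and $\tilde X_{z_i}=X_{z_i}$.

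The main obstacle is the remaining half of (ii), namely $\tilde X_{z_i}\perp \tilde X_{z_j}$ for $j\neq i$. The difficulty is that the two modified noise fields $(\hat J^{(i)},\hat\xi^{(i)})$ and $(\hat J^{(j)},\hat\xi^{(j)})$ both inherit the original $(J,\xi)$ on $\Pi_{-i}^c\cap \Pi_{-j}^c$ and so are not \emph{a priori} mutually independent. The plan to address this is to work at the level of the noise actually used by each regeneration scheme: by tracking the (random) sets of sites visited by the two modified backward walks, one aims to show that the dependence on the shared portion of $(J,\xi)$ factorises through the joint independence of the fresh copies $(J^{(i)},\xi^{(i)})$ across distinct indices. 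I expect this factorisation to be the technical heart of the proof and to require an inductive argument on $k$.
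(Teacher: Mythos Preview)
Your construction correctly handles (i), (iii), and the part of (ii) asserting $\tilde X_{z_i}\perp (X_{z_j})_{j\neq i}$, but it does \emph{not} yield $\tilde X_{z_i}\perp \tilde X_{z_j}$, and this is not a matter of bookkeeping that an induction on $k$ will resolve; it already fails for $k=2$. Here is a concrete obstruction. Take $k=2$, so $\Pi_{-1}=\pi^{z_2}$ and $\Pi_{-2}=\pi^{z_1}$. On $\{z_1\leftrightsquigarrow z_2\}$, both modified walks agree with the originals until the coalescence site $z^c$; at $z^c$ the walk for $z_1$ jumps via $J^{(1)}_{z^c}$ and the walk for $z_2$ via $J^{(2)}_{z^c}$. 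With positive probability these two independent jumps land on the same site $v\notin\pi^{z_1}\cup\pi^{z_2}$. At $v$ your rule gives $\hat J^{(1)}_v=J_v=\hat J^{(2)}_v$, so from that moment the two modified walks evolve together using the \emph{same} original noise as long as they stay outside $\pi^{z_1}\cup\pi^{z_2}$; if they regenerate at a common site $v^*\notin\pi^{z_1}\cup\pi^{z_2}$, then $\hat\xi^{(1)}_{v^*}=\xi_{v^*}=\hat\xi^{(2)}_{v^*}$ and both $\tilde X_{z_1},\tilde X_{z_2}$ are determined by the single Bernoulli $\xi_{v^*}$ (together with the path parities). This produces genuine correlation between $\tilde X_{z_1}$ and $\tilde X_{z_2}$.

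The fix the paper uses is different in spirit from ``replace the noise on $\Pi_{-i}$'': instead, each tilde walk carries a layer label and switches \emph{permanently} to its private layer $i$ the first time the \emph{original} walk $I^{z_i}$ meets any of the other original walks. Formally, with $\tau^c_i=\sup_{j\neq i}\tau^c_{z_i,z_j}$ and layer $\tilde L^{z_i}_s=i\cdot\indiq_{\{s\le\tau^c_i\}}$, one sets $\tilde I^{z_i}_{s-1}=J^{(\tilde L^{z_i}_s)}_{(\tilde I^{z_i}_s,s)}$. The point is that whenever $\tilde I^{z_i}_s=\tilde I^{z_j}_s\notin\{0,\infty\}$, the layers cannot both be $0$ (that would force $I^{z_i}_s=I^{z_j}_s$ and hence $s\le\tau^c_i$, a contradiction), so one of them is on a private layer distinct from the other's; hence the site-layer-time triples used by $\tilde I^{z_i}$ are disjoint from those used by $(\tilde I^{z_j},I^{z_j})_{j\neq i}$, which gives exact independence. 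Your scheme, by contrast, lets a tilde walk return to layer $0$ whenever it leaves $\Pi_{-i}$, and that is precisely what allows two tilde walks to share noise. Replacing your ``site-based'' replacement rule by this ``once-and-forever'' switch to a private layer is the missing idea.
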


\begin{lemma}[Blockwise independence] 
\label{lem:final:independent}
Let $z_1,\dots,z_4$ be four different points in $\mathcal{Z}$. There exist random variables $\tilde{X}_{z_1},\dots,\tilde{X}_{z_4}$ and $\hat{X}_{z_1}, \hat{X}_{z_2}$ satisfying the following properties:
    \begin{enumerate}[(i)]
        \item $(\hat{X}_{z_1}, \hat{X}_{z_2}) \equaldistrib (X_{z_1}, X_{z_2})$;
        \item $(\hat{X}_{z_1}, \hat{X}_{z_2})$ is independent of $(X_{z_3}, X_{z_4}, \tilde{X}_{z_3}, \tilde{X}_{z_4})$;
        \item $\{\hat{X}_{z_1} \neq  X_{z_1} \} \subset \cup_{k=3}^4 \{ z_1  \leftrightsquigarrow z_k\} \cup \left( \{\tau^R_{z_1} < t_3 \vee t_4\} \cap ( \cup_{k=3}^4 \{ z_2  \leftrightsquigarrow z_k\})  \right)$, 
        \item $\{\hat{X}_{z_2} \neq  X_{z_2} \} \subset \cup_{k=3}^4 \{ z_2  \leftrightsquigarrow z_k\} \cup \left( \{\tau^R_{z_2} < t_3 \vee t_4\} \cap ( \cup_{k=3}^4 \{ z_1  \leftrightsquigarrow z_k\})  \right)$.
        \item $\hat{X}_{z_1}$ is independent of $\tilde{X}_{z_2}$ and $\hat{X}_{z_2}$ is independent of $\tilde{X}_{z_1}$.
       \end{enumerate}
Moreover, for each $i\in \{1,\ldots, 4\}$,
\begin{enumerate}
     \item[(iv)] $\tilde{X}_{z_i} \equaldistrib X_{z_i}$;
        \item[(v)] $\tilde{X}_{z_i}$ is independent of $(\tilde{X}_{z_j}, X_{z_j}, I^{z_j})_{j\in\{1,\ldots, 4\} \setminus\{ i\}}$;
        \item[(vi)] $\{\tilde{X}_{z_i}\neq X_{z_i}\}\subset \cup_{j\in\{1,\ldots, 4\} \setminus\{ i\}}\{z_i\leftrightsquigarrow z_j\}.$
\end{enumerate}
\end{lemma}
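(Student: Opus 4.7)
The plan is to realise the six random variables simultaneously on an enriched probability space via a coupling of the backward regeneration scheme with several independent copies of $(J_z, \xi_z)$. Concretely, I introduce independent families $(J^{(0)}_z, \xi^{(0)}_z)_{z \in \mathcal{Z}}$ and $(J^{(i)}_z, \xi^{(i)}_z)_{z \in \mathcal{Z}}$, $i = 1, \ldots, 4$, each having the same law as $(J_z, \xi_z)_{z \in \mathcal{Z}}$, mutually independent and independent of the original family. Modified backward walks are built by running the original scheme down to a suitable switching time and then continuing with the relevant independent copy, with the corresponding chain value read off through \eqref{eq:regeneration:representation}.

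For the $\tilde{X}_{z_i}$ verifying (iv)--(vi), I follow the construction of Lemma \ref{lem:independent:construction:ksites}. For each $i$, set
\[
\sigma_i := \sup\bigl\{ s \in \Z : \exists\, j \neq i,\ I^{z_i}_s = I^{z_j}_s \notin \{0, \infty\}\bigr\},
\]
the most recent backward coalescence of $I^{z_i}$ with any other $I^{z_j}$, with $\sup \emptyset = -\infty$. Define $\tilde{I}^{z_i}$ to coincide with $I^{z_i}$ for $s \geq \sigma_i$ and to be continued below $\sigma_i$ using the $(J^{(i)}, \xi^{(i)})$ family. Then set $\tilde{X}_{z_i}$ through \eqref{eq:regeneration:representation} along $\tilde{I}^{z_i}$, using $\xi^{(i)}$ at the regeneration site whenever it lies strictly below $\sigma_i$. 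On $\{\sigma_i = -\infty\}$ the walk is untouched and $\tilde{X}_{z_i} = X_{z_i}$, giving (vi); otherwise the variables defining $\tilde{X}_{z_i}$ come either from the $(J^{(i)}, \xi^{(i)})$ family or from the $(J, \xi)$ variables along $I^{z_i}$ above $\sigma_i$, a set disjoint from those entering the other $X_{z_j}$'s and $\tilde{X}_{z_j}$'s by definition of $\sigma_i$, yielding (iv) and (v).

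For the block $(\hat{X}_{z_1}, \hat{X}_{z_2})$ I set
\[
\sigma^{\star} := \sup\bigl\{ s \in \Z : \exists\, i \in \{1, 2\},\ k \in \{3, 4\},\ I^{z_i}_s = I^{z_k}_s \notin \{0, \infty\} \bigr\}
\]
and define $(\hat{I}^{z_1}, \hat{I}^{z_2})$ to agree with $(I^{z_1}, I^{z_2})$ for $s \geq \sigma^{\star}$, while below $\sigma^{\star}$ the pair is continued using the shared family $(J^{(0)}, \xi^{(0)})$ according to the same coalescing dynamics as in \eqref{eq:def:backward:RW}: if $I^{z_1}$ and $I^{z_2}$ were already merged above $\sigma^{\star}$ they remain merged below, otherwise they continue as two walks driven by $(J^{(0)}, \xi^{(0)})$. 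Then set $(\hat{X}_{z_1}, \hat{X}_{z_2})$ via \eqref{eq:regeneration:representation}. Property (i) holds because $(\hat{I}^{z_1}, \hat{I}^{z_2})$ and the associated $\xi$'s have the same joint law as $(I^{z_1}, I^{z_2})$ and their $\xi$'s; (ii) holds because the variables used below $\sigma^{\star}$ come from the independent family $(J^{(0)}, \xi^{(0)})$, while those used above $\sigma^{\star}$ are, by definition of $\sigma^{\star}$, at space-time coordinates disjoint from those visited by $(I^{z_3}, I^{z_4})$; the additional independence statements in (v) follow from the mutual independence of the $(J^{(i)}, \xi^{(i)})$ families.

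The delicate step, which I expect to be the main obstacle, is the precise inclusion (iii) (and its symmetric counterpart (iv)). The event $\{\hat{X}_{z_1} \neq X_{z_1}\}$ can occur only if the ancestry determining $X_{z_1}$ enters a space-time coordinate strictly below $\sigma^{\star}$. There are two disjoint ways for this to happen. Either $I^{z_1}$ itself reaches $\sigma^{\star}$, which forces a coalescence $z_1 \leftrightsquigarrow z_k$ for some $k \in \{3, 4\}$, producing the first term of the union. Or $I^{z_1}$ first merges with $I^{z_2}$ above $\sigma^{\star}$, and the common walk then interacts with $I^{z_3}$ or $I^{z_4}$. This second scenario requires both $z_2 \leftrightsquigarrow z_k$ for some $k \in \{3, 4\}$ and that $I^{z_1}$ remain alive when reaching the merging time with $I^{z_2}$ and all the way down to $\sigma^{\star}$. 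Since any coalescence with $I^{z_3}$ or $I^{z_4}$ occurs at a time $\le t_3 \vee t_4$, aliveness of $I^{z_1}$ down to that depth is equivalent to $\tau^R_{z_1} < t_3 \vee t_4$, yielding the second term.
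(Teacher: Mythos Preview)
Your overall architecture is sound and in fact \emph{simpler} than the paper's: you switch both hat walks at a single common time $\sigma^{\star}$, whereas the paper switches each $\hat I^{z_i}$ at its individual time $\hat\tau_i=\sup\{\tau^c_{z_i,z_3},\tau^c_{z_i,z_4}\}$ and then adds a secondary correction time $\hat\tau_{1,2}$ to restore the joint law when one hat walk has already jumped to the independent layer and the other then meets it. Both constructions yield the lemma; yours avoids the $\hat\tau_{1,2}$ bookkeeping at the cost of making $\hat I^{z_1}$ deviate from $I^{z_1}$ slightly more often.

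However, your argument for the inclusion (iii) is confused. You split $\{\hat X_{z_1}\neq X_{z_1}\}$ into ``$I^{z_1}$ itself reaches $\sigma^\star$, forcing $z_1\leftrightsquigarrow z_k$'' and ``$I^{z_1}$ first merges with $I^{z_2}$ above $\sigma^\star$ and the common walk then meets $I^{z_3}$ or $I^{z_4}$''. But the second scenario also gives $z_1\leftrightsquigarrow z_2\leftrightsquigarrow z_k$, hence $z_1\leftrightsquigarrow z_k$ by transitivity --- it lands in the \emph{first} term of the union, not the second. What you miss is the case that actually produces the second term: in your construction $\hat I^{z_1}$ switches at $\sigma^\star$ \emph{regardless} of whether $I^{z_1}$ is involved in the coalescence there. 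So $\hat X_{z_1}\neq X_{z_1}$ can occur when $I^{z_1}$ is still alive at $\sigma^\star$ (i.e.\ $\tau^R_{z_1}\le\sigma^\star$), has \emph{not} merged with $I^{z_2}$, and it is $I^{z_2}$ alone that coalesces with some $I^{z_k}$, $k\in\{3,4\}$. The correct dichotomy is simply: either $z_1\leftrightsquigarrow z_k$ for some $k\in\{3,4\}$, or not, in which case $\sigma^\star$ is realised by $z_2\leftrightsquigarrow z_k$, and $\tau^R_{z_1}\le\sigma^\star\le t_3\vee t_4$ gives the second term.

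Finally, your justifications for (i), (ii) and the item about $\hat X_{z_1}\perp\tilde X_{z_2}$ are assertions rather than proofs. Property (i) is not automatic: $\sigma^\star$ is \emph{not} a stopping time for the marginal pair $(I^{z_1},I^{z_2})$, so you need the strong Markov property for the full four-tuple together with the observation that the marginal two-walk transition does not depend on the positions of $I^{z_3},I^{z_4}$. For (ii), ``disjoint space--time coordinates'' is the right idea, but you must also check that the switching time $\sigma^\star$ is a measurable function of $(\hat I^{z_1},\hat I^{z_2},I^{z_3},I^{z_4})$ --- the paper makes this ``measurability of the layer process'' point explicitly and uses it to factor the joint probability over fixed trajectories. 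The paper's Appendix~B carries out these verifications in detail; you should at least sketch the corresponding arguments for your variant.
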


\begin{remark}
    Let us describe with some words Item (iii) above: if the hat version $\hat{X}_{z_1}$ is different from the standard version $X_{z_1}$, then we know that 
    \begin{itemize}
        \item $z_1$ coalesces with $z_3$ or $z_4$, or
        \item $z_2$ coalesces with $z_3$ or $z_4$ and the (backward) regeneration time of $z_1$ is less than the starting times of $z_3$ or $z_4$.
    \end{itemize}
\end{remark}

\subsection{Construction of the couplings}
\label{sec:coupling:construction}

Let us denote $(J,\xi)$ the sequence $(J_z,\xi_z)_{z}$ defined in the backward regeneration representation of the process $X$. Since we are working for a fixed matrix $\theta$ in this section, we omit its dependence in what follows. Due to the backward regeneration representation there exist measurable functions $f$, $g$, $r$ and $\tau$ (which depend on $\theta$) such that, for all $z\in \mathcal{Z}$,
\begin{equation*}
    I^z = (I^{z}_s)_{s\in \Z} = f(z,J) \text{ and } X_z = g(I^z,\xi_{r(I^z)}),
\end{equation*}
where $r(I^z)=(I^z_{\tau(I^z)},\tau(I^z))$ is the regenerating site associated with $z$ and $\tau(I^z) = \inf\{ s\in \Z :   I^z_s > 0\}$ is the last time that the random walk $I^{z}$ is not in the state $0$. In what follows, we denote $I=(I^{z})_{z\in\cal Z}$.

Now, consider $5$ independent sequences $$(J^{(1)},\xi^{(1)}),\ldots, (J^{(5)},\xi^{(5)}),$$ all distributed as the sequence $(J,\xi)$, and which are furthermore independent of $(J,\xi)$. By convenience of notation, let us denote $(J^{(0)},\xi^{(0)}) := (J,\xi)$. In other words,  the layer (0) corresponds to the original process, and we add five i.i.d. layers on top of that. We then define the backward random walks $I^{(i)}=(I^{(i),z})_{z\in\mathcal{Z}}$ by $I^{(i),z} = f(z,J^{(i)})$, for each $0\leq i\leq 5$. Clearly, the collections of random walks $I^{(0)}, \ldots, I^{(5)}$ are independent. We will need the layers $ (0) - (4) $ to construct the tilde versions, and the layers $ (0) $ and $(5) $ to construct the hat versions.

\paragraph*{Construction of the tilde versions}
For $i\in\{1,\ldots, 4\}$, let us denote 
\begin{equation*}
    {\tau}^c_i = \sup\left\{ \tau^c_{z_i,z_j}: j\in\{1,\ldots, 4\}\setminus\{i\} \right\},
\end{equation*}
the first time at which $I^{z_i}$ coalesces with (at least) one of the $3$ other random walks (observe that ${\tau}^c_i=-\infty$ if $I^{z_i}$ does not coalesce with any of the other $3$ random walks), and denote $z^c_i = (I^{z_i}_{{\tau}^c_i}, {\tau}^c_i)$ the site of coalescence in case ${\tau}^c_i>-\infty$. 
Then, for all $1\leq i\leq 4$ and $s\in \Z$, we define
\begin{equation}\label{eq:tilde:I}
    \tilde{I}^{z_i}_s = I^{(0),z_i}_s \mathbf{1}_{s >  \tau^c_i} + I^{(i),z^c_i}_s \mathbf{1}_{s \leq \tau^c_i}.
\end{equation}
In words, the random walk $\tilde{I}^{z_i}$ follows the random walk $I^{z_i}$ until it coalesces with one of the random walks $I^{z_j}$, $j\in \{1,\ldots, 4\}\setminus\{i\}$. After this time, the random walk $\tilde{I}^{z_i}$ follows the independent random walk $I^{(i),z_i^c}$ associated with the site of coalescence $z_i^c$. With the notation $\tilde{L}^{z_i}_s = i\cdot 1_{s\leq {\tau}^c_i} \in \{0,\dots, 4\}$, Equation \eqref{eq:tilde:I} rewrites as
\begin{equation*}
    \tilde{I}^{z_i}_s = I^{(0),z_i}_s \mathbf{1}_{\tilde{L}^{z_i}_s = 0} + I^{(i),z^c_i}_s \mathbf{1}_{\tilde{L}^{z_i}_s = i},
\end{equation*}
and some computations give the following easy recursion formula:
\begin{equation}\label{eq:recursion:I:tilde}
    \tilde{I}^{z_i}_{s-1} = J_{(\tilde{I}^{z_i}_{s}, s)}^{(\tilde{L}^{z_i}_s)}, \text{ for all $s\leq t_i$},
\end{equation}
which can be compared with Equation \eqref{eq:def:backward:RW}.
Moreover, remark that the layer processes $\tilde{L}$ are measurable functions of the state processes $\tilde{I}$ since
\begin{equation*}
    \tau^c_{z_i,z_j} = \sup\left\{s\in\Z: I^{z_i}_s=I^{z_j}_s\notin \{0,\infty\}\right\} = \sup\left\{s\in\Z: \tilde{I}^{z_i}_s=\tilde{I}^{z_j}_s\notin \{0,\infty\}\right\}.
\end{equation*}

\paragraph*{Construction of the hat versions}
Let us denote, for $i=1,2$,
\begin{equation*}
    \hat{\tau}_{i} = \sup\left\{ \tau^c_{z_i,z_j}: j\in \{3,4\}\right\},
\end{equation*}
and denote $\hat{z}^c_i = (I^{z_i}_{\hat{\tau}_i}, \hat{\tau}_i)$ the corresponding site of coalescence in case $\hat{\tau}_i>-\infty$. For instance, $\hat{\tau}_{1}$ corresponds to the first time at which $I^{z_1}$ coalesces with either $I^{z_3}$ or $I^{z_4}$. 
Then, for $i=1,2$, we define the auxiliary process, for all $s\in \Z$,
\begin{equation*}
    \hat{I}^{(5),z_i}_s = I^{z_i}_s \mathbf{1}_{s> \hat{\tau}_i} + I^{(5),\hat{z}^c_i}_s \mathbf{1}_{s \leq \hat{\tau}_i}.
\end{equation*}
The construction is not yet done since these two random walks may be missing some coalescence in order to mimic the joint distribution of $(X_{z_1}, X_{z_2}).$
To circumvent this problem, we define
\begin{equation*}
    \hat{\tau}_{1,2} = \sup\left\{s\in\Z: \hat{I}^{(5),z_1}_s=\hat{I}^{(5),z_2}_s\notin \{0,\infty\}\right\},
\end{equation*}
and 
$$ \hat{I}^{z_i} = \hat{I}^{(5),z_{i}}, \mbox{ if } \hat{\tau}_{1,2} \le \hat \tau_i ,$$
while we put, for all $s\in \Z$,
\begin{equation}\label{eq:I:hat}
\hat{I}^{z_i}_s =   I^{z_i}_s \mathbf{1}_{s>  \hat{\tau}_{1,2}} + \hat{I}^{(5),z_{3-i}}_s \mathbf{1}_{ s\leq   \hat{\tau}_{1,2}}, \mbox{ if } \hat{\tau}_{1,2} >  \hat \tau_i .
\end{equation}
For instance, if $ I^{z_1} $ first coalesces, say, with $ I^{z_3}, $ then it switches to its hat-version $ \hat{I}^{z_1}$  and remains stuck to this version forever. However, if $ I^{z_2} $ hits $ \hat{I}^{z_1} $ before hitting  $ I^{z_3}$ or $ I^{z_4}, $ then it coalesces with $\hat{I}^{z_1}$ and remains stuck to it forever. 
In words, we do not modify the random walk with the first (the largest in time) coalescence time $\hat{\tau}_i$, but we possibly modify the other one in between the times $\hat{\tau}_1$ and $\hat{\tau}_2$. Like Equation \eqref{eq:recursion:I:tilde}, we have the following recursion:
\begin{equation}\label{eq:recursion:I:hat}
    \hat{I}^{z_i}_{s-1} = J_{(\hat{I}^{z_i}_{s}, s)}^{(\hat{L}^{z_i}_s)}, \text{ for all $s\leq t_i$},
\end{equation}
where 
\begin{equation}\label{eq:hatL}
 \hat{L}^{z_1}_s =5 \cdot 1_{s \le \hat{\tau}_1 } +  5 \cdot 1_{\hat{\tau}_1 <  s  \le  \hat{\tau}_{1,2}}   1_{   \hat{\tau}_{1,2}  \leq \hat{\tau}_2 } 
\end{equation}
with a similar definition for $\hat{L}^{z_2}_s.$ In other words, in any case, process $1$ is in layer $5$ starting from time $\hat{\tau}_1$, that is $\hat{L}^{z_1}_s = 5$ for $s\leq \hat{\tau}_1$. But, if process $1$ coalesces with process $2$ (at time $\hat{\tau}_{1,2}$) and if process $2$ is already evolving on layer $5$ at that time, then process $1$ switches to layer $5$ at that coalescence time $\hat{\tau}_{1,2}$.

Once again, the layer processes $\hat{L}^{z_1}$ and $\hat{L}^{z_2}$ are measurable functions of the state processes $(\hat{I}^{z_1}, \hat{I}^{z_2}, I^{z_3}, I^{z_4})$. 

\subsection{Properties of the couplings}

First, let us formalize the fact that the processes $I$ do not depend on the whole sequences $J^{(0)},\dots, J^{(5)}$ but merely on a small subset of those. Let $\gamma\in \{0,1,\dots, N, \infty\}^\Z$ and $\ell \in \{0,\dots, 5\}^\Z$ be some generic trajectories of the processes $I$ and $L$. 
Let us then define $J_\gamma^\ell = (J_{(\gamma_s,s)}^{(\ell_s)})_{s\in \Z}\in \{0,1,\dots, N\}^\mathbb{Z}$ and, for all $z=(i,t)\in \mathcal{Z}$, $f^|_z : \{0,1,\dots, N\}^\mathbb{Z} \to \{0,1,\dots, N, \infty\}^\mathbb{Z}$ given by
\begin{equation*}
    (f^|_z(J_\gamma^\ell))_s = \begin{cases}
        \infty & \text{if $s>t$}\\
        i & \text{if $s=t$}\\
        J_{(\gamma_{s+1},s+1)}^{(\ell_{s+1})} & \text{else}.
    \end{cases}
\end{equation*}

Finally, let us remark that:
\begin{enumerate}
    \item the definition of $I^{z_i}$ implies that $\{I^{z_i} = \gamma\} = \{f^|_{z_i}(J_\gamma^{\ell^\emptyset}) = \gamma\}$ where $\ell^\emptyset_s=0$ for all $s\in \Z$;
    \item Equation \eqref{eq:recursion:I:tilde} and the measurability of the layer processes imply that for all $\tilde{\gamma}^1,\dots, \tilde{\gamma}^4 \in  \{0,1,\dots, N, \infty\}^\Z$, there exist $\tilde{\ell}^1,\dots, \tilde{\ell}^4 \in \{0,\dots, 4\}^\Z$ such that
    \begin{equation*}
        \cap_{i=1}^4 \{\tilde{I}^{z_i} = \tilde{\gamma}^i\} = \cap_{i=1}^4 \{\tilde{I}^{z_i} = \tilde{\gamma}^i, \tilde{L}^{z_i} = \tilde{\ell}^i\} 
        = \cap_{i=1}^4 \{f^|_{z_i}(J_{\tilde{\gamma}^i}^{\tilde{\ell}^i}) = \tilde{\gamma}^i\};
    \end{equation*}
    \item Equation \eqref{eq:recursion:I:hat} and the measurability of the layer processes imply that for all choices $\hat{\gamma}^1,\hat{\gamma}^2, \gamma^3,\gamma^4,\tilde{\gamma}^3,\tilde{\gamma}^4 \in  \{0,1,\dots, N, \infty\}^\Z$, there exist $\hat{\ell}^1,\hat{\ell}^2 \in \{ 0, 5 \}^\Z $ and $\tilde{\ell}^3,\tilde{\ell}^4 \in \{0,3, 4 \}^\Z$ such that
    \begin{eqnarray*}
        A 
        &:=& \left( \cap_{i=1}^2 \{\hat{I}^{z_i} = \hat{\gamma}^i\} \right) \cap \left( \cap_{j=3}^4 \{ I^{z_j} = \gamma^j, \tilde{I}^{z_j} = \tilde{\gamma}^j \} \right)\\
        &=& \left( \cap_{i=1}^2 \{\hat{I}^{z_i} = \hat{\gamma}^i, \hat{L}^{z_i} = \hat{\ell}^{i} \} \right) \cap  \left( \cap_{j=3}^4 \{ I^{z_j} = \gamma^j, \tilde{I}^{z_j} = \tilde{\gamma}^j, \tilde{L}^{z_j} = \tilde{\ell}^j \} \right)\\
        &=& \left( \cap_{i=1}^2 \{f^|_{z_i}(J_{\hat{\gamma}^i}^{\hat{\ell}^i}) = \hat{\gamma}^i\} \right) \cap \left( \cap_{j=3}^4 \{f^|_{z_j}(J_{\gamma^j}^{\ell^\emptyset}) = \gamma^j, f^|_{z_j}(J_{\tilde{\gamma}^j}^{\tilde{\ell}^j}) = \tilde{\gamma}^j\} \right).\\
    \end{eqnarray*}
\end{enumerate}

First, let us summarize the coupling properties of the processes $I$.
\begin{proposition}\label{prop:I:coupling:property}
    The processes $I$, $\tilde{I}$ and $\hat{I}$ satisfy the following coupling properties: 
    \begin{enumerate}[(i)]
        \item for all $i=1,\dots, 4$, $\{\tilde{I}^{z_i}\neq I^{z_i}\} = \cup_{j\in\{1,\ldots, 4\} \setminus\{ i\}}\{z_i\leftrightsquigarrow z_j\}$,
        \item we have that $$\{\hat{I}^{z_1} \neq  I^{z_1} \} \subset \bigcup_{k=3}^4 \{ z_1  \leftrightsquigarrow z_k\} \cup \left( \bigcup_{k=3}^4 \{ z_2 \leftrightsquigarrow z_k\} \cap \{ \tau^R_{z_1} <  t_3 \vee t_4 \} \right)  $$ 
        and 
        $$\{\hat{I}^{z_2} \neq  I^{z_2} \} \subset \bigcup_{k=3}^4 \{ z_2  \leftrightsquigarrow z_k\} \cup \left( \bigcup_{k=3}^4 \{ z_1 \leftrightsquigarrow z_k\} \cap \{ \tau^R_{z_2} <  t_3 \vee t_4 \} \right)  .$$ 
    \end{enumerate}
\end{proposition}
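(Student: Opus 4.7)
The plan is to verify the two assertions by unpacking the constructions \eqref{eq:tilde:I} and \eqref{eq:I:hat} of $\tilde{I}^{z_i}$ and $\hat{I}^{z_i}$ and performing a case analysis on the coalescence times.

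For (i), I would fix $i$ and observe that $\tau^c_i = -\infty$ precisely on the complement of $\cup_{j\neq i}\{z_i \leftrightsquigarrow z_j\}$. On this complement \eqref{eq:tilde:I} yields $\tilde{I}^{z_i} = I^{(0),z_i} = I^{z_i}$, giving the inclusion needed for Lemma \ref{lem:final:independent}. The reverse inclusion (up to a null set, which is all that Lemma \ref{lem:final:independent} truly requires) follows because, on $\{\tau^c_i > -\infty\}$, the process $\tilde{I}^{z_i}$ switches to the independent layer $I^{(i),z^c_i}$ for $s \leq \tau^c_i$, whose regeneration time differs almost surely from that of $I^{z_i}$.

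For (ii), it suffices by the obvious symmetry in the construction of $\hat{I}^{z_1},\hat{I}^{z_2}$ to treat $\hat{I}^{z_1}$. I would split according to definition \eqref{eq:I:hat}. Suppose first $\hat{\tau}_{1,2} \leq \hat{\tau}_1$, so that $\hat{I}^{z_1} = \hat{I}^{(5),z_1}$. On $\{\hat{\tau}_1 = -\infty\}$ the process $\hat{I}^{(5),z_1}$ is identically $I^{z_1}$, while on $\{\hat{\tau}_1 > -\infty\}$ we have $z_1 \leftrightsquigarrow z_k$ for some $k \in \{3,4\}$, which places the event in the first set of the right-hand side. Suppose next $\hat{\tau}_{1,2} > \hat{\tau}_1$; if $\hat{\tau}_1 > -\infty$ we conclude as before, so assume $\hat{\tau}_1 = -\infty$. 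Then $\hat{I}^{(5),z_1}\equiv I^{z_1}$, hence $\hat{\tau}_{1,2}$ is the last time at which $I^{z_1}_s$ meets $\hat{I}^{(5),z_2}_s$ in $\{1,\ldots,N\}$. If $\hat{\tau}_{1,2} = -\infty$ the definition again gives $\hat{I}^{z_1} = I^{z_1}$. Otherwise the meeting must occur at a time where $I^{z_1}$ is alive, so $\tau^R_{z_1}\leq \hat{\tau}_{1,2}$. Moreover $\hat{\tau}_{1,2} < \hat{\tau}_2$: if equality held, then $I^{z_1}_{\hat{\tau}_2} = I^{z_2}_{\hat{\tau}_2} = I^{z_k}_{\hat{\tau}_2}$ for the $k\in\{3,4\}$ realising $\hat{\tau}_2$, which by transitivity would yield $z_1\leftrightsquigarrow z_k$ and contradict $\hat{\tau}_1 = -\infty$. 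This forces $\hat{\tau}_2 > -\infty$, i.e.\ $z_2\leftrightsquigarrow z_k$ for some $k\in\{3,4\}$; combined with the elementary bound $\hat{\tau}_2 \leq t_3\vee t_4$ we obtain $\tau^R_{z_1} \leq \hat{\tau}_{1,2} < \hat{\tau}_2 \leq t_3\vee t_4$, which places the event in the second set.

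The delicate step is the last one: an indirect modification of $\hat{I}^{z_1}$ in the absence of a direct coalescence with $z_3$ or $z_4$ must be routed through $\hat{I}^{(5),z_2}$, and the challenge is to extract from this routing both the $z_2$-coalescence condition and the regeneration condition on $z_1$. Transitivity of the coalescence relation is the tool that rules out $\hat{\tau}_{1,2}\geq \hat{\tau}_2$ and converts the two-step meeting into a genuine regeneration constraint via $\hat{\tau}_2\leq t_3\vee t_4$. Everything else is bookkeeping on the piecewise definitions.
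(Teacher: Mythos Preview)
Your argument tracks the paper's own approach (the paper compresses (ii) into the single inclusion $\{\hat{\tau}_i < \hat{\tau}_{1,2} \leq \hat{\tau}_{3-i}\} \subset \cup_{k=3}^4 \{z_{3-i} \leftrightsquigarrow z_k\} \cap \{\tau^R_{z_i} < t_3 \vee t_4\}$, which you are unpacking by cases). Your remark on (i)---that the reverse inclusion holds only almost surely---is a fair observation about the statement; only the inclusion $\{\tilde{I}^{z_i}\neq I^{z_i}\}\subset\cup_{j\neq i}\{z_i\leftrightsquigarrow z_j\}$ is ever used downstream.

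There is, however, a small gap in your treatment of (ii). In the regime $\hat{\tau}_1=-\infty$, $\hat{\tau}_{1,2}>-\infty$, you assert $\hat{\tau}_{1,2}<\hat{\tau}_2$ but rule out only the equality case. Two further possibilities need a word. If $\hat{\tau}_2=-\infty$, then also $\hat{I}^{(5),z_2}=I^{z_2}$, so $\hat{\tau}_{1,2}=\tau^c_{z_1,z_2}$ and definition \eqref{eq:I:hat} gives $\hat{I}^{z_1}_s=I^{z_2}_s=I^{z_1}_s$ for $s\leq\hat{\tau}_{1,2}$ (the last equality by coalescence of $I^{z_1}$ and $I^{z_2}$), whence $\hat{I}^{z_1}=I^{z_1}$ and there is nothing to prove. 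If instead $\hat{\tau}_{1,2}>\hat{\tau}_2>-\infty$, then at $s=\hat{\tau}_{1,2}$ one has $\hat{I}^{(5),z_2}_s=I^{z_2}_s$ (since $s>\hat{\tau}_2$), so $I^{z_1}$ and $I^{z_2}$ meet at $\hat{\tau}_{1,2}$ and therefore coincide at $s=\hat{\tau}_2<\hat{\tau}_{1,2}$; your own transitivity argument then yields $z_1\leftrightsquigarrow z_k$ and the same contradiction with $\hat{\tau}_1=-\infty$. With this patch the proof is complete.
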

\begin{proof}
    This follows directly from the construction of the processes and the remark that 
    \begin{equation*}
        \left\{ \hat{\tau}_i < \hat{\tau}_{1,2} \leq \hat{\tau}_{3-i}  \right\} \subset 
        \bigcup_{k=3}^4 \{ z_{3-i} \leftrightsquigarrow z_k\} \cap \{ \tau^R_{z_i} <  t_3 \vee t_4 \}.
    \end{equation*}   
\end{proof}

We are now in position to prove the following independence properties of the processes $I$.
\begin{proposition}\label{prop:I:independence:property}
    The processes $I$, $\tilde{I}$ and $\hat{I}$ satisfy the following independence properties:
    \begin{enumerate}[(i)]
        \item for all $i$, $\tilde{I}^{z_i}$ is independent of $(I^{z_j}, \tilde{I}^{z_j})_{j\neq i}$;
        \item $(\hat{I}^{z_1}, \hat{I}^{z_2})$ is independent of $(I^{z_j}, \tilde{I}^{z_j})_{j=3,4}$.
        \item $\hat{I}^{z_1} $ is independent of $\tilde {I}^{z_2}, $ and $\hat{I}^{z_2} $ is independent of $\tilde {I}^{z_1}.$ 
    \end{enumerate}
\end{proposition}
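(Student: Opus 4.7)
The plan is to exploit the explicit layered construction, combined with the three formal remarks stated just before the proposition, to reduce each independence statement to a purely combinatorial claim: the sets of space--time--layer coordinates of the i.i.d.\ variables $J^{(k)}_{(i,s)}$ used to determine the processes on one side are disjoint from those used on the other side. Once such disjointness is verified, the independence follows immediately from the independence across layers and across coordinates of the family $\{J^{(k)}_{(i,s)}\}_{k,i,s}$ and a standard Dynkin/$\pi$-$\lambda$ argument (it suffices to check the product formula on cylinder events, i.e., on fixed trajectories).

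For part (i), fix admissible trajectories $\tilde{\gamma}^{1},\dots,\tilde{\gamma}^{4}$ and $\gamma^{1},\dots,\gamma^{4}$. Applying remark 1 to each $I^{z_j}$ and remark 2 to the tuple $(\tilde{I}^{z_i})_i$, the joint event is equivalent to prescribing the values of $J$ at the triples
\begin{equation*}
    \mathcal{T}_i^{\tilde{}} = \{((\tilde{\gamma}^i_s,s),\tilde{\ell}^i_s) : s\le t_i\},\qquad \mathcal{T}_j = \{((\gamma^j_s,s),0) : s\le t_j\},
\end{equation*}
with $\tilde{\ell}^i_s\in\{0,i\}$ (equal to $i$ for $s\le\tau^c_i$, and $0$ otherwise). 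The claim is that $\mathcal{T}_i^{\tilde{}}$ is disjoint from $\bigcup_{j\neq i}(\mathcal{T}_j\cup \mathcal{T}_j^{\tilde{}})$: layer $i$ coordinates in $\mathcal{T}_i^{\tilde{}}$ cannot appear in the others (which only use layers in $\{0\}\cup\{j:j\neq i\}$), and layer $0$ coordinates in $\mathcal{T}_i^{\tilde{}}$ occur only at times $s>\tau^c_i$ where, by definition of $\tau^c_i$, $I^{z_i}_s\neq I^{z_j}_s$ for every $j\neq i$, hence $(\tilde{\gamma}^i_s,s)\neq (\gamma^j_{s},s)=(\tilde{\gamma}^j_{s},s)$. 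This proves the factorization of the joint probability.

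For part (ii), the argument is identical in spirit but relies on remark 3. The layer $5$ coordinates used by $(\hat{I}^{z_1},\hat{I}^{z_2})$ are automatically disjoint from the layers $\{0,3,4\}$ used by $(I^{z_j},\tilde{I}^{z_j})_{j=3,4}$. The delicate part is the layer-$0$ portion. Reading off from \eqref{eq:hatL}, $\hat{L}^{z_1}_s=0$ forces $s>\hat{\tau}_1=\tau^c_{z_1,z_3}\vee\tau^c_{z_1,z_4}$, so $I^{z_1}_s$ differs from both $I^{z_3}_s$ and $I^{z_4}_s$, and likewise from $\tilde{I}^{z_j}_s$ on its layer-$0$ portion (where $s>\tau^c_j$ so $I^{z_j}_s=\tilde{I}^{z_j}_s$). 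The same holds with the indices $1$ and $2$ swapped. Thus the layer-$0$ coordinates visited by the hat processes are disjoint from those visited by the right-hand processes.

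For part (iii), focusing on $\hat{I}^{z_1}$ versus $\tilde{I}^{z_2}$ (the other case is symmetric): $\hat{I}^{z_1}$ is a function of $(J^{(0)},J^{(5)})$ and $\tilde{I}^{z_2}$ a function of $(J^{(0)},J^{(2)})$, so layers $2$ and $5$ are automatically disjoint from each other and from layer $0$. It remains to show disjointness of the layer-$0$ coordinates used by each. When both $\hat{L}^{z_1}_s=0$ and $\tilde{L}^{z_2}_s=0$ at the same time $s$, we have in particular $s>\tau^c_2\ge \tau^c_{z_1,z_2}$, so $I^{z_1}_s\neq I^{z_2}_s$ and the corresponding space--time coordinates differ. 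I expect this last verification--carefully unwinding the definition of $\hat{L}$ given in \eqref{eq:hatL} to exclude the pathological case in which $s\le \hat{\tau}_{1,2}\le \hat{\tau}_2$--to be the most delicate step. Once it is settled, assembling the three parts into the independence statements is immediate by integrating over all admissible fixed trajectories and invoking independence of the $J^{(k)}_{(i,s)}$.
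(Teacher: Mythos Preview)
Your proof is correct and follows the same strategy as the paper's: fix trajectories, rewrite the events via the $f^|_z$ functions (your ``remarks'' 1--3), and verify that the space--time--layer index sets of the $J$-variables feeding each side are disjoint. Your treatment of (i) and (ii) is in fact more explicit than the paper's on why the layer-$0$ portions cannot collide.

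One small correction in (iii): the sub-case you flag as delicate, $s\le\hat\tau_{1,2}\le\hat\tau_2$, is actually harmless, since then $\hat L^{z_1}_s=5$ directly from \eqref{eq:hatL} and no layer-$0$ collision can occur. The sub-case that genuinely needs care is $\hat\tau_1<s\le\hat\tau_{1,2}$ with $\hat\tau_{1,2}>\hat\tau_2$: here $\hat L^{z_1}_s=0$ yet $\hat I^{z_1}_s$ is following $\hat I^{(5),z_2}$, so it is not immediate that $\hat I^{z_1}_s=I^{z_1}_s$. However, in this situation both $\hat I^{(5),z_1}$ and $\hat I^{(5),z_2}$ are still on layer $0$ at time $\hat\tau_{1,2}$, so $I^{z_1}_{\hat\tau_{1,2}}=I^{z_2}_{\hat\tau_{1,2}}$ and hence $\tau^c_{z_1,z_2}\ge\hat\tau_{1,2}\ge s$; this contradicts $s>\tau^c_2\ge\tau^c_{z_1,z_2}$, which you already derived from $\tilde L^{z_2}_s=0$. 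With this swap of sub-cases your argument goes through unchanged.
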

\begin{proof}\ 
\paragraph*{Proof of (i)}
Assume without loss of generality that $i=1$. Let $\tilde{\gamma}^1,\dots, \tilde{\gamma}^4, \gamma^2,\dots, \gamma^4 \in  \{0,1,\dots, 2N, \infty\}^\Z,$ and let $ \tilde \ell_1, \ldots, \tilde \ell_4 $ be the associated layer processes, which are uniquely defined being measurable functions of $\tilde{\gamma}^1,\dots, \tilde{\gamma}^4, \gamma^2,\dots, \gamma^4.$ By the point 2 above, we have
\begin{eqnarray*}
    A
    &=& \left( \cap_{i=1,\dots,4} \{\tilde{I}^{z_1} = \tilde{\gamma}^1\} \right) \cap \left( \cap_{j=2,\dots,4}  \{I^{z_j} = \gamma^j\} \right)\\
    &=& \{f^|_{z_1}(J_{\tilde{\gamma}^1}^{\tilde{\ell}^1}) = \tilde{\gamma}^1\} \cap \left( \cap_{j=2}^4 \{f^|_{z_j}(J_{\tilde{\gamma}^j}^{\tilde{\ell}^j}) = \tilde{\gamma}^j, f^|_{z_j}(J_{\gamma^j}^{\ell^\emptyset}) = \gamma^j\} \right)
\end{eqnarray*}
By construction, if $\P(A)$ is non null, then the set $\{ (\tilde{\gamma}^1_s, \tilde{\ell}^1_s, s), s\in \Z, \tilde{\gamma}^1_s \notin \{0,\infty\} \}$ is disjoint from the sets $\{ (\tilde{\gamma}^j_s, \tilde{\ell}^j_s, s), s\in \Z, \tilde{\gamma}^j_s \notin \{0,\infty\} \}$ and $\{ (\gamma^j_s, 0, s), s\in \Z, \gamma^j_s \notin \{0,\infty\} \}$ for $j>1$ so that the two events in the final expression of $A$ are independent, and Item (i) follows.

\paragraph*{Proof of (ii)}
Let $\hat{\gamma}^1,\hat{\gamma}^2,\gamma^3,\gamma^4,\tilde{\gamma}^3,\tilde{\gamma}^4 \in  \{0,1,\dots, N, \infty\}^\Z,$ and let $ \hat l^1, \hat l^2 , \tilde l^3, \tilde l^4 $ be the associated layer processes. By the point 3 above, we have
\begin{eqnarray*}
        A 
        &=& \left( \cap_{i=1}^2 \{\hat{I}^{z_i} = \hat{\gamma}^i\} \right) \cap \left( \cap_{j=3}^4 \{ I^{z_j} = \gamma^j, \tilde{I}^{z_j} = \tilde{\gamma}^j \} \right)\\
        &=& \left( \cap_{i=1}^2 \{f^|_{z_i}(J_{\hat{\gamma}^i}^{\hat{\ell}^i}) = \hat{\gamma}^i\} \right) \cap \left( \cap_{j=3}^4 \{f^|_{z_j}(J_{\gamma^j}^{\ell^\emptyset}) = \gamma^j, f^|_{z_j}(J_{\tilde{\gamma}^j}^{\tilde{\ell}^j}) = \tilde{\gamma}^j\} \right).\\
\end{eqnarray*}
By construction, if $\P(A)$ is non null, then the sets $\{ (\hat{\gamma}^1_s, \hat{\ell}^1_s, s), s\in \Z, \hat{\gamma}^1_s \notin \{0,\infty\} \}$ and $\{ (\hat{\gamma}^2_s, \hat{\ell}^2_s, s), s\in \Z, \hat{\gamma}^2_s \notin \{0,\infty\} \}$ are disjoint from the sets $\{ (\tilde{\gamma}^j_s, \tilde{\ell}^j_s, s), s\in \Z, \tilde{\gamma}^j_s \notin \{0,\infty\} \}$ and $\{ (\gamma^j_s, 0, s), s\in \Z, \gamma^j_s \notin \{0,\infty\} \}$ for $j=3,4$ so that the two events in the final expression of $A$ are independent, and Item (ii) follows.

\paragraph*{Proof of (iii)}
Let $\hat{\gamma}^1 , \tilde{\gamma}^2 \in  \{0,1,\dots, N, \infty\}^\Z,$ and let $ \hat \ell^1 \in \{0,5\}^\Z$ and $ \tilde \ell^2 \in \{0,2\}^\Z $ be two fixed layer processes that are compatible with the event $ \{\hat{I}^{z_1} = \hat{\gamma}^1\} \cap \{ \tilde{I}^{z_2} = \tilde{\gamma}^2\}$. By compatible, we mean for example that supposing that $ \hat{\gamma}^1 $ and $ \tilde{\gamma}^2 $ meet at some time $ s \le t_1 \vee t_2,$ it is not possible to have $ \hat l^1_u = \tilde l^2_u$ for $ u \le s.$

Then, analogously to the proof of point (ii) above, the sets $\{ (\hat{\gamma}^1_s, \hat{\ell}^1_s, s), s\in \Z, \hat \gamma_s^1 \notin \{0, \infty \} \} $  and $\{ (\tilde{\gamma}^2_s, \tilde{\ell}^2_s, s), s\in \Z, \tilde \gamma_s^2 \notin \{0, \infty \} \} $ are disjoint sets implying the independence.  
\end{proof}

\begin{proposition}\label{prop:I:distribution:property}
    For all $i$, $\tilde{I}^{z_i}$ has the same distribution as $I^{z_i}$ and $(\hat{I}^{z_1}, \hat{I}^{z_2})$ has the same distribution as $(I^{z_1}, I^{z_2})$.
\end{proposition}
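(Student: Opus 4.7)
The plan is to show that each of $\tilde I^{z_i}$ and the pair $(\hat I^{z_1},\hat I^{z_2})$ satisfies exactly the same backward Markovian dynamics as $I^{z_i}$ and $(I^{z_1},I^{z_2})$ respectively, via induction on backward time. The common engine will be the twofold structure observed in the recursions \eqref{eq:recursion:I:tilde}--\eqref{eq:recursion:I:hat}: at each backward step $s$, (a) the layer index is measurable with respect to the already-built piece of trajectory (at times $\ge s$), while (b) the $J$-variable consumed at that step is fresh, hence uniform on $\{1,\dots,N\}$ with an atom $\lambda$ at $0$, independently of the past.

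For the tilde version I would fix $i$, set $z_i=(i_i,t_i)$, and argue by induction on $k\ge 0$ that the law of $(\tilde I^{z_i}_{t_i-k},\dots,\tilde I^{z_i}_{t_i})$ equals that of $(I^{z_i}_{t_i-k},\dots,I^{z_i}_{t_i})$. The initialization is trivial since both start at $i_i$. For the induction step, one uses \eqref{eq:recursion:I:tilde}: $\tilde I^{z_i}_{s-1}=J^{(\tilde L^{z_i}_s)}_{(\tilde I^{z_i}_s,s)}$, together with the fact that $\tilde L^{z_i}_s\in\{0,i\}$ is determined by $(\tilde I^{z_i}_u)_{u\ge s}$ (via $\tau^c_i$, which by Proposition~\ref{prop:I:independence:property}(i) can be read off the tilde trajectory alone). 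Since the time coordinate strictly decreases along the backward walk, the space-time-layer coordinate $(\tilde I^{z_i}_s,s,\tilde L^{z_i}_s)$ is visited for the first time, so the variable $J^{(\tilde L^{z_i}_s)}_{(\tilde I^{z_i}_s,s)}$ is independent of the past trajectory. Conditional on that past, $\tilde I^{z_i}_{s-1}$ is therefore distributed exactly as the generic backward transition of $I^{z_i}$, which closes the induction.

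For the hat version I would run an analogous induction jointly on $(\hat I^{z_1},\hat I^{z_2})$. The layer processes $\hat L^{z_1},\hat L^{z_2}$ take values in $\{0,5\}$ as given by \eqref{eq:hatL} and are measurable functions of the quadruple $(\hat I^{z_1}_u,\hat I^{z_2}_u,I^{z_3}_u,I^{z_4}_u)_{u\ge s}$. At each backward step $s$, $\hat I^{z_\alpha}_{s-1}=J^{(\hat L^{z_\alpha}_s)}_{(\hat I^{z_\alpha}_s,s)}$ for $\alpha=1,2$. The argument now has three sub-cases. If $\hat L^{z_1}_s\neq \hat L^{z_2}_s$, or if $\hat I^{z_1}_s\neq \hat I^{z_2}_s$, the two space-time-layer coordinates consulted are distinct from each other and from all previously visited coordinates (including those used by $I^{z_3},I^{z_4}$ which live in layer $0$ only); hence the two new $J$-values are independent and independent of the past, matching the joint one-step transitions of $(I^{z_1},I^{z_2})$ when they have not yet coalesced. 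If instead $\hat I^{z_1}_s=\hat I^{z_2}_s$ and $\hat L^{z_1}_s=\hat L^{z_2}_s$, the same $J$-variable is consumed and the two processes remain coalesced, again matching the joint dynamics of $(I^{z_1},I^{z_2})$ after their meeting. The switch at $\hat\tau_{1,2}$ in \eqref{eq:I:hat} is precisely what ensures that the second scenario is consistent: on $\{\hat\tau_{1,2}>\hat\tau_i\}$ the process $\hat I^{z_i}$ joins $\hat I^{z_{3-i}}$ on its current layer, so that from $\hat\tau_{1,2}$ onward they share both location and layer.

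The main obstacle will be the bookkeeping in the joint hat argument: one must verify rigorously that whenever a $J$-variable is newly read at step $s$, no earlier piece of any of the four processes (including $I^{z_3},I^{z_4}$ and the counterpart $\hat I^{z_{3-\alpha}}$) has already touched the triple $(\hat I^{z_\alpha}_s,s,\hat L^{z_\alpha}_s)$, except in the coalescent sub-case above. This amounts to a careful case split on which of $\hat\tau_1,\hat\tau_2,\hat\tau_{1,2}$ is largest, combined with the fact that layer $5$ variables are independent of layer $0$ variables. Once this case analysis is in place, the joint finite-dimensional distributions of $(\hat I^{z_1},\hat I^{z_2})$ and $(I^{z_1},I^{z_2})$ agree step by step, yielding the claim.
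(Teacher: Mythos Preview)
Your approach is essentially the paper's: show that the tilde (resp.\ hat) processes satisfy the same backward Markov transitions as $I^{z_i}$ (resp.\ $(I^{z_1},I^{z_2})$), using that the layer index at step $s$ is determined by information at times $>s$ and that the $J$-variable read at time $s$ is therefore fresh. The paper makes exactly this argument, conditioning on the event $A=\{\hat I^{z_1}_s=i_1,\hat I^{z_2}_s=i_2,\hat L^{z_1}_s=l_1,\hat L^{z_2}_s=l_2\}$ and noting that $A$ depends only on $J$-variables at times $>s$.

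Two points where your write-up should be tightened. First, your three-way case split is slightly off: the sub-case $\hat I^{z_1}_s=\hat I^{z_2}_s$ with $\hat L^{z_1}_s\neq\hat L^{z_2}_s$ \emph{cannot} occur, and this is not a cosmetic point---if it could, the hat pair would take two independent steps while the original pair (at the same state) would take one common step, and the transitions would not match. The paper disposes of this in one line (``Then necessarily $l_1=l_2$''); you hint at it via the switching rule at $\hat\tau_{1,2}$ but should state it as a lemma and use it to reduce to the clean dichotomy $i_1\neq i_2$ versus $i_1=i_2$.

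Second, your ``main obstacle'' paragraph overcomplicates matters. Because the time coordinate strictly decreases, \emph{every} $J$-variable indexed by time $s$ is automatically independent of anything determined by times $>s$, regardless of what $I^{z_3},I^{z_4}$ are doing. You do not need to track which triples have been visited; you only need that $(\hat I^{z_\alpha}_s,\hat L^{z_\alpha}_s)$ is measurable with respect to the $\sigma$-field generated by $\{J^{(\ell)}_{(k,u)}:u>s\}$. This is the content of the paper's parenthetical ``$A$ depends only on decisions strictly after time $s$''; the bookkeeping against $I^{z_3},I^{z_4}$ at time $s$ is unnecessary.
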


\begin{proof}
    Remind that the layers $J^{(0)},\dots,J^{(5)}$ are i.i.d. so that it is easy to prove that $\tilde{I}^{z_i}$ is a backward random walk with the same transitions as $I^{z_i}$ (compare Equation \eqref{eq:recursion:I:tilde} with Equation \eqref{eq:def:backward:RW}) which in turn implies that they share the same distribution.

    \medskip
    We now turn to the second part of the proof. For the marginals, the same argument as above applies (thanks to Equation \eqref{eq:recursion:I:hat}). Hence, $\hat{I}^{z_i}$ has the same distribution as $I^{z_i}$. Then, we show that the process $ (\hat{I}^{z_1}_{s}, \hat{I}^{z_2}_{s})_s$ is a two-dimensional Markov chain (backwards in time) which has the same transitions as $(I^{z_1}_{s}, I^{z_2}_{s})_s.$ Since both chains start from the same initial conditions, this implies the desired result. 
    
    Since transitions between $t_1 \vee t_2$ and $t_1 \wedge t_2$ only concern one of the two processes, evolving according to the right marginals, we do only need to 
    consider transitions $ s \to s-1$ for $ s \le t_1 \wedge t_2.$ Fix $i_1 , i_2, j_1 , j_2  \in \{0, \ldots, N \} . $ Let us first discuss the case $ i_1 \neq i_2.$ 
    
    Then for any $ l_1, l_2 \in \{0, 5 \}, $ writing 
    $$ A := \{ \hat{I}^{z_1}_s = i_1, \hat{I}^{z_2}_s = i_2, \hat{L}^{z_1}_s = l_1,\hat{L}^{z_2}_s = l_2\}, $$
    we have that 
    $$
    \P ( \hat{I}^{z_1}_{s-1} = j_1, \hat{I}^{z_2}_{s-1} = j_2|  A )  
    = \P ( J^{(l_1)}_{(i_1,s)} = j_1, J^{(l_2)}_{(i_2,s)} = j_2 |  A ).
    $$
    By construction, $J^{(l_1)}_{(i_1,s)} $ and $J^{(l_2)}_{(i_2,s)}$ are independent (since $i_1 \neq i_2),$ they have the same joint distribution as $(J^{(0)}_{(i_1,s)} , J^{(0)}_{(i_2,s)}),$ and they are independent of $A $ (since $A$ depends only on decisions strictly after time $s$). Thus 
    $$\P ( \hat{I}^{z_1}_{s-1} = j_1, \hat{I}^{z_2}_{s-1} = j_2|  A )= \P ( J^{(0)}_{(i_1,s)} = j_1, J^{(0)}_{(i_2,s)} =  j_2 ) . $$
    Summing over all possible choices of $l_1$ and $l_2,$ this implies that 
    \begin{equation}\label{eq:transition:hatI:1}
        \P ( \hat{I}^{z_1}_{s-1} = j_1, \hat{I}^{z_2}_{s-1} = j_2 | \hat{I}^{z_1}_s = i_1, \hat{I}^{z_2}_s = i_2) = \P ( J^{(0)}_{(i_1,s)} = j_1, J^{(0)}_{(i_2,s)} =  j_2 ).
    \end{equation}

    We now discuss the case $ i_1 = i_2.$ Then necessarily $l_1 = l_2 $ and thus, by our coalescence construction, $ j_1 = j_2.$ With the same notation for the set $A$ as above and using the same independence argument, it still holds that  
    \begin{equation*} 
    \P ( \hat{I}^{z_1}_{s-1} = j_1=  \hat{I}^{z_2}_{s-1} |  A )  
    = \P ( J^{(l_1)}_{(i_1,s)} = j_1|  A ) =\P ( J^{(l_1)}_{(i_1,s)} = j_1 ) = \P ( J^{(0 )}_{(i_1,s)} = j_1 ) . 
    \end{equation*}
    Summing over all possible values of $l_1 $ implies that 
    \begin{equation}\label{eq:transition:hatI:2}
        \P ( \hat{I}^{z_1}_{s-1} = j_1=  \hat{I}^{z_2}_{s-1} |\hat{I}^{z_1}_s = i_1=  \hat{I}^{z_2}_s ) =  \P ( J^{(0 )}_{(i_1,s)} = j_1 ).
    \end{equation}
    The transition probabilities given by Equations \eqref{eq:transition:hatI:1} and \eqref{eq:transition:hatI:2} correspond exactly to the transition probabilities of $(I^{z_1}_s, I^{z_2}_s)$ - see Equation \eqref{eq:def:backward:RW}.
\end{proof}

\subsection{Proof of the two lemmas}

\begin{proof}[Proof of Lemma \ref{lem:independent:construction:ksites}]
    Four tilde processes are constructed above so we write the proof for $k=4$. Nevertheless, the proof can easily be generalized to any $k\geq 2$.

    Let $z_1,\dots, z_4$ be four different sites in $\mathcal{Z}$. For all $i=1,\dots,4$, let $\tilde{I}^{z_i}$ be the backward random walks defined by \eqref{eq:tilde:I}, and $\tilde{L}^{z_i}$ be the associated layer process. We denote $\tilde{L}^{z_i}_{-\infty} = \lim_{s\to -\infty} \tilde{L}^{z_i}_s \in \{0,i\}$ the terminal layer. Then, remind the functions $g$ and $r$ defined in the beginning of Appendix \ref{sec:coupling:construction} and define, for all $i=1,\dots, 4$, 
    \begin{equation*}
        \tilde{X}_{z_i} = g\left( \tilde{I}^{z_i}, \xi^{(0)}_{r(\tilde{I}^{z_i})} \right) \mathbf{1}_{\tilde{L}^{z_i}_{-\infty} = 0} + g\left( \tilde{I}^{z_i}, \xi^{(i)}_{r(\tilde{I}^{z_i})} \right) \mathbf{1}_{\tilde{L}^{z_i}_{-\infty} = i}.
    \end{equation*}
    In comparison, remind that $X_{z_i} = g(I^{z_i}, \xi^{(0)}_{r(I^{z_i})})$. Using the fact that the $\xi^{(j)}$'s are i.i.d. with Propositions \ref{prop:I:independence:property} and \ref{prop:I:distribution:property}, one deduces Items (i) and (ii). Finally, Item (iii) follows from the fact that $\{\tilde{X}^{z_i}\neq X^{z_i}\} \subset \{\tilde{I}^{z_i}\neq I^{z_i}\}$ and Proposition \ref{prop:I:coupling:property}.
\end{proof}

\begin{proof}[Proof of Lemma \ref{lem:final:independent}]
    Let $z_1,\dots, z_4$ be four different sites in $\mathcal{Z}$. For all $i=1,\dots, 4$, let us define $\tilde{X}_{z_i}$ as in the proof of Lemma \ref{lem:independent:construction:ksites}. In particular, Items (iv) to (vi) follow from Lemma \ref{lem:independent:construction:ksites}.
    
    Then, for $i=1,2$, let $\hat{I}^{z_i}$ be the backward random walks defined by \eqref{eq:I:hat}, and $\hat{L}^{z_i}$ be the associated layer process. We denote $\hat{L}^{z_i}_{-\infty} = \lim_{s\to -\infty} \tilde{L}^{z_i}_s \in \{0,5\}$ the terminal layer. Then, remind the functions $g$ and $r$ defined in the beginning of Appendix \ref{sec:coupling:construction} and define, for all $i=1,\dots, 4$, 
    \begin{equation*}
        \hat{X}_{z_i} = g\left( \hat{I}^{z_i}, \xi^{(0)}_{r(\hat{I}^{z_i})} \right) \mathbf{1}_{\hat{L}^{z_i}_{-\infty} = 0} + g\left( \hat{I}^{z_i}, \xi^{(5)}_{r(\hat{I}^{z_i})} \right) \mathbf{1}_{\hat{L}^{z_i}_{-\infty} = 5}.
    \end{equation*}
    Using the fact that the $\xi^{(j)}$'s are i.i.d., Item (i) follows from Proposition \ref{prop:I:distribution:property}, Items (ii) and (v) follow respectively from Items (ii) and (iii) of Proposition \ref{prop:I:independence:property}. Finally, Items (ii) and (iii) follow from Item (ii) of Proposition \ref{prop:I:coupling:property}.
\end{proof}

\section{Coalescence of two or more backward random walks}
\label{app:coalescence_of_2_or_more_BRW}

The aim of this section is to prove Proposition \ref{prop:1}. Throughout the proof we will consider partitions of the sets of cardinal 2, 3 and 4. For a finite set $E$, we denote by $\mathcal{P}(E)$ the set of its partitions. For ease of notation, we consider a notation which we exemplify in the case of a set $\{a,b,c\}$ with three different elements. In this case, the set $\mathcal{P}(\{a,b,c\})$ has 5 elements which are :
\begin{itemize}
    \item $\{ \{a\}, \{b\}, \{c\} \}$ written as $a \vp b \vp c$,
    \item $\{\{a, b\},\{c\}\}$, written as $a b \vp c$,
    \item $\{\{a,c\},\{b\}\}$, written as $a c \vp b$,
    \item $\{\{a\},\{b,c\}\}$, written as $a \vp b c$,
    \item $\{\{a,b,c\}\}$, written as $abc$.
\end{itemize}


\changes{
The proof relies on the Markovian property of the backward random walks $(I^z)_{z\in \mathcal{Z}}$. From these backward random walks (BRW), we define a ``status'' random walk - such as $S^1_t$ below - which informs us whether the BRW has started, is currently running, or has already reached the cemetery. This, in turn, allows us to define a partition valued Markov chain, denoted $P_t$. To keep concise notation for the states of this partition valued Markov chain, we use the notation introduced above, which informs us on the coalescing status of the BRWs. 
    The proof is then based on the computation of the transition probabilities of the partition Markov chain (some transition graphs are drawn for pedagogical purposes). Finally, the proof is concluded by the computation of the coalescence probabilities.
}

\begin{proof}[Proof of Proposition \ref{prop:1}]
Without loss of generality, we assume in this proof that $t_1\geq t_2\geq t_3\geq t_4$.

\paragraph*{Proof of Item (i)} 
Consider the backward random walks $I^{z_1}$ and $I^{z_2}$ from time $t=t_1$ to time $t=-\infty$. Let us introduce the following notation: in what follows, the letter `$S$' stands for the state of a random walk; $ S^1_t $ will be the state at time $t$ of the random walk associated to $z_1,$ and $S^2_t $ the state at time $t$ of the random walk associated to $z_2$. We define $S^{1}_t = 1_{\infty} $ if $t > t_1,$ $S^{1}_t = 1_0 $ if $I^{z_1}_t=0$, $S^{1}_t = 1$ else, and by analogy, $ S^2_t= 2_{\infty} $ if $t > t_2,$ $S^2_t = 2_0 $ if $I^{z_2}_t = 0 ,$ $ S^2_t = 2 $ else. 

In what follows, we extend the notation introduced above for partitions of sets. Let us denote $P_t = S^{1}_t \vp S^{2}_t$ if $t>\tau^c_{z_1,z_2}$ and $P_t = S^{1}_tS^{2}_t$ else. Then, the backward process $(P_t)_{-\infty< t\leq t_1}$ is a time in-homogeneous backward Markov chain on the state space $\mathcal{P}_2 =\cup_{a\in \{1_\infty, 1, 1_0\}} \cup_{b\in \{2_\infty, 2, 2_0\}} \{ab, a \vp b\}$. At time $t=t_1$, the backward process starts from the initial condition $P_{t_1} = 1 \vp 2_\infty $ if $t_1 >   t_2 ,$ and $P_{t_1} = 1 \vp 2 $ if $t_1 = t_2 $ (note that in that case $z_1\neq z_2$ implies that $i_1 \neq i_2$ so that $P_{t_1} = 12$ is not possible). 
By Proposition \ref{prop:cemetery:as}, the random variable $\min\{\tau^R_{z_1}, \tau^R_{z_2}\}>-\infty$ almost surely, implying that $P_{-\infty}:=\lim_{t\to-\infty}P_t$ exists almost surely and $P_{-\infty}\in\{1_{0}2_{0}, 1_{0}\vp 2_{0}\}$, that is the set of absorbing states. 
One can check that
\begin{equation}
\label{eq:coalesce:equiv:markov}
    \P_\theta ( \{ z_1 \leftrightsquigarrow z_2 \}) = \P_\theta \left( P_{-\infty} = 1_02_0 \,|\, P_{t_1} = 1 \vp 2_\infty \right),
\end{equation}
in case $ t_1 > t_2,$ and 
\begin{equation}
\label{eq:coalesce:equiv:markov2}
    \P_\theta ( \{ z_1 \leftrightsquigarrow z_2 \}) = \P_\theta ( P_{-\infty} = 1_02_0 \,|\, P_{t_1} = 1 \vp 2),
\end{equation}
in case $ t_1 = t_2.$

Now, for $t\leq t_1$, let us denote $Q_t(x,y)$ the transition probabilities of the time in-homogeneous backward Markov chain $(P_t)_{-\infty< t\leq t_1}$:
\begin{equation*}
    Q_t(x,y) = \P_\theta\left( P_{t-1} = y | P_t = x\right),\; x,y\in \mathcal{P}_2 .
\end{equation*}

Assume for now that $t_1>t_2$. Since we are only interested in computing the coalescence probability \eqref{eq:coalesce:equiv:markov}, we only need to give the transition probabilities encountered on the path from state $1 \vp 2_\infty$  to state  $1_{0}2_{0}$.
For $t>t_2+1$, the only relevant transition probability is $Q_t(1 \vp 2_\infty, 1 \vp 2_\infty)=1-\lambda$. At time $t=t_2+1$ there are two relevant transitions:  $Q_{t_2+1}(1 \vp 2_\infty$,$1 \vp 2)=(1-\lambda)(1-1/N)$ and $Q_{t_2+1}(1 \vp 2_\infty, 12)=(1-\lambda)/N$.

For $t\leq t_2$, there are four relevant transition probabilities: 
\begin{itemize}
    \item $Q_t(1 \vp 2, 1 \vp 2)=(1-\lambda)^2(1-1/2)$
    \item $Q_t(1 \vp 2, 12)=(1-\lambda)^2/N$,
    \item $Q_t(12, 1_02_0)=1- Q_t(12, 12)= \lambda$.
\end{itemize}
Figure \ref{fig:graph:1|2:to:12} gives a graphical representation of the relevant transitions just described in case $ t_2 < t_1$. 

\begin{figure}[ht] \centering
\begin{tikzpicture}[->,>=latex,shorten >=2pt, line width=0.5pt, node distance=2.5cm]
\newcommand\Y{3.5}
\newcommand\Xt{-1.3}

\node [circle, draw] (1_2-) {$1 \vp 2_\infty$};
\node [circle, draw] (1_2) [above left  = of 1_2-] {$1 \vp 2$};
\node [circle, draw] (12) [below left = of 1_2-] {$12$};
\node [circle, draw] (1+2+) [left = 1cm of 12] {$1_02_0$};
\path (1_2-)  edge[loop right]  node[above=5pt]{$1-\lambda$}  (1_2-);
\path (1_2-)  edge  node[above right]{$(1-\lambda)(1 - 1/N)$}   (1_2);
\path (1_2-)  edge  node[below right]{$\frac{1-\lambda}{N}$}  (12);
\path (1_2)  edge[loop left]  node[left]{$(1-\lambda)^2(1-1/N)$} (1_2);
\path (1_2)  edge  node[left]{$\frac{(1-\lambda)^2}{N}$} (12);
\path (12)  edge[loop below]  node[right]{$1-\lambda$} (12);
\path (12)  edge  node[above]{$\lambda$} (1+2+);
\path (1+2+)  edge[loop left]  node[left]{$1$} (1+2+);

\draw[-, dashed, gray] (\Xt,\Y) -- (\Xt,-\Y); 
\end{tikzpicture} 
\caption{Graph of the relevant transitions of the backward process $P$ used to compute a bound for $\P_\theta ( \{ z_1 \leftrightsquigarrow z_2 \})$ when $t_1\geq t_2$. The starting node is $1 \vp 2_\infty$ if $t_1>t_2$ and $1 \vp 2$ else. On each edge, the corresponding transition probability is given. The gray vertical line separates two temporal zones: the right one corresponds to times $t>t_2+1$ and the left one corresponds to times $t\leq t_2+1$.} 
\label{fig:graph:1|2:to:12}
\end{figure}

Starting from Equation \eqref{eq:coalesce:equiv:markov}, one can check that  
\begin{multline*}
\P_\theta(\{z_1 \leftrightsquigarrow z_2\})=\P_\theta(P_{t_2}=12|P_{t_1}=1\vp 2_{\infty}) 
\P_\theta(P_{-\infty}=1_{0}2_{0}|P_{t_2}=12)\\
+\P_\theta(P_{t_2}=1\vp 2|P_{t_1}=1\vp 2_{\infty}) 
\P_\theta(P_{-\infty}=1_{0}2_{0}|P_{t_2}=1\vp 2).
\end{multline*}
Now, by the dynamics of the Markov chain $(P_t)_{t\leq t_1}$ we have that $\P_\theta(P_{t_2}=1\vp 2|P_{t_1}=1\vp 2_{\infty})=(1-\lambda)^{t_1-t_2}$ and $\P_\theta(P_{t_2}=12|P_{t_1}=1\vp 2_{\infty})=(1-\lambda)^{t_1-t_2}/N$. Moreover, $\P_\theta(P_{-\infty}=1_{0}2_{0}|P_{t_2}=12)=1$ by Proposition \ref{prop:cemetery:as}. By observing that
\begin{eqnarray*}
 \P_\theta(P_{-\infty}=1_{0}2_{0}|P_{t_2}=1\vp 2)
 & = &\frac{(1-\lambda)^2}{N} \sum_{k=0}^{\infty}(1-\lambda)^{2k} (1-1/N)^{k}\\
 & \leq &\frac{(1-\lambda)^2}{N}\frac{1}{1-(1-\lambda)^2},  
\end{eqnarray*}
and by putting all pieces together, we then obtain that
\begin{equation}
\label{prop_coalescence_ineq_1}
\displaystyle \P_\theta ( \{ z_1 \leftrightsquigarrow z_2 \}) \leq \frac{(1-\lambda)^{|t_1-t_2|} }{N}\left[1+\frac{(1-\lambda)^{2}}{{1-(1-\lambda)^2}}\right] = \frac{(1-\lambda)^{|t_1-t_2|} }{N(1-(1-\lambda)^2)}.
\end{equation}

In the case $t_2=t_1$, we have
\begin{multline}
\label{prop_coalescence_ineq_2}
\displaystyle \P_\theta ( \{ z_1 \leftrightsquigarrow z_2 \}) = \P_\theta(P_{-\infty}=1_{0}2_{0}|P_{t_2}=1\vp 2) \\
\leq \frac{(1-\lambda)^2}{N}\frac{1}{1-(1-\lambda)^2} \leq \frac{(1-\lambda) }{N(1-(1-\lambda)^2)},    
\end{multline}
so that Item (i) follows from inequalities \eqref{prop_coalescence_ineq_1} and \eqref{prop_coalescence_ineq_2}.

\paragraph*{Proof of Item (ii)}
By analogy to the case with two sites, we define $S^{3}_t = 3_{\infty} $ if $t > t_3,$ $S^{3}_t = 3_0 $ if $I^{z_3}_t=0$, $S^{3}_t = 3$ else.
Let us describe the backward process $(P_t)_{-\infty< t\leq t_1}$ in that case. First note that for all $t\leq t_1$, $P_t$ is a partition of $\{S^{1}_t,S^{2}_t,S^{3}_t\}$ and in particular $P_s \in \mathcal{P}_3 =\cup_{a\in \{1_\infty, 1, 1_0\}} \cup_{b\in \{2_\infty, 2, 2_0\}} \cup_{c\in \{3_\infty, 3, 3_0\}} \mathcal{P}(\{a,b,c\})$. The choice of the partition is induced by the equivalence relation defined by 
\begin{equation*}
    S^{k}_t \leftrightarrow_t S^{\ell}_t \mbox{ if and only if } t\leq \tau^c_{z_k,z_\ell}.
\end{equation*}
This equivalence relation naturally induces a partition of $\{S^{1}_t,S^{2}_t,S^{3}_t\}$ and we define $P_t$ as this partition. Similarly to the proof of Item (i), we have
\begin{equation}
    \label{eq:coalesce:equiv:markov3}
    \P_\theta ( \{ z_1 \leftrightsquigarrow z_2 \leftrightsquigarrow z_3  \} ) =  \P_\theta \left( P_{-\infty} = 1_02_03_0 \,|\, P_{t_1} = 1 \vp 2_\infty \vp 3_\infty \right),
\end{equation}
in case $t_1>t_2\geq t_3$ for instance, where
$P_{-\infty}=\lim_{t\to-\infty}P_t$ exists almost surely and $P_{-\infty}\in{\cal{P}}(\{1_0,2_0,3_0\}).$
Assume for now that $t_1>t_2> t_3$ which is the most general case. The other cases can be treated similarly.

For $t\leq t_1$, let us denote the transition probabilities of the chain 
\begin{equation*}
    Q_t(x,y) = \P_\theta\left( P_{t-1} = y | P_t = x\right),\; x,y\in \mathcal{P}_3 .
\end{equation*}
Since we are only interested in computing the coalescence probability \eqref{eq:coalesce:equiv:markov3}, we only need to give the transition probabilities encountered on the path from state $1 \vp 2_\infty \vp 3_\infty$  to state $1_{0}2_{0}3_{0}$.
For $t>t_2+1$, the only relevant transition probability is $Q_t(1 \vp 2_\infty \vp 3_\infty, 1 \vp 2_\infty \vp 3_\infty)=1-\lambda$. At time $t=t_2+1$, there are two relevant transition probabilities: $Q_{t_2+1}(1 \vp 2_\infty\vp 3_{\infty},1 \vp 2\vp 3_{\infty})=(1-\lambda)(1-1/N)$ and $Q_{t_2+1}(1 \vp 2_\infty\vp 3_{\infty}, 12\vp 3_{\infty})=(1-\lambda)/(N)$.

For $t_3+1< t\leq t_2$ the three relevant transition probabilities are :
\begin{itemize}
    \item $Q_t(1 \vp 2 \vp 3_\infty, 1 \vp 2 \vp 3_\infty) = (1-\lambda)^2(1-1/N) \leq (1-\lambda)^2 $,
    \item $Q_t(1 \vp 2 \vp 3_\infty, 1 2 \vp 3_\infty)=(1-\lambda)^2/N \leq (1-\lambda)N^{-1}$,
    \item $Q_t(1 2 \vp 3_\infty, 1 2 \vp 3_\infty)=(1-\lambda)$.
\end{itemize}
At time $t=t_3+1$, the seven relevant transition probabilities are : \begin{itemize}
    \item $Q_t(1 \vp 2 \vp 3_\infty, 1 \vp 2 \vp 3)=(1-\lambda)^2(1-1/N)(1-2/N)\leq (1-\lambda)^2$, 
    \item $Q_t(1 \vp 2 \vp 3_\infty, 1 \vp 23)= Q_t(1 \vp 2 \vp 3_\infty, 1 3 \vp 2) = Q_t(1 \vp 2 \vp 3_\infty, 1 2 \vp 3) \leq (1-\lambda)^2 N^{-1}$, 
    \item $Q_t(1 \vp 2 \vp 3_\infty, 1 2 3)=(1-\lambda)^2/N^2$.
    \item $Q_t(1 2 \vp 3_\infty, 1 2 \vp 3)=(1-\lambda)(1-/N)\leq (1-\lambda),$
    \item $Q_t(1 2 \vp 3_\infty, 1 2 3)=(1-\lambda)/N$.
\end{itemize}
Finally, for $t\leq t_3$, the thirteen relevant transition probabilities are :
\begin{itemize}
    \item $Q_t(1 \vp 2 \vp 3, 1 \vp 2 \vp 3) \leq (1-\lambda)^3$,
    \item $Q_t(1 \vp 2 \vp 3, 1 2 \vp 3)= Q_t(1 \vp 2 \vp 3, 1 \vp 2 3)=Q_t(1 \vp 2 \vp 3, 2 \vp 1 3) \leq (1-\lambda)^3 N^{-1}$,
    \item $Q_t(1 \vp 2 \vp 3, 1 2 3) \leq (1-\lambda)^3 N^{-2}$,
    \item $Q_t(1 2 \vp 3, 1 2 \vp 3)= Q_t(1 \vp 2 3, 1 \vp 2 3)=Q_t(2 \vp 1 3, 2 \vp 1 3)\leq (1-\lambda)^2$,
    \item $Q_t(1 2 \vp 3, 1 2 3)= Q_t(1 \vp 2 3, 1 2 3)=Q_t(2 \vp 1 3, 1 2  3) \leq (1-\lambda)^2 N^{-1}$,
    \item $Q_t(1 2 3, 1_0 2_0 3_0)= 1 - Q_t(1 2 3, 1 2 3)=\lambda$.
\end{itemize}

\begin{figure}[ht] \centering
\newcommand\Y{4}
\newcommand\Xt{-2.3}
\newcommand\Xtt{-6}
\begin{tikzpicture}[->,>=latex,shorten >=2pt, line width=0.5pt, node distance=2cm]
\node [circle, draw] (1_2-_3-) {$1 \vp 2_\infty \vp 3_\infty$};
\node [circle, draw] (1_2_3-) [above left = 1cm and 3cm of 1_2-_3-] {$1 \vp 2 \vp 3_\infty$};
\node [circle, draw] (12_3-) [below = 3cm of 1_2_3-] {$1 2 \vp 3_\infty$};
\path (1_2-_3-)  edge[loop right]  node[right]{$1-\lambda$} (1_2-_3-);
\path (1_2-_3-)  edge  node[above right]{$1-\lambda$}  (1_2_3-);
\path (1_2-_3-)  edge  node[below right]{$\frac{1-\lambda}{N}$}  (12_3-);
\path (1_2_3-)  edge[loop above]  node[above]{$(1-\lambda)^2$} (1_2_3-);
\path (1_2_3-)  edge  node[right]{$\frac{(1-\lambda)^2}{N}$} (12_3-);
\path (12_3-)  edge[loop below]  node[below]{$1-\lambda$} (12_3-);

\draw[-, dashed, gray] (\Xt,\Y) -- (\Xt,-\Y); 
\node[draw, gray] at (-6,3) {\Huge II};
\node[draw, gray] at (2,3) {\Huge III};
\end{tikzpicture} 
\caption{Graph of the relevant transitions of the backward process $P$ (for times $t > t_3+1$) used to compute a bound for $\P_\theta ( \{ z_1 \leftrightsquigarrow z_2 \leftrightsquigarrow z_3 \})$ when $t_1\geq t_2 \geq t_3$. The starting node is $1 \vp 2_\infty \vp 2_\infty$ if $t_1>t_2>3$, $1 \vp 2 \vp 3_\infty$ if $t_1=t_2>t_3$ and $1 \vp 2 \vp 3$ (which appears in Figure \ref{fig:graph:1|2|3:to:123:part2}) else. On each edge, an upper bound of the corresponding transition probability is given. The gray vertical line separates two temporal zones: zone II corresponds to times $t_3+1<t\leq t_2+1$ and zone III corresponds to times $t>t_2+1$.}
\label{fig:graph:1|2|3:to:123:part1}
\end{figure}

\begin{figure}[ht] \centering
\newcommand\Y{5}
\newcommand\Xtt{-1.1}
\begin{tikzpicture}[->,>=latex,shorten >=2pt, line width=0.5pt, node distance=2cm]
\node [circle, draw] (1_2_3-) {$1 \vp 2 \vp 3_\infty$};
\node [circle, draw] (12_3-) [below = of 1_2_3-] {$1 2 \vp 3_\infty$};
\node [circle, draw] (13_2) [left = 3cm of 1_2_3-] {$1 3 \vp 2$};
\node [circle, draw] (1_23) [above = 0.5cm of 13_2] {$1 \vp 2 3$};
\node [circle, draw] (12_3) [below = 0.5cm of 13_2] {$1 2 \vp 3$};
\node [circle, draw] (1_2_3) [above right = of 1_23] {$1 \vp 2 \vp 3$};
\node [circle, draw] (123) [left = 5cm of 12_3-] {$1 2 3$};
\node [circle, draw] (1+2+3+) [left = of 123] {$1_0 2_0 3_0$};
\path (1_2_3-)  edge[loop right]  node[below right]{$(1-\lambda)^2$} (1_2_3-);
\path (1_2_3-)  edge  node[right]{$\frac{(1-\lambda)^2}{N}$} (12_3-);
\path (12_3-)  edge[loop right]  node[above right]{$1-\lambda$} (12_3-);
\path (1_2_3-)  edge  node[above right]{$(1-\lambda)^2$} (1_2_3);
\path (1_2_3-)  edge  node[above]{$\frac{(1-\lambda)^2}{N}$} (1_23);
\path (1_2_3-)  edge  (13_2);
\path (1_2_3-)  edge  (12_3);
\path (1_2_3-)  edge  node[right=.5cm]{$\frac{(1-\lambda)^2}{N^2}$} (123);
\path (12_3-)  edge  node[below left]{$1-\lambda$} (12_3);
\path (12_3-)  edge  node[below]{$\frac{1-\lambda}{N}$} (123);
\path (1_23)  edge[loop above]  node[above left]{$(1-\lambda)^2$} (1_23);
\path (13_2)  edge[loop, in=45, out=90, looseness=4] (13_2);
\path (12_3)  edge[loop, in=45, out=90, looseness=4] (12_3);
\path (1_23)  edge  node[above=1.3cm]{$\frac{(1-\lambda)^2}{N}$} (123);
\path (13_2)  edge  (123);
\path (12_3)  edge  (123);
\path (1_2_3)  edge[loop above]  node[above]{$(1-\lambda)^3$} (1_2_3);
\path (1_2_3)  edge  node[above=.3cm]{$\frac{(1-\lambda)^3}{N}$} (1_23);
\path (1_2_3)  edge  (13_2);
\path (1_2_3)  edge  (12_3);
\path (1_2_3) edge [bend right=100,looseness=2,in=200] node[above left]{$\frac{(1-\lambda)^3}{N^2}$} (123);

\path (123)  edge[loop, out=100, in=170, looseness=4]  node[above left]{$1-\lambda$} (123);
\path (123)  edge  node[below right]{$\lambda$} (1+2+3+);
\path (1+2+3+)  edge[loop above]  node[above]{$1$} (1+2+3+);

\draw[-, dashed, gray] (\Xtt,\Y+.8) -- (\Xtt,-\Y+.5); 
\node[draw, gray] at (-9,3) {\Huge I};
\node[draw, gray] at (2,3) {\Huge II};
\end{tikzpicture} 
\caption{Graph of the relevant transitions of the backward process $P$ (for times $t \leq t_3+1$) used to compute a bound for $\P_\theta ( \{ z_1 \leftrightsquigarrow z_2 \leftrightsquigarrow z_3 \})$ when $t_1\geq t_2 \geq t_3$. The starting node is $1 \vp 2_\infty \vp 2_\infty$ (which appears in Figure \ref{fig:graph:1|2|3:to:123:part1}) if $t_1>t_2>3$, $1 \vp 2 \vp 3_\infty$ if $t_1=t_2>t_3$ and $1 \vp 2 \vp 3$ else. On some of the edges, an upper bound of the corresponding transition probability is given (the nodes $1\vp23$, $13\vp2$ and $12\vp3$ play almost the same role that is why we omit some the transition probabilities). The gray vertical line separates two temporal zones: zone I corresponds to times $t\leq t_3+1$ and zone II corresponds to times $t_3+1<t\leq t_2+1$.}
\label{fig:graph:1|2|3:to:123:part2}
\end{figure}

Looking at the Figures \ref{fig:graph:1|2|3:to:123:part1} and \ref{fig:graph:1|2|3:to:123:part2}, the Markov property of $(P_t)_{t\leq t_1}$ permits to compute the probabilities of ending in state $1_{0}2_{0}3_{0}$ from any of the other states of the graph. These two figures highlight three time zones denoted I, II and III, which are depicted from left to right corresponding to the forward evolution of time (hence the arrows go from right to left since they correspond to a backward dynamics). 

In the following, we decompose the computation of the probability in Equation \eqref{eq:coalesce:equiv:markov3} with respect to the three zones. To simplify the notation, let us denote for all $t\in\Z$ and $\Theta\in \mathcal{P}_3$, $p(t,\Theta) = \P_\theta \left( P_{-\infty} = 1_02_03_0 \,|\, P_{t} = \Theta \right)$.

For zone I, we have to consider initial conditions starting at time $t_3$ (see Figure \ref{fig:graph:1|2|3:to:123:part2}). That is,
\begin{equation*}
    p(t_3, 1 \vp 23) = p(t_3, 12 \vp 3)=p(t_3,13\vp 2)\leq K(1-\lambda)^2 N^{-1},
\end{equation*}
and 
\begin{equation*}
    p(t_3, 1 \vp 2 \vp 3 ) \leq K(1-\lambda)^3 \left[3p(t_3,1 \vp 23) N^{-1} + N^{-2} \right] \leq K (1-\lambda)^3 N^{-2}.
\end{equation*}
For zone II, we have for all $t_3< t \leq t_2$,
\begin{equation*}
    p(t, 12 \vp 3_\infty ) = (1-\lambda)^{t-t_3} [p(t_3, 12 \vp 3) + N^{-1}p(t_3, 123)] \leq K (1-\lambda)^{t-t_3} N^{-1},
\end{equation*}
since we have to wait in the middle part during $t-t_3-1$ steps and then take the edge from $12 \vp 3_\infty$ to $12\vp 3$ or the edge from $12 \vp 3_\infty$ to $123$. Moreover, we have
\begin{eqnarray*}
    p(t_2 ,1\vp 2 \vp 3_\infty ) &\leq& K \left\{ \sum_{k=1}^{t_2-t_3-1} (1-\lambda)^{2k} p(t_2-k, 12 \vp 3_\infty) N^{-1} \right. \\
    && \left. + (1-\lambda)^{2(t_2-t_3)} \left[ 3p(t_3, 1\vp23) N^{-1} + p(t_3, 1 \vp 2 \vp 3)  \right] \right\} \\
    &\leq & K \left\{ (1-\lambda)^{t_2-t_3} \sum_{k=1}^{t_2-t_3-1} (1-\lambda)^k N^{-2} + 4 (1-\lambda)^{2(t_2-t_3)} (1-\lambda)^2 N^{-2} \right\} \\
    & \leq & K (1-\lambda)^{t_2-t_3}N^{-2}.
\end{eqnarray*}
Finally, using zone III, we have 
\begin{eqnarray*} 
    \P_\theta ( \{ z_1 \leftrightsquigarrow z_2 \leftrightsquigarrow z_3  \} )&=& p(t_1,1\vp 2_\infty\vp 3_\infty) \\
     &=& (1-\lambda)^{t_1-t_2}[p(t_2 ,1\vp 2 \vp 3_\infty )(1-1/2N)+p(t_2 ,12 \vp 3_\infty )(2N)^{-1}],
\end{eqnarray*}
which ends the proof.

\paragraph*{Proof of Item (iii)}
The backward process $(P_t)_{-\infty<t\leq t_1}$ is naturally extended to the case of four sites and we use the decomposition
\begin{equation*}
\{ z_1 \leftrightsquigarrow z_2  \} \cap \{z_3 \leftrightsquigarrow z_4 \} 
=  \{ z_1 \leftrightsquigarrow z_2  \leftrightsquigarrow z_3 \leftrightsquigarrow z_4 \} \cup \{ z_1 \leftrightsquigarrow z_2  \not\leftrightsquigarrow z_3 \leftrightsquigarrow z_4 \}.
\end{equation*} 
Assume for now that $t_1 > t_2 \geq t_3 \geq t_4$ so that $P_{t_1} = 1 \vp 2_\infty \vp 3_\infty \vp 4_\infty.$ It suffices to control 
\begin{multline*}
    \P_\theta \left( P_{-\infty} = 1_02_03_04_0 \,|\, P_{t_1} = 1 \vp 2_\infty \vp 3_\infty \vp 4_\infty \right)\\ + \P_\theta \left( P_{-\infty} = 1_02_0 \vp 3_04_0 \,|\, P_{t_1} = 1 \vp 2_\infty \vp 3_\infty \vp 4_\infty \right).
\end{multline*}
The list of the relevant transitions and the corresponding graphs are not written here due to their huge size. However the computations can be made in the same manner.
For instance, one can prove that
\begin{equation*}
    \P_\theta \left( P_{-\infty} = 1_02_03_04_0 \,|\, P_{t_1} = 1 \vp 2_\infty \vp 3_\infty \vp 4_\infty \right) \leq K(1-\lambda)^{t_1-t_4}N^{-3},
\end{equation*}
and 
\begin{equation*}
    \P_\theta \left( P_{-\infty} = 1_02_0 \vp 3_04_0 \,|\, P_{t_1} = 1 \vp 2_\infty \vp 3_\infty \vp 4_\infty \right) \leq K(1-\lambda)^{(t_1-t_2)+(t_3-t_4)}N^{-2},
\end{equation*}
which in turn give the desired result.
Finally, the same upper bound holds if $t_1>t_2$ is not satisfied by replacing the initial condition (for instance $t_1 = t_2 > t_3 \geq t_4$ corresponds to the initial condition $P_{t_1} = 1 \vp 2 \vp 3_\infty \vp 4_\infty$ if $z_1 \neq z_2$).

\paragraph*{Proof of Item (iv)}
We will only prove the case $t_1\wedge t_2> t_3\vee t_4$. The case in which $t_1\wedge t_2=t_3\vee t_4$ follows along the same lines. In the sequel, as before,
we assume without loss of generality that $t_1\geq t_2>  t_3\geq t_4$ (the case where $t_1<t_2$ can be treated in the same way). In the following, we use the backward process $(P_t)_{-\infty<t\leq t_1}$ with four sites like in the proof of Item (iii). Then, since $t_3 + 1 \le t_2, $
\begin{multline*}
   \{ z_1 \leftrightsquigarrow z_3  \leftrightsquigarrow z_4 \not\leftrightsquigarrow z_2 \} \cap \{ \tau^R_{z_2} < t_3\} \subset 
   \{ z_1 \leftrightsquigarrow z_3  \leftrightsquigarrow z_4 \not\leftrightsquigarrow z_2 \} \cap \{\tau^R_{z_2} < t_3+1\} \\
=   \left\{ P_{-\infty} = 1_03_04_0 \vp 2_0 \right\} \cap \{ P_{t_3+1} = 1 \vp 2 \vp 3_\infty \vp 4_\infty \}.
\end{multline*}

Assume for now that $t_1 > t_2$. It follows from the same arguments as above that
\begin{equation*}
    \P_\theta \left( P_{-\infty} = 1_03_04_0 \vp 2_0  \,|\, P_{t_3+1} = 1 \vp 2 \vp 3_\infty \vp 4_\infty \right) \leq K(1-\lambda)^{t_3+1-t_4}N^{-2},
\end{equation*}
and
\begin{equation*}
    \P_\theta \left( P_{t_3+1} = 1 \vp 2 \vp 3_\infty \vp 4_\infty \,|\, P_{t_1} = 1 \vp 2_\infty \vp 3_\infty \vp 4_\infty \right) \leq (1-\lambda)^{t_1-(t_3+1)+t_2-(t_3+1)}.
\end{equation*}
which in turn give the desired result. 

Finally, if $t_1=t_2$, the same upper-bound holds with the initial condition $P_{t_1} = 1 \vp 2_\infty \vp 3_\infty \vp 4_\infty$ replaced by $P_{t_1} = 1 \vp 2 \vp 3_\infty \vp 4_\infty$.
\end{proof}

\section{Proof of Lemma \ref{lem:covariance:produit}}
\label{app:covariance:decay}

\changes{
The statements in both Items 1 and 2 are trivial, so that we only need to prove Items 3 through 7. Items 3, 4 and 5 are proved in a similar way: Lemma \ref{lem:independent:construction:ksites} is used to relate independence with the high probability event of no coalescence, and some easy computations are needed to bound the covariance.  Items 6 and 7 use the more involved Lemma \ref{lem:final:independent}.
}

\begin{proof}[Proof of Item 3]
In this case, $B=\cov_{\theta}[Y^2_{z_1},Y^2_{z_3}]$. Let $\tilde{X}_{z_1}$ be a random variable distributed as $X_{z_1}$, independent of $X_{z_3}$ 
and such that $\{X_{z_1}\neq \tilde{X}_{z_1}\}\subset \{z_1 \leftrightsquigarrow z_3\}$. 
The existence of such random variables is ensured by Lemma \ref{lem:independent:construction:ksites} with $k=2$.
In particular, we have that $\E_{\theta}[\tilde{X}_{z_1}]=\E_{\theta}[X_{z_1}]$. Denote $\tilde{Y}_{z_1}=\tilde{X}_{z_1}-\E_{\theta}[\tilde{X}_{z_1}]$ and observe that $\{\tilde{Y}_{z_1}\neq Y_{z_1}\}=\{\tilde{X}_{z_1}\neq X_{z_1}\}$. By using the properties satisfied by random variables $\tilde{X}_{z_1},$ 
one can check that $\tilde{Y}_{z_1}$ 
is independent of $Y_{z_3}$,
so that   
\begin{equation*}
B=\cov_{\theta}[Y^2_{z_1},Y^2_{z_3}]=\E_{\theta}[(Y^2_{z_1}-\tilde{Y}^2_{z_1})Y^2_{z_3}].
\end{equation*}
Since $\{\tilde{Y}_{z_1} \neq  Y_{z_1} \}
=\{\tilde{X}_{z_1} \neq  X_{z_1} \}
\subset \{ z_1 \leftrightsquigarrow z_3 \}$ and $\max\{|Y_{z_1}|,|\tilde{Y}_{z_1}|,|Y_{z_3}|\}\leq 1$ almost surely, it follows that 
$$
|B|\leq \P_{\theta}\left(z_1\leftrightsquigarrow z_3\right) ,
$$
so that the result follows Item (i) of Proposition \ref{prop:1}.
\end{proof}

\begin{proof}[Proof of Item 4]
Suppose that $ z_1 = z_2.$ Then $B=\cov_{\theta}[Y^2_{z_1},Y_{z_3}Y_{z_4}]$. 
By Lemma \ref{lem:independent:construction:ksites} with $k=3$, there exist random variables $\tilde{Y}_{z_1}, \tilde{Y}_{z_3}$ and $\tilde{Y}_{z_4}$ defined in such a way that the following properties hold for each $i\in \{1,3,4\}$: 1) $\tilde{Y}_{z_i}$ has the same law as $Y_{z_i}$; 2)   
$\tilde{Y}_{z_i}$ is independent of $\tilde{Y}_{z_j}$ and $Y_{z_j}$ for all $j\in \{1,3,4\}\setminus\{i\}$; 3) $\{\tilde{Y}_{z_i}\neq Y_{z_i}\} \subset \cup_{j\in \{1,3,4\}\setminus\{i\}}\{z_i\leftrightsquigarrow z_j\}.$
Using first that $Y^2_{z_1}$ has same law as $\tilde{Y}^2_{z_1}$ and then that  $\E_{\theta}[\tilde{Y}_{z_i}]=\E_{\theta}[Y_{z_i}]=0$, we can deduce from properties 1 and 2 above that
\begin{eqnarray*}
B &= &\E_{\theta}[(Y^2_{z_1}-\tilde{Y}^2_{z_1})Y_{z_3}Y_{z_4}]\\
&= & \E_{\theta}[(Y^2_{z_1}-\tilde{Y}^2_{z_1})(Y_{z_3}-\tilde{Y}_{z_3})(Y_{z_4}-\tilde{Y}_{z_4})].    
\end{eqnarray*}
By combining property 3, the above identity and the fact that $\leftrightsquigarrow$ is an equivalence relation, we can deduce that
$$
|B|\leq \E_{\theta}\left[ 1_{\{z_1\leftrightsquigarrow z_3 \leftrightsquigarrow z_4\}} \right] = \P_\theta ( \{ z_1 \leftrightsquigarrow z_2 \leftrightsquigarrow z_3  \} ),
$$
so that the result follows from Item (ii) of Proposition \ref{prop:1}. 
\end{proof}

\begin{proof}[Proof of Item 5]
Let us consider the case in which $z_1=z_3$. Let $z_1,z_2,z_4$ be three different sites. The goal is to bound the covariance
    \begin{equation}
    \label{prof_item_4_lemma_cov_id_1}
        B =\cov_\theta \left[ Y_{z_1}Y_{z_2}, Y_{z_1}Y_{z_4} \right]=\E_\theta \left[ Y^2_{z_1}Y_{z_2}Y_{z_4} \right]-\cov_\theta \left[ Y_{z_1},Y_{z_2}\right]\cov_\theta \left[ Y_{z_1},Y_{z_4} \right].
    \end{equation}
    Arguing exactly like in the proof of Item 3, one can show that
    $\cov_\theta \left[ Y_{z_1},Y_{z_2}\right]\leq K(1-\lambda)^{|t_1-t_2|}N^{-1}$ and $\cov_\theta \left[ Y_{z_1},Y_{z_4} \right]\leq K(1-\lambda)^{|t_1-t_4|}N^{-1}$, so that
    \begin{equation}
    \label{prof_item_4_lemma_cov_ineq_1}
\cov_\theta \left[ Y_{z_1},Y_{z_2}\right]\cov_\theta \left[ Y_{z_1},Y_{z_4} \right]\leq K(1-\lambda)^{|t_1-t_2|+|t_1-t_4|}N^{-2}.
    \end{equation}
    
We will now deal with the term $\E_\theta \left[ Y^2_{z_1}Y_{z_2}Y_{z_4} \right].$ For that end, let $\tilde{X}_{z_1}$, $\tilde{X}_{z_2}$ and $\tilde{X}_{z_4}$ be the random variables defined in Lemma \ref{lem:independent:construction:ksites} with $k=3$, and denote $\tilde{Y}_{z_i} = \tilde{X}_{z_i}- \E_\theta ( X_{z_i})$, $i\in\{1,2,4\}$ the corresponding centered versions. By using the independence properties of the random variables $\tilde{Y}_{z_i}$ (and the fact that they are all centered), one can check that 
$$
\E_\theta \left[ Y^2_{z_1}Y_{z_2}Y_{z_4} \right]=\E_\theta \left[ (Y^2_{z_1}-\tilde{Y}^2_{z_1})(Y_{z_2}-\tilde{Y}_{z_2})(Y_{z_4}-\tilde{Y}_{z_4})\right]+\E_{\theta}[\tilde{Y}^2_{z_1}]\cov_\theta \left[ Y_{z_2},Y_{z_4}\right].
$$
Arguing exactly like in the proof of Item 4, one can deduce that
\begin{equation}
\label{prof_item_4_lemma_cov_ineq_2}
\E_\theta \left[ (Y^2_{z_1}-\tilde{Y}^2_{z_1})(Y_{z_2}-\tilde{Y}_{z_2})(Y_{z_4}-\tilde{Y}_{z_4})\right]\leq K(1-\lambda)^{|s_3-s_2|+|s_2-s_1|}N^{-2},
\end{equation}
where $s_1\leq s_2\leq s_3$ is an ordering of $\{t_1,t_2,t_4\}$.
Finally, since $\cov_\theta \left[ Y_{z_2},Y_{z_4}\right]\leq K(1-\lambda)^{|t_2-t_4|}N^{-1}$, the random variables $\tilde{Y}_{z_k}$ are bounded by 1 almost surely and $|t_2-t_4|\leq \min\{|t_2-t_1|+|t_1-t_4|,|s_3-s_2|+|s_2-s_1|\}$, it follows from \eqref{prof_item_4_lemma_cov_id_1}, \eqref{prof_item_4_lemma_cov_ineq_1} and \eqref{prof_item_4_lemma_cov_ineq_2} that
$$
|B|\leq K(1-\lambda)^{|t_2-t_4|}N^{-1},
$$
concluding the proof.
\end{proof}

\begin{proof}[Proof of Item 6]
Let $z_1,\dots,z_4$ be four different points in $\cal Z$ and denote by $\hat{X}_{z_1}, \hat{X}_{z_2}$, $\tilde{X}_{z_1},\ldots, \tilde{X}_{z_4}$ the random variables defined in Lemma \ref{lem:final:independent}. Moreover, denote by $\tilde{Y}_{z_k} = \tilde{X}_{z_k}- \E_\theta ( \tilde{X}_{z_k})$, $1\leq k\leq 4$ and $\hat{Y}_{z_k} = \hat{X}_{z_k}- \E_\theta ( \hat{X}_{z_k})$, $1\leq k\leq 2$, their centered versions. First of all, observe that by using Items (i), (ii) and (iii) of Lemma \ref{lem:final:independent}, we can rewrite B as
    $$
B=\E_{\theta}\left[\left(Y_{z_1}Y_{z_2}-\hat{Y}_{z_1}\hat{Y}_{z_2}\right)Y_{z_3}Y_{z_4}1_{\{z_1 \leftrightsquigarrow z_3\}\cup\{z_1 \leftrightsquigarrow z_4\}\cup \{z_2 \leftrightsquigarrow z_3\}\cup \{z_2\leftrightsquigarrow z_4\}}\right].
    $$
In the remaining of the proof, we adopt the following notation. For $i,j,k,\ell \in\{1,\ldots, 4\}$, we write $\{ijk\ell\}$, $\{ijk|\ell\}$, $\{ij|k\ell\}$ and $\{ij|k|\ell\}$ to denote, respectively, the events $\{z_i\leftrightsquigarrow z_j\leftrightsquigarrow z_k\leftrightsquigarrow z_{\ell}\},$ 
$\{z_i\leftrightsquigarrow z_j\leftrightsquigarrow z_k\not\leftrightsquigarrow z_{\ell}\}$, $\{z_i\leftrightsquigarrow z_j \not\leftrightsquigarrow z_k\leftrightsquigarrow z_{\ell}\}$ and $\{z_k \not\leftrightsquigarrow z_i\leftrightsquigarrow z_j \not\leftrightsquigarrow z_\ell\}.$ 
With this notation, observe that we can write    
\begin{multline*}
1_{\{z_1 \leftrightsquigarrow z_3\}\cup\{z_1 \leftrightsquigarrow z_4\}\cup \{z_2 \leftrightsquigarrow z_3\}\cup \{z_2\leftrightsquigarrow z_4\}}=1_{\{1234\}}+1_{\{123|4\}}+1_{\{124|3\}}+1_{\{134|2\}}+1_{\{234|1\}}\\
+1_{\{13|24\}}+1_{\{14|23\}}+1_{\{13|2|4\}}+1_{\{14|2|3\}}+1_{\{23|1|4\}}+1_{\{24|1|4\}}.
\end{multline*}
Let us denote 
\begin{multline*}
M = \max\{ \P_{\theta}(1234), \P_{\theta}(13|24), \P_{\theta}(14|23), \\
\P_{\theta}(\{134|2\} \cap \{\tau^R_{z_2} < t_3 \vee t_4\}), \P_{\theta}(\{234|1\} \cap \{\tau^R_{z_1} < t_3 \vee t_4\})\}.
\end{multline*}

\paragraph*{Step 1}
Since the random variables $Y_{z_i}$'s and $\hat{Y}_{z_i}$'s are bounded by $1$ almost surely, we clearly have
\begin{equation*}
\left|\E_{\theta}\left[\left(Y_{z_1}Y_{z_2}-\hat{Y}_{z_1}\hat{Y}_{z_2}\right)Y_{z_3}Y_{z_4}\left(1_{\{1234\}}+1_{\{13|24\}}+1_{\{14|23\}}\right)\right]\right| \leq 6M.
\end{equation*}

\paragraph*{Step 2}
Let $E$ denote one of the following events $\{123|4\}$ or $\{124|3\}$. Then, we prove the following inequality
\begin{equation}\label{eq:step2}
\left|\E_{\theta}\left[\left(Y_{z_1}Y_{z_2}-\hat{Y}_{z_1}\hat{Y}_{z_2}\right)Y_{z_3}Y_{z_4}1_E\right]\right|
\leq 2M.
\end{equation}
To do so, let us assume that $E=\{123|4\}$. The other case is treated similarly. By Item (vi) of Lemma \ref{lem:final:independent}, we have that $Y_{z_4}=\tilde{Y}_{z_4}$ on $E$, so that
$$
\E_{\theta}\left[\left(Y_{z_1}Y_{z_2}-\hat{Y}_{z_1}\hat{Y}_{z_2}\right)Y_{z_3}Y_{z_4}1_E\right]=
\E_{\theta}\left[\left(Y_{z_1}Y_{z_2}-\hat{Y}_{z_1}\hat{Y}_{z_2}\right)Y_{z_3}\tilde{Y}_{z_4}1_E\right].
$$
By using that $1_{\{123|4\}}+1_{\{1234\}}=1_{\{123\}}$ is $\sigma(I^{z_1},I^{z_2},I^{z_3})$-measurable and the fact that $\tilde{Y}_{z_4}$ is a centered random variable which is independent of $\left(Y_{z_1}Y_{z_2}-\hat{Y}_{z_1}\hat{Y}_{z_2}\right)Y_{z_3}1_{\{123\}}$ (thanks to Items (ii) and (v) of Lemma \ref{lem:final:independent}), it follows that
$$
\E_{\theta}\left[\left(Y_{z_1}Y_{z_2}-\hat{Y}_{z_1}\hat{Y}_{z_2}\right)Y_{z_3}\tilde{Y}_{z_4}1_{E}\right]=-\E_{\theta}\left[\left(Y_{z_1}Y_{z_2}-\hat{Y}_{z_1}\hat{Y}_{z_2}\right)Y_{z_3}\tilde{Y}_{z_4}1_{\{1234\}}\right].
$$ 
Finally, we conclude like in Step 1.

\paragraph*{Step 3}
Let $E$ denote one of the following events $\{134|2\}$ or $\{234|1\}$. Then, we prove the following inequality
\begin{equation}\label{eq:step3}
\left|\E_{\theta}\left[\left(Y_{z_1}Y_{z_2}-\hat{Y}_{z_1}\hat{Y}_{z_2}\right)Y_{z_3}Y_{z_4}1_E\right]\right|
\leq 5M.
\end{equation}
To do so, let us assume that $E=\{134|2\}$. The other case is treated similarly. In the following, let us denote $E_1 = E \cap \{\tau^R_{z_2} \geq  t_3 \vee t_4\}$ and $E_2 = E \cap \{\tau^R_{z_2} < t_3 \vee t_4\}$. On the one hand, we use the fact that both 
$$
\E_{\theta}\left[\left(Y_{z_1}Y_{z_2}-\hat{Y}_{z_1}\hat{Y}_{z_2}\right)Y_{z_3}Y_{z_4}1_{E_2} \right] \text{ and } \E_{\theta}\left[\left(Y_{z_1}-\hat{Y}_{z_1}\right)\tilde{Y}_{z_2}Y_{z_3}Y_{z_4} 1_{E_2}\right]
$$
are upper bounded by $2\P_{\theta}\left( E_2 \right)$. On the other hand, we know by Items (iii) and (vi) of Lemma \ref{lem:final:independent}, that $\hat{Y}_{z_2}=Y_{z_2}=\tilde{Y}_{z_2}$ on $E_1$. Combining those two properties, we can get
\begin{equation*}
    \left|\E_{\theta}\left[\left(Y_{z_1}Y_{z_2}-\hat{Y}_{z_1}\hat{Y}_{z_2}\right)Y_{z_3}Y_{z_4}1_E\right]\right| \leq \left| \E_{\theta}\left[\left(Y_{z_1}-\hat{Y}_{z_1}\right)\tilde{Y}_{z_2}Y_{z_3}Y_{z_4} 1_{E}\right] \right| + 4\P_{\theta}\left( E_2 \right).
\end{equation*}
By using that $1_{\{134|2\}}+1_{\{1234\}}=1_{\{134\}}$ is $\sigma(I^{z_1},I^{z_3},I^{z_4})$-measurable and the fact that $\tilde{Y}_2$ is a centered random variable which is independent of $\left(Y_{z_1}-\hat{Y}_{z_1}\right)Y_{z_3}Y_{z_4}1_{\{134\}}$ (thanks to Item (v) of Lemma \ref{lem:final:independent}), it follows that
$$
\E_{\theta}\left[\left(Y_{z_1}-\hat{Y}_{z_1}\right)\tilde{Y}_{z_2}Y_{z_3}Y_{z_4}1_{\{134|2\}}\right]=-\E_{\theta}\left[\left(Y_{z_1}-\hat{Y}_{z_1}\right)\tilde{Y}_{z_2}Y_{z_3}Y_{z_4}1_{\{1234\}}\right].
$$ 
Finally, we conclude like in Step 1.

\paragraph*{Step 4}
Let $E$ denote one of the following events: $\{13|2|4\}$, $\{14|2|3\}$, $\{23|1|4\}$ or $\{24|1|3\}$. Then, we prove the following inequality
$$
\left|\E_{\theta}\left[\left(Y_{z_1}Y_{z_2}-\hat{Y}_{z_1}\hat{Y}_{z_2}\right)Y_{z_3}Y_{z_4}1_E\right]\right|
\leq 7M. 
$$
To do so, let us assume that $E=\{13|2|4\}$. The other cases are treated similarly. 
First, we combine the fact that $Y_{z_4}=\tilde{Y}_{z_4}$ 
on $E$ (this holds by Item (vi) of Lemma \ref{lem:final:independent}) together with the decomposition $1_{\{13|2|4\}}=1_{13|2}-1_{13|24}-1_{134|2}$ to deduce that
\begin{multline*}
\E_{\theta}\left[\left(Y_{z_1}Y_{z_2}-\hat{Y}_{z_1}\hat{Y}_{z_2}\right)Y_{z_3}Y_{z_4}1_{\{13|2|4\}}\right]=\E_{\theta}\left[\left(Y_{z_1}Y_{z_2}-\hat{Y}_{z_1}\hat{Y}_{z_2}\right)Y_{z_3}\tilde{Y}_{z_4}1_{\{13|2\}}\right] \\
-\E_{\theta}\left[\left(Y_{z_1}Y_{z_2}-\hat{Y}_{z_1}\hat{Y}_{z_2}\right)Y_{z_3}\tilde{Y}_{z_4}1_{\{13|24\}}\right]-
\E_{\theta}\left[\left(Y_{z_1}Y_{z_2}-\hat{Y}_{z_1}\hat{Y}_{z_2}\right)Y_{z_3}\tilde{Y}_{z_4}1_{\{134|2\}}\right].
\end{multline*}
Now, since $\tilde{Y}_{z_4}$ is centered and independent of $Y_{z_1},\hat{Y}_{z_1}, Y_{z_2}, \hat{Y}_{z_2}, Y_{z_3}$ and $1_{13|2}$ (because this random variable is $\sigma(I^{z_1},I^{z_2},I^{z_3})$-measurable), we have that
$$
\E_{\theta}\left[\left(Y_{z_1}Y_{z_2}-\hat{Y}_{z_1}\hat{Y}_{z_2}\right)Y_{z_3}\tilde{Y}_{z_4}1_{\{13|2\}}\right]=0,
$$
which implies that 
\begin{multline*}
\E_{\theta}\left[\left(Y_{z_1}Y_{z_2}-\hat{Y}_{z_1}\hat{Y}_{z_2}\right)Y_{z_3}Y_{z_4}1_{\{13|2|4\}}\right]=-\E_{\theta}\left[\left(Y_{z_1}Y_{z_2}-\hat{Y}_{z_1}\hat{Y}_{z_2}\right)Y_{z_3}\tilde{Y}_{z_4}1_{\{13|24\}}\right]\\
-
\E_{\theta}\left[\left(Y_{z_1}Y_{z_2}-\hat{Y}_{z_1}\hat{Y}_{z_2}\right)Y_{z_3}\tilde{Y}_{z_4}1_{\{134|2\}}\right].
\end{multline*}
The first term on the right-hand side of the above equality is in absolute value at most $2M$. The second one can be dealt with proceeding similarly as Step 3.  

Combining Steps 1 to 4, we get $|B|\leq 48M$, and Items (iii) and (iv) of Proposition \ref{prop:1} give the upper-bound  $M \le K (1 - \lambda)^{ t_1 \vee t_2 - t_3 \wedge t_4} N^{- 2 },$ which allows to conclude.
\end{proof}

\begin{proof}[Proof of Item 7] 
By symmetry, we only need to consider the case $ t_1 \le t_2 $ and $t_3 \le t_4.$ The case $t _1 \geq t_3 $ or $ t_2 \le t_3$ has already been treated in Item 6. So suppose that either $ t_1 \le t_3 \le t_2 \le t_4$ or that $ t_3 \le t_1 \le t_4 \le t _2.$ Since $\E_\theta[ Y_{z_i} ] = 0$, we can write
\begin{multline}\label{eq:control_cov_for_t1<t3<t2<t4bis}
B=\cov_{\theta}\left[Y_{z_1}Y_{z_3},Y_{z_2}Y_{z_4}\right]+\cov_{\theta}\left[Y_{z_1},Y_{z_3}\right]\cov_{\theta}\left[Y_{z_2},Y_{z_4}\right]\\
-\cov_{\theta}\left[Y_{z_1},Y_{z_2}\right]\cov_{\theta}\left[Y_{z_3},Y_{z_4}\right].
\end{multline}
By Lemma \ref{lem:covariance:Y}, it follows that 
\begin{multline}
\label{ineq:prod_cov_for_t1<t3<t2<t4bis}
|\cov_{\theta}\left[Y_{z_1},Y_{z_3}\right]\cov_{\theta}\left[Y_{z_2},Y_{z_4}\right]|
+|\cov_{\theta}\left[Y_{z_1},Y_{z_2}\right]\cov_{\theta}\left[Y_{z_3},Y_{z_4}\right]|\\
\leq K N^{-2}\left[(1-\lambda)^{|t_1-t_3|+|t_2-t_4|}+(1-\lambda)^{|t_1-t_2|+|t_3-t_4|}\right]  .  
\end{multline}
Also, from Item 6 above, we have that
\begin{equation}
\label{ineq:cov_for_the_product_of_Yt1t3_and_Yt2t4bis}
\cov_{\theta}\left[Y_{z_1}Y_{z_3},Y_{z_2}Y_{z_4}\right]\leq K (1-\lambda)^{\max t_i - \min t_i}N^{-2}.    
\end{equation}
Combing \eqref{eq:control_cov_for_t1<t3<t2<t4bis}, \eqref{ineq:prod_cov_for_t1<t3<t2<t4bis} and \eqref{ineq:cov_for_the_product_of_Yt1t3_and_Yt2t4bis}, we obtain the assertion. 
\end{proof}

\section{Temporal convergences}
\label{app:quenched:rates}

\subsection{Proof of Equation \eqref{eq:quenched:m}}
This one is simple. Since we start from the stationary distribution, we have
$$
\E_\theta [\widehat{m}] = T^{-1} N^{-1} \sum_{t=1}^T \sum_{i=1}^N \E_\theta [X_{i,t}] = N^{-1} \sum_{i=1}^N m^N_{i} = \overline{m^N}.
$$
Hence, the left-hand side of Equation \eqref{eq:quenched:m} is equal to $\var_\theta( \widehat{m} )$.

Then, note that $\widehat{m} = (TN)^{-1} \sum_{t=1}^T \sum_{i=1}^N X_{i,t}$ so that 
\begin{align*}
\var_\theta( \widehat{m} ) &\leq (TN)^{-2} \sum_{t_1,t_2=1}^T \sum_{i_1,i_2=1}^N |\cov_{\theta}(X_{i_1,t_1}, X_{i_2,t_2})|\\
& \leq (TN)^{-1} \left[\frac{2K}{\lambda} + K(1-\lambda) + 1\right],
\end{align*}
where the last inequality comes from Lemma \ref{lem:covariance:Y}.

\subsection{Proof of Equation \eqref{eq:quenched:w}}
\label{app:proof:quenched:w}

\changes{
The proof relies on a decomposition of the error $|\widehat{w}-w^N_{\infty}|$ (see Inequality \eqref{ineq:control_hatw_w_N}). Each term of the decomposition is then controlled by one of the three Propositions \ref{prop:control_DN1}, \ref{prop:control_DN3} or \ref{prop:control_DN2_squared}. Their proofs need intermediate results, which are provided as lemmas. They are linked with martingale properties of the process (Lemmas \ref{Lemma_2_finer_control_of_the_first_term_of_exp_Ut_squared}, \ref{lemma:exp_martingale_differece} and \ref{Lemma_1_finer_control_of_the_first_term_of_exp_Ut_squared}) or with the vanishing covariances (Lemma \ref{lemma:control_cov_barU_squared}).
}

Recall that
$$
\widehat{w} = 2\mathcal{W}_{2\Delta}-\mathcal{W}_{\Delta}, \ \text{with} \  
\mathcal{W}_{\Delta}=\frac{N}{T}\sum_{k=1}^{\lfloor T/\Delta \rfloor}\left(\overline{Z}_{k\Delta}-\overline{Z}_{(k-1)\Delta}-\Delta\hat{m}\right)^2.  
$$
\changes{
As it appears in the analysis below, $w^N_{\infty}$ is the limit of $\mathcal{W}_{\Delta}$ when $T,\Delta\to \infty$. Hence, the choice $\widehat{w} = \mathcal{W}_{\Delta}$ would give a consistent estimator. However, the two estimators $\mathcal{W}_{2\Delta}$ and $\mathcal{W}_{\Delta}$ share a common bias which can be eliminated by considering $\widehat{w} = 2\mathcal{W}_{2\Delta}-\mathcal{W}_{\Delta}$, which in turn drastically improves the rate of convergence. This common bias is related with the quantity $S^N_{\theta,t}$ defined in Lemma \ref{Lemma_2_finer_control_of_the_first_term_of_exp_Ut_squared}.
}

Note that by the triangle inequality, the following inequality holds: 
\begin{equation}
\label{ineq:control_hatw_w_N}
\Big|\widehat{w}-w^N_{\infty}\Big|\leq 2D^{N,1}_{2\Delta,T}+D^{N,1}_{\Delta,T}+2D^{N,2}_{2\Delta,T}+D^{N,2}_{\Delta,T}+D^{N,3}_T,
\end{equation}
where 
\begin{multline}
\label{def:DN1}
D^{N,1}_{\Delta,T}=\frac{N}{T}\left|\sum_{s=1}^{\lfloor T/\Delta \rfloor}\left(\overline{Z}_{s\Delta}
-\overline{Z}_{(s-1)\Delta}-\Delta\widehat{m}\right)^2 \right. \\ \left. 
-\sum_{s=1}^{\lfloor T/\Delta \rfloor}\left(\overline{Z}_{s\Delta}
-\overline{Z}_{(s-1)\Delta}-\E_{\theta}\left[ \overline{Z}_{s\Delta}-\overline{Z}_{(s-1)\Delta}\right]\right)^2\right|,  \end{multline}
\begin{multline}
\label{def:DN2}
D^{N,2}_T=\frac{N}{T}\left|\sum_{s=1}^{\lfloor T/\Delta \rfloor}\left(\overline{Z}_{s\Delta}
-\overline{Z}_{(s-1)\Delta}-\E_{\theta}\left[ \overline{Z}_{s\Delta}-\overline{Z}_{(s-1)\Delta}\right]\right)^2 \right. \\ \left. 
-\E_{\theta}\left[\sum_{s=1}^{\lfloor T/\Delta \rfloor}\left(\overline{Z}_{s\Delta}
-\overline{Z}_{(s-1)\Delta}-\E_{\theta}\left[ \overline{Z}_{s\Delta}-\overline{Z}_{(s-1)\Delta}\right]\right)^2\right]\right|,    
\end{multline}
and
\begin{multline} 
\label{def:DN3}
D^{N,3}_T=\left|\frac{2N}{T}\E_{\theta}\left[\sum_{s=1}^{\lfloor T/2\Delta \rfloor}\left(\overline{Z}_{2s\Delta}
-\overline{Z}_{2(s-1)\Delta}-\E_{\theta}\left[ \overline{Z}_{2s\Delta}-\overline{Z}_{2(s-1)\Delta}\right]\right)^2\right] \right. \\ \left. -\frac{N}{T}\E_{\theta}\left[\sum_{s=1}^{\lfloor T/\Delta \rfloor}\left(\overline{Z}_{s\Delta}
-\overline{Z}_{(s-1)\Delta}-\E_{\theta}\left[ \overline{Z}_{s\Delta}-\overline{Z}_{(s-1)\Delta}\right]\right)^2\right]-w^N_{\infty}\right |. 
\end{multline}

    In the rest of this section, propositions provide upper-bounds for the terms involved in the right-hand side of \eqref{ineq:control_hatw_w_N}, while lemmas provide intermediate results.

\begin{proposition}
\label{prop:control_DN1}
There exists a constant $K$ depending only on $ \lambda$ such that for all $N\geq 1$, $T\geq 2$ and $1\leq \Delta\leq \lfloor T/2\rfloor$, we have that 
\begin{equation*}
\E_{\theta}\left[D^{N,1}_{2\Delta,T}+D^{N,1}_{\Delta,T}\right]\leq K\frac{\Delta}{T}.  \end{equation*}
\end{proposition}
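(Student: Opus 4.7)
The plan is to reduce $D^{N,1}_{\Delta,T}$ to a controllable form by exploiting the elementary algebraic identity
$$ (a-b)^2 - (a-c)^2 = 2(c-b)(a-c) + (c-b)^2.$$
Applying this with $a_s := \overline{Z}_{s\Delta}-\overline{Z}_{(s-1)\Delta}$, $b := \Delta\widehat{m}$ and $c := \Delta m_\infty^N$ (so that, by stationarity, $c = \E_\theta[a_s]$), and writing $n_\Delta = \lfloor T/\Delta\rfloor$, I would obtain
$$ D^{N,1}_{\Delta,T} = \frac{N}{T}\left| 2(c-b)\sum_{s=1}^{n_\Delta}(a_s - c) + n_\Delta(c-b)^2 \right|.$$
Here $c-b = \Delta(m_\infty^N - \widehat{m})$ and, crucially, $\sum_{s=1}^{n_\Delta}(a_s - c) = \overline{Z}_{n_\Delta\Delta} - \E_\theta[\overline{Z}_{n_\Delta\Delta}]$, which is again centered.

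Next, I would control the two factors under $\P_\theta$ separately. By Proposition~\ref{prop:control_in_L2_of_m_hat}, $\E_\theta[\widehat{m}] = m_\infty^N$ and $\var_\theta(\widehat{m}) \leq K/(TN)$, so
$$ \E_\theta[(c-b)^2] = \Delta^2 \var_\theta(\widehat{m}) \leq \frac{K\Delta^2}{TN}.$$
For the centered partial sum, I would mimic the proof of \eqref{eq:variancemhatn} in Proposition~\ref{prop:control_in_L2_of_m_hat}, using Lemma~\ref{lem:covariance:Y} to bound the double covariance sum
$$ \var_\theta\!\left( \overline{Z}_{n_\Delta \Delta} \right) = \frac{1}{N^2}\sum_{i_1,i_2=1}^N\sum_{s_1,s_2=1}^{n_\Delta\Delta}\cov_\theta(X_{i_1,s_1},X_{i_2,s_2}) \leq \frac{K\, n_\Delta \Delta}{N} \leq \frac{KT}{N}.$$

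Now I would put the pieces together via Cauchy–Schwarz. The cross term is bounded by
$$ \E_\theta\!\left[|c-b|\Big|\sum_s(a_s-c)\Big|\right] \leq \sqrt{\E_\theta[(c-b)^2]\,\var_\theta(\overline{Z}_{n_\Delta\Delta})} \leq \sqrt{\tfrac{K\Delta^2}{TN}\cdot\tfrac{KT}{N}} = \frac{K\Delta}{N},$$
while the diagonal term gives $n_\Delta \E_\theta[(c-b)^2] \leq (T/\Delta)\cdot K\Delta^2/(TN) = K\Delta/N$. Multiplying by $N/T$ yields $\E_\theta[D^{N,1}_{\Delta,T}] \leq K\Delta/T$. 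The same argument applied to $2\Delta$ in place of $\Delta$ (using $2\Delta \leq T$, which follows from $\Delta \leq \lfloor T/2\rfloor$) gives $\E_\theta[D^{N,1}_{2\Delta,T}] \leq 2K\Delta/T$, and summing the two estimates yields the proposition. No real obstacle is anticipated here: the argument is essentially a telescoping algebraic identity combined with the quenched $L^2$ bound on $\widehat{m}$ and its partial-sum analogue, both of which are direct consequences of Lemma~\ref{lem:covariance:Y}.
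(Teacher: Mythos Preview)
Your proof is correct and follows the same overall strategy as the paper, but you miss one algebraic simplification that makes the paper's argument shorter. The paper observes that $\sum_{s=1}^{n_\Delta} a_s = \overline{Z}_{n_\Delta \Delta} = n_\Delta \Delta\, \widehat m$ (at least when $\Delta$ divides $T$; the paper is slightly informal here), i.e.\ $b=\Delta\widehat m$ is precisely the empirical mean of the $a_s$'s. Plugging this into your identity $(a_s-b)^2-(a_s-c)^2 = 2(c-b)(a_s-c)+(c-b)^2$ and summing makes the cross term collapse: $2(c-b)\sum_s(a_s-c) = 2(c-b)\,n_\Delta(b-c) = -2n_\Delta(c-b)^2$, so one obtains the exact equality
\[
D^{N,1}_{\Delta,T}=\frac{N}{T}\,n_\Delta\,\Delta^2\bigl|\widehat m-\E_\theta[\widehat m]\bigr|^2 \le N\Delta\,\var_\theta(\widehat m),
\]
and Proposition~\ref{prop:control_in_L2_of_m_hat} finishes immediately. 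Your route via Cauchy--Schwarz and the separate variance bound $\var_\theta(\overline{Z}_{n_\Delta\Delta})\le KT/N$ is perfectly valid and yields the same rate; it simply does a bit more work by not exploiting that $b$ is the sample mean of the increments.
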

\begin{proof}
Using that $\sum_{s=1}^{\lfloor T/\Delta \rfloor}\left(\overline{Z}_{s\Delta}-\overline{Z}_{(s-1)\Delta}\right)=\lfloor T/\Delta \rfloor \Delta \widehat{m}$ and $\E_{\theta}[\overline{Z}_{\Delta}]=\Delta\E_{\theta}\left[\widehat{m}\right]$, one can check that
$$
D^{N,1}_{\Delta, T}=\frac{N}{T}\Delta^2 \left\lfloor \frac{T}{\Delta} \right\rfloor\Big|\widehat{m}-\E_{\theta}\left[\widehat{m}\right]\Big|^2\leq N\Delta \Big|\widehat{m}-\E_{\theta}\left[\widehat{m}\right]\Big|^2,
$$
which implies that $\E_{\theta}\left[D^{N,1}_{\Delta,T}\right] \le  N\Delta\var_{\theta}(\widehat{m})$. 
Replacing $\Delta$ by $2\Delta$ in the previous identity, we  obtain  $\E_{\theta}\left[D^{N,1}_{2\Delta,T}\right] \leq  2N\Delta\var_{\theta}(\widehat{m})$.
Hence, the result follows from Equation \eqref{eq:quenched:m}.
\end{proof}

To deal with the other two terms, we need to obtain a fine estimate on $\var_{\theta}\left( \overline{Z}_{\Delta} \right) = \E_{\theta}\left[\left(\overline{U}_{\Delta}\right)^2\right]$ (recall \eqref{eq:defu}).  This can be done as follows. Recall that for all $t\geq 1$ and $1\leq i\leq N$, 
$$
M_{i,t} = \sum_{s=1}^t(X_{i,s}-p_{\theta,i}(X_{s-1})), \text{ and } M_{t} = (M_{1,t},\ldots, M_{N,t}).
$$
Let us denote $\mathcal{F}_t=\sigma(X_0,\ldots, X_t)$. From the fact that the process $(X_t)_{t\in\Z}$ is Markovian,  it follows that $(M_{t})_{t\geq 1}$ is a martingale with respect to the filtration $(\mathcal{F}_t)_{t\geq 1}$.

Recall that $Q^N=(I_{N}-(1-\lambda)A^N)^{-1}.$
Then \eqref{eq_for_U_t_1} implies 
\begin{equation}
\label{eq_for_U_t_2}
U_t=Q^NM_t+(Q^N-I_{N})(X_0-X_t), \ t\geq 1.
\end{equation} 

\begin{lemma} 
\label{Lemma_2_finer_control_of_the_first_term_of_exp_Ut_squared}
There exists a constant $K$ depending only on $ \lambda$ such that for all $t\geq 1$ and $N\geq 1$,
$$
\E_{\theta}\left[\left(\overline{U}_t\right)^2\right]=\E_{\theta}\left[\left(\overline{Q^N M_t}\right)^2\right]+S^N_{\theta,t}+r^N_{\theta,t},
$$
where $|r^N_{\theta,t}|\leq K (1-\lambda)^{t}N^{-1}$ and $S^N_{\theta,t}$ has the following properties: $|S^N_{\theta,t}|\leq KN^{-1}$ and $|S^N_{\theta,2t}-S^N_{\theta,t}|\leq K(1-\lambda)^{t}N^{-1}$. 
\end{lemma}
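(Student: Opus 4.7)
The plan is to start from \eqref{eq_for_U_t_2}, namely $U_t = Q^N M_t + (Q^N-I_N)(X_0-X_t)$, and to expand $(\overline{U}_t)^2 = (\overline{Q^N M_t})^2 + 2\,\overline{Q^N M_t}\cdot W_t + W_t^2$, where $W_t := \overline{(Q^N-I_N)(X_0-X_t)} = N^{-1}\langle c^N - 1_N,\,X_0 - X_t\rangle$ (using $c^N = (Q^N)^\intercal 1_N$). Taking $\E_\theta$, the cross and square terms in $W_t$ together form the correction $S^N_{\theta,t}+r^N_{\theta,t}$. I will show that each of them splits into a piece that is independent of $t$ and bounded by $KN^{-1}$ (absorbed into $S^N_{\theta,t}$) plus an exponentially small remainder of order $K(1-\lambda)^t N^{-1}$ (absorbed into $r^N_{\theta,t}$). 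Since $S^N_{\theta,t}$ then comes out $t$-independent, the stability bound $|S^N_{\theta,2t}-S^N_{\theta,t}|\leq K(1-\lambda)^t N^{-1}$ is automatic.

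For $\E_\theta[W_t^2]$, stationarity yields
\begin{equation*}
\E_\theta[W_t^2] = \frac{2}{N^2}\bigl[\var_\theta(\langle c^N-1_N,X_0\rangle) - \cov_\theta(\langle c^N-1_N,X_0\rangle,\langle c^N-1_N,X_t\rangle)\bigr].
\end{equation*}
Expanding each quadratic form into a sum of covariances $\cov_\theta(X_{i,s},X_{j,s'})$ and applying Lemma \ref{lem:covariance:Y} together with the uniform bound $\|c^N-1_N\|_\infty \leq K$ coming from Lemma \ref{lem:bound:infty:ell:c}, one obtains $\var_\theta(\langle c^N-1_N,X_0\rangle)\leq KN$ (a $t$-independent contribution of size $KN^{-1}$) and $|\cov_\theta(\langle c^N-1_N,X_0\rangle,\langle c^N-1_N,X_t\rangle)|\leq K(1-\lambda)^t N$ (a remainder of size $K(1-\lambda)^t N^{-1}$), precisely of the two desired types.

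For the cross term, the martingale property implies $\E_\theta[M_{i,t}X_{j,0}]=0$ (as $X_{j,0}$ is $\mathcal{F}_0$-measurable), hence
\begin{equation*}
\E_\theta[\overline{Q^N M_t}\cdot W_t] = -\frac{1}{N^2}\sum_{i,j} c^N_i(c^N_j-1)\,f(t,i,j), \qquad f(t,i,j):=\E_\theta[M_{i,t}X_{j,t}].
\end{equation*}
Writing $M_{i,t}=M_{i,t-1}+\varepsilon_{i,t}$ with $\varepsilon_{i,s}:=X_{i,s}-p_{\theta,i}(X_{s-1})$, using the conditional independence of the coordinates of $X_s$ given $X_{s-1}$, and exploiting the identity $p_{\theta,j}(x)-m_j^N = (1-\lambda)(A^N(x-m^N))_j$ (a direct consequence of \eqref{equation_for_m_theta_i}), one derives the recursion
\begin{equation*}
f(t,i,j) = \delta_{ij}\,\pi_i^N + (1-\lambda)\sum_k A^N(j,k)\,f(t-1,i,k), \qquad f(0,i,j)=0,
\end{equation*}
with $\pi_i^N := \E_\theta[p_{\theta,i}(X_0)(1-p_{\theta,i}(X_0))]$ independent of $t$ by stationarity. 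Iterating and using the series expansion of $Q^N$ yields the closed form $f(t,i,j) = \pi_i^N Q^N(j,i) - \pi_i^N\bigl((1-\lambda)^t(A^N)^t Q^N\bigr)(j,i)$. Substituting back, the first summand produces a $t$-independent contribution of size $KN^{-1}$ (via $|||Q^N|||_1\leq \lambda^{-1}$, $\|c^N\|_\infty\leq K$ and $|\pi_i^N|\leq 1/4$), while the second is bounded by $KN^{-1}(1-\lambda)^t$ through Cauchy--Schwarz combined with $|||(A^N)^t Q^N|||_2\leq \lambda^{-1}$. Assembling these pieces completes the proof; the main technical step is the derivation and resolution of the recursion for $f(t,i,j)$, the rest being routine bookkeeping with the operator-norm bound $|||Q^N|||_r\leq \lambda^{-1}$ and Lemma \ref{lem:covariance:Y}.
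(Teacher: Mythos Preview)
Your argument is correct. The treatment of $\E_\theta[W_t^2]$ is essentially identical to the paper's. For the cross term, however, you take a genuinely different route: the paper writes $S^N_{\theta,t}(i,j)=\E_\theta[M_{i,t}(X_{j,0}-X_{j,t})]$, expands $p_{\theta,i}(X_{s-1})$ and reduces everything to sums of covariances $\cov_\theta(X_{k,0},X_{j,s})$, then invokes Lemma~\ref{lem:covariance:Y} to control the tails $\sum_{s=t}^{2t-1}$ and obtain the stability bound. In the paper's decomposition $S^N_{\theta,t}$ is genuinely $t$-dependent. You instead set up the linear recursion $f(t,i,j)=\delta_{ij}\pi_i^N+(1-\lambda)\sum_k A^N(j,k)f(t-1,i,k)$ and solve it in closed form via the Neumann series for $Q^N$, which cleanly splits $f$ into a $t$-independent limit $\pi_i^N Q^N(j,i)$ plus a remainder carrying the factor $(1-\lambda)^t$. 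This makes your $S^N_{\theta,t}$ independent of $t$, so the stability estimate is automatic, and the cross-term bound relies only on operator-norm estimates $|||(A^N)^tQ^N|||_r\leq\lambda^{-1}$ rather than on the coupling-based Lemma~\ref{lem:covariance:Y}. Both approaches are valid; yours is arguably cleaner and more self-contained for the cross term, while the paper's stays closer to the covariance-decay machinery developed in Section~\ref{sec:3}.
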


\begin{proof}
Starting from Equation \eqref{eq_for_U_t_2}, one can check that for any $t\geq 1$, 
\begin{equation*}
\E_{\theta}\left[\left(\overline{U}_{t}\right)^2\right]=\E_{\theta}\left[\left(\overline{Q^N M_{t}}\right)^2\right]+ H^N_{\theta,t},
\end{equation*}
where
$$
H^N_{\theta,t}=2\E_{\theta}\left[\overline{Q^N M_{t}} \ \overline{(Q^N-I_{N})(X_0-X_t)}\right]+\E_{\theta}\left[\left(\overline{(Q^N-I_{N})(X_0-X_t)}\right)^2\right].
$$
Hence, to conclude the proof, it remains to show that we can write
$H^N_{\theta,t}=S^N_{\theta,t}+r^N_{\theta,t}$ where
$|r^N_{\theta,t}|\leq K(1-\lambda)^{t}N^{-1}$ and $S^N_{\theta,t}$ is such that $|S^N_{\theta,2t}-S^N_{\theta,t}|\leq K(1-\lambda)^{t}N^{-1}$, for all $t\geq 1$ and $N\geq 1$. To see that, first of all, observe that
\begin{align*}
\overline{(Q^N-I_{N})(X_0-X_t)}&=N^{-1}\sum_{i=1}^N\sum_{j=1}^N(Q^N(i,j)-I_N(i,j))(X_{j,0}-X_{j,t})\\
&=N^{-1}\sum_{j=1}^N(X_{j,0}-X_{j,t})(c^N_j-1),
\end{align*}
where $c^N=(Q^N)^{\intercal}1_N$, so that 
\begin{align*}
\left( \overline{(Q^N-I_{N})(X_0-X_t)} \right)^2&=N^{-2}\sum_{i=1}^N\sum_{j=1}^N(c^N_i-1)(c^N_j-1)(X_{i,0}-X_{i,t})(X_{j,0}-X_{j,t})\\
&=N^{-2}\sum_{i=1}^N\sum_{j=1}^N(c^N_i-1)(c^N_j-1)(Y_{i,0}-Y_{i,t})(Y_{j,0}-Y_{j,t}),
\end{align*}
where the second equality follows from the definition of $Y_{i,t}=X_{i,t}-\E_{\theta}[X_{i,t}]$ and the stationarity of the process $(X_t)_{t\in\Z}$. Using the stationarity once more and the above equation, we then deduce that 
\begin{multline*}
\E_\theta \left[ \left( \overline{(Q^N-I_{N})(X_0-X_t)}\right)^2 \right] = 2N^{-2}\sum_{i=1}^N\sum_{j=1}^N(c^N_i-1)(c^N_j-1)\left[\cov_{\theta}(Y_{i,0},Y_{j,0}) \right. \\ \left. -\cov_{\theta}(Y_{i,0},Y_{j,t})\right],
\end{multline*}
so that by applying Lemma \ref{lem:covariance:Y} and  using Inequality \eqref{omegancons1},  we obtain that
\begin{multline*}
 \E_{\theta}\left[\left(\overline{(Q^N-I_{N})(X_0-X_t)}\right)^2\right]= \frac{2}{N^2}\sum_{i=1}^N\sum_{j=1}^N(c^N_i-1)(c^N_j-1)\cov_{\theta}(Y_{i,0},Y_{j,0})+r^{N}_{\theta,t},
\end{multline*}
where $|r^{N}_{\theta,t}|\leq K(1-\lambda)^t N^{-1}$.

Now, proceeding similarly as above, one can check that  
\begin{align*}
\E_{\theta}\left[\overline{Q^N M_{t}} \ \overline{(Q^N-I_{N})(X_0-X_t)}\right]=N^{-2}\sum_{i=1}^N\sum_{j=1}^Nc^N_i(c^N_j-1)\E_{\theta}\left[M_{i,t}(X_{j,0}-X_{j,t})\right] .
\end{align*}
For each $1\leq i,j\leq N$ and $t\geq 1$, denote
$
S^N_{\theta,t}(i,j)=\E_{\theta}\left[M_{i,t}(X_{j,0}-X_{j,t})\right].
$
We claim that there exists a constant $K>0$ such that for all $1\leq i,j\leq N$ and $t\geq 1$, the following inequality holds: $|S^N_{\theta,t}(i,j)|\leq K(\delta_{ij}+N^{-1})$ and
$$
|S^N_{\theta,2t}(i,j)-S^N_{\theta,t}(i,j)|\leq K(1-\lambda)^tN^{-1}.
$$

Once this claim is proved, using Inequality \eqref{omegancons1} and  Lemma \ref{lem:covariance:Y} it is immediate to check  that $H^N_{\theta,t}=S^N_{\theta,t}+r^N_{\theta,t}$, where
$$
S^N_{\theta,t}=N^{-2}\sum_{i=1}^N\sum_{j=1}^N\left[c^N_i(c^N_j-1)S^N_{\theta,t}(i,j)+2(c^N_i-1)(c^N_j-1)\cov_{\theta}(X_{i,0},Y_{j,0})\right]
$$
satisfies $|S^N_{\theta,2t}-S^N_{\theta,t}|\leq K(1-\lambda)^{t}N^{-1}$ and $|S^N_{\theta,t}|\leq KN^{-1}$, implying the result.

To prove the claim above, we first write
$$
S^N_{\theta,t}(i,j)=\E_{\theta}\left[M_{i,t}(X_{j,0}-X_{j,t})\right]=\sum_{s=1}^t\E_{\theta}\left[(X_{i,s}-p_{\theta,i}(X_{s-1}))(X_{j,0}-X_{j,t})\right]
$$
and then use the fact that 
$$
\E_{\theta}\left[(X_{i,s}-p_{\theta,i}(X_{s-1}))X_{j,0}\right]=\E_{\theta}\left[X_{j,0}\E_{\theta}\left[(X_{i,s}-p_{\theta,i}(X_{s-1}))|\mathcal{F}_{s-1}\right]\right]=0
$$
to deduce that
$$
\E_{\theta}\left[M_{i,t}(X_{j,0}-X_{j,t})\right]=\sum_{s=1}^t\left(\E_{\theta}\left[p_{\theta,i}(X_{s-1})X_{j,t}\right]-\E_{\theta}\left[X_{i,s}X_{j,t}\right]\right).
$$
Next, we use \eqref{def:transition_prob_2} and the stationarity to obtain that  
\begin{multline*}
\sum_{s=1}^t\E_{\theta}\left[p_{\theta,i}(X_{s-1})X_{j,t}\right]=t m^N_j(\mu-(1-\lambda)L^{N,\bullet -}_i)\\
+(1-\lambda)\sum_{k=1}^NA^N(i,k)\sum_{s=1}^t\E_{\theta}\left[X_{k,s-1}X_{j,t}\right].
\end{multline*}
Hence, using that $\E_{\theta}\left[X_{k,s-1}X_{j,t}\right]=\cov_{\theta}(X_{k,s-1},X_{j,t})+m_k^Nm_j^N$ and the stationarity once more, one can check that
\begin{multline*}
\sum_{s=1}^t\E_{\theta}\left[p_{\theta,i}(X_{s-1})X_{j,t}\right]=t m^N_j(\mu-(1-\lambda)L^{N,\bullet -}_i+(1-\lambda)(A^Nm^N)_i)\\
+(1-\lambda)\sum_{k=1}^NA^N(i,k)\sum_{s=1}^t\cov_{\theta}\left(X_{k,0}, X_{j,s}\right) ,
\end{multline*}
and similarly that
$$
\sum_{s=1}^t\E_{\theta}\left[X_{i,s}X_{j,t}\right]=tm_i^Nm^N_j-\sum_{s=0}^{t-1}\cov_{\theta}\left[X_{i,0},X_{j,s}\right].
$$
Therefore, combining the last two equations and using \eqref{equation_for_m_theta_i}, we obtain that
$$
S^N_{\theta,t}(i,j)=(1-\lambda)\sum_{k=1}^NA^N(i,k)\sum_{s=1}^t \cov_{\theta}\left(X_{k,0}, X_{j,s}\right) +\sum_{s=0}^{t-1} \cov_{\theta}\left(X_{i,0}, X_{j,s}\right),
$$
so that Lemma \ref{lem:covariance:Y} allows us to conclude that
\begin{multline*}
\left|S^N_{\theta,2t}(i,j)-S^N_{\theta,t}(i,j)\right| \le \\
\leq (1-\lambda)\sum_{k=1}^N|A^N(i,k)|\sum_{s=t+1}^{2t} |\cov_{\theta}\left(X_{k,0}, X_{j,s}\right)|
+\sum_{s=t}^{2t-1} |\cov_{\theta}\left(X_{i,0}, X_{j,s}\right)|
\\
\leq \frac{K(1-\lambda)^{t}}{N}\left(\sum_{s=0}^{t-1}(1-\lambda)^s\right)
\left[(1-\lambda)^2 N \tn A^N\tn_{\infty}+1\right] \leq \frac{K(1-\lambda)^{t}}{N}.
\end{multline*}
Finally, using once more Lemma \ref{lem:covariance:Y}, one can easily check that $|S^N_{\theta,t}(i,j)|\leq K ( \delta_{ij} + N^{-1}) $ proving the claim.
\end{proof}

The next result will be important to find the leading term of $\E_{\theta}\left[\left(\overline{Q^N M_{t}}\right)^2\right]$.   
\begin{lemma}
\label{lemma:exp_martingale_differece}
For all $t\geq 1,$ the following equality holds.
 $$
\E_{\theta}\left[(X_{t}-p_{\theta}(X_{t-1}))^2\right]=m^N-(m^N)^2+r^N_{\theta},
 $$
 where $\|r^N_{\theta}\|_{\infty}\leq KN^{-1}$ for some constant $K>0$ depending only on $\lambda$.
\end{lemma}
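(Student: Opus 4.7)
The plan is to condition on $\mathcal{F}_{t-1}$. Conditionally, $X_{i,t}$ is a Bernoulli random variable with parameter $p_{\theta,i}(X_{t-1})$, so its centered square has conditional expectation $p_{\theta,i}(X_{t-1})(1-p_{\theta,i}(X_{t-1}))$. Taking the outer expectation yields, componentwise,
\begin{equation*}
\E_\theta\bigl[(X_{i,t} - p_{\theta,i}(X_{t-1}))^2\bigr] = \E_\theta[p_{\theta,i}(X_{t-1})] - \E_\theta\bigl[p_{\theta,i}(X_{t-1})^2\bigr].
\end{equation*}

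By stationarity and the definition \eqref{def:transition_prob_2}, the first term equals $m^N_i$. For the second, I would write
\begin{equation*}
\E_\theta\bigl[p_{\theta,i}(X_{t-1})^2\bigr] = (m^N_i)^2 + \var_\theta\bigl(p_{\theta,i}(X_{t-1})\bigr),
\end{equation*}
so that the proof reduces to the variance bound $\|\var_\theta(p_{\theta,i}(X_{t-1}))\|_\infty \le K N^{-1}$, which will provide the claimed remainder $r^N_\theta$.

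To obtain this bound, I would rewrite the conditional probability in matrix form. A direct computation from \eqref{def:transition_prob_2} gives
\begin{equation*}
p_{\theta,i}(X_{t-1}) = \mu - (1-\lambda)L^{N,\bullet -}_i + (1-\lambda)(A^N X_{t-1})_i,
\end{equation*}
since the constant piece does not contribute to the variance, we obtain
\begin{equation*}
\var_\theta\bigl(p_{\theta,i}(X_{t-1})\bigr) = (1-\lambda)^2 \sum_{j,k=1}^N A^N(i,j)A^N(i,k)\, \cov_\theta(X_{j,t-1}, X_{k,t-1}).
\end{equation*}
I then split the sum into its diagonal and off-diagonal parts. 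On the diagonal, $|A^N(i,j)|\le N^{-1}$ and $\cov_\theta(X_{j,t-1},X_{j,t-1})\le 1$ yield the trivial bound $\sum_j A^N(i,j)^2 \le N^{-1}$. For the off-diagonal part, Lemma \ref{lem:covariance:Y} gives $|\cov_\theta(X_{j,t-1},X_{k,t-1})| \le K N^{-1}$ for $j\neq k$, and combining with $\sum_j |A^N(i,j)| \le |||A^N|||_\infty \le 1$ yields
\begin{equation*}
\sum_{j\neq k} |A^N(i,j)||A^N(i,k)|\,|\cov_\theta(X_{j,t-1},X_{k,t-1})| \le \frac{K}{N}.
\end{equation*}
Adding the two contributions gives $|\var_\theta(p_{\theta,i}(X_{t-1}))| \le K N^{-1}$ uniformly in $i$, completing the proof.

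The argument is essentially computational; the only substantive ingredient beyond conditioning and stationarity is the decay of correlations furnished by Lemma \ref{lem:covariance:Y}, which controls the off-diagonal covariances at the required $N^{-1}$ rate. There is no real obstacle here.
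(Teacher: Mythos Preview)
Your proof is correct and follows essentially the same route as the paper: both reduce the claim to showing $\E_\theta[p_{\theta,i}(X_{t-1})^2]=(m^N_i)^2+O(N^{-1})$ via the matrix form $p_{\theta,i}(X_{t-1})=\mu-(1-\lambda)L^{N,\bullet-}_i+(1-\lambda)(A^NX_{t-1})_i$ and then bound the covariance sum using Lemma~\ref{lem:covariance:Y}. Your framing through the variance decomposition $\E_\theta[p^2]=(\E_\theta[p])^2+\var_\theta(p)$ is slightly cleaner than the paper's direct expansion of the square followed by re-identification of $(m^N_i)^2$ via \eqref{equation_for_m_theta_i}, but the substance is identical.
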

\begin{remark}
Note that $X_{t}$, $p_{\theta}(X_{t-1})$ and $m^N$ are vectors so that the square terms in Lemma \ref{lemma:exp_martingale_differece} have to be interpreted in terms of the Hadamard product of vectors. Also, the remainder term $r^N_{\theta}$ does not depend on time $t$ because the process $(X_t)_{t\in\Z}$ is assumed to be stationary.  
\end{remark}

\begin{proof}
 First, observe that
 \begin{eqnarray*}
 \E_{\theta}\left[(X_{i,t}-p_{i,\theta}(X_{t-1}))^2\right]&=&\E_{\theta}\left[X_{i,t}\right]-\E_{\theta}\left[p^2_{i, \theta}(X_{t-1})\right]\\
 &=&m^N_{i}-\E_{\theta}\left[p^2_{i, \theta}(X_0)\right],
\end{eqnarray*}
 where the second equality follows by the stationarity. Now, from \eqref{def:transition_prob_2}, we have that
 $$
p_{i, \theta}(X_0)=\left(\mu-(1-\lambda)L^{N,\bullet-}_{i}\right)+(1-\lambda)(A^NX_0)_{i},
 $$
 so that 
 \begin{multline}
 \label{eq1_proof_Xi_minus_pi}
 \E_{\theta}\left[p^2_{i, \theta}(X_0)\right]=(\mu-(1-\lambda)L^{N,\bullet-}_{i})^2+2(\mu-(1-\lambda)L^{N,\bullet-}_{i})(1-\lambda)(A^N\E_{\theta}\left[X_0\right])_{i}\\+(1-\lambda)^2\E_{\theta}\left[((A^NX_0)_{i})^2\right].    
 \end{multline}
Next, notice that 
\begin{equation*}
\E_{\theta}\left[((A^N_{\theta}X_0)_{i})^2\right] = \sum_{j=1}^{N} \sum_{k=1}^{N} A^N(i,j)A^N(i,k)\E_{\theta}\left[X_{k,0}X_{j,0}\right],
\end{equation*}
so that using that $\E_{\theta}\left[X_{k,0}X_{j,0}\right]=m^N_jm^N_k+\cov_{\theta}(X_{k,0},X_{j,0})$, we deduce that 
\begin{equation}
\label{eq2_proof_Xi_minus_pi}
\E_{\theta}\left[((A^NX_0)_i)^2\right]=\left(\sum_{j=1}^{N}A^N(i,j)m^N_{j}\right)^2+\widetilde{r}^N_{\theta,i}    
\end{equation}
 where
 $$
\widetilde{r}^N_{\theta,i}= \sum_{j=1}^{N}\sum_{k=1}^{N} A^N(i,j)A^N(i,k)\cov_{\theta}(X_{k,0},X_{j,0}).
 $$
Using the fact that $|A^N(i,j)| \leq 1/N$ and Lemma \ref{lem:covariance:Y}, we have
\begin{equation*}
|\widetilde{r}^N_{\theta,i}| \leq  N^{-2} \sum_{j=1}^{N}\sum_{k=1}^{N} |\cov_{\theta}[X_{k,0},X_{j,0}]| \leq N^{-2} (N + N^2 K (1-\lambda) N^{-1}) \leq KN^{-1}.
 \end{equation*}
Finally, recalling that $m^N=\E_{\theta}\left[X_{0}\right]$, it follows from \eqref{eq1_proof_Xi_minus_pi} and \eqref{eq2_proof_Xi_minus_pi} that
$$
\E_{\theta}\left[p^2_{i, \theta}(X_0)\right]=\left[(\mu-(1-\lambda)L^{N,\bullet-}_{i})+(1-\lambda)(A^Nm^N)_i\right]^2 + (1-\lambda)^2\widetilde{r}^N_{\theta,i}.
$$
The result then follows from equation  \eqref{eq:expression:m:theta}.
\end{proof}

As a consequence of Lemma \ref{lemma:exp_martingale_differece}, we will now find the leading term of $\E_{\theta}\left[\left(\overline{Q^N M_{t}}\right)^2\right]$.

\begin{lemma} 
\label{Lemma_1_finer_control_of_the_first_term_of_exp_Ut_squared}
 For any $t\geq 1$ and $N\geq 1$,
 \begin{equation*}
\E_{\theta}\left[\Big|\overline{Q^NM_{t}}\Big|^2\right]=\frac{t}{N}w^N_{\infty}+r^N_{\theta,t} ,    
 \end{equation*}
where $w^N_{\infty}$ is defined in \eqref{eq:def:m:v:w:infty} and $r^N_{\theta,t}$ is such that $|r^N_{\theta,t}|\leq KtN^{-2}$ for some $K$ depending only on $\lambda$.
\end{lemma}
\begin{proof}
We start observing that 
$$
\overline{Q^NM_{t}}=N^{-1}\sum_{i=1}^{N}\sum_{j=1}^{N}Q^N(i,j)M_{j,t}=
N^{-1}\sum_{j=1}^{N}c^N_{j}M_{j,t},
$$
and then we use that for all $s\neq h$ and $1\leq j,k\leq N$,
$$
\E_{\theta}\left[(X_{i,s}-p_{i,\theta}(X_{s-1}))(X_{k,h}-p_{k, \theta}(X_{h-1}))\right]=0,
$$ 
to deduce that
\begin{eqnarray*}
\E_{\theta}\left[\Big|\overline{Q^NM_{t}}\Big|^2\right] &=&N^{-2}\sum_{j=1}^{N}\sum_{k=1}^{N}c^N_{j}c^N_{k}
\E_{\theta}\left[M_{j,t}M_{k,t}\right]\\
&=& N^{-2}\sum_{j=1}^{N}(c^N_{j})^2
\sum_{s=1}^t\E_{\theta}\left[\left(X_{j,s}-p_{j, \theta}(X_{s-1})\right)^2\right].
\end{eqnarray*}
Hence, it follows from Lemma \ref{lemma:exp_martingale_differece} that
 \begin{eqnarray*}
\E_{\theta}\left[\Big|\overline{Q^NM_{t}}\Big|^2\right]&=&N^{-2}\sum_{j=1}^{N}(c^N_{j})^2\sum_{s=1}^t\E_{\theta}\left[\left(X_{j,s}-p_{j, \theta}(X_{s-1})\right)^2\right]\\
&=& tN^{-2}\sum_{j=1}^{N}(c^N_{j})^2m^N_{j}(1-m^N_{j})+tN^{-2}\langle (c^N)^2,r^N_{\theta}\rangle\\
&=&tN^{-1}w^N_{\infty}+tN^{-2}\langle (c^N)^2,r^N_{\theta}\rangle.
 \end{eqnarray*}
Finally, by applying Hölder inequality and then using that $\|v\|_{1}\leq N\|v\|_{\infty}$ for $v\in\R^N$, we obtain
$$
\langle (c^N)^2,r^N_{\theta}\rangle\leq \|(c^N)^2\|_{\infty}\|r^N_{\theta}\|_{1}\leq \|(c^N)^2\|_{\infty}N\|r^N_{\theta}\|_{\infty},
$$
and the result follows from inequality \eqref{omegancons1} and Lemma  \ref{lemma:exp_martingale_differece}.   
\end{proof}

For later use, let us mention the following immediate corollary of Lemmas \ref{Lemma_2_finer_control_of_the_first_term_of_exp_Ut_squared} and \ref{Lemma_1_finer_control_of_the_first_term_of_exp_Ut_squared}. 
\begin{lemma}
\label{Lemma_bound_on_the_exp_of_Ut_squared}
There exists a constant $K>0$ depending only on $ \lambda$ such that for all $t\geq 1$ and $N\geq 1$,
$$
\E_{\theta}\left[\left(\overline{U}_t\right)^2\right]\leq KtN^{-1}.
$$
\end{lemma}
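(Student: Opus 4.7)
The plan is simply to combine the decomposition of $\E_{\theta}[(\overline{U}_t)^2]$ provided by Lemma \ref{Lemma_2_finer_control_of_the_first_term_of_exp_Ut_squared} with the explicit asymptotic value of its main term given by Lemma \ref{Lemma_1_finer_control_of_the_first_term_of_exp_Ut_squared}, and then bound $w_\infty^N$ by a constant depending only on $\lambda$.

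More precisely, Lemma \ref{Lemma_2_finer_control_of_the_first_term_of_exp_Ut_squared} yields
\begin{equation*}
    \E_{\theta}\left[\left(\overline{U}_t\right)^2\right] = \E_{\theta}\left[\left(\overline{Q^N M_t}\right)^2\right] + S^N_{\theta,t} + r^N_{\theta,t},
\end{equation*}
with $|S^N_{\theta,t}| \leq K N^{-1}$ and $|r^N_{\theta,t}| \leq K(1-\lambda)^t N^{-1}$. Lemma \ref{Lemma_1_finer_control_of_the_first_term_of_exp_Ut_squared} then gives
\begin{equation*}
    \E_{\theta}\left[\left(\overline{Q^N M_t}\right)^2\right] = \frac{t}{N} w^N_\infty + \tilde r^N_{\theta,t},
\end{equation*}
with $|\tilde r^N_{\theta,t}| \leq K t N^{-2}$. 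It remains to bound $w^N_\infty$. From the definition \eqref{def:W_infty_infty}, Inequality \eqref{omegancons1} (which gives $\|c^N\|_\infty \leq \lambda^{-1}$) and the trivial bound $m^N_k(1-m^N_k) \leq 1/4$, we obtain
\begin{equation*}
    |w^N_\infty| \leq \frac{1}{N} \sum_{k=1}^N (c^N_k)^2 \, m^N_k (1 - m^N_k) \leq \frac{1}{4\lambda^2}.
\end{equation*}

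Putting everything together, since $t \geq 1$ we have $N^{-1} \leq t N^{-1}$, $(1-\lambda)^t N^{-1} \leq t N^{-1}$ and $t N^{-2} \leq t N^{-1}$, so each remainder is dominated by a constant multiple of $t N^{-1}$. Combining these bounds yields
\begin{equation*}
    \E_{\theta}\left[\left(\overline{U}_t\right)^2\right] \leq \frac{t}{N}\, \frac{1}{4\lambda^2} + K t N^{-2} + K N^{-1} + K(1-\lambda)^t N^{-1} \leq K' \, t N^{-1},
\end{equation*}
for some constant $K'$ depending only on $\lambda$. There is no substantive obstacle here; the statement is purely a book-keeping consequence of the two previous lemmas together with the trivial control of $w^N_\infty$.
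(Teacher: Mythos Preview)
The proposal is correct and follows exactly the paper's approach: combine Lemmas \ref{Lemma_2_finer_control_of_the_first_term_of_exp_Ut_squared} and \ref{Lemma_1_finer_control_of_the_first_term_of_exp_Ut_squared}, and bound $|w^N_\infty|\leq \lambda^{-2}/4$ via \eqref{omegancons1}. Your version simply spells out the details of what the paper states in one sentence.
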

\begin{proof}
First, observe that $w^N_{\infty}$ defined in \eqref{eq:def:m:v:w:infty} satisfies $|w^N_{\infty}|\leq \lambda^{-2}/4$ by
Inequality \eqref{omegancons1}, and then combine this fact with Lemmas \ref{Lemma_2_finer_control_of_the_first_term_of_exp_Ut_squared} and \ref{Lemma_1_finer_control_of_the_first_term_of_exp_Ut_squared}.
\end{proof}

We are now in position to deal with the term $D^{N,3}_{T}$ defined in \eqref{def:DN3}.
\begin{proposition}
\label{prop:control_DN3}
There exists a constant $K>0$ depending only on $ \lambda$ such that for all $N\geq 1$, $T\geq 2$ and $1\leq \Delta\leq \lfloor T/2\rfloor$, 
\begin{equation*}
\E_{\theta}\left[D^{N,3}_{T}\right]\leq K\left(\frac{1}{N}+\frac{(1-\lambda)^{\Delta}}{\Delta}+\frac{\Delta}{T}\right).  
\end{equation*}
\end{proposition}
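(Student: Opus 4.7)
The plan is to exploit the stationarity of $(X_t)_{t\in\Z}$ to collapse each sum in the definition of $D^{N,3}_T$ into a single variance, and then to substitute the refined expansions of $\E_\theta[(\overline{U}_\Delta)^2]$ and $\E_\theta[(\overline{U}_{2\Delta})^2]$ provided by Lemmas \ref{Lemma_2_finer_control_of_the_first_term_of_exp_Ut_squared} and \ref{Lemma_1_finer_control_of_the_first_term_of_exp_Ut_squared}. The decisive feature is the near cancellation $|S^N_{\theta,2\Delta} - S^N_{\theta,\Delta}| \le K(1-\lambda)^\Delta/N$, which explains why $\hat{w}$ is built as $2\mathcal{W}_{2\Delta} - \mathcal{W}_\Delta$ rather than $\mathcal{W}_\Delta$ alone: the bias $S^N_{\theta,\Delta}$, once rescaled by $N/\Delta$, would give a contribution of order $1/\Delta$ that does not vanish fast enough.

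First, I would note that $D^{N,3}_T$ is $\theta$-measurable (it involves only $\E_\theta$-quantities), so $\E_\theta[D^{N,3}_T] = D^{N,3}_T$. By stationarity, for each $s$ the centered increment $(\overline{Z}_{s\Delta} - \overline{Z}_{(s-1)\Delta}) - \E_\theta[\overline{Z}_{s\Delta} - \overline{Z}_{(s-1)\Delta}]$ has the law of $\overline{U}_\Delta$, hence the same second moment, and likewise for $2\Delta$. Introducing $\alpha := (2N/T)\lfloor T/(2\Delta)\rfloor$ and $\beta := (N/T)\lfloor T/\Delta\rfloor$, this reduces the statement to
\begin{equation*}
D^{N,3}_T = \bigl|\alpha \E_\theta[(\overline{U}_{2\Delta})^2] - \beta \E_\theta[(\overline{U}_\Delta)^2] - w^N_\infty\bigr|.
\end{equation*}
Elementary arithmetic on the floors yields $\alpha,\beta \le N/\Delta$, $|\alpha-\beta|\le N/T$ (from $|2\lfloor T/(2\Delta)\rfloor - \lfloor T/\Delta\rfloor|\le 1$), and $|\alpha\Delta/N - 1|\le 2\Delta/T$.

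Second, combining Lemmas \ref{Lemma_2_finer_control_of_the_first_term_of_exp_Ut_squared} and \ref{Lemma_1_finer_control_of_the_first_term_of_exp_Ut_squared} gives the uniform decomposition
\begin{equation*}
\E_\theta[(\overline{U}_t)^2] = \frac{t}{N} w^N_\infty + S^N_{\theta,t} + \rho^N_{\theta,t}, \qquad |\rho^N_{\theta,t}| \le K\Bigl(\frac{(1-\lambda)^t}{N} + \frac{t}{N^2}\Bigr).
\end{equation*}
Setting $a:=\E_\theta[(\overline{U}_{2\Delta})^2]$, $b:=\E_\theta[(\overline{U}_\Delta)^2]$ and using the identity $\alpha a - \beta b = \alpha(a-b) + (\alpha-\beta)b$, the decomposition gives
\begin{equation*}
a - b = \frac{\Delta}{N} w^N_\infty + \bigl(S^N_{\theta,2\Delta} - S^N_{\theta,\Delta}\bigr) + \bigl(\rho^N_{\theta,2\Delta} - \rho^N_{\theta,\Delta}\bigr).
\end{equation*}

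Third, I would estimate each resulting contribution. The main-term part is $|\alpha(\Delta/N) w^N_\infty - w^N_\infty| \le 2(\Delta/T)|w^N_\infty| \le K\Delta/T$, since $|w^N_\infty|$ is bounded by a constant depending only on $\lambda$ thanks to inequality \eqref{omegancons1}. The bias-difference contribution is $\alpha|S^N_{\theta,2\Delta} - S^N_{\theta,\Delta}| \le (N/\Delta)\cdot K(1-\lambda)^\Delta/N = K(1-\lambda)^\Delta/\Delta$. The remainder-difference contribution is $\alpha|\rho^N_{\theta,2\Delta} - \rho^N_{\theta,\Delta}| \le K\bigl((1-\lambda)^\Delta/\Delta + 1/N\bigr)$. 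Finally, Lemma \ref{Lemma_bound_on_the_exp_of_Ut_squared} gives $b\le K\Delta/N$, so $|(\alpha-\beta) b| \le (N/T)(K\Delta/N) = K\Delta/T$. Summing these four contributions produces the announced bound with a new constant depending only on $\lambda$. The argument is essentially bookkeeping on the previously established expansions, so I do not foresee any real obstacle; the only delicate point is to use Lemma \ref{Lemma_2_finer_control_of_the_first_term_of_exp_Ut_squared} in the form of the cancellation $|S^N_{\theta,2\Delta} - S^N_{\theta,\Delta}|$ rather than the naive bound $|S^N_{\theta,t}|\le K/N$, which would miss the crucial $(1-\lambda)^\Delta$ factor.
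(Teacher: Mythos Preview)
Your proof is correct and follows essentially the same approach as the paper: reduce $D^{N,3}_T$ by stationarity to a combination of $\E_\theta[(\overline{U}_{2\Delta})^2]$ and $\E_\theta[(\overline{U}_{\Delta})^2]$, then insert the expansion from Lemmas \ref{Lemma_2_finer_control_of_the_first_term_of_exp_Ut_squared} and \ref{Lemma_1_finer_control_of_the_first_term_of_exp_Ut_squared} and exploit the cancellation $|S^N_{\theta,2\Delta}-S^N_{\theta,\Delta}|\le K(1-\lambda)^\Delta/N$ together with Lemma \ref{Lemma_bound_on_the_exp_of_Ut_squared}. The only cosmetic difference is that the paper splits via the triangle inequality into $\bigl|\tfrac{N}{\Delta}(a-b)-w^N_\infty\bigr|+\tfrac{2N}{T}(a+b)$, whereas you write $\alpha a-\beta b=\alpha(a-b)+(\alpha-\beta)b$ and track the floor discrepancies $|\alpha-\beta|\le N/T$ and $|\alpha\Delta/N-1|\le 2\Delta/T$ explicitly; both arrive at the same bound.
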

\begin{proof}
First, observe that by stationarity,
 $$
D^{N,3}_{T}=\Big|\frac{2N}{T} \lfloor T/{2\Delta} \rfloor \E_{\theta}\left[(\overline{U}_{2\Delta})^2\right]-\frac{N}{T} \lfloor T/{\Delta} \rfloor \E_{\theta}\left[(\overline{U}_{\Delta})^2\right]-w^N_{\infty}\Big|.
$$
Then, by using the triangle inequality, one can check that 
\begin{multline*}
\Big|\frac{2N}{T} \lfloor T/{2\Delta} \rfloor \E_{\theta}\left[(\overline{U}_{2\Delta})^2\right]-\frac{N}{T} \lfloor T/{\Delta} \rfloor \E_{\theta}\left[(\overline{U}_{\Delta})^2\right]-w^N_{\infty}\Big|\\
\leq \Big|\frac{N}{\Delta}\left(\E_{\theta}\left[(\overline{U}_{2\Delta})^2\right]-\E_{\theta}\left[(\overline{U}_{\Delta})^2\right]\right)-w^N_{\infty}\Big|+\frac{2N}{T}\left(\E_{\theta}\left[(\overline{U}_{2\Delta})^2\right]+\E_{\theta}\left[(\overline{U}_{\Delta})^2\right]\right). \end{multline*}
Then Lemmas \ref{Lemma_2_finer_control_of_the_first_term_of_exp_Ut_squared}, \ref{Lemma_1_finer_control_of_the_first_term_of_exp_Ut_squared} and \ref{Lemma_bound_on_the_exp_of_Ut_squared} imply the result.
\end{proof}

It remains to deal with $D^{N,2}_{\Delta,T}$ and $D^{N,2}_{2\Delta,T}$. To that end, we shall use two additional lemmas. The first one is the following.

\begin{lemma}
\label{lemma:control_cov_barU_squared}
There exists a constant $K>0$ depending only on $ \lambda$ such that for all $N\geq 1$, $\Delta\geq 1$ and $t\geq 2,$
\begin{equation*}
\cov_{\theta}\left[\left(\overline{U}_{\Delta}\right)^2,\left(\overline{U}_{t\Delta}-\overline{U}_{(t-1)\Delta}\right)^2\right]\leq K N^{-2} (1-\lambda)^{(t-2)\Delta}.  
\end{equation*}
\end{lemma}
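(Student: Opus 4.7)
The plan is to expand the covariance as a quadruple sum in the centered variables $Y_{i,s} = X_{i,s} - \E_\theta[X_{i,s}]$, and then to estimate each of the four-point covariances that appear by Lemma \ref{lem:covariance:produit}. First, one writes
\begin{align*}
\overline{U}_\Delta &= N^{-1}\sum_{(i,s) \in \mathcal{Z}_1} Y_{i,s}, & \mathcal{Z}_1 &:= [N] \times \{1,\dots,\Delta\},\\
\overline{U}_{t\Delta} - \overline{U}_{(t-1)\Delta} &= N^{-1}\sum_{(i,s) \in \mathcal{Z}_2} Y_{i,s}, & \mathcal{Z}_2 &:= [N] \times \{(t-1)\Delta+1,\dots,t\Delta\},
\end{align*}
so that, by bilinearity of the covariance,
\begin{equation*}
\cov_\theta\!\left[(\overline{U}_\Delta)^2,\,(\overline{U}_{t\Delta}-\overline{U}_{(t-1)\Delta})^2\right] = N^{-4}\sum_{z_1,z_2 \in \mathcal{Z}_1}\sum_{z_3,z_4 \in \mathcal{Z}_2}\cov_\theta[Y_{z_1}Y_{z_2},\,Y_{z_3}Y_{z_4}].
\end{equation*}
Crucially, the assumption $t \geq 2$ makes $\mathcal{Z}_1$ and $\mathcal{Z}_2$ time-disjoint, so no element of $\mathcal{Z}_1$ can coincide with an element of $\mathcal{Z}_2$.

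I would then split the sum according to $\#E := \#\{z_1,z_2,z_3,z_4\}$. By the disjointness just noted, $\#E \in \{2,3,4\}$, and moreover $\#E = 2$ forces $z_1 = z_2$ \emph{and} $z_3 = z_4$, while $\#E = 3$ forces exactly one of those two equalities. I would apply item 3 of Lemma \ref{lem:covariance:produit} when $\#E = 2$, item 4 when $\#E = 3$, and item 6 when $\#E = 4$; items 5 and 7 are never needed (item 5 requires a coincidence across the two pairs, which is forbidden by disjointness). For item 6, one exchanges the two pairs $(z_1,z_2) \leftrightarrow (z_3,z_4)$ using the symmetry of the covariance in its two arguments: the hypothesis becomes $t_3 \wedge t_4 \geq t_1 \vee t_2$, which holds by disjointness, and the bound reads $|B| \leq K(1-\lambda)^{t_3 \vee t_4 - t_1 \wedge t_2}N^{-2}$. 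A direct count shows that the $\#E = 2$ and $\#E = 3$ contributions are each $O(N^{-3}(1-\lambda)^{(t-2)\Delta})$ (there are $O(N^2)$ and $O(N^3)$ space configurations, while the covariance bounds from items 3 and 4 carry factors $N^{-1}$ and $N^{-2}$ respectively); these are absorbed in the announced bound, and only the $\#E = 4$ piece achieves the full $N^{-2}$ size.

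The main technical step, and what I expect to be the obstacle, is the time sum arising in the $\#E = 4$ case,
\begin{equation*}
\Sigma \;:=\; \sum_{t_1,t_2 \in [1,\Delta]}\sum_{t_3,t_4 \in [(t-1)\Delta+1,\,t\Delta]} (1-\lambda)^{\max(t_3,t_4)-\min(t_1,t_2)}.
\end{equation*}
Factorizing via $a := \min(t_1,t_2)$ and $b := \max(t_3,t_4)$, and using that each value of $a$ is attained by $2(\Delta-a)+1$ pairs (and similarly for $b$), one reduces $\Sigma$ to a product of sums of the form $\sum_{k=0}^{\Delta-1}(2k+1)(1-\lambda)^k$. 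The key observation is that this series is bounded by a constant depending only on $\lambda$: the linearly growing combinatorial weight is swallowed by the geometric factor (via $\sum_{k\geq 0}(2k+1)(1-\lambda)^k = O(\lambda^{-2})$), and in particular no factor $\Delta$ appears. This yields $\Sigma \leq K(1-\lambda)^{(t-2)\Delta}$ and, once multiplied by the $N^4$ count of space indices and by $N^{-4}\cdot KN^{-2}$ from the prefactor and item 6, gives exactly $KN^{-2}(1-\lambda)^{(t-2)\Delta}$, which dominates all three pieces and concludes the proof.
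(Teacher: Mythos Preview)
Your proposal is correct and follows essentially the same route as the paper: expand as a quadruple sum, note that time-disjointness of the two blocks rules out items 1, 2, 5 (and 7) of Lemma~\ref{lem:covariance:produit}, then handle the three remaining cases via items 3, 4, 6, with the $\#E=4$ piece giving the dominant $N^{-2}(1-\lambda)^{(t-2)\Delta}$ term. The only organizational difference is that the paper first imposes $t_1\le t_2$, $t_3\le t_4$ (gaining a factor~$4$) and sums $t_4$ out via a geometric bound before treating the residual triple sum, whereas you keep the full sum and factorize directly through $a=\min(t_1,t_2)$, $b=\max(t_3,t_4)$ with the multiplicities $2(\Delta-a)+1$ and $2(b-(t-1)\Delta)-1$; both computations produce the same $\sum_k (k+1)(1-\lambda)^k$-type series controlled by a $\lambda$-dependent constant.
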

\begin{proof}
First, use that $\overline{U}_t=N^{-1}\sum_{i=1}^{N}\sum_{s=1}^t Y_{{i,s}}$ to deduce that
$$
\cov_{\theta}\left[\left(\overline{U}_{\Delta}\right)^2,\left(\overline{U}_{t\Delta}-\overline{U}_{(t-1)\Delta}\right)^2\right]=N^{-4}\sum_{z_1\in F_1}\sum_{z_2\in F_1}\sum_{z_3\in F_t}\sum_{z_4\in F_t}\cov_{\theta}\left[Y_{z_1}Y_{z_2},Y_{z_3}Y_{z_4}\right],
$$
where $z_k=(i_k,t_k)$ for $k\in\{1,\ldots,4\}$, and $F_{s}:=\{ 1, \ldots, N \} \times \{(s-1)\Delta+1,\ldots, s\Delta\}$ for $s\geq 1$. In the rest of the proof, we shall denote ${\bf z}=(z_1,z_2,z_3,z_4)$ and $B({\bf z})=\cov_{\theta}\left[Y_{z_1}Y_{z_2},Y_{z_3}Y_{z_4}\right].$ 
Notice that the covariance function $B({\bf z})$ is symmetric with respect to $z_1$ and $z_2$, as well as with respect to $z_3$ and $z_4$. For each $p\in \{1,\ldots, 6\}$ and $s\geq 1$, we denote $\mathcal{C}_{p,s}$ the set of all vectors ${\bf z}\in F_1\times F_1\times F_s\times F_s$ satisfying the conditions of Item $p$ of Lemma \ref{lem:covariance:produit} and such that $t_1\leq t_2$ and $t_3\leq t_4$. 
With this notation, one can check using the symmetry of the covariance function that the following inequality holds: 
\begin{equation}
\label{eq_cov_U_in_terms_of_Ys}
\cov_{\theta}\left[\left(\overline{U}_{\Delta}\right)^2,\left(\overline{U}_{t\Delta}-\overline{U}_{(t-1)\Delta}\right)^2\right]\leq 4N^{-4}\sum_{p=1}^{6}\sum_{{\bf z}\in \mathcal{C}_{p,t}}|B({\bf z})|.
\end{equation}

Notice that $\mathcal{C}_{1,t}=\mathcal{C}_{2,t}=\mathcal{C}_{5,t}=\emptyset$ for $t\geq 2$, so that we need only to control the sum of covariances for the cases $p\in\{3,4,6\}$ . This is done in the 3 steps below. 

\medskip
{\it Step 1.} Here, we show that there exists a constant $K>0$ such that for all $N\geq 1$, $\Delta\geq 1$ and $t\geq 2$,  
$$
\sum_{{\bf z}\in \mathcal{C}_{3,t}} |B({\bf z})|\leq KN(1-\lambda)^{(t-2)\Delta+1}. 
$$
First, note that Item 3 of Lemma \ref{lem:covariance:produit} implies that
\begin{align*}
\sum_{{\bf z}\in \mathcal{C}_{3,t}} |B({\bf z})| &\leq K N^{2}\sum_{t_1=1}^{\Delta}\sum_{t_3=(t-1)\Delta+1}^{t\Delta}(1-\lambda)^{(t_3-t_1)}N^{-1}\\
&=KN(1-\lambda)^{(t-2)\Delta+1}\sum_{k=0}^{\Delta-1}(1-\lambda)^{k}\sum_{t_1=1}^{\Delta}(1-\lambda)^{\Delta-t_1}.
\end{align*}
Step 1 follows by noticing that $\sum_{t_1=1}^{\Delta}(1-\lambda)^{\Delta-t_1}=\sum_{k=0}^{\Delta-1}(1-\lambda)^k,$ and then by using that $\sum_{k=0}^{\Delta-1}(1-\lambda)^{k}\leq \sum_{k=0}^{\infty}(1-\lambda)^{k}\leq \lambda^{-1}$.

\medskip
{\it Step 2.} Here, we prove that there exists a constant $K>0$ such that for all $N\geq 1$, $\Delta\geq 1$ and $t\geq 2$,  
$$
\sum_{{\bf z}\in \mathcal{C}_{4,t}} |B({\bf z})|\leq KN(1-\lambda)^{(t-2)\Delta+1}. 
$$

First, by Remark \ref{rmk:case_4}, we can write
$$
\sum_{{\bf z}\in \mathcal{C}_{4,t}} |B({\bf z})|=\sum_{{\bf z}\in \mathcal{C}_{4,t}} \indiq_{\{z_1=z_2,z_3\neq z_4\}}|B({\bf z})|+\sum_{{\bf z}\in \mathcal{C}_{4,t}} \indiq_{\{z_1\neq z_2,z_3=z_4\}} |B({\bf z})|.
$$
To conclude, we apply Item 4 of Lemma \ref{lem:covariance:produit} to obtain an upper bound for each one of the terms on the right hand side of the above identity. Since both terms are treated very similarly, we explain how we handle the first term only. By applying Item 4 of Lemma \ref{lem:covariance:produit} to the first one, we obtain that
$$
\sum_{{\bf z}\in \mathcal{C}_{4,t}} \indiq_{\{z_1=z_2,z_3\neq z_4\}}|B({\bf z})|\leq KN^3\sum_{t_1=1}^{\Delta}\sum_{t_3=(t-1)\Delta+1}^{t\Delta}\sum_{t_4=t_3}^{t\Delta}(1-\lambda)^{(t_4-t_3)+(t_3-t_1)}N^{-2}.
$$
Now, since for any $(t-1)\Delta+1\leq t_3\leq t\Delta$,
\begin{equation}
\label{ineq_geom_sum_over_t4}
\sum_{t_4=t_3}^{t\Delta}(1-\lambda)^{(t_4-t_3)}=\sum_{k=0}^{t\Delta-t_3}(1-\lambda)^{k}\leq \sum_{k=0}^{\Delta-1}(1-\lambda)^{k}\leq \lambda^{-1}, 
\end{equation}
we can then proceed as in Step 1 to conclude that
$$
\sum_{{\bf z}\in \mathcal{C}_{4,t}} \indiq_{\{z_1=z_2,z_3\neq z_4\}}|B({\bf z})|\leq KN(1-\lambda)^{(t-2)\Delta+1}.
$$

\medskip
{\it Step 3.} Here, we prove that there exists a constant $K>0$ such that for all $N\geq 1$, $\Delta\geq 1$ and $t\geq 2$,  
$$
\sum_{{\bf z}\in \mathcal{C}_{6,t}} |B({\bf z})|\leq K N^2(1-\lambda)^{(t-2)\Delta+1}.
$$

By applying Item 6 of Lemma \ref{lem:covariance:produit} and inequality \eqref{ineq_geom_sum_over_t4}, one can check that
$$
\sum_{{\bf z}\in \mathcal{C}_{6,t}} |B({\bf z})| \leq KN^4\sum_{t_1=1}^{\Delta}\sum_{t_2=t_1}^{\Delta}\sum_{t_3=(t-1)\Delta+1}^{t\Delta} N^{-2}(1-\lambda)^{t_3-t_1}.
$$
Some algebraic computations imply that
\begin{multline*}
\sum_{t_1=1}^{\Delta}\sum_{t_2=t_1}^{\Delta}\sum_{t_3=(t-1)\Delta+1}^{t\Delta}(1-\lambda)^{t_3-t_1}\leq (1-\lambda)^{(t-2)\Delta+1}\left(\sum_{k=0}^{\Delta-1}(1-\lambda)^k\right)\\ 
\times \left(\sum_{k=0}^{\Delta-1}(k+1)(1-\lambda)^{k}\right).    
\end{multline*}
Putting together these estimates and using the fact that $\sum_{k=0}^{\infty}(k+1)(1-\lambda)^k<\infty$, we conclude the proof of Step 3.

Finally, combining Steps 1 through 3 and \eqref{eq_cov_U_in_terms_of_Ys}, the result follows.
\end{proof}

\begin{lemma}
\label{lemma:control_var_barU_squared}
There exists a constant $K$ depending only on $ \lambda, \mu $ and $ p,$ such that for all $N\geq 1$ and $\Delta\geq 1$,
\begin{equation*}
\var_{\theta}[(\overline{U}_{\Delta})^2]\leq K\Delta^2N^{-2}. 
\end{equation*}
\end{lemma}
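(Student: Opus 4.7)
My plan is to follow the scheme of Lemma~\ref{lemma:control_cov_barU_squared}, specialised to the case where both time windows coincide. Writing $\overline{U}_\Delta = N^{-1}\sum_{z\in F_1}Y_z$ with $F_1 = \{1,\dots,N\}\times\{1,\dots,\Delta\}$ and $Y_z = X_z - \E_\theta[X_z]$, I would expand
$$
\var_\theta\!\bigl[(\overline{U}_\Delta)^2\bigr] \;=\; N^{-4} \sum_{(z_1,z_2,z_3,z_4)\in F_1^4} \cov_\theta\!\bigl[Y_{z_1}Y_{z_2},\,Y_{z_3}Y_{z_4}\bigr],
$$
and then split the quadruple sum according to the seven cases of Lemma~\ref{lem:covariance:produit}. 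The only difference compared with Lemma~\ref{lemma:control_cov_barU_squared} is that the cases that were automatically empty there (because $F_1$ and $F_t$ did not intersect for $t\ge 2$) are now non-trivial and must also be bounded.

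The three leading contributions, each of size $O(\Delta^2/N^2)$, come from items 2, 5 and 7 of Lemma~\ref{lem:covariance:produit}. For item 2 (i.e.\ $\#E=2$ with $z_1\ne z_2$ and $z_3\ne z_4$), the identity $\{z_1,z_2\}=\{z_3,z_4\}$ yields at most $2(N\Delta)^2$ tuples, and the trivial bound $|B|\le 1$ then gives the desired estimate. For item 5 ($\#E=3$ with $z_1\ne z_2$, $z_3\ne z_4$), assuming $z_1=z_3$ for instance, the bound $|B|\le K(1-\lambda)^{|t_2-t_4|}N^{-1}$ combined with the free time index $t_1\in[\Delta]$ and $\sum_{t_2,t_4\in[\Delta]}(1-\lambda)^{|t_2-t_4|}\le 2\lambda^{-1}\Delta$ gives $O(\Delta^2/N^2)$. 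For item 7 ($\#E=4$), the bound $|B|\le K(1-\lambda)^{|s_2-s_1|+|s_3-s_4|}N^{-2}$, where $s_1\le s_2\le s_3\le s_4$ is an ordering of $\{t_1,\dots,t_4\}$, combined with elementary summation of the two geometric series in $s_2$ and $s_4$ (each bounded by $\lambda^{-1}$) and the remaining time sum $\sum_{s_1\le s_3\in[\Delta]} 1=O(\Delta^2)$, once more yields $O(\Delta^2/N^2)$.

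The remaining items 1, 3, 4 and 6 contribute only $O(\Delta/N^3)$, which is dominated by $\Delta^2/N^2$ whenever $\Delta\ge 1$ and $N\ge 1$. These cases are handled exactly as in Steps~1--3 of the proof of Lemma~\ref{lemma:control_cov_barU_squared}, with $t$ replaced by $1$; item 6 in particular is already subsumed by item 7. I expect no conceptual obstacle: the argument is essentially a bookkeeping exercise that combines Lemma~\ref{lem:covariance:produit} with elementary counting of four-tuples in $F_1$ and geometric sums in time, producing a constant $K$ depending only on $\lambda$ as required.
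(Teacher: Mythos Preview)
Your proposal is correct and follows essentially the same approach as the paper: both expand $\var_\theta[(\overline{U}_\Delta)^2]$ as a quadruple sum over $F_1^4$, split according to the cases of Lemma~\ref{lem:covariance:produit}, and identify items~2, 5 and~7 as the leading $O(\Delta^2 N^{-2})$ contributions. One small imprecision: your grouping of item~6 with the $O(\Delta/N^3)$ cases is not quite right (it would actually be $O(\Delta/N^2)$ if treated separately via the item~6 bound), but since you correctly note it is subsumed by item~7, this does not affect the argument.
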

\begin{proof}
We use the notation of the proof of Lemma \ref{lemma:control_cov_barU_squared}, except that we now denote $ {\mathcal{C}_{1,6}}$ the set of all vectors $ \bf z  \in F_1 \times F_1 \times F_1 \times F_1$ satisfying the conditions of Item 7. of Lemma \ref{lem:covariance:produit}, that is, without imposing the constraint that $ t_1 > t_4 $ or $ t_3 > t_2$. By taking $t=1$ in inequality \eqref{eq_cov_U_in_terms_of_Ys}, it follows that 
\begin{equation}
\label{eq_var_U_squared_in_terms_of_Ys}
\var_{\theta}\left[\left(\overline{U}_{\Delta}\right)^2\right]\leq 4N^{-4} \left( \sum_{p=1}^{6}\sum_{{\bf z}\in \mathcal{C}_{1,p}}|B({\bf z})|  \right) .
\end{equation}
So we need to show that the sum of covariances is at most $K\Delta^2N^2$. This is done in the 5 steps below.

\medskip
{\it Step 1.} First, we show that there exists a constant $K>0$ such that for all $N\geq 1$ and $\Delta\geq 1$,
$$
\sum_{p=1}^{2}\sum_{{\bf z}\in \mathcal{C}_{1,p}}|B({\bf z})|\leq K (\Delta N)^2.
$$
By Item 1 and Item 2 of Lemma \ref{lem:covariance:produit}, we have that
$$
\sum_{p=1}^{2}\sum_{{\bf z}\in \mathcal{C}_{1,p}}|B({\bf z})|\leq |\mathcal{C}_{1,1}|+|\mathcal{C}_{1,2}|\leq \Delta N+2\Delta N(\Delta N-1)\leq 3(\Delta N)^2,
$$
so that the result follows with $K=3$.

\medskip
{\it Step 2.} Next, we show that there exists a constant $K>0$ such that for all $N\geq 1$ and $\Delta\geq 1$,
$$
\sum_{{\bf z}\in \mathcal{C}_{1,3}}|B({\bf z})|\leq K N\Delta^2.
$$
Item 3 of Lemma \ref{lem:covariance:produit}, ensures that
$$
\sum_{{\bf z}\in \mathcal{C}_{1,3}}|B({\bf z})|\leq K N^2\sum_{t_1=1}^{\Delta}\sum_{t_3=1}^{\Delta}(1-\lambda)^{|t_3-t_1|}N^{-1}.
$$
Since $\sum_{t_1=1}^{\Delta}\sum_{t_3=1}^{\Delta}(1-\lambda)^{|t_3-t_1|}\leq \Delta^2$, the result follows from the previous inequality.

\medskip
{\it Step 3.} Here, we show that there exists a constant $K>0$ such that for all $N\geq 1$ and $\Delta\geq 1$,
$$
\sum_{{\bf z}\in \mathcal{C}_{1,4}}|B({\bf z})|\leq K N\Delta^2.
$$
We proceed very similarly as in the proof of Step 2 of Lemma \ref{lemma:control_cov_barU_squared}. First, observe that we can write (see Remark \ref{rmk:case_4}),
$$
\sum_{{\bf z}\in \mathcal{C}_{1,4}} |B({\bf z})|=\sum_{{\bf z}\in \mathcal{C}_{1,4}} \indiq_{\{z_1=z_2,z_3\neq z_4\}}|B({\bf z})|+\sum_{{\bf z}\in \mathcal{C}_{1,4}} \indiq_{\{z_1\neq z_2,z_3=z_4\}} |B({\bf z})|.
$$
To obtain the result, we apply Item 4 of Lemma \ref{lem:covariance:produit} to obtain an upper bound for each term on the right hand side of the above identity. Since both terms are treated very similarly, we explain how we deal with the first term only. By applying Point 4 of Lemma \ref{lem:covariance:produit} to the first one, we obtain that
$$
\sum_{{\bf z}\in \mathcal{C}_{1,4}} \indiq_{\{z_1=z_2,z_3\neq z_4\}}|B({\bf z})|\leq KN^3\sum_{t_1=1}^{\Delta}\sum_{t_3=1}^{\Delta}\sum_{t_4=1}^{\Delta}\indiq_{\{t_3\leq t_4\}}(1-\lambda)^{|s_3-s_2|+|s_2-s_1|}N^{-2},
$$
where $s_1\leq s_2\leq s_3$ denotes the ordering of the triple $(t_1,t_3,t_4)$. Next, we use that $|s_3-s_2|+|s_2-s_1|\geq t_4-t_3$ to deduce that   
\begin{align*}
\sum_{t_1=1}^{\Delta}\sum_{t_3=1}^{\Delta}\sum_{t_4=1}^{\Delta}\indiq_{\{t_3\leq t_4\}}(1-\lambda)^{|s_3-s_2|+|s_2-s_1|}&\leq \Delta \sum_{t_3=1}^{\Delta}\sum_{t_4=1}^{\Delta}\indiq_{\{t_3\leq t_4\}}(1-\lambda)^{t_4-t_3}\\
& \leq \Delta^2\sum_{k=0}^{\Delta -1}(1-\lambda)^k\leq \lambda^{-1}\Delta^2.
\end{align*}
As a consequence, we obtain that
$$
\sum_{{\bf z}\in \mathcal{C}_{1,4}} \indiq_{\{z_1=z_2,z_3\neq z_4\}}|B({\bf z})|\leq KN\lambda^{-1}\Delta^2,
$$
and the result follows.

\medskip
{\it Step 4.} In this step, we show that there exists a constant $K>0$ such that for all $N\geq 1$ and $\Delta\geq 1$,
$$
\sum_{{\bf z}\in \mathcal{C}_{1,5}}|B({\bf z})|\leq K N^2\Delta^2.
$$
We start observing that
$$
\sum_{{\bf z}\in \mathcal{C}_{1,5}} |B({\bf z})|=\sum_{{\bf z}\in \mathcal{C}_{1,5}} \indiq_{\{z_1=z_3,z_2\neq z_4\}}|B({\bf z})|+\sum_{{\bf z}\in \mathcal{C}_{1,5}} \indiq_{\{z_1\neq z_3,z_2=z_4\}} |B({\bf z})|.
$$
Hence, it suffices to provide an upper bound for each term on the right hand side of the above inequality. We will explain how we deal with the first one only. The second one can be treated similarly.
By Item 5 of Lemma \ref{lem:covariance:produit}, we have that 
$$
\sum_{{\bf z}\in \mathcal{C}_{1,5}} \indiq_{\{z_1=z_3,z_2\neq z_4\}}|B({\bf z})|\leq KN^3\sum_{t_1=1}^{\Delta}\sum_{t_2=1}^{\Delta}\sum_{t_4=1}^{\Delta}\indiq_{\{t_2\leq t_4\}}(1-\lambda)^{(t_4-t_2)}N^{-1}.
$$
Hence, by observing that
$$
\sum_{t_1=1}^{\Delta}\sum_{t_2=1}^{\Delta}\sum_{t_4=1}^{\Delta}\indiq_{\{t_2\leq t_4\}}(1-\lambda)^{(t_4-t_2)}\leq \Delta^2\sum_{k=0}^{\Delta -1}(1-\lambda)^{k}\leq \Delta^2\lambda^{-1},
$$
the result follows.

\medskip
{\it Step 5.} Finally, we show that there exists a constant $K>0$ such that for all $N\geq 1$ and $\Delta\geq 1$,
$$
\sum_{{\bf z}\in {\mathcal{C}_{1,6}}}|B({\bf z})|\leq K N^2\Delta^2.
$$
We start from
\begin{multline*}
\sum_{{\bf z}\in {\mathcal{C}_{1,6}}}|B({\bf z})| \leq 
\sum_{{\bf z}\in {\mathcal{C}_{1,6}}}\left[\indiq_{\{t_3\leq t_4\leq t_1\leq t_2\}}+\indiq_{\{t_1\leq t_2\leq t_3\leq t_4\}}\right]|B({\bf z})|
+\sum_{{\bf z}\in {\mathcal{C}_{1,6}}}\left[\indiq_{\{t_1\leq t_3\leq t_2\leq t_4\}}\right.\\ \left.+\indiq_{\{t_3\leq t_1\leq t_4\leq t_2\}} + \indiq_{\{t_1\leq t_3\leq t_4\leq t_2\}}+\indiq_{\{t_3\leq t_1\leq t_2\leq t_4\}}\right]|B({\bf z})|.     
\end{multline*}
In the sequel, we handle the two terms on the right-hand side of the above inequality separately.

\medskip
{\it Step 5.1} Here, we show that there exists a constant $K>0$ such that for all $N\geq 1$ and $\Delta\geq 1$,
$$
\sum_{{\bf z}\in {\mathcal{C}_{1,6}}}\left[\indiq_{\{t_3\leq t_4\leq t_1\leq t_2\}}+\indiq_{\{t_1\leq t_2\leq t_3\leq t_4\}}\right]|B({\bf z})|\leq K N^2\Delta.
$$
By Item 6 of Lemma \ref{lem:covariance:produit},
\begin{multline*}
\sum_{{\bf z}\in {\mathcal{C}_{1,6}}}\indiq_{\{t_3\leq t_4\leq t_1\leq t_2\}}|B({\bf z})|\leq KN^4\sum_{t_1=1}^{\Delta}\sum_{t_2=1}^{\Delta}\sum_{t_3=1}^{\Delta}\sum_{t_4=1}^{\Delta}\indiq_{\{t_3\leq t_4\leq t_1\leq t_2\}}\times \\
(1-\lambda)^{t_2-t_3}N^{-2} .
\end{multline*}
We upper bound
\begin{equation*}
\sum_{t_1=1}^{\Delta}\sum_{t_2=1}^{\Delta}\sum_{t_3=1}^{\Delta}\sum_{t_4=1}^{\Delta}\indiq_{\{t_3\leq t_4\leq t_1\leq t_2\}}(1-\lambda)^{t_2-t_3}\leq \Delta\left(\sum_{k=0}^{\Delta-1}(1-\lambda)^k\right)^3 ,    
\end{equation*}
such that 
$$
\sum_{{\bf z}\in {\mathcal{C}_{1,6}}}\indiq_{\{t_3\leq t_4\leq t_1\leq t_2\}}|B({\bf z})|\leq KN^2\Delta.
$$
Proceeding similarly, we can also check that $\sum_{{\bf z}\in {\mathcal{C}_{1,6}}}\indiq_{\{t_3\leq t_4\leq t_1\leq t_2\}}|B({\bf z})|\leq KN^2\Delta$. This concludes the proof of Step 5.1

\medskip
{\it Step 5.2} Here we show that there exists a constant $K>0$ such that for all $N\geq 1$ and $\Delta\geq 1$,
$$
\sum_{{\bf z}\in {\mathcal{C}_{1,6}}}\left[\indiq_{\{t_1\leq t_3\leq t_2\leq t_4\}}+\indiq_{\{t_3\leq t_1\leq t_4\leq t_2\}}+ \indiq_{\{t_1\leq t_3\leq t_4\leq t_2\}}+\indiq_{\{t_3\leq t_1\leq t_2\leq t_4\}}\right]|B({\bf z})|\leq K N^2\Delta^2.
$$
From Item 7 of Lemma \ref{lem:covariance:produit}, we deduce that
$$
\sum_{{\bf z}\in {\mathcal{C}_{1,6}}}\indiq_{\{t_1\leq t_3\leq t_2\leq t_4\}}|B({\bf z})|\leq K\sum_{{\bf z}\in {\mathcal{C}_{1,6}}}\indiq_{\{t_1\leq t_3\leq t_2\leq t_4\}}(1-\lambda)^{(t_4-t_2) + (t_3-t_1)}N^{-2} . 
$$
Since 
$$
\sum_{t_1=1}^{\Delta}\sum_{t_2=1}^{\Delta}\sum_{t_3=1}^{\Delta}\sum_{t_4=1}^{\Delta}\indiq_{\{t_1\leq t_3\leq t_2\leq t_4\}}(1-\lambda)^{(t_4-t_2)+(t_3-t_1)}\leq \Delta^2\left(\sum_{k=0}^{\Delta-1}(1-\lambda)^k\right)^2,
$$
it then follows that
$$
\sum_{{\bf z}\in {\mathcal{C}_{1,6}}}\indiq_{\{t_1\leq t_3\leq t_2\leq t_4\}}(1-\lambda)^{(t_1-t_3)+(t_2-t_4)}N^{-2}\leq N^2\Delta^2\lambda^{-2}
$$
such that
$$
\sum_{{\bf z}\in {\mathcal{C}_{1,6}}}\indiq_{\{t_1\leq t_3\leq t_2\leq t_4\}}|B({\bf z})|\leq KN^2  \Delta^2.
$$
Proceeding similarly, we can also check the three other cases, concluding the proof of Step 5.2.

Combining Steps 5.1, 5.2 and 5.3, we obtain 
$$
\sum_{{\bf z}\in {\mathcal{C}_{1,6}}}|B({\bf z})|\leq K N^2\Delta^2,
$$
concluding the proof of Step 5.

By summing all the upper bounds provided by the Steps 1 through 5, we conclude the proof of Lemma \ref{lemma:control_var_barU_squared}.

\end{proof}

We are now in position to handle the terms $D^{N,2}_{\Delta,T}$ and $D^{N,2}_{2\Delta,T}$.

\begin{proposition}
\label{prop:control_DN2_squared}
There exists a constant $K$ depending only on $ \lambda$ such that for all  $N\geq 1$, $T\geq 2$ and $1\leq \Delta\leq \lfloor T/2\rfloor$, we have that 
\begin{equation*}
\E_{\theta}\left[\left(D^{N,2}_{\Delta,T}\right)^2+\left(D^{N,2}_{2\Delta,T}\right)^2\right]\leq K\frac{\Delta}{T}.  
\end{equation*}
\end{proposition}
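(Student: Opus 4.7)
The plan is to exploit the stationarity, under $\P_\theta$, of the increments $V_s := \overline{U}_{s\Delta} - \overline{U}_{(s-1)\Delta}$ and then reduce the problem to bounds that have already been established in Lemmas \ref{lemma:control_var_barU_squared} and \ref{lemma:control_cov_barU_squared}. First, observe that
\[
\overline{Z}_{s\Delta} - \overline{Z}_{(s-1)\Delta} - \E_\theta\!\left[\overline{Z}_{s\Delta} - \overline{Z}_{(s-1)\Delta}\right] = V_s,
\]
since $\overline{U}_t = \overline{Z}_t - \E_\theta[\overline{Z}_t]$. Hence, writing $n := \lfloor T/\Delta\rfloor$,
\[
D^{N,2}_{\Delta,T} = \frac{N}{T}\left|\sum_{s=1}^{n}\bigl(V_s^{2}-\E_\theta[V_s^{2}]\bigr)\right|,
\]
so that $\E_\theta[(D^{N,2}_{\Delta,T})^2] = (N^2/T^2)\,\var_\theta\!\left(\sum_{s=1}^{n} V_s^{2}\right)$.

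Next, I would split the variance into a diagonal and an off-diagonal contribution. By the stationarity of $(X_t)_{t\in\Z}$ under $\P_\theta$, each $V_s$ is a function of $(X_{i,k})_{1\leq i\leq N,\, (s-1)\Delta < k \leq s\Delta}$, so $V_s \equaldistrib \overline{U}_{\Delta}$ and $(V_s,V_{s'})\equaldistrib (V_1,V_{s'-s+1})$ for $s<s'$. In particular, $\cov_\theta(V_s^2,V_{s'}^2) = \cov_\theta(V_1^2,V_{s'-s+1}^2) = \cov_\theta\bigl((\overline{U}_\Delta)^2,(\overline{U}_{(s'-s+1)\Delta}-\overline{U}_{(s'-s)\Delta})^2\bigr)$, which is exactly the quantity controlled by Lemma \ref{lemma:control_cov_barU_squared}. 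The diagonal term is $n\,\var_\theta(V_1^2)=n\,\var_\theta((\overline{U}_\Delta)^2)$, which is controlled by Lemma \ref{lemma:control_var_barU_squared}.

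For the diagonal term, Lemma \ref{lemma:control_var_barU_squared} gives $\var_\theta(V_1^2)\leq K\Delta^2 N^{-2}$, so
\[
\frac{N^2}{T^2}\, n\,\var_\theta(V_1^2) \leq \frac{N^2}{T^2}\cdot\frac{T}{\Delta}\cdot\frac{K\Delta^2}{N^2} = K\,\frac{\Delta}{T}.
\]
For the off-diagonal term, setting $t = s'-s+1\geq 2$ and using Lemma \ref{lemma:control_cov_barU_squared},
\[
2\!\!\sum_{1\leq s<s'\leq n}\!\cov_\theta(V_s^2,V_{s'}^2) \leq 2KN^{-2}\sum_{t=2}^{n}(n-t+1)(1-\lambda)^{(t-2)\Delta}\leq \frac{2K\,n}{N^{2}\bigl(1-(1-\lambda)^\Delta\bigr)} \leq \frac{2K\,T}{\lambda\,\Delta\,N^{2}},
\]
since $1-(1-\lambda)^{\Delta}\geq \lambda$ for $\Delta\geq 1$. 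Multiplying by $N^2/T^2$ gives a bound of order $1/(T\Delta)\leq \Delta/T$. Combining both estimates yields $\E_\theta[(D^{N,2}_{\Delta,T})^2]\leq K\Delta/T$; the exact same argument applied with $2\Delta$ (which still satisfies $2\Delta\leq T$ by assumption) gives $\E_\theta[(D^{N,2}_{2\Delta,T})^2]\leq 2K\Delta/T$, and summing concludes the proof.

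There is no real obstacle here: both difficult ingredients (the variance of a single block and the block-to-block covariance of squared partial sums) have been established in the two preceding lemmas. The only point requiring a small amount of care is the stationarity argument needed to reduce $\cov_\theta(V_s^2,V_{s'}^2)$ to the quantity estimated in Lemma \ref{lemma:control_cov_barU_squared}, together with the observation that the factor $1-(1-\lambda)^\Delta$ is bounded below by $\lambda$ uniformly in $\Delta\geq 1$, which allows the off-diagonal contribution to be absorbed into the diagonal one.
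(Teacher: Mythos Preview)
Your proof is correct and follows essentially the same approach as the paper: both rewrite $D^{N,2}_{\Delta,T}$ in terms of the centered block variables $V_s=\overline{U}_{s\Delta}-\overline{U}_{(s-1)\Delta}$, expand $\var_\theta\!\big(\sum_s V_s^2\big)$ into diagonal and off-diagonal parts, use stationarity to reduce the covariances to the form handled by Lemma \ref{lemma:control_cov_barU_squared}, and invoke Lemma \ref{lemma:control_var_barU_squared} for the diagonal term. Your explicit observation that $1-(1-\lambda)^\Delta\ge\lambda$ is a slightly more concrete version of the paper's appeal to $\sum_{k\ge0}(1-\lambda)^{k\Delta}<\infty$, but otherwise the arguments coincide.
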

\begin{proof}
Clearly, the result will follow if we can show that $\E_{\theta}\left[(D^{N,2}_{\Delta,T})^2\right]\leq K\Delta T^{-1}$. To establish this inequality, first observe that
\begin{eqnarray*}
\E_{\theta}\left[\left(D^{N,2}_{\Delta,T}\right)^2\right]&=&\frac{N^2}{T^2}\var_{\theta}\left[\sum_{t=1}^{\lfloor T/\Delta \rfloor}\left(\overline{U}_{t\Delta}-\overline{U}_{(t-1)\Delta}\right)^2\right]\\
&=&\frac{N^2}{T^2}\sum_{t=1}^{\lfloor T/\Delta \rfloor}\sum_{s=1}^{\lfloor T/\Delta \rfloor}\cov_{\theta}\left[\left(\overline{U}_{t\Delta}-\overline{U}_{(t-1)\Delta}\right)^2,\left(\overline{U}_{s\Delta}-\overline{U}_{(s-1)\Delta}\right)^2\right].
\end{eqnarray*}
Next, we use the stationarity of the system to conclude that
\begin{multline*}
\E_{\theta}\left[\left(D^{N,2}_{\Delta,T}\right)^2\right]=\frac{N^2}{T^2} \lfloor T/\Delta \rfloor \var_{\theta}[(\overline{U}_{\Delta})^2]\\
+2\frac{N^2}{T^2}\sum_{t=2}^{\lfloor T/\Delta \rfloor}\left(\lfloor T/\Delta \rfloor-t+1\right)\cov_{\theta}\left[\left(\overline{U}_{\Delta}\right)^2,\left(\overline{U}_{t\Delta}-\overline{U}_{(t-1)\Delta}\right)^2\right].   
\end{multline*}

As a consequence of Lemma \ref{lemma:control_cov_barU_squared} and of the fact that
$\sum_{k=0}^{\infty} (1-\lambda)^{k\Delta} <\infty$, it then follows that  
$$
\E_{\theta}\left[\left(D^{N,2}_{\Delta,T}\right)^2\right]\leq \frac{N^2}{T\Delta}\var_{\theta}[(\overline{U}_{\Delta})^2]+K(\Delta T)^{-1}.
$$
Hence, using Lemma \ref{lemma:control_var_barU_squared}, we establish the desired inequality and conclude the proof. 
\end{proof}

\subsection{Proof of Equation \eqref{eq:quenched:v}}

\changes{
    The proof relies on a decomposition of the error $|\hat{v}-v^N_{\infty}|$ (see Inequality \eqref{eq:triangle:ineq:vhat:vinfty}). Each term of the decomposition is then controlled by one of the four Propositions \ref{prop:control:up2:up3}-\ref{prop:control:up121}. Their proofs are quite similar and less involved than the proofs given in the previous subsection.
}

Recall that 
$$
\hat{v}= \frac{1}{T^2}\sum_{i=1}^{N}(Z_{i,T}-\overline{Z}_T)^2-\frac{N}{T^2}\overline{Z}_T+\frac{1}{T^3}\sum_{i=1}^{N}(Z_{i,T})^2,
$$
and that $\hat{m} = \overline{Z}_T/T$. Therefore,
\begin{equation}\label{eq:triangle:ineq:vhat:vinfty}
    | \hat{v} - v^N_\infty| \le \Upsilon_T^{N, 1 } + \Upsilon_T^{N, 2} + \Upsilon_T^{N, 3 },
\end{equation}
where 
$$ \Upsilon_T^{N, 1 } = \left| \sum_{i=1}^{N}\left[ \left( \frac{Z_{i, T}}{T} - \frac{ \bar Z_T}{T} \right)^2 - (m_{i}^N - \overline{m^N})^2\right]- \frac{N}{T} \overline{m^N} + \frac1T \sum_{i=1}^{N} (m_{i}^N)^2  \right|, $$
$$ \Upsilon_T^{N, 2 } = \frac{N}{T} \left|\hat{m} -\overline{m^N}  \right| ,
\mbox{ and  } \Upsilon_T^{N, 3 }  = \frac1T \left| \sum_{i=1}^{N} \left(\frac{Z_{i, T}}{T}  \right)^2 -(m_{i}^N)^2   \right| .$$

\begin{proposition}\label{prop:control:up2:up3}
    There exists a constant $K$ depending only on $ \lambda$ such that for all $N\geq 1$ and $T\geq 1$, 
    \begin{equation*}
        \E_\theta \left[ \Upsilon_T^{N, 2 } + \Upsilon_T^{N, 3 } \right] \le K\left(\frac{N}{T^2}+\frac{ {N}^{1/2}}{T^{3/2}}\left(1+\sqrt{v_\infty^N}\right)\right).
    \end{equation*}
\end{proposition}
\begin{proof}
    First, remind that $\E_\theta[Z_{i, T}/T] = m_{i}^N$ and so $\E_\theta[\hat{m}] = \overline{m^N}$. On the one hand, by Cauchy-Schwarz, $\E_\theta [\Upsilon_T^{N, 2 }] \leq NT^{-1}\left( \var_\theta(\hat{m})\right)^{1/2} \leq KN^{1/2}T^{-3/2}$ by Equation \eqref{eq:quenched:m}.
    On the other hand, by first rewriting $\Upsilon_T^{N, 3 }$ as (recall that ${U}_{i,T}=Z_{i,T}-Tm^N_i$),
$$
\Upsilon_T^{N, 3 }=\frac{1}{T}\left|\sum_{i=1}^N\left(\frac{U_{i,T}}{T}\right)^2+2\sum_{i=1}^{N}(m^N_i-\overline{m^N})\frac{U_{i,T}}{T}+2\frac{N}{T}\overline{m^N} \ \overline{U}_T\right|
$$
and then using the triangle inequality, we obtain that    
   
\begin{equation*}  
    \E_\theta \left[ \Upsilon_T^{N, 3 } \right] \le  \frac{1}{T^3}\sum_{i=1}^N\E_{\theta}\left[\left(U_{i,T}\right)^2\right]+\frac{2}{T^2}\sum_{i=1}^{N}|m^N_i-\overline{m^N}| \E_{\theta}\left[\left|U_{i,T}\right|\right]+2\frac{N}{T^2}\overline{m^N} \E_{\theta}\left[\left|\overline{U}_T\right|\right].
    \end{equation*}
    Now, by stationarity and Lemma \ref{lem:covariance:Y},
    \begin{multline}\label{eq:controlvarz} 
    \var_\theta\left(Z_{i, T}\right) = T \var_\theta\left(X_{i, 0}\right) + 2 \sum_{t=1}^T (T-t) \cov_\theta (X_{i, 0}, X_{i, t }) \\
    \le T\var_\theta\left(X_{i, 0}\right)  + K T/N  \leq KT,  
    \end{multline}
    so that 
    $$
   \frac{1}{T^3}\sum_{i=1}^N\E_{\theta}\left[\left(U_{i,T}\right)^2\right]=
   \frac{1}{T^3}\sum_{i=1}^N\text{Var}_{\theta}\left(Z_{i,T}\right)\le K N T^{-2}.
    $$
Moreover, by using Cauchy-Schwarz inequality and then Jensen inequality, we deduce that 
$$
\sum_{i=1}^{N}|m^N_i-\overline{m^N}| \E_{\theta}\left[\left|U_{i,T}\right|\right]\leq \sqrt{\sum_{i=1}^N\left|m^N_i-\overline{m^N}\right|^2}\sqrt{\sum_{i=1}^N\E_{\theta}\left[\left(U_{i,T}\right)^2\right]}.
$$
Hence, using once more Inequality \eqref{eq:controlvarz} and reminding that $v_\infty^N = \sum_{i=1}^N\left|m^N_i-\overline{m^N}\right|^2$, we obtain that 
$$
\frac{2}{T^2}\sum_{i=1}^{N}|m^N_i-\overline{m^N}|\E_{\theta}\left[\left|U_{i,T}\right|\right]\leq K\frac{N^{1/2}}{T^{3/2}}\sqrt{v_\infty^N}. 
$$

Finally, by combining Jensen inequality and Lemma \ref{Lemma_bound_on_the_exp_of_Ut_squared}, it follows that $\E_{\theta}\left[\left|\overline{U}_T\right|\right]\leq \sqrt{\E_{\theta}\left[\left|\overline{U}_T\right|^2\right]}\leq KT^{1/2}N^{-1/2}$
so that
$$
2\frac{N}{T^2}\overline{m^N} \E_{\theta}\left[\left|\overline{U}_T\right|\right]\leq K N^{1/2}T^{-3/2},
$$
where we have also used that that $|\overline{m^N}|\leq 1$. Putting together the above estimates, it then follows that 
$$
\E_\theta \left[ \Upsilon_T^{N, 3 } \right] \le K\left(\frac{N}{T^2}+\frac{ {N}^{1/2}}{T^{3/2}}\sqrt{v_\infty^N}\right),
$$
and the result follows since we have already proved that $\E_\theta [\Upsilon_T^{N, 2 }] \leq KN^{1/2}T^{-3/2}$.

\end{proof}

We now turn to the study of $\Upsilon_T^{N, 1 }.$ Using that 
$$ (a-b)^2 - (\bar a - \bar b)^2 = ( a-b)^2 - ( a- \bar b)^2 + ( a- \bar a )^2 + 2 (a- \bar a ) (\bar a - \bar b) ,$$
we obtain that $ \Upsilon_T^{N, 1 } \le \Upsilon_T^{N, 1,1 } +\Upsilon_T^{N, 1,2 }+ \Upsilon_T^{N, 1,3 },$ where 
$$\Upsilon_T^{N, 1,1 } = \left| \sum_{i=1}^{N} \left( \frac{Z_{i, T}}{T} - \frac{ \bar Z_T}{T}\right)^2 - \left( \frac{Z_{i, T}}{T} - \overline{m^N} \right)^2 \right| ,$$
$$ \Upsilon_T^{N, 1,2 } = \left| \sum_{i=1}^{N} \left( \frac{Z_{i, T}}{T} - m_{i }^N\right)^2 - \frac{N}{T} \overline{m^N} + \frac1T \sum_{i=1}^{N} (m_{ i}^N)^2  \right|$$
and 
$$ \Upsilon_T^{N, 1,3 }= 2 \left| \sum_{i=1}^{N} ( \frac{Z_{i, T}}{T} - m_{i }^N) ( m_{ i }^N - \overline{m^N})  \right| .$$
Furthermore, remind that ${U}_{i,t}=Z_{i,t}-tm^N_i$ and so we write $\Upsilon_T^{N, 1,2 } = \Upsilon_T^{N, 1,2, 1 }+\Upsilon_T^{N, 1,2, 2 } ,$ with 
$$ \Upsilon_T^{N, 1,2 , 1 }= \left|\sum_{i=1}^{N}  \left( \frac{U_{i, T }}{T}\right)^2 - \E_\theta \left( \left( \frac{U_{i, T }}{T}\right)^2\right)  \right|$$
and 
$$ \Upsilon_T^{N, 1,2 , 2 }= \left| \sum_{i=1}^{N}\E_\theta \left( \left( \frac{U_{i, T }}{T}\right)^2\right) - \frac{N}{T} \overline{m^N} + \frac1T \sum_{i=1}^{N} (m_{ i}^N)^2  \right| . $$

\begin{proposition}
    For all $N\geq 1$ and $T\geq 1$, there exists a constant $K$ depending only on $ \lambda$ such that
    \begin{equation*}
        \E_\theta \left[ \Upsilon_T^{N, 1,1 } + \Upsilon_T^{N, 1,2 , 2 } \right] \le K \frac{1}{T}.
    \end{equation*}
\end{proposition}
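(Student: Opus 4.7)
The plan is to reduce each of the two quantities to an explicit deterministic expression that can be controlled directly.

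For $\Upsilon_T^{N,1,1}$, I apply the algebraic identity $(a-b)^2 - (a-c)^2 = (c-b)(2a - b - c)$ pointwise with $a = Z_{i,T}/T$, $b = \hat m = \bar Z_T/T$ and $c = \overline{m^N}$. Summing over $i$ and using $\sum_{i=1}^N Z_{i,T}/T = N\hat m$ produces a clean telescoping: the sum of $2a - b - c$ equals $N(\hat m - \overline{m^N})$, so
\begin{equation*}
    \Upsilon_T^{N,1,1} = \left| (\overline{m^N} - \hat m)\, N(\hat m - \overline{m^N}) \right| = N(\hat m - \overline{m^N})^2.
\end{equation*}
Taking $\E_\theta$ and invoking Proposition \ref{prop:control_in_L2_of_m_hat}, which gives $\var_\theta(\hat m) \le K/(TN)$, we obtain $\E_\theta[\Upsilon_T^{N,1,1}] \le K/T$.

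For $\Upsilon_T^{N,1,2,2}$, I use that $\E_\theta[(U_{i,T}/T)^2] = \var_\theta(Z_{i,T})/T^2$ and the stationarity decomposition
\begin{equation*}
    \var_\theta(Z_{i,T}) = T\,\var_\theta(X_{i,0}) + 2 \sum_{t=1}^{T-1}(T-t)\,\cov_\theta(X_{i,0}, X_{i,t}),
\end{equation*}
where $\var_\theta(X_{i,0}) = m_i^N - (m_i^N)^2$ since $X_{i,0}\sim \mathrm{Ber}(m_i^N)$ under $\P_\theta$. Summing over $i$ and dividing by $T^2$, the term $\frac{1}{T}\sum_{i=1}^N(m_i^N - (m_i^N)^2) = \frac{N}{T}\overline{m^N} - \frac{1}{T}\sum_{i=1}^N(m_i^N)^2$ cancels exactly the last two terms in the definition of $\Upsilon_T^{N,1,2,2}$, leaving
\begin{equation*}
    \Upsilon_T^{N,1,2,2} = \left| \frac{2}{T^2}\sum_{i=1}^N\sum_{t=1}^{T-1}(T-t)\,\cov_\theta(X_{i,0},X_{i,t}) \right|.
\end{equation*}
By Lemma \ref{lem:covariance:Y}, $|\cov_\theta(X_{i,0},X_{i,t})| \le K(1-\lambda)^t N^{-1}$ for every $t \ge 1$, so
\begin{equation*}
    \sum_{t=1}^{T-1}(T-t)(1-\lambda)^t \le T\sum_{t=1}^{\infty}(1-\lambda)^t \le T/\lambda,
\end{equation*}
and the double sum is at most $K T$. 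Dividing by $T^2$ yields $\Upsilon_T^{N,1,2,2} \le K/T$, which is deterministic and therefore survives the expectation $\E_\theta$. Combining the two bounds gives the announced estimate. There is no genuine obstacle here; the main point is spotting the telescoping for $\Upsilon_T^{N,1,1}$ and the exact cancellation of the variance-of-Bernoulli term in $\Upsilon_T^{N,1,2,2}$.
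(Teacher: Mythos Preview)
Your proof is correct and follows essentially the same route as the paper's. For $\Upsilon_T^{N,1,1}$ the paper also reduces to $N(\hat m - \overline{m^N})^2$ (by reference to the argument of Proposition~\ref{prop:control_DN1}) and then applies Proposition~\ref{prop:control_in_L2_of_m_hat}; for $\Upsilon_T^{N,1,2,2}$ the paper likewise uses the stationarity decomposition of $\var_\theta(Z_{i,T})$, cancels the $m_i^N-(m_i^N)^2$ part, and bounds the remaining covariance sum via Lemma~\ref{lem:covariance:Y} (packaged as \eqref{eq:controlvarz}).
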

\begin{proof}
    Adapting the argument used in the beginning of Proposition \ref{prop:control_DN1}, we have
    $$ \Upsilon_T^{N, 1,1 } = N \left( \frac{\bar Z_T}{T} - \overline{m^N} \right)^2. $$
    Hence, Equation \eqref{eq:quenched:m} implies that $\E_\theta \Upsilon_T^{N, 1,1 } \le K/ T$.

    Second, remark that $\Upsilon_T^{N, 1,2 , 2 }$ is not random and rewrites as
    $$
    \Upsilon_T^{N, 1,2 , 2 } = \left| \sum_{i=1}^{N} \var_\theta \left( \frac{Z_{i, T}}{T} \right) - \frac{N}{T} \overline{m^N} + \frac1T \sum_{i=1}^{N} (m_{ i}^N)^2  \right| . $$
    Using Equation \eqref{eq:controlvarz} and the fact that $ \var_\theta (X_{i, 0} ) = m_i^N - (m_i^N)^2 $ we conclude that $ \E_\theta \Upsilon_T^{N, 1,2 , 2 } \le K / T.$
\end{proof}

\begin{proposition}
For all $N\geq 1$ and $T\geq 1$, there exists a constant $K$ depending only on $ \lambda$ such that
$$ \E_\theta |\Upsilon_T^{N, 1, 3}|^2 \le \frac{K}{T} \sum_{i=1}^{N} | m_{ i }^N - \overline{m^N}|^2.$$
\end{proposition}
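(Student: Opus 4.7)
The plan is to rewrite $\Upsilon_T^{N,1,3}$ using the centered quantities $U_{i,T} = Z_{i,T} - T m_{i}^N$. Since $(m_i^N - \overline{m^N})$ is $\theta$-measurable, I get
\begin{equation*}
    \Upsilon_T^{N,1,3} = \frac{2}{T}\left| \sum_{i=1}^N U_{i,T}\,(m_i^N - \overline{m^N}) \right|,
\end{equation*}
and squaring and taking conditional expectation gives
\begin{equation*}
    \E_\theta |\Upsilon_T^{N,1,3}|^2 = \frac{4}{T^2} \sum_{i,j=1}^N (m_i^N - \overline{m^N})(m_j^N - \overline{m^N}) \, \cov_\theta(Z_{i,T},Z_{j,T}).
\end{equation*}

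Next, I would use the elementary inequality $|a_i a_j| \leq \tfrac{1}{2}(a_i^2 + a_j^2)$ together with the symmetry $|\cov_\theta(Z_{i,T},Z_{j,T})| = |\cov_\theta(Z_{j,T},Z_{i,T})|$ to obtain
\begin{equation*}
    \E_\theta |\Upsilon_T^{N,1,3}|^2 \le \frac{4}{T^2} \sum_{i=1}^N (m_i^N - \overline{m^N})^2 \sum_{j=1}^N |\cov_\theta(Z_{i,T},Z_{j,T})|.
\end{equation*}
The key remaining step is thus to show that $\sum_{j=1}^N |\cov_\theta(Z_{i,T},Z_{j,T})| \leq K T$ uniformly in $i$ and $\theta$.

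For $i = j$, inequality \eqref{eq:controlvarz} (established in the proof of Proposition \ref{prop:control:up2:up3}) gives $\var_\theta(Z_{i,T}) \leq KT$. For $i \neq j$, by stationarity and Lemma \ref{lem:covariance:Y} (which bounds each $|\cov_\theta(X_{i,s},X_{j,t})| \leq K(1-\lambda)^{|s-t|\vee 1} N^{-1}$), I get
\begin{equation*}
    |\cov_\theta(Z_{i,T},Z_{j,T})| \leq \sum_{s,t=1}^T |\cov_\theta(X_{i,s},X_{j,t})| \leq \frac{K}{N} \sum_{s,t=1}^T (1-\lambda)^{|s-t|\vee 1} \leq \frac{K T}{\lambda N}.
\end{equation*}
Summing over $j$ yields $\sum_j |\cov_\theta(Z_{i,T},Z_{j,T})| \leq KT + (N-1)\cdot KT/N \leq K' T$, and plugging this back produces the desired bound. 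No step here is particularly delicate — the only point requiring care is keeping the diagonal term $i=j$ separate from the off-diagonal ones so that the factor $N^{-1}$ in Lemma \ref{lem:covariance:Y} compensates exactly the $N-1$ off-diagonal indices.
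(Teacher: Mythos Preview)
Your proof is correct and follows essentially the same approach as the paper: expand the square into a double sum weighted by $\cov_\theta(Z_{i,T},Z_{j,T})$, use the elementary inequality $|a_ia_j|\le \tfrac12(a_i^2+a_j^2)$ together with symmetry, and control the covariances via Lemma~\ref{lem:covariance:Y}, separating the diagonal term $i=j$ (bounded by \eqref{eq:controlvarz}) from the off-diagonal ones (where the factor $N^{-1}$ compensates the sum over $j$). The only cosmetic difference is that the paper applies the inequality $|a_ia_j|\le a_i^2+a_j^2$ after bounding the covariances, whereas you apply it before; the two orderings are equivalent.
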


\begin{proof}
Clearly, 
$$ \E_\theta |\Upsilon_T^{N, 1, 3}|^2 \leq \frac{4}{T^2 } \sum_{i, j =1}^{N} |\cov_\theta (U_{i, T}, U_{j, T })| | m_{ i }^N - \overline{m^N}| | m_{ j }^N - \overline{m^N}| . $$
But, by stationarity and using Lemma \ref{lem:covariance:Y}, 
$$\left| \cov_\theta (U_{i, T}, U_{j, T }) \right| \leq \sum_{s, t= 1}^T \left| \cov_\theta ( X_{i, s}, X_{j, t}) \right| \le  T \left| \cov_\theta ( X_{i, 0}, X_{j, 0}) \right| + KT/N .  $$
Moreover, 
$$\left| \cov_\theta ( X_{i, 0}, X_{j, 0}) \right| \le  (m^N_{ i } - (m_{ i}^N)^2) \indiq_{\{ i=j\}} + K/N \indiq_{\{ i \neq j\}}, $$
such that all in all
\begin{multline*} \E_\theta |\Upsilon_T^{N, 1, 3}|^2 \\
\le \frac{K}{T} \left(\sum_{i=1}^N 
\left[ \frac{1}{N} + (m^N_{ i } - (m_{ i}^N)^2) \right]
( m_{ i }^N - \overline{m^N})^2 + \frac1N \sum_{i \neq j }  
| m_{ i }^N - \overline{m^N}| | m_{ j }^N - \overline{m^N}| \right) .
\end{multline*}
Since $m^N_{ i } - (m_{ i}^N)^2 \le 1  $  and $ | m_{ i }^N - \overline{m^N}| | m_{ j }^N - \overline{m^N}| \le ( m_{i }^N - \overline{m^N})^2 + ( m_{ j }^N - \overline{m^N})^2, $ the conclusion follows.   
\end{proof}

\begin{proposition}\label{prop:control:up121}
For all $N\geq 1$ and $T\geq 1$, there exists a constant $K$ depending only on $ \lambda$ such that
$$ \E_\theta |\Upsilon_T^{N, 1,2 , 1 }|^2  \le K N/ T^2 . $$
\end{proposition}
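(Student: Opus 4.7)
The plan is to reduce $\E_\theta|\Upsilon_T^{N,1,2,1}|^2$ to a sum of covariances and apply Lemma \ref{lem:covariance:produit} case by case, along the lines of Lemma \ref{lemma:control_var_barU_squared}. First I would note that
\begin{equation*}
    \E_\theta\bigl|\Upsilon_T^{N,1,2,1}\bigr|^2 = \var_\theta\!\left(\sum_{i=1}^N \left(\tfrac{U_{i,T}}{T}\right)^{\!2}\right) = \frac{1}{T^4}\sum_{i,j=1}^N \cov_\theta\bigl(U_{i,T}^2,\,U_{j,T}^2\bigr),
\end{equation*}
and then expand $U_{i,T}^2 = \sum_{s_1,s_2=1}^T Y_{i,s_1}Y_{i,s_2}$, so that the problem becomes bounding
\begin{equation*}
    \sum_{i,j=1}^N\sum_{s_1,s_2,s_3,s_4=1}^T \bigl|B(\mathbf{z})\bigr|,\qquad B(\mathbf{z}):=\cov_\theta\bigl(Y_{z_1}Y_{z_2},Y_{z_3}Y_{z_4}\bigr),
\end{equation*}
with $z_1=(i,s_1)$, $z_2=(i,s_2)$, $z_3=(j,s_3)$, $z_4=(j,s_4)$, and showing that this is at most $KNT^2$.

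I would split according to whether $i=j$ or $i\neq j$, and within each case partition the four-tuples by their coincidence pattern in order to apply the right item of Lemma \ref{lem:covariance:produit}. For the diagonal $i=j$, the four sites all lie in $\{i\}\times\Z$, and I would follow the five-step decomposition used in the proof of Lemma \ref{lemma:control_var_barU_squared} verbatim. The dominant contribution here comes from configurations with $\#E\le 2$ for which only the trivial bound $|B|\le 1$ (items 1--2) is available: these configurations are $(s_1,s_2,s_3,s_4)$ of the form $(a,b,a,b)$, $(a,b,b,a)$, or with three coincidences, and their cardinality is at most $CT^2$. All other configurations carry at least a factor $N^{-1}$ or $N^{-2}$ and standard geometric time-summation shows they contribute at most lower-order terms. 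The net bound is $\var_\theta(U_{i,T}^2)\le KT^2$, so that summing over $i$ gives $\sum_i\var_\theta(U_{i,T}^2)\le KNT^2$.

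For the off-diagonal $i\neq j$, the key observation — and what makes this proof succeed — is that $\{z_1,z_2\}\subset\{i\}\times[T]$ and $\{z_3,z_4\}\subset\{j\}\times[T]$ are disjoint, so item 2 of Lemma \ref{lem:covariance:produit} (which requires $\{z_1,z_2\}=\{z_3,z_4\}$ as sets) never applies. Consequently every nonzero covariance comes with at least one factor of $N^{-1}$. I would treat the four subcases: (a) $s_1=s_2$ and $s_3=s_4$ (item 3, bound $K(1-\lambda)^{|s_1-s_3|}N^{-1}$), which sums to $KT/N$ per pair and total $\le KNT$ over $i\neq j$; (b)--(c) exactly one of $s_1=s_2$ or $s_3=s_4$ holds (item 4, bound $N^{-2}$), summing to $O(T)$ per pair; (d) all $s_k$ distinct (item 7, bound $N^{-2}$), summing to $O(T^2)$ per pair after geometric time-summation. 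The $N^{-2}$ cases together contribute at most $KT^2$ to the double sum, so overall $\sum_{i\neq j}|\cov_\theta(U_{i,T}^2,U_{j,T}^2)|\le K(NT+T^2)$.

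Putting the two cases together yields $\sum_{i,j}\cov_\theta(U_{i,T}^2,U_{j,T}^2)\le K(NT^2 + NT + T^2)\le KNT^2$, and dividing by $T^4$ gives the claimed bound. The main obstacle is simply the bookkeeping: many subcases of coincidence patterns have to be enumerated, as already witnessed by the five-step proof of Lemma \ref{lemma:control_var_barU_squared}; the essential structural point — that the disjointness of the spatial supports when $i\neq j$ eliminates item 2 and forces an $N^{-1}$ gain — is conceptually simple but has to be checked carefully to make sure no pattern slips through.
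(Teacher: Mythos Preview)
Your proposal is correct and follows essentially the same approach as the paper: both reduce to a sum of covariances $B(\mathbf{z})$ and invoke Lemma~\ref{lem:covariance:produit} item by item, with the structural observation that the worst case (item~2, trivial bound $|B|\le 1$) only arises when the two spatial indices coincide, so that its total contribution is $O(NT^2)$. The only difference is organizational --- you split first on $i=j$ versus $i\neq j$ and then on time coincidences, whereas the paper splits first on $s=s'$, $t=t'$ (their $S_1$) versus $s\neq s'$, $t\neq t'$ (their $S_2$) and then classifies $S_2$ by which item of Lemma~\ref{lem:covariance:produit} applies ($p\in\{2,5,7\}$); both routes identify the same dominant $\mathcal{P}_{2,T}$-type contribution and bound the rest identically.
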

\begin{proof}
We have that 
\begin{multline}
   \E_\theta |\Upsilon_T^{N, 1,2 , 1 }|^2= \frac{1}{T^4 }\sum_{i, j=1}^N \cov_\theta ( (U_{i, T})^2, (U_{j, T })^2 )\\
   =  \frac{1}{T^4 }\sum_{i, j=1}^N \sum_{s=1}^T \sum_{t=1}^T \cov_\theta ( Y_{i,s} Y_{i, s}, Y_{j, t} Y_{j, t}) 
   +  \frac{1}{T^4 }\sum_{i, j=1}^N \sum_{s\neq s'} \sum_{t \neq t'  } \cov_\theta ( Y_{i,s} Y_{i, s '}, Y_{j, t} Y_{j, t'})
 \\
  =: S_1 + S_2.
\end{multline}
By  Lemma \ref{lem:covariance:produit}, Item 1 and Item 3,
$ S_1 \le K N / T^3  .$ To deal with $ S_2,$ we adapt the arguments of the proof of Lemma \ref{lemma:control_var_barU_squared}. As before, write ${\bf z}=(z_1,z_2,z_3,z_4), $ and $B({\bf z})=\cov_{\theta}\left[Y_{z_1}Y_{z_2},Y_{z_3}Y_{z_4}\right].$
This expression is symmetric with respect to $ t, t'$ and also with respect to $ s, s'.$ Now, introduce the set $F^{(2)} = \{(z_1,z_2): z_1=(i,s), z_2=(i,s'), i\in\{1,\dots, N\}, s,s'\in \{1,\dots, T\}, s\neq s' \}$ and define, for $p\in \{2,5,7\}$, $\mathcal P_{ p, T}$ as the set of all vectors $\mathbf{z}\in F^{(2)}\times F^{(2)}$ satisfying the conditions of Item $p$ of Lemma \ref{lem:covariance:produit}. Moreover, notice that $\mathbf{z} \in \mathcal P_{ 5, T} \cup \mathcal P_{ 2, T}$ implies that the first coordinates (denoted $i$ and $j$ above) of the four couples in $\mathbf{z}$ are equal.

Then 
$$ S_2 =\frac{1}{T^4 } [\sum_{{\bf z} \in  \mathcal P_{ 7, T} } B({\bf z}) +\sum_{{\bf z} \in  \mathcal P_{ 5, T} } B({\bf z})+ \sum_{{\bf z} \in  \mathcal P_{ 2, T} } B({\bf z}) ]. $$
Following Step 5 of the proof of Lemma \ref{lemma:control_var_barU_squared}, it is easy so see that 
$$ \sum_{{\bf z} \in  \mathcal P_{ 7, T} } B({\bf z}) \le K T^2 .$$
Moreover, Lemma \ref{lem:covariance:produit}, Item 5, implies that 
$$ \sum_{{\bf z} \in  \mathcal P_{ 5, T} } B({\bf z}) \le K T^2 .$$
Finally, 
$$ \sum_{{\bf z} \in  \mathcal P_{ 2, T} } B({\bf z})\le K | \mathcal P_{ 2, T} | = K N T^2.$$
All in all we therefore obtain that 
$S_2 \le KN / T^2 ,$ implying the assertion.
\end{proof}

\section{Proof of Proposition \ref{prop:control:N:infty}}
\label{app:annealed:rates}

\changes{
    This section contains a fine study of the random environment $\theta$. First, some matrix notation is introduced. Then, the asymptotics of the rows and columns of the rescaled random environment $A^N$ are stated in Lemma \ref{lemma_collection_of_bounds}. In turn, the asymptotics of the rows and columns of the inverse matrix $Q^N$ are stated in Lemmas \ref{lem:ell:and:c} and \ref{lem:control:ell-ellbar:elltilde}. Then, the proofs of these three lemmas are given in three disjoint subsections. Finally, the proof of Proposition \ref{prop:control:N:infty} is given in the last subsection.
    
    To readers interested in random environments that differ from the i.i.d. case, let us mention that the proof of Proposition \ref{prop:control:N:infty}  relies solely on the results of Lemmas \ref{lemma_collection_of_bounds}, \ref{lem:ell:and:c}, and \ref{lem:control:ell-ellbar:elltilde} in the sense that the same proof applies to any environment $\theta$ for which these lemmas hold. Similarly, the proofs of Lemmas \ref{lem:ell:and:c} and \ref{lem:control:ell-ellbar:elltilde} depend only on Lemma \ref{lemma_collection_of_bounds} Therefore, only the proof Lemma \ref{lemma_collection_of_bounds} must be adapted. In what follows, we specify the adaptation required for the symmetric interaction case.

The main modification concerns the bound of the variance terms for which we use Bienaymé's identity together with a control of the few non null covariance terms. The exhaustive list of modifications is: 1) Equation \eqref{eq:96}, 2) Equation \eqref{eq:;two:variance:terms} by proving that $\operatorname{Var}(\sum_{k\in{\cal P}_a} Z_k)$ and $\E\left[\left(\sum_{k\in{\cal P}_a} \theta_{1j}Z_j\right)^2\right]$ are smaller than some universal constant, 3) check that the "independence argument" below Equation \eqref{eq:control:YN:variance} still holds, 4) replace the "independence argument" by Cauchy-Schwarz inequality to bound $\mu_{1,1}$, 5) Equation \eqref{eq:99} is replaced by 
$$
\E\left[\left(Z_i(1-|\mathcal{P}_a|^{-1})-\frac{1}{|\mathcal{P}_a|}\sum_{j\in\mathcal{P}_a:j\neq i}Z_j\right)^2\right]\leq (1-|\mathcal{P}_a|^{-1}) \var(Z_i)+KN^{-2}
$$
for some universal constant $K$.

    Two technical results used throughout the proofs are stated as technical lemmas in Section \ref{app:auxiliary}.
}

\subsection{General notation}
Hereafter, for any subset $S$ of $[N]$, we write $1_S$ to indicate the $N$-dimensional vector having value $1$ in each coordinate belonging to $S$ and value $0$ in the remaining coordinates. To alleviate the notation, we will simply write $1_N$ and $0_N$ instead of $1_{[N]}$ and $1_{\emptyset}$, respectively. For any vector $v\in\R^{N}$, $\overline{v}=N^{-1}\sum_{i=1}^N v_i$ denotes the arithmetic mean of the coordinates of $v$, $\|v\|_{r}=\left(\sum_{i=1}^{N}|v_i|^r\right)^{1/r}$ where $r\in [1,\infty)$, denotes the $r$-norm of $v$ and  
$\|v\|_{\infty}=\max_{1\leq i\leq N}|v_i|$ its $\infty$-norm.
For vectors $v,u\in\R^N$, we write $v\odot u$ to denote the vector whose $i-$th coordinate  is $v_iu_i, $ for $1\leq i\leq N$. In other words, $v\odot u$ is the Hadamard product between the vectors $v$ and $u$.
To shorten the notation, we will simply note $v^2$ instead of $v\odot v$.
Observe that, with this notation, the orthogonal projection of a vector $v\in\R^N$ on the coordinates in $S\subseteq [N]$ can be written as $1_S\odot v$. In particular, all coordinates in $S^c$ of the vector $1_S\odot v$ are null. One can always write $v=1_S\odot v+1_{S^c}\odot v.$

For any $N$-by-$N$ matrix $B$ with real entries, we denote $B^\intercal$ its transpose. For all $r\in [1,\infty]$, we denote $\tn B\tn _r$ the operator norm of $B$ associated to the $r$-norm $\|\cdot\|_r:$ 
$$
\tn B\tn _r=\sup_{v\in \R^{N}:v\neq 0_{N}}\frac{\|Bv\|_{r}}{\|v\|_r}.
$$
It is well-known that $\tn B\tn _1$ and $\tn B\tn _{\infty}$ may be defined alternatively as
$$
\tn B\tn _1=\max_{1\leq j\leq N}\sum_{i=1}^{N}|B(i,j)| \ \text{and} \  \tn B\tn _{\infty}=\max_{1\leq i\leq N}\sum_{j=1}^{N}|B(i,j)|.
$$
The following fact will also be used in the sequel: for each $r \in (1,\infty)$, it holds that
\begin{equation}
\label{ineq_upper_bound_for_the_r_norm}
\tn B\tn _r\leq \tn B\tn ^{1/r}_1\tn B\tn ^{1-1/r}_{\infty}.
\end{equation}

\subsection{Study of the rescaled random environment}


Recall that $A^N=(A^N(i,j))_{1\leq i,j\leq N}$ is a rescaled version of the random environment $\theta$ defined as, for each $1\leq i\leq N$,
\begin{equation*}
A^N(i,j)=
\begin{cases}
N^{-1}\theta_{ij}, \ \text{if} \ j\in\cal{P}_+,\\    
-N^{-1}\theta_{ij}, \ \text{if} \ j\in\cal{P}_-.
\end{cases}
\end{equation*} 
One can check that $\max\{\tn A^N\tn _{1},\tn A^N\tn _{\infty}\}\leq 1$, so that \eqref{ineq_upper_bound_for_the_r_norm} implies that $\tn A^{N}\tn _r\leq 1$ for all $r\in (1,\infty)$ as well. 

\subsubsection{Notation}

In what follows, for $a,b\in\{-,+\}$, we denote $L^{N,ab}=1_{\mathcal{P}_a}\odot A^N1_{\mathcal{P}_b}$ and $C^{N,ab}=1_{\mathcal{P}_a}\odot (A^N)^{\intercal}1_{\mathcal{P}_b}$, where $(A^N)^{\intercal}$ denotes the transpose of the matrix $A^N$. 
On the one hand, both vectors have null coordinates outside of the set $\mathcal{P}_a$.
On the other hand, each coordinate $i\in\mathcal{P}_a$ of the random vector $L^{N,ab}$ (resp. $C^{N,ab}$) is obtained by summing the entries in ${\cal{P}}_b$ of the $i$-th row (resp. column) of the matrix $A^N$.
Alternatively, the random vectors $L^{N,ab}$ and $C^{N,ab}$ can be defined as follows:
\begin{equation}
\label{def:sum_of_the_rows}
L^{N,ab}_i=
\begin{cases}
\sum_{j\in{\cal{P}}_b}A^N(i,j), \ \text{if} \ i\in{\cal P}_a,\\
0, \ \text{otherwise,}
\end{cases}
\end{equation}
and 
\begin{equation}
\label{def:sum_of_the_cols}
C^{N,ab}_i=
\begin{cases}
\sum_{j\in{\cal{P}}_b}A^N(j,i), \ \text{if} \ i\in{\cal P}_a,\\
0, \ \text{otherwise. }
\end{cases}
\end{equation} 
For $a\in\{-,+\}$, we denote $L^{N,a\bullet }=1_{\mathcal{P}_a}\odot A^N 1_{N}$ and $C^{N,a \bullet }= 1_{\mathcal{P}_a}\odot (A^N)^{\intercal}1_{N}$. Note that the coordinates not belonging to $\mathcal{P}_a$ of these two vectors are also $0$.
Besides, each coordinate $i\in \mathcal{P}_a$ of the random vector $L^{N,a \bullet}$ (resp. $C^{N,a \bullet}$) is given by the sum over all entries of the $i$-th row (resp. column) of the matrix $A^N$.
One can easily check that 
\begin{equation}
\label{def:sum_of_La+_La-_and_sum_of_Ca+_La-}
L^{N,a\bullet }=L^{N,a+}+L^{N,a-} \ \text{and} \ C^{N,a\bullet }=C^{N,a+}+C^{N,a-},
\end{equation}
so that the vectors $L^{N,a\bullet}$ and $C^{N,a\bullet }$ could be defined alternatively through these identities. 

For $b\in\{-,+\}$, we denote $L^{N,\bullet b}=A^N1_{\mathcal{P}_b}$ and $C^{N,\bullet b}=(A^N)^{\intercal}1_{\mathcal{P}_b}$. Observe that each coordinate $1\leq i\leq N$ of the random vector $L^{N,\bullet b}$ (resp. $C^{N,\bullet b}$) is given by the sum over the entries in $\mathcal{P}_b$ of the $i$-th row (resp. column) of the matrix $A^N$.
One can also verify that 
\begin{equation}
\label{def:sum_of_L+b_L-b_and_sum_of_C+b_L-b}
L^{N,\bullet b}=L^{N,+b}+L^{N,-b} \ \text{and} \ C^{N,\bullet b}=C^{N,+b}+C^{N,-b}.
\end{equation}

In the sequel, for each $a,b\in\{-,+\}$, let 
\begin{equation}
\label{def:centered_versions_of_Lab_and_Cab}
\widetilde{L}^{N,ab}=L^{N,ab}-(r^N_a)^{-1}\overline{L^{N,ab}}1_{\mathcal{P}_a} \ \text{and} \ \widetilde{C}^{N,ab}=C^{N,ab}-(r^N_a)^{-1}\overline{C^{N,ab}}1_{\mathcal{P}_a}.
\end{equation}
The vectors $\widetilde{L}^{N,ab}$ and $\widetilde{C}^{N,ab}$ 
can be thought as the population-wise centered versions of the vectors $L^{N,ab}$ and  $C^{N,ab}$ respectively, in the sense that $\overline{\widetilde{L}^{N,ab}}=\overline{\widetilde{C}^{N,ab}}=0.$ Observe that these vectors are well-defined for all $N\geq 1$ such that $r^N_a>0$.  
In the next result, we collect some bounds which will be used throughout the section.

\subsubsection{Convergence rates}

In what follows, for $a,b\in\{-,+\}$, we write $\delta_{ab}$ to denote the Kronecker delta between $a$ and $b$.

\begin{lemma}\label{lemma_collection_of_bounds} 
Let $a,b,a_1,b_1\in\{-,+\}$. The following inequalities hold for all $N\geq 1$ such that $r^{N}_-\wedge r^{N}_+>0,$
\begin{gather}
\E\left[ \left| \left< \widetilde{L}^{N,a b}, \widetilde{L}^{N,a_1 b_1} \right>  - \delta_{a a_1}\delta_{b b_1} r_{a} r_{b} p(1-p)\right|^2\right] \leq K N^{-1},\label{ineq_l2_norm_of_tildeLab}
\\
\E\left[ \| A \widetilde{L}^{ab} \|_2^2 \right] \leq K N^{-1}
\label{ineq_l2_norm_of_AtildeLab},  
\end{gather}
\vspace{-0.6cm}
\begin{multline}
\label{ineq:max_variance_of_rarbbarLab_rarbbarCab}
\E\left[ \max_{a,b\in\{-,+\}}\left\{ \left|(r^N_ar^N_b)^{-1}\overline{L^{N,ab}} -(bp)\right|^2 \vee \left|(r^N_ar^N_b)^{-1}\overline{C^{N,ab}} -(ap)\right|^2 \right\} \right] \\
\leq K\max_{a,b}\left\{(r^{N}_ar^{N}_b)^{-1}\right\}N^{-2},\ \text{and}
\end{multline}
\vspace{-0.6cm}
\begin{equation}
\label{ineq:tildeL_Lcentered_wrt_environment}
\|\widetilde{L}^{N,ab}\|_2\leq \|L^{N,ab} - (bpr^N_b)1_{{\cal{P}}_a} \|_2 \ \text{and} \ \|\widetilde{C}^{N,ab}\|_2\leq \|C^{N,ab} - (apr^N_{b})1_{{\cal{P}}_a}\|_2,
\end{equation}
where $K$ is some universal constant. Moreover, for each $\alpha\geq 1$, there exists a constant $K_{\alpha}$ depending only on $\alpha$ such that, for all $N\geq 1$ satisfying $r^{N}_-\wedge r^{N}_+>0$,
\begin{multline}
\label{ineq:moments_l2norm_Lcentered_wrt_environment}
\E\left[\max_{a,b\in\{-,+\}}\left\{\|L^{N,ab} - (bpr^N_b)1_{{\cal{P}}_a} \|^{\alpha}_2 \vee\|C^{N,ab} - (apr^N_{b})1_{{\cal{P}}_a}\|^{\alpha}_2\right\}\right]\\
\leq K_{\alpha}\max_{a\in\{-,+\}}\left\{\left(r^N_a/r^N_b\right)^{\alpha/2}\right\}.
\end{multline}
\end{lemma}
The proof of Lemma \ref{lemma_collection_of_bounds} is postponed to Section \ref{sec:proof:lemma:bounds:L:C}.

\begin{remark}\label{rem:corollaries:convergence:AN}
    Let $a,b\in\{-,+\}$. Because of the relations \eqref{def:sum_of_La+_La-_and_sum_of_Ca+_La-} and \eqref{def:sum_of_L+b_L-b_and_sum_of_C+b_L-b}, some statements of Lemma \ref{lemma_collection_of_bounds} admit immediate corollaries. For instance, 
    \begin{itemize}
        \item $C^{N,\bullet b} = C^{N,+b} + C^{N,-b}$ and $\widetilde{C}^{N,\bullet b} = \widetilde{C}^{N,+b} + \widetilde{C}^{N,-b}$ are orthogonal sums, and in particular one can deduce from Equations \eqref{ineq:tildeL_Lcentered_wrt_environment} and \eqref{ineq:moments_l2norm_Lcentered_wrt_environment} that
        \begin{equation*}
            \E\left[ \|\widetilde{C}^{N,\bullet b}\|^{\alpha}_2\right] \leq \E\left[ \|C^{N,\bullet b} - pr^N_{b}(1_{{\cal{P}}_+}- 1_{{\cal{P}}_-})\|^{\alpha}_2\right] \leq 2 K_{\alpha}\max_{a\in\{-,+\}}\left\{\left(r^N_a/r^N_b\right)^{\alpha/2}\right\},
        \end{equation*}
        \item or, $L^{N,a\bullet} = L^{N,a+} + L^{N,a-}$ and $\widetilde{L}^{N,a\bullet } = \widetilde{L}^{N,a+} + \widetilde{L}^{N,a-}, $ and in particular one can deduce from Equations \eqref{ineq:tildeL_Lcentered_wrt_environment} and \eqref{ineq:moments_l2norm_Lcentered_wrt_environment} that
        \begin{equation*}
            \|\widetilde{L}^{N,a\bullet}\|_2\leq \|L^{N,a+} - pr^N_+1_{{\cal{P}}_a} \|_2 + \|L^{N,a-} + pr^N_-1_{{\cal{P}}_a} \|_2,
        \end{equation*}
        and
        \begin{equation*}
            \E\left[\|L^{N,a\bullet} - p(r^N_+ - r^N_-)1_{{\cal{P}}_a} \|^{\alpha}_2 \right]
            \leq K'_{\alpha}\max_{a\in\{-,+\}}\left\{\left(r^N_a/r^N_b\right)^{\alpha/2}\right\},
        \end{equation*}
        for some $K'_{\alpha}$ which may be different from $K_{\alpha}$.
    \end{itemize}
\end{remark}

\subsection{Results regarding the inverse matrix}


Recall that $\lambda>0$. Under this condition, the random matrix $Q^N:=\left(I-(1-\lambda)A^N\right)^{-1} = \sum_{n=0}^{\infty} (1-\lambda)^n (A^N)^n$ is well-defined and satisfies $\tn Q^{N}\tn _r\leq \lambda^{-1}$ for all $r\in [1,\infty].$ For later use, let us observe that the easy-to-check properties $\tn (Q^N)^{\intercal}\tn _{1}=\tn Q^N\tn _{\infty}$ and $\tn (Q^N)^{\intercal}\tn _{\infty}=\tn Q^N\tn _{1}$ combined with inequality \eqref{ineq_upper_bound_for_the_r_norm} imply that $\tn (Q^N)^{\intercal}\tn _r\leq \lambda^{-1}$ for any $r\in[1,\infty].$

As suggested by the heuristics presented in Section  \ref{sec:results}, a crucial ingredient in our analysis is to study both the sum of rows and columns of the matrix $Q^N$. By definition of the matrix $Q^N$ itself, these quantities are related to the corresponding counterparts computed from the matrix $A^N$. 
\subsubsection{Notation}

Similarly as above, for $a,b\in\{-,+\}$, let us denote $\ell^{N,ab}$ (resp. $c^{N,ab}$) the random vector obtained by summing, for each row (resp. column) in ${\cal{P}}_a$ of the random matrix $Q^N$, the entries in ${\cal{P}}_b$. Also, we denote $\ell^{N,a \bullet}=\ell^{N,a+}+\ell^{N,a-}$ and $\ell^{N,\bullet b}=\ell^{N,+b}+\ell^{N,-b}$ for $a,b\in\{-,+\}$, and define $\ell^{N}=\ell^{N,+\bullet}+\ell^{N,-\bullet}$. Note that $\ell^{N}=Q^N1_N$, i.e., $\ell^{N}$ corresponds to the random vector obtained by summing the rows of $Q^N$. The vectors $c^{N,\bullet a}$, $c^{N,\bullet a}$ and $c^{N}$ are defined in a similar way. In particular, note that $c^{N}= (Q^{N})^{\intercal}1_{N}$ is given as the sum of the columns of $Q^N$. Here $(Q^{N})^{\intercal}$ denotes the transpose of the matrix $Q^N$. 
For later use, let us also observe that $\overline{\ell^{N}} = \overline{c^{N}}$.

Like we did for $L$ and $C$, we define, for each $a,b\in\{-,+\}$
\begin{equation}
\label{def:centered_versions_of_ellab_and_cab}
\widetilde{\ell}^{N,ab}=\ell^{N,ab}-(r^N_a)^{-1}\overline{\ell^{N,ab}}1_{\mathcal{P}_a} \ \text{and} \ \widetilde{c}^{N,ab}=c^{N,ab}-(r^N_a)^{-1}\overline{c^{N,ab}}1_{\mathcal{P}_a}.
\end{equation}
Moreover, we denote $\widetilde{\ell}^{N,a \bullet}=\widetilde{\ell}^{N,a+}+\widetilde{\ell}^{N,a-}$, $\widetilde{\ell}^{N,\bullet b}=\widetilde{\ell}^{N,+b}+\widetilde{\ell}^{N,-b}$ for $a,b\in\{-,+\}$, and $\widetilde{\ell}^{N}=\widetilde{\ell}^{N,+\bullet}+\widetilde{\ell}^{N,-\bullet}$.

    Here is an immediate result stating that the $\ell$ and $c$ vectors are uniformly bounded.

\begin{lemma}
    \label{lem:bound:infty:ell:c}
Assume that $1\geq \lambda>0$. Then,
\begin{gather}
\label{omegancons1}
\max\left\{ \max_{a,b\in\{-,+\}}\left\{\|\ell^{N,ab}\|_{\infty}, \|c^{N,ab}\|_{\infty}\right\},\|\ell^{N}\|_{\infty},\|c^{N}\|_{\infty}\right\} \leq \lambda^{-1} .  \ 
\end{gather}
\end{lemma}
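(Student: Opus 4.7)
The proof should be essentially a direct application of the operator norm bounds on $Q^N$ and $(Q^N)^{\intercal}$ already established in the paragraph preceding the lemma, namely $|||Q^N|||_r \leq \lambda^{-1}$ and $|||(Q^N)^{\intercal}|||_r \leq \lambda^{-1}$ for all $r \in [1,\infty]$. Each of the six vectors appearing in the statement can be written as $Q^N$ or $(Q^N)^{\intercal}$ applied to a vector with entries in $\{0,1\}$, possibly followed by a Hadamard product with another such vector, and each of these operations is controlled by an operator $\infty$-norm.

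First, I would observe that $\ell^N = Q^N \mathbf{1}_N$ and $c^N = (Q^N)^{\intercal} \mathbf{1}_N$ by the very definition of these vectors. Since $\|\mathbf{1}_N\|_{\infty} = 1$, the bound on the operator $\infty$-norm gives immediately
\[
\|\ell^N\|_{\infty} \leq |||Q^N|||_{\infty}\,\|\mathbf{1}_N\|_{\infty} \leq \lambda^{-1}, \qquad \|c^N\|_{\infty} \leq |||(Q^N)^{\intercal}|||_{\infty}\,\|\mathbf{1}_N\|_{\infty} \leq \lambda^{-1}.
\]

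Next, for $a,b \in \{-,+\}$, I would rewrite $\ell^{N,ab} = \mathbf{1}_{\mathcal{P}_a} \odot (Q^N \mathbf{1}_{\mathcal{P}_b})$ and $c^{N,ab} = \mathbf{1}_{\mathcal{P}_a} \odot ((Q^N)^{\intercal} \mathbf{1}_{\mathcal{P}_b})$. Since the Hadamard product with a $\{0,1\}$-valued vector only suppresses coordinates, it cannot increase the $\infty$-norm: for any $v \in \mathbb{R}^N$ and any subset $S \subseteq [N]$, $\|\mathbf{1}_S \odot v\|_{\infty} \leq \|v\|_{\infty}$. Combined with $\|\mathbf{1}_{\mathcal{P}_b}\|_{\infty} \leq 1$, this gives
\[
\|\ell^{N,ab}\|_{\infty} \leq \|Q^N \mathbf{1}_{\mathcal{P}_b}\|_{\infty} \leq |||Q^N|||_{\infty} \leq \lambda^{-1},
\]
and the analogous bound for $\|c^{N,ab}\|_{\infty}$ follows by using $|||(Q^N)^{\intercal}|||_{\infty} \leq \lambda^{-1}$ instead.

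There is no real obstacle in this proof: it is purely a matter of unwinding the definitions of the row/column sums as applications of $Q^N$ (or its transpose) to indicator vectors, and then invoking the already-established operator-norm bound $|||Q^N|||_{\infty} \vee |||(Q^N)^{\intercal}|||_{\infty} \leq \lambda^{-1}$, which itself comes from the Neumann series representation $Q^N = \sum_{n\geq 0}(1-\lambda)^n (A^N)^n$ together with $|||A^N|||_{\infty} \leq 1$.
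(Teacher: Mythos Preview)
Your proof is correct and follows essentially the same approach as the paper: both arguments reduce the bound to the operator-norm estimate $\max\{|||Q^N|||_1,|||Q^N|||_\infty\}\le\lambda^{-1}$, the paper bounding each coordinate directly by a partial row or column sum of $Q^N$, and you doing the equivalent computation via $|||Q^N|||_\infty$ and $|||(Q^N)^\intercal|||_\infty$ applied to indicator vectors.
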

\begin{proof}
To show \eqref{omegancons1}, first observe that $$\max_{1\leq i\leq N}\max\left\{ \max_{a,b\in\{-,+\}}\left\{|\ell^{N,ab}_{i}|, |c^{N,ab}_{i}|\right\},|\ell^{N}_{i}|,|c^{N}_{i}|\right\}\leq \max\{\tn Q^N\tn _1,\tn Q^N\tn _{\infty}\},$$
and then use the fact that $\max\{\tn Q^N\tn _1,\tn Q^N\tn _{\infty}\}\leq \lambda^{-1}$.
\end{proof}

\subsubsection{Convergence rates}
Recall that $\delta_{ab}$ denotes the Kronecker delta between $a$ and $b$.

\begin{lemma}\label{lem:ell:and:c}
Assume that $1\geq \lambda>0$. There exists a constant $K>0$ which depends on $\lambda$ such that for all $N\geq 1$ and $a,b\in\{-,+\}$, it holds that  
\begin{gather}
\E\left[\Big|\overline{\ell^{N,a b}}- \frac{r_a \delta_{ab} + b (1-\lambda)p r_+ r_- }{1-(1-\lambda)p(r_+-r_-)} \Big|^2 \right] \leq \frac K {N^{2}}, \label{eq:limit:ell:bar}\\
\E\left[\Big|\overline{c^{N,a \bullet}}- r_a\frac{\left[1+(2a)(1-\lambda)p(1-r_a)\right]}{1-(1-\lambda)p(r_+-r_-)} \Big|^2 \right] \leq \frac K {N^{2}}, \label{eq:limit:c:bar}\\
\E\left[ \| \widetilde{\ell}^{N,\bullet b} \|_2^4\right] \leq K \text{ and } \E\left[ \| \widetilde{c}^{N,\bullet b} \|_2^4\right] \leq K, \label{eq:bound:tilde}\\
\E\left[ \left| \| \widetilde{\ell}^{N,\bullet b} \|_2^2 -  \frac{(1-\lambda)^2 p(1-p) \left[ r_b + (1-\lambda)pr_+r_-(2b + (1-\lambda)p) \right]}{(1 - (1-\lambda)p(r_+ - r_-))^2}\right| \right] \leq \frac{K}{\sqrt{N}}, \label{eq:limit:ell.b:tilde}\\
\E\left[ \left| \| \widetilde{\ell}^{N} \|_2^2 -  \frac{(1-\lambda)^2 p(1-p)}{(1 - (1-\lambda)p(r_+ - r_-))^2}\right| \right] \leq \frac{K}{\sqrt{N}}. \label{eq:limit:ell:tilde}
\end{gather}
\end{lemma}

\begin{remark}\label{rem:limit:ell:bar:full}
    First, notice that Remark \ref{rem:corollaries:convergence:AN} also applies to Lemma \ref{lem:ell:and:c} (with $L$ and $C$ replaced by $\ell$ and $c$).

 For later use, let us observe that an immediate consequence of Equation \eqref{eq:limit:ell:bar} is that there exists a constant $K>0$ such that $N\geq 1$,
\begin{equation}\label{eq:limit:ell:bar:full}
    \E\left[\left|\overline{\ell^{N}} - \frac{1}{1-(1-\lambda)p(r_+-r_-)} \right|^2\right] \leq \frac K {N^2}.
\end{equation}   
\end{remark}
The proof of Lemma \ref{lem:ell:and:c} is postponed to Section \ref{sec:proof:lemma:ell:and:c}.

The following lemma gives a way to control the $l^2$ norms of the fully centered vectors by the population wise centered vectors.

\begin{lemma}\label{lem:control:ell-ellbar:elltilde}
    Assume that $1\geq \lambda>0$. There exists a constant $K>0$ which depends on $\lambda$ such that for all $N\geq 1$ such that $r^N_+\wedge r^N_->0$, it holds that
    \begin{gather}
        \E\left[ \left|  \left\| \ell^N - \overline{\ell^N}1_N  \right\|_2^2 -  \left\| \widetilde{\ell}^N \right\|_2^2  \right| \right] \leq \frac K {N}, \label{eq:control:ell-ellbar:elltilde}\\
       \E\left[\left|\langle \ell^N-\overline{\ell^N}1_N, (1-\lambda)^2(Q^NL^{N,\bullet-}-\overline{Q^NL^{N,\bullet-}})  \rangle -\langle \widetilde{\ell}^{N}
, \widetilde{\ell}^{N,\bullet -}\rangle \right|\right]\leq K N^{-1},\label{eq:control:product:ell-ellbar:elltilde}\\
\E\left[\left|(1-\lambda)\| QL^{N,\bullet} -\overline{Q^NL^{N,\bullet -}} \|^2_2-\|\widetilde{\ell}^{N,\bullet-}\|^2_2\right|\right]\leq KN^{-1}. \label{eq:control:QL-QL:elltilde}    \end{gather}
\end{lemma}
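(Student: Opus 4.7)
The strategy is an orthogonal decomposition on each subpopulation. Set the fully centered vectors $\widehat{\ell}^N := \ell^N - \overline{\ell^N}1_N$ and $\widehat{\ell}^{N,\bullet-} := \ell^{N,\bullet-} - \overline{\ell^{N,\bullet-}}1_N$, and use the algebraic relation $(1-\lambda)Q^N L^{N,\bullet-} = \ell^{N,\bullet-} - 1_{\mathcal{P}_-}$ (derived in the heuristic discussion of Section~\ref{subsec_heuristics_first_estimator}) to obtain
\begin{equation*}
    (1-\lambda)(Q^N L^{N,\bullet-} - \overline{Q^N L^{N,\bullet-}}1_N) = \widehat{\ell}^{N,\bullet-} - h^N,
\end{equation*}
where $h^N := 1_{\mathcal{P}_-} - r^N_-1_N = -r^N_-1_{\mathcal{P}_+} + r^N_+1_{\mathcal{P}_-}$ is constant on each subpopulation. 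The differences between the fully centered and population-wise centered vectors are themselves of that form: writing $h^N_+ = -r^N_-$ and $h^N_- = r^N_+$,
\begin{equation*}
    \widehat{\ell}^N - \widetilde{\ell}^N = \sum_{a\in\{-,+\}} \delta_a 1_{\mathcal{P}_a}, \qquad (\widehat{\ell}^{N,\bullet-} - h^N) - \widetilde{\ell}^{N,\bullet-} = \sum_{a\in\{-,+\}} \epsilon^-_a 1_{\mathcal{P}_a},
\end{equation*}
with $\delta_a := (r^N_a)^{-1}\overline{\ell^{N,a\bullet}} - \overline{\ell^N}$ and $\epsilon^-_a := (r^N_a)^{-1}\overline{\ell^{N,a-}} - \overline{\ell^{N,\bullet-}} - h^N_a$.

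Lemma~\ref{lem:ell:and:c} and Remark~\ref{rem:limit:ell:bar:full} provide the deterministic limits of $\overline{\ell^{N,a\bullet}}$, $\overline{\ell^N}$, $\overline{\ell^{N,a-}}$ and $\overline{\ell^{N,\bullet-}}$, each with $L^2$-rate $K/N$. A direct computation shows that these limits make both $\delta_a$ and $\epsilon^-_a$ vanish, so that $\E[\delta_a^2] \leq K/N^2$ and $\E[(\epsilon^-_a)^2] \leq K/N^2$. For $\delta_a$ the cancellation is immediate (both terms converge to $1/D(\lambda,p)$); for $\epsilon^-_a$ it is less obvious and relies on the algebraic identity $1 = D(\lambda,p) + (1-\lambda)p(r_+-r_-)$ applied to the closed-form limits given in \eqref{eq:limit:ell:bar}.

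The three claims then reduce to the same orthogonality argument. Because $\widetilde{\ell}^N$ and $\widetilde{\ell}^{N,\bullet-}$ each have zero mean on $\mathcal{P}_+$ and $\mathcal{P}_-$ by construction, they are orthogonal to any vector of the form $\sum_a c_a 1_{\mathcal{P}_a}$. Expanding the squared norms (resp.\ the inner product) and using this orthogonality to cancel all cross terms between the $\widetilde{\ell}$ parts and the residuals yields, for \eqref{eq:control:ell-ellbar:elltilde},
\begin{equation*}
    \|\widehat{\ell}^N\|_2^2 - \|\widetilde{\ell}^N\|_2^2 = \Bigl\|\sum_a \delta_a 1_{\mathcal{P}_a}\Bigr\|_2^2 = N\sum_a r^N_a \delta_a^2.
\end{equation*}
The analogous expansion gives $N\sum_a r^N_a (\epsilon^-_a)^2$ for \eqref{eq:control:QL-QL:elltilde}, and $N\sum_a r^N_a \delta_a \epsilon^-_a$ for \eqref{eq:control:product:ell-ellbar:elltilde} after substituting the algebraic identity for $(1-\lambda)Q^N L^{N,\bullet-}$ in the first argument. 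Each of these three quantities has expectation bounded by $K/N$: directly for the first two via the moment bounds above, and via Cauchy-Schwarz for the third.

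The main obstacle lies not in the analytic estimates but in identifying the correct decomposition: recognizing that the residual between any fully centered and the corresponding population-wise centered vector is automatically a linear combination of $1_{\mathcal{P}_+}$ and $1_{\mathcal{P}_-}$ is precisely what triggers the cascade of cancellations. The remaining (slightly tedious) verification is the arithmetic that guarantees $\epsilon^-_a \to 0$, which crucially uses the explicit form of the denominator $D(\lambda,p)$.
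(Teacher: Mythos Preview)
Your proof is correct and follows essentially the same route as the paper: decompose on each subpopulation, use that the $\widetilde{\ell}$-vectors are orthogonal to anything constant on $\mathcal{P}_+$ and $\mathcal{P}_-$, and invoke the $L^2$-rates from Lemma~\ref{lem:ell:and:c} on the population means to kill the residual coefficients. Your packaging is in fact tidier than the paper's: by absorbing the constant vector $h^N$ into the coefficient $\epsilon^-_a$ from the outset, you treat all three inequalities by the single formula $N\sum_a r^N_a$ times (a product of) two scalars, whereas the paper handles \eqref{eq:control:product:ell-ellbar:elltilde} by splitting off $\langle \ell^N-\overline{\ell^N}1_N,\, r^N_-1_{\mathcal{P}_+}-r^N_+1_{\mathcal{P}_-}\rangle$ as a separate piece and then checking cancellations term by term, which introduces an auxiliary remainder $\xi^N$ and an appeal to $|r^N_a-r_a|\le K/N$ at the end.
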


The proof of Lemma \ref{lem:control:ell-ellbar:elltilde} is postponed to Section \ref{sec:proof:lem:control:ell-ellbar:elltilde}.

\subsection{Proof of Lemma \ref{lemma_collection_of_bounds}}
\label{sec:proof:lemma:bounds:L:C}

Before the proof, we recall some classical results in the three lemmas below.

\begin{lemma}\label{lemma_pth_central_moments_empirical_mean_of_bernoulli}
Consider i.i.d. random variables $(B_{i})_{1\leq i\leq N}$ distributed as $\text{Ber}(p)$ with $p\in[0,1]$ and let $\bar{B}=N^{-1}\sum_{i=1}^NB_i$. Then, for each $\alpha\geq 1$ there exists a constant $C_{\alpha}>0$ depending only on $\alpha$ such that
\begin{equation*}
\E\left[|\bar{B}-p|^{\alpha}\right]\leq C_{\alpha}N^{-\alpha/2}.
\end{equation*}
\end{lemma}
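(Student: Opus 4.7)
The plan is to reduce to the case of even integer exponents by Jensen's inequality, and then to handle even integer exponents by a direct combinatorial expansion that exploits the fact that the centered Bernoulli variables $B_i - p$ are independent, bounded by $1$, and have mean zero.

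First, given $\alpha \geq 1$, pick the smallest integer $k \geq 1$ with $\alpha \leq 2k$. By Jensen's inequality applied to the concave function $x \mapsto x^{\alpha/(2k)}$, one has
\begin{equation*}
    \E\left[|\bar B - p|^{\alpha}\right] = \E\left[\left(|\bar B - p|^{2k}\right)^{\alpha/(2k)}\right] \leq \left(\E\left[(\bar B - p)^{2k}\right]\right)^{\alpha/(2k)}.
\end{equation*}
So it suffices to prove that $\E[(\bar B - p)^{2k}] \leq K_k N^{-k}$ for each integer $k \geq 1$, with $K_k$ depending only on $k$; the required bound then follows with $C_\alpha = K_k^{\alpha/(2k)}$.

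Second, for $k \geq 1$, expand
\begin{equation*}
    \E\left[(\bar B - p)^{2k}\right] = \frac{1}{N^{2k}} \sum_{(i_1,\ldots,i_{2k}) \in [N]^{2k}} \E\left[\prod_{j=1}^{2k} (B_{i_j} - p)\right].
\end{equation*}
The key observation is that because the $B_i$ are independent and centered (with $\E[B_i - p] = 0$), the expectation of the product vanishes as soon as some index $i \in [N]$ appears exactly once among $(i_1, \ldots, i_{2k})$. Therefore only tuples in which every distinct index appears at least twice contribute; in particular, any surviving tuple involves at most $k$ distinct indices. The number of such tuples is bounded by a combinatorial constant $\gamma_k$ (depending only on $k$, namely the number of partitions of $\{1,\ldots,2k\}$ whose blocks all have size at least $2$) times $N(N-1)\cdots(N-k+1) \leq N^k$.

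Third, on each surviving tuple, independence factors the expectation as a product of terms $\E[(B_i - p)^{m_i}]$ with $m_i \geq 2$, and each such factor is bounded by $\E[|B_i - p|^{m_i}] \leq 1$ since $|B_i - p| \leq 1$ almost surely. Consequently
\begin{equation*}
    \E\left[(\bar B - p)^{2k}\right] \leq \frac{\gamma_k N^k}{N^{2k}} = \gamma_k N^{-k},
\end{equation*}
which gives the desired bound with $K_k = \gamma_k$. Combining with the Jensen step above completes the proof. There is no real obstacle here; the only thing to be careful with is the combinatorial counting of tuples whose index multiset has no singleton block, which is a purely finite, $k$-dependent bookkeeping.
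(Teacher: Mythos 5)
Your proof is correct, but it takes a genuinely different route from the paper. The paper first applies Hoeffding's inequality to get the sub-Gaussian tail bound $\P(|\bar B - p|\geq x)\leq 2e^{-2x^2N}$, then converts it into a moment bound via the tail-sum formula, recognizing the resulting integral as (a multiple of) a Gaussian moment with variance $(4N)^{-1}$ and rescaling to extract the $N^{-\alpha/2}$ factor. You instead reduce to even integer exponents $2k\geq\alpha$ via Jensen's inequality for the concave map $x\mapsto x^{\alpha/(2k)}$, and then bound $\E[(\bar B - p)^{2k}]$ by a direct combinatorial expansion: independence and centering kill every tuple in which some index appears exactly once, the surviving tuples (those whose index partition has no singleton block) number at most $\gamma_k N^k$, and each surviving expectation is bounded by $1$ since $|B_i-p|\leq 1$. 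Both arguments are sound and yield the same rate with $\alpha$-dependent constants. The paper's approach leans on a standard concentration inequality and gives constants expressed through Gaussian moments, and it transfers immediately to any setting where a Hoeffding-type tail is available; yours is more elementary and self-contained (no concentration inequality or Gaussian comparison needed), applies verbatim to any i.i.d.\ centered variables bounded by $1$, and its constant $\gamma_k$ (the number of partitions of $\{1,\dots,2k\}$ into blocks of size at least $2$) is explicit, at the cost of a small amount of combinatorial bookkeeping which you have handled correctly.
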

\begin{proof}
Hoeffding's inequality implies that for any $x>0$,
$$
\P\left(|\bar{B}-p|\geq x\right)\leq 2e^{-2x^2N}.
$$
Now, using the tail sum formula for the expectation one can show that
$$
\E\left[|\bar{B}-p|^{\alpha}\right]\leq 2\alpha\int_{0}^{\infty} x^{\alpha-1}e^{-2x^2N}dx=\alpha\sqrt{\frac{2\pi}{N}}\E\left[Z^{\alpha-1}1_{Z>0}\right],
$$
where $Z\sim \mathcal{N}(0,(4N)^{-1})$. Finally, by observing that $Y=(4N)^{1/2}Z \sim \mathcal{N}(0,1)$, it follows that 
$$
\alpha\sqrt{\frac{2\pi}{N}}\E\left[Z^{\alpha-1}1_{Z>0}\right]=\alpha\frac{\sqrt{2\pi}}{2^{\alpha-1}}N^{-\alpha/2}\E\left[Y^{\alpha-1}1_{Y>0}\right],
$$
and the result follows.
\end{proof}

\begin{lemma}\label{lemma_vector_concentration}
Let $(B_{ij})_{1\leq i,j\leq N}$ be i.i.d. random variables  distributed as $\text{Ber}(p)$ with $p\in[0,1]$. For $U,V\subset [N]$ and $\epsilon\in\{-1,1\}$, define $W_i=\epsilon N^{-1}\sum_{j\in V}B_{ij}$ for $i\in U$, and $W_i=0$, for $i\in U^c$. Denote $W=(W_1,\ldots, W_N)$. Then, for any $\alpha\geq 1$, there exists a constant $K_{\alpha}>0$ depending only on $\alpha$ such that
\begin{equation}
\E\left(\|W-p\epsilon|V|N^{-1}1_U\|^{\alpha}_2\right)\leq K_{\alpha}\left(\frac{|U|}{|V|}\right)^{\alpha/2}.
\end{equation} 
\end{lemma}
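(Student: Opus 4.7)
First, observe that $\E[W_i] = \epsilon |V|pN^{-1}$ for $i\in U$ and $\E[W_i] = 0$ for $i\in U^c$, so that the deterministic vector $p\epsilon|V|N^{-1}1_U$ coincides with $\E W$. The statement thus reduces to an upper bound on $\E\|W - \E W\|_2^{\alpha}$. I would then rewrite
$$\|W-\E W\|_2^2 = N^{-2}\sum_{i\in U} S_i^2, \qquad S_i := \sum_{j\in V}(B_{ij}-p),$$
and note that, since the $B_{ij}$ are jointly independent, the random variables $(S_i)_{i\in U}$ are i.i.d., centred, and bounded by $|V|$.

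The proof then splits into two regimes. For $\alpha\in[1,2]$, a direct application of Jensen's inequality yields
$$\E\|W-\E W\|_2^{\alpha} \leq \bigl(\E\|W-\E W\|_2^2\bigr)^{\alpha/2} = \Bigl(\frac{|U||V|p(1-p)}{N^2}\Bigr)^{\alpha/2},$$
and one concludes using the elementary inequality $|V| \leq N$, which turns $(|U||V|/N^2)^{\alpha/2}$ into something bounded by $(|U|/|V|)^{\alpha/2}$.

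For $\alpha > 2$, pick an integer $m$ with $2m\geq \alpha$ (e.g.\ $m=\lceil\alpha/2\rceil$). By the power-mean inequality applied to the function $x\mapsto x^m$,
$$\Bigl(\sum_{i\in U}S_i^2\Bigr)^{m} \leq |U|^{m-1}\sum_{i\in U}S_i^{2m},$$
so that, using the i.i.d.\ property,
$$\E\|W-\E W\|_2^{2m} \leq N^{-2m}|U|^{m}\,\E[S_1^{2m}].$$
Now I would invoke Lemma \ref{lemma_pth_central_moments_empirical_mean_of_bernoulli} applied to the empirical mean $|V|^{-1}\sum_{j\in V}B_{1j}$ to obtain $\E[S_1^{2m}] = |V|^{2m}\,\E[(|V|^{-1}\sum_{j\in V}B_{1j}-p)^{2m}] \leq C_{2m}|V|^{m}$. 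Plugging in yields
$$\E\|W-\E W\|_2^{2m} \leq C_{2m}\Bigl(\frac{|U||V|}{N^2}\Bigr)^{m},$$
and one last use of Jensen's inequality (in the form $\E Y^{\alpha} \leq (\E Y^{2m})^{\alpha/(2m)}$ for $Y\geq 0$ and $2m\geq \alpha$) followed by the bound $|V|\leq N$ gives the announced estimate with $K_{\alpha} = C_{2m}^{\alpha/(2m)}$.

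There is no real obstacle: the argument is routine, the only mild subtlety being to handle arbitrary real $\alpha\geq 1$ by interpolating between $\alpha=2$ (where the variance identity is sharp) and an even integer $2m\geq \alpha$ (where centred moments of bounded i.i.d.\ sums are controlled via Lemma \ref{lemma_pth_central_moments_empirical_mean_of_bernoulli}). The key quantitative input is the sub-Gaussian-type moment bound for Bernoulli sums, which has already been established in the previous lemma.
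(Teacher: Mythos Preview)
Your proof is correct and follows essentially the same approach as the paper: bound a higher moment of $\|W-\E W\|_2$ via the power-mean inequality applied to $\sum_{i\in U}S_i^2$, control the individual terms through Lemma~\ref{lemma_pth_central_moments_empirical_mean_of_bernoulli}, and conclude with Jensen and the bound $|V|\le N$. The only cosmetic difference is that the paper avoids your case split by noting that the power-mean inequality $(\sum_{i\in U}x_i)^{\alpha}\le |U|^{\alpha-1}\sum_{i\in U}x_i^{\alpha}$ holds for every real $\alpha\ge 1$, so it bounds $\E\|W-\E W\|_2^{2\alpha}$ directly (with the real exponent $2\alpha$ rather than an even integer $2m$) and then applies Jensen once in the form $\E Y^{\alpha}\le(\E Y^{2\alpha})^{1/2}$; this yields a single uniform argument with $K_\alpha=C_{2\alpha}^{1/2}$.
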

\begin{proof}
Jensen inequality combined with Lemma \ref{lemma_pth_central_moments_empirical_mean_of_bernoulli} and the fact that $|V|/N\leq 1$ implies that for any $\alpha\geq 1$,
$$
\E\left[\|W-p\epsilon|V|N^{-1}1_U\|^{2\alpha}_{2}\right]\leq |U|^{\alpha}\left(|V|N^{-1}\right)^{2\alpha} (C_{2\alpha}|V|^{-(2\alpha)/2})\le C_{2\alpha}\left(\frac{|U|}{|V|}\right)^{\alpha},
$$
where the constant $C_{2 \alpha}$ is the one of Lemma \ref{lemma_pth_central_moments_empirical_mean_of_bernoulli}. By using Jensen inequality once more, we deduce that 
$$
\E\left[\|W-p\epsilon|V|N^{-1}1_U\|^{\alpha}_{2}\right]\leq \left(\E\left[\|W-p\epsilon|V|N^{-1}1_U\|^{2\alpha}_{2}\right]\right)^{1/2},
$$
and the result follows, putting $ K_\alpha := C_{2\alpha}^{1/2}.$
\end{proof}

\begin{lemma}\label{lemma_variance_inequality_vectorial_form}
Consider a vector $v=(v_1,\ldots, v_N)\in\R^N$ supported on $S\subseteq\{1,\ldots,N\}$. For any $\xi\in\R$, the following inequality holds:
\begin{equation}
\left\|v-\frac{N}{|S|}\bar{v}1_{S}\right\|_{2}\leq \left\|v-\xi1_S\right\|_{2},
\end{equation} 
where $\bar{v}=N^{-1}\sum_{i=1}^Nv_i=N^{-1}\sum_{i\in S}v_i$.
\end{lemma}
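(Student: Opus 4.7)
The plan is to recognize this as the standard fact that the orthogonal projection of a vector onto the one-dimensional subspace spanned by $1_S$ (inside $\mathbb{R}^S$) is the mean. Concretely, set $\mu = |S|^{-1}\sum_{i\in S} v_i = (N/|S|)\bar v$, which is exactly the coefficient appearing on the left-hand side. Since $v$ is supported on $S$, both $v - \xi 1_S$ and $v - \mu 1_S$ are supported on $S$ as well, so the $\ell^2$ norms reduce to sums over $i\in S$.

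The key computation I would then carry out is a simple expand-and-complete-the-square argument: for any $\xi\in\mathbb{R}$,
\begin{equation*}
\|v-\xi 1_S\|_2^2 = \sum_{i\in S}(v_i - \xi)^2 = \sum_{i\in S}\bigl[(v_i-\mu) + (\mu - \xi)\bigr]^2.
\end{equation*}
Expanding gives $\sum_{i\in S}(v_i-\mu)^2 + 2(\mu-\xi)\sum_{i\in S}(v_i-\mu) + |S|(\mu-\xi)^2$. By the very definition of $\mu$, the cross term $\sum_{i\in S}(v_i-\mu)$ vanishes, so
\begin{equation*}
\|v-\xi 1_S\|_2^2 = \|v-\mu 1_S\|_2^2 + |S|(\mu-\xi)^2 \geq \|v-\mu 1_S\|_2^2,
\end{equation*}
which is exactly the claim after substituting $\mu = (N/|S|)\bar v$.

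There is no real obstacle here; this is a one-line Pythagorean / least-squares identity, where the only thing to verify carefully is that the support condition on $v$ makes the cross term $\sum_{i\in S}(v_i - \mu)$ telescope to zero (equivalently, that $\mu$ is indeed the $\ell^2$-best constant approximant of $v|_S$ on $S$).
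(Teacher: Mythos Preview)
Your proof is correct and captures exactly the same idea as the paper's: the mean minimizes the sum of squared deviations, i.e.\ the Pythagorean identity $\|v-\xi 1_S\|_2^2 = \|v-\mu 1_S\|_2^2 + |S|(\mu-\xi)^2$. The paper merely dresses this up probabilistically, letting $U$ be uniform on $\{v_i:i\in S\}$ and invoking $\var(U)=\inf_{\xi}\E[(U-\xi)^2]$, whereas you expand the square directly; the content is identical.
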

\begin{proof}
Consider $U\sim\text{Unif}(\{v_i:i\in S\})$, and observe that $\E[U]=\bar{v}N|S|^{-1}$ and
$\var(U)=|S|^{-1}\|v-N|S|^{-1}\bar{v}1_S\|_2$. Since $\var(U)=\inf_{\xi\in\R}\E\left[(U-\xi)^2\right]$, it follows that for any $\xi\in\R$,
\begin{equation*}
|S|^{-1}\left\|v-\frac{N}{|S|}\bar{v}1_{S}\right\|_{2}\leq \E\left[(U-\xi)^2\right]=|S|^{-1}\left\|v-\xi1_S\right\|_{2},
\end{equation*}
implying the result.
\end{proof}

\begin{proof}[Proof of Lemma \ref{lemma_collection_of_bounds}]
Inequalities \eqref{ineq:tildeL_Lcentered_wrt_environment} and \eqref{ineq:moments_l2norm_Lcentered_wrt_environment} follow from Lemmas \ref{lemma_variance_inequality_vectorial_form} and  \ref{lemma_vector_concentration}  respectively. Next, we prove Inequality \eqref{ineq:max_variance_of_rarbbarLab_rarbbarCab}.
First, note that $(r^N_ar^N_b)^{-1}\overline{L^{N,ab}} -(bp) = b(|\mathcal{P}_a| |\mathcal{P}_b|)^{-1}\sum_{i\in\mathcal{P}_a}\sum_{j\in\mathcal{P}_b} (\theta_{ij}-p),$ so that  
\begin{align}
\label{eq:96}
\E\left[\left|(r^N_ar^N_b)^{-1}\overline{L^{N,ab}} -(bp)\right|^2\right]&=\var\left((|\mathcal{P}_a| |\mathcal{P}_b|)^{-1}\sum_{i\in\mathcal{P}_a}\sum_{j\in\mathcal{P}_b} \theta_{ij}\right) \nonumber\\
&=p(1-p)(|\mathcal{P}_a| |\mathcal{P}_b|)^{-1}=N^{-2}p(1-p)(r^N_ar^N_b)^{-1} .
\end{align}
Since $p(1-p)\leq 1/4$, we conclude that
$$
\E\left[\left|(r^N_ar^N_b)^{-1}\overline{L^{N,ab}} -(bp)\right|^2\right]\leq (N^{-2}/4) \max_{c,d}\{(r^N_cr^N_d)^{-1}\}.
$$ 
Similar arguments can be applied to show that 
$$
\E\left[\left|(r^N_ar^N_b)^{-1}\overline{C^{N,ab}} -(ap)\right|^2\right]\leq (N^{-2}/4) \max_{c,d}\{(r^N_cr^N_d)^{-1}\}.
$$ 
Inequality \eqref{ineq:max_variance_of_rarbbarLab_rarbbarCab} follows then from the above inequalities and from the fact that $x\vee y\leq x+y$ for any $x,y\geq 0$.

We now establish Inequality \eqref{ineq_l2_norm_of_AtildeLab}. To that end, first observe that
$$
\E\left[\|A^N\widetilde{L}^{N,ab}\|_2^2\right]=N\E\left[\left(\sum_{j\in\mathcal{P}_a}A^N(1,j)\widetilde{L}^{N,ab}_j \right)^2\right].
$$
Next, observe that if we denote $Z_j=L^{N,ab}_j-(pb)r^N_b$ for $j\in\mathcal{P}_a$, then we can write (recall that $A^N(i,j)=bN^{-1}\theta_{ij}$ for $1\leq i\leq N$ and $j\in\mathcal{P}_b$), 
$$
\left[\sum_{j\in\mathcal{P}_a}A^N(1,j)\widetilde{L}^{N,ab}_j\right]^2 =N^{-2}\left[\sum_{j\in\mathcal{P}_a}\theta_{1j}Z_j
-\left(\sum_{j\in\mathcal{P}_a}\theta_{1j}\right)\left(|\mathcal{P}_a|^{-1}\sum_{k\in\mathcal{P}_a}Z_k\right)\right]^2,
$$
so that by applying Jensen's inequality we obtain that
\begin{multline}\label{eq:;two:variance:terms}
\E\left[\|A^N\widetilde{L}^{N,ab}\|_2^2\right]\leq \\
2 N^{-1} \left[\E\left[\left(\sum_{j\in\mathcal{P}_a}\theta_{1j}Z_j\right)^2\right]+
\E\left[\left(\sum_{j\in\mathcal{P}_a}\theta_{1j}\right)^2\left(|\mathcal{P}_a|^{-1}\sum_{k\in\mathcal{P}_a}Z_k\right)^2\right]
\right] .
\end{multline}
By observing that $\sum_{j\in\mathcal{P}_a}\theta_{1j}\leq |\mathcal{P}_a|$ and using that  $(Z_k)_{k\in\mathcal{P}_b}$ are i.i.d. centered random variables, one can check that    
\begin{align*}
\E\left[\left(\sum_{j\in\mathcal{P}_a}\theta_{1j}\right)^2\left(|\mathcal{P}_a|^{-1}\sum_{k\in\mathcal{P}_a}Z_k\right)^2\right]&\leq |\mathcal{P}_a|^2\var\left(|\mathcal{P}_a|^{-1}\sum_{k\in\mathcal{P}_a}Z_k\right)\\
&=|\mathcal{P}_a|\var\left(Z_1\right)=r^N_ar^N_bp(1-p) \leq 1/4,
\end{align*}
where in the last inequality we have used that $\var\left(Z_k\right)=\var\left(L^{N,ab}_k\right)=|\mathcal{P}_b|p(1-p)/N^2$.

Now, note that if $j\neq 1,$ then $\theta_{1j}$ is independent of $Z_j$ so that 
$\E[\theta_{1j}Z_j]=\E[\theta_{1j}]\E[Z_j]=0$, because $Z_j$ is centered. Moreover, one can check that $(\theta_{1j}Z_j)_{j\in \mathcal{P}_a:j\neq 1}$ are independent. Hence, by combining these facts with Jensen's inequality, we can deduce that
\begin{align*}
\E\left[\left(\sum_{j\in\mathcal{P}_a}\theta_{1j}Z_j\right)^2\right]& \leq 2\left[\E\left[\left(\sum_{j\in\mathcal{P}_a:j\neq 1}\theta_{1j}Z_j\right)^2\right]+\E\left[\left(\theta_{11}Z_1\right)^2\right]\right]\\
& \leq 2\left[\var\left(\sum_{j\in\mathcal{P}_a:j\neq 1}\theta_{1j}Z_j\right)+\var\left(Z_1\right)\right]\\
& \leq 2\left[(|\mathcal{P}_a|-1)p\var(Z_1)+\var\left(Z_1\right)\right]\leq 2r^N_ar^N_bp(1-p) \leq 1/2.
\end{align*}
Combining the last three inequalities we show that Inequality \eqref{ineq_l2_norm_of_AtildeLab} holds.

It remains to show Inequality \eqref{ineq_l2_norm_of_tildeLab}. In the case $a_1\neq a$, the two vectors $\widetilde{L}^{N,a b}$ and $\widetilde{L}^{N,a b_1}$ have disjoint supports so that $\left< \widetilde{L}^{N,a b}, \widetilde{L}^{N,a_1 b_1} \right>=0$ a.s. and \eqref{ineq_l2_norm_of_AtildeLab} trivially holds.

From now on, consider that $a_1=a$. Let us denote $Y^{N}_{a,bb_1} = \left< \widetilde{L}^{N,a b}, \widetilde{L}^{N,a b_1} \right>$ and prove that there exists a universal constant $K$ such that 
\begin{equation}\label{eq:control:YN:variance}
    \E\left[\left| Y^{N}_{a,bb_1} - \delta_{bb_1} r_ar_bp(1-p)\right|^2\right]\leq K\left[\E\left[\left|Y^{N}_{a,bb_1}-\E\left[Y^{N}_{a,bb_1}\right]\right|^2\right]+N^{-2}\right].
\end{equation}

First, consider the case $b_1\neq b$. Remind that $L^{N,a b}$ only depends on $\{\theta_{ij}, i\in \mathcal{P}_a, j\in \mathcal{P}_b\}$, and remark that $\E\left[ \widetilde{L}^{N,a b}_i \right] = 0$ for all $i=1,\dots,N$. In particular, the vectors $L^{N,a b}$ and $L^{N,a b_1}$ are independent and
\begin{equation*}
    \E\left[ Y^{N}_{a,bb_1} \right] =  \left< \E\left[ \widetilde{L}^{N,a b}\right], \E\left[ \widetilde{L}^{N,a b_1} \right]\right>  =0,
\end{equation*}
so that
\begin{equation*}
    \E\left[\left| Y^{N}_{a,bb_1} - \delta_{bb_1} r_ar_bp(1-p)\right|^2\right] = \E\left[\left|Y^{N}_{a,bb_1}-\E\left[Y^{N}_{a,bb_1}\right]\right|^2\right].
\end{equation*}

Now, consider the case $b_1=b$. In that case, $Y^{N}_{a,bb_1}= \|\widetilde{L}^{N,ab}\|_2^2$, and observe that
$$
\E\left[\|\widetilde{L}^{N,ab}\|_2^2\right]=|\mathcal{P}_a|\E\left[\left(L_i^{N,ab}-\frac{1}{|\mathcal{P}_a|}\sum_{j\in\mathcal{P}_a}L_j^{N,ab}\right)^2\right],
$$
for any fixed $i\in\mathcal{P}_a$. 
In the rest of the proof, $i$ denotes an arbitrary index in $\mathcal{P}_a$.
Next, observe that if we denote $Z_j=L^{N,ab}_j-(pb)r^N_b$ for $j\in\mathcal{P}_a$, then we can write 
$$
\E\left[\|\widetilde{L}^{N,ab}\|_2^2\right]=|\mathcal{P}_a|\E\left[\left(Z_i(1-|\mathcal{P}_a|^{-1})-\frac{1}{|\mathcal{P}_a|}\sum_{j\in\mathcal{P}_a:j\neq i}Z_j\right)^2\right].
$$
Since $(Z_j)_{j\in\mathcal{P}_a}$ are i.i.d. centered random variables, one can check that    
\begin{equation}
\label{eq:99}
\E\left[\left(Z_i(1-|\mathcal{P}_a|^{-1})-\frac{1}{|\mathcal{P}_a|}\sum_{j\in\mathcal{P}_a:j\neq i}Z_j\right)^2\right]=(1-|\mathcal{P}_a|^{-1})\var(Z_i),
\end{equation}
which, in turn, implies that
\begin{align*}
\E\left[\|\widetilde{L}^{N,ab}\|_2^2\right]&=|\mathcal{P}_a|(1-|\mathcal{P}_a|^{-1})\var(Z_i)\\
&=|\mathcal{P}_a|(1-|\mathcal{P}_a|^{-1})\var(L^{N,ab}_i)=r^N_ar^N_bp(1-p)-N^{-1}r^N_bp(1-p).    
\end{align*}
Combining the above identity with Jensen's inequality (and the assumption we made on the sequence of fractions $r_a^N$), we deduce that Equation \eqref{eq:control:YN:variance} is satisfied.

Next, notice that $(|\mathcal{P}_a|-1)^{-1}Y^{N}_{a,bb_1}$ can be seen as the empirical covariance of the random variables $L^{N,ab}_i$ and $L^{N,ab_1}_i$, $i\in\mathcal{P}_a$. In this perspective, $\E\left[\left|Y^{N}_{a,bb_1}-\E\left[Y^{N}_{a,bb_1}\right]\right|^2\right]$ corresponds to
$(|\mathcal{P}_a|-1)^{2}$ times the variance of the empirical covariance of the random variables $L^{N,ab}_i$ and $L^{N,ab_1}_i$, $i\in\mathcal{P}_a$. It is well-known (e.g., see \citep[page 363, exercise 7.45b]{casella2024statistical} for the case of the empirical variance) that the latter is equal to  
\begin{equation*}
    |\mathcal{P}_a|^{-1}\left(\mu_{2,2} + \frac{\mu_{2,0}\mu_{0,2} - (|\mathcal{P}_a| - 2)\mu_{1,1}^2}{|\mathcal{P}_a|-1} \right),
\end{equation*}
where $\mu_{r,s} = \E\left[ \left( L^{N,ab}_i -\E\left[ L^{N,ab}_i \right] \right)^r \left( L^{N,ab_1}_i -\E\left[ L^{N,ab_1}_i \right] \right)^s \right]$. We have $\mu_{2,2} = \E[(L^{N,ab}_i-bpr^N_b)^2(L^{N,ab_1}_i-b_1pr^N_{b_1})^2] \leq (\E[(L^{N,ab}_i-bpr^N_b)^4]\E[(L^{N,ab_1}_i-b_1pr^N_{b_1})^4])^{1/2}$ by Cauchy Schwarz inequality, $\mu_{2,0} = \E[(L^{N,ab}_i-bpr^N_b)^2]$, $\mu_{0,2} = \E[(L^{N,ab_1}_i-b_1pr^N_{b_1})^2]$ and $\mu_{1,1} = \delta_{b b_1} \E[(L^{N,ab}_i-bpr^N_b)^2]$ by independence. 
By applying Lemma \ref{lemma_pth_central_moments_empirical_mean_of_bernoulli}, we know that, for some universal constant $K$, $\mu_{2,2} \leq K N^{-2}$ and $\mu_{2,0}+\mu_{0,2}+\mu_{1,1} \leq K N^{-1}$. Finally,
\begin{equation*}
 \E\left[\left|Y^{N}_{a,bb_1}-\E\left[Y^{N}_{a,bb_1}\right]\right|^2\right] \leq K \frac{(|\mathcal{P}_a|-1)^{2}}{|\mathcal{P}_a|} \left(N^{-2} + \frac{N^{-2} - (|\mathcal{P}_a| - 2)N^{-2}}{|\mathcal{P}_a|-1}\right) \leq KN^{-1},
\end{equation*}
which implies the result.
\end{proof}

\subsection{Proof of Lemma \ref{lem:ell:and:c}}
\label{sec:proof:lemma:ell:and:c}
It suffices to show that the inequalities hold for all $N$ sufficiently large. Recall that $r^N_a=|{\cal{P}}_a|/N$, for $a\in\{-,+\}$. Throughout the proof, we assume that $N$ is large enough ($N\geq N_0$) ensuring that $r^N_+\wedge r^N_-\geq r_{\rm min}>0$ for some $r_{\rm min}$ sufficiently small depending only on the choice of $r_+$ and $r_-$. 
In what follows, we shall denote $K$ a constant which may depend on $\lambda$ and which may change from one line to another. The proof is divided in the 12 steps below. 

\medskip
{\it Step 1.} 
Consider the event
\begin{equation*}
\mathcal{A}_N = \bigcap_{a\in\{-,+\}}\bigcap_{b\in\{-,+\}}\left\{\|L^{N,ab} - (bpr^N_b)1_{{\cal{P}}_a} \|_2 \vee\|C^{N,ab} - (apr^N_{b})1_{{\cal{P}}_a}\|_2 \leq N^{1/4} \right\}.
\end{equation*}
For later use, let us observe that Points (ii) and (iii) of Lemma \ref{lemma_collection_of_bounds} imply that $\mathcal{A}_N$ is included in the event
\begin{multline*}
\mathcal{G}_N=\bigcap_{b\in\{-,+\}}\left\{\|L^{N,\bullet b}-(bpr^N_b)1_{N}\|_2\vee \|L^{N,b\bullet }-p(r^N_+-r^N_-)1_{\mathcal{P}_b}\|_2 \right. \\ \left.
\vee
\|C^{N,\bullet b} - pr^N_{b}(1_{{\cal{P}}_+}-1_{{\cal{P}}_-})\|_2\vee \|C^{N,b\bullet } - (bp)1_{{\cal{P}}_b}\|_2 \leq 2N^{1/4} \right\}.
\end{multline*}
Combining Point (iv) of Lemma \ref{lemma_collection_of_bounds} with Markov's inequality, one can show that for any $\alpha\geq 1$, there exits a constant $K_{\alpha}>0$ such that
\begin{equation*}
    \P\left(\mathcal{G}_{N}\right) \geq \P\left(\mathcal{A}_{N}\right) \geq 1 - K_{\alpha}N^{-\alpha/4}.
\end{equation*}
Hence, $\mathcal{A}_{N}$ is a large probability event and we will first consider the expectations appearing in Lemma \ref{lem:ell:and:c} under the event $\mathcal{A}_{N}$ only. This is used in Step 4.

\medskip
{\it Step 2.} 
Let $a,b\in\{-,+\}$. Here, we prove that
$$
\text{(i)} \ \ \widetilde{\ell}^{N,ab} = (1-\lambda)\left[ 
    1_{\mathcal{P}_a} \odot A^{N}\widetilde{\ell}^{N,\bullet b}
    + \sum_{e\in\{-,+\}} (r^N_e)^{-1} \overline{\ell^{N,eb}} \widetilde{L}^{N,ae}
    + \epsilon^{N,ab} 1_{\mathcal{P}_a} \right],$$ 
where 
$$
\text{(ii)} \ \ |\epsilon^{N,ab}| \leq (r^N_a N)^{-1} \| C^{N,\bullet a} - pr_a(1_{\mathcal{P}_+} - 1_{\mathcal{P}_-}) \|_2  \| \widetilde{\ell}^{N,\bullet b} \|_2.
$$

Starting from $\ell^{N,\bullet b} = Q^{N}1_{\mathcal{P}_b} = (I_N - (1-\lambda)A^{N})^{-1} 1_{\mathcal{P}_b}$, we get $\ell^{N,\bullet b} =  1_{\mathcal{P}_b} + (1-\lambda) A^{N}\ell^{N,\bullet b} $. Hence, it implies that $\ell^{N,ab} = 1_{\mathcal{P}_a} \odot \left( 1_{\mathcal{P}_b} + (1-\lambda) A^{N}\ell^{N,\bullet b} \right)$ and $(r^N_a)^{-1} \overline{\ell^{N,ab}} = \delta_{ab} + (1-\lambda) (r^N_a N)^{-1} \left< A^{N}\ell^{N,\bullet b}, 1_{\mathcal{P}_a} \right>$, so that 
\begin{equation*}
    \widetilde{\ell}^{N,ab} = (1-\lambda) \left[ 1_{\mathcal{P}_a} \odot A^{N}\ell^{N,\bullet b} - (r^N_a N)^{-1} \left< A^{N}\ell^{N,\bullet b}, 1_{\mathcal{P}_a} \right> 1_{\mathcal{P}_a} \right].
\end{equation*}
Then, using the substitution $\ell^{N,\bullet b} = \widetilde{\ell}^{N,\bullet b} + \sum_{e\in\{-,+\}} (r^N_{e})^{-1} \overline{\ell^{N,eb}} 1_{\mathcal{P}_e}$ in the two terms above and the fact that $1_{\mathcal{P}_a} \odot A^{N} 1_{\mathcal{P}_e} = L^{N,a e}$, we get point (i) with 
\begin{equation*}
    \epsilon^{N,ab} = - (r^N_a N)^{-1} \left< A^{N}\widetilde{\ell}^{N,\bullet b}, 1_{\mathcal{P}_a} \right> = - (r^N_a N)^{-1} \left< \widetilde{\ell}^{N,\bullet b}, (A^{N})^{\intercal}1_{\mathcal{P}_a} \right>.
\end{equation*}
Yet, $(A^{N})^{\intercal}1_{\mathcal{P}_a} = C^{N,\bullet a}$ by definition and $\left<\tilde{\ell}^{N,\bullet b},1_{\mathcal{P}_+} \right> = \left<\tilde{\ell}^{N,\bullet b},1_{\mathcal{P}_-} \right> = 0$ by construction so that
\begin{equation*}
    \epsilon^{N,ab} = - (r^N_a N)^{-1} \left< \widetilde{\ell}^{N,\bullet b}, C^{N,\bullet a} - pr_a(1_{\mathcal{P}_+} - 1_{\mathcal{P}_-}) \right>,
\end{equation*}
and point (ii) follows from Cauchy-Schwarz inequality.

\medskip
{\it Step 3.} 
Proceeding as in Step 2, one can show that
$$
\text{(i)} \ \ \widetilde{c}^{N,ab} = (1-\lambda)\left[ 
    1_{\mathcal{P}_a} \odot A^{N}\widetilde{c}^{N,\bullet b}
    + \sum_{e\in\{-,+\}} (r^N_e)^{-1} \overline{c^{N,eb}} \widetilde{C}^{N,ae}
    + \gamma^{N,ab} 1_{\mathcal{P}_a} \right],$$ 
where 
$$
\text{(ii)} \ \ |\gamma^{N,ab}| \leq (r^N_a N)^{-1} \| L^{N,\bullet a} - (apr_a)1_{N} \|_2  \| \widetilde{c}^{N,\bullet b} \|_2.
$$

\medskip
{\it Step 4.} 
Here, we prove Equation \eqref{eq:bound:tilde}. 

From Step 2, one can sum for $a\in \{-,+\}$ to obtain
\begin{equation}\label{eq:tilde:ell:bullet:b}
    \widetilde{\ell}^{N,\bullet b} = (1-\lambda)\left[ 
    A^{N}\widetilde{\ell}^{N,\bullet b}
    + \sum_{a\in \{-,+\}} \sum_{e\in\{-,+\}} (r^N_e)^{-1} \overline{\ell^{N,eb}} \widetilde{L}^{N,a e}
    + \sum_{a\in \{-,+\}} \epsilon^{N,ab} 1_{\mathcal{P}_a} \right].
\end{equation}
Then, using the fact that $\|1_{\mathcal{P}_a}\|_2=N^{1/2}(r^N_a)^{1/2}\leq N^{1/2}$, we get
\begin{multline*}
    (1-\lambda)^{-1} \|\widetilde{\ell}^{N,\bullet b}\|_2 \leq  \tn A^N\tn _2\|\widetilde{\ell}^{N,\bullet b}\|_2 \\
    + 4 \max_{e\in\{-,+\}}\left\{ (r^N_e)^{-1} |\overline{\ell^{N,eb}}| \right\}\max_{a,e\in\{-,+\}}\left\{\|\widetilde{L}^{N,a e}\|_2\right\} 
    + N^{1/2} \sum_{a\in\{-,+\}} |\epsilon^{N,ab}|.
\end{multline*}
Recall that $\tn A^N\tn _2 \leq 1$ and $|\overline{\ell^{N,eb}}| \leq r^N_e\lambda^{-1}$ (see inequality \eqref{omegancons1}). On the event $\mathcal{A}_N \supset \mathcal{G}_N$, we have $\|C^{N,\bullet a} - pr^N_{a}(1_{{\cal{P}}_+}-1_{{\cal{P}}_-})\|_2 \leq 2N^{1/4}$, and using Point (ii) of Step 2, we have 
\begin{equation*}
 \indiq_{\mathcal{A}_N} (1-\lambda)^{-1} \|\widetilde{\ell}^{N,\bullet b}\|_2 \leq \|\widetilde{\ell}^{N,\bullet b}\|_2 \left[1 + \frac{4}{r^N_+\wedge r^N_-}N^{-1/4}\right]+ 4\lambda^{-1} \max_{a,e\in\{-,+\}}\left\{\|\widetilde{L}^{N,ae}\|_2\right\}. 
\end{equation*}
Now,  since $r^N_+\wedge r^N_-\geq r_{\rm min}$, it follows that $1 + \frac{4}{r^N_+\wedge r^N_-}N^{-1/4}\leq 1 + 4r_{\rm min}^{-1}N^{-1/4} < (1-\lambda)^{-1}$ for $N\geq N_1$ for some $N_1$ sufficiently large. As a consequence, there exists a constant $K$ such that
\begin{equation}\label{eq:bound:ell:tilde:L:tilde}
    \indiq_{\mathcal{A}_N} \|\widetilde{\ell}^{N,\bullet b}\|_2\leq K \max_{a,e\in\{-,+\}}\left\{\|\widetilde{L}^{N,ae}\|_2\right\},
\end{equation}
and Equations \eqref{ineq:tildeL_Lcentered_wrt_environment} and
\eqref{ineq:moments_l2norm_Lcentered_wrt_environment} of Lemma \ref{lemma_collection_of_bounds} implies that 
$$
 \E\left[\indiq_{\mathcal{A}_N}\|\widetilde{\ell}^{N,\bullet b}\|^4_2\right]\leq K \E\left[\max_{a,b\in\{-,+\}}\left\{\|\widetilde{L}^{N,ab}\|^4_2\right\}\right]\leq K,
$$
for all $N\geq N_1$. Taking the maximum value of  $\E\left[\indiq_{\mathcal{A}_N}\|\tilde{\ell}^{N,\bullet b}\|^4_2\right]$ over all $N \in [N_0,N_1]$, we get 
$$
 \E\left[\indiq_{\mathcal{A}_N}\|\widetilde{\ell}^{N,\bullet b}\|^4_2\right]\leq K,
$$
for all $N\geq N_0$. Yet, inequality \ref{omegancons1} implies that $|\widetilde{\ell}^{N,ab}_i|\leq 2\lambda^{-1}$ which in turn implies that $\|\widetilde{\ell}^{N,\bullet b}\|^4_2 \leq 4\lambda^{-2}N^{2}$. Hence, to get rid of the term $\indiq_{\mathcal{A}_N}$ it suffices to write
\begin{equation*}
    \E\left[\|\widetilde{\ell}^{N,\bullet b}\|^4_2\right]\leq 4\lambda^{-2}N^{2} \P\left( (\mathcal{A}_N)^{c} \right) + \E\left[\indiq_{\mathcal{A}_N}\|\widetilde{\ell}^{N,\bullet b}\|^4_2\right] \leq K,
\end{equation*}
where we used the last inequality of Step 1 with $\alpha=8$.

Finally, the proof of the inequality $\E\left[ \|\widetilde{c}^{N,\bullet b}\|_2^4 \right] \leq K$ follows the same line and is therefore omitted.

\medskip
{\it Step 5.} Here, we prove Equation \eqref{eq:limit:ell:bar}.

We have already used in Step 2 that $\ell^{N,ab} = 1_{\mathcal{P}_a} \odot \left( 1_{\mathcal{P}_b} + (1-\lambda) A^{N}\ell^{N,\bullet b} \right)$ Hence, we deduce that 
\begin{eqnarray*}
    \overline{\ell^{N,ab}} &= &r^N_a \delta_{ab} + (1-\lambda) N^{-1} \sum_{i\in\mathcal{P}_a}\sum_{j=1}^{N} A^N(i,j) \ell^{N,\bullet b}_j\\
    &=& r^N_a \delta_{ab} + (1-\lambda) N^{-1} \sum_{j=1}^{N} C_{j}^{N,\bullet a} \ell_{j}^{N, \bullet b}\\
    &=& r^N_a \delta_{ab} + (1-\lambda)p r^N_a \left(\overline{\ell^{N,+ b}}-\overline{\ell^{N,- b}} \right) + \eta^{N,ab},
\end{eqnarray*}
where $\eta^{N,ab} = (1-\lambda) N^{-1} \left< C^{N,\bullet a} - pr^N_a(1_{\mathcal{P}_+}-1_{\mathcal{P}_-}), \ell^{N, \bullet b} \right>$. In particular, we have
\begin{equation*}
    \begin{cases}
        \overline{\ell^{N,+ b}}-\overline{\ell^{N,- b}}=\frac{(br^N_b)}{1-(1-\lambda)p(r^N_+-r^N_-)}+\eta^{N,+b}-\eta^{N,-b},\\
        \overline{\ell^{N,+ b}}+\overline{\ell^{N,- b}} = \frac{r^N_b + 2b(1-\lambda)pr^N_+r^N_-}{1-(1-\lambda)p(r^N_+-r^N_-)} + (1-\lambda) p (\eta^{N,+b} - \eta^{N,-b}) + \eta^{N,+b} + \eta^{N,-b},
    \end{cases}
\end{equation*}
and so
\begin{equation*}
    \overline{\ell^{N,ab}} = \frac{r^N_a \delta_{ab} +  b(1-\lambda)pr^N_+r^N_-}{1-(1-\lambda)p(r^N_+-r^N_-)} + \frac{(1-\lambda) p}{2} (\eta^{N,+b} - \eta^{N,-b}) + \eta^{N,ab}.\\
\end{equation*}
Since by our assumption, $|r^N_+ - r_+| + |r^N_- - r_-|\leq K N^{-1}$, we have
$$
\left|\frac{r^N_a \delta_{ab} +  b(1-\lambda)pr^N_+r^N_-}{1-(1-\lambda)p(r^N_+-r^N_-)} - \frac{r_a \delta_{ab} +  b(1-\lambda)pr_+r_-}{1-(1-\lambda)p(r_+-r_-)}\right|\leq K (|r^N_+ - r_+| + |r^N_- - r_-|)\leq KN^{-1},
$$
for some constant $K$ that may depend on $\lambda$, and the result will follow once we check that $\E[ (\eta^{N,ab})^2] \leq K N^{-2}$.

To this end, we write $\eta^{N,ab}= (1-\lambda) N^{-1} (\eta^{N,ab}_+ + \eta^{N,ab}_-)$ where, for $e\in\{-,+\}$, $\eta^{N,ab}_e = \left< C^{N,e a} - (ep)r^N_a1_{\mathcal{P}_e}, \ell^{N, e b} \right>$. Remind that $\ell^{N, e b} = \widetilde{\ell}^{N, e b} + (r_e^N)^{-1} \overline{\ell^{N,eb}} 1_{\mathcal{P}_e}$. Then, Lemma \ref{lem:technical:V:v} can be applied with $V^N = C^{N,e a} - (ep)r^N_a1_{\mathcal{P}_e}$, $v^N_1 = 0$, $v_2^N = \widetilde{\ell}^{N,eb}$ and $v_3^N = (r_e^N)^{-1} \overline{\ell^{N,eb}}1_{\mathcal{P}_e}$: assumption (i) is satisfied thanks to Equation \eqref{ineq:moments_l2norm_Lcentered_wrt_environment} and the fact that $\| C^{N,e a} \|_{\infty} \leq 1$, assumption (ii) is satisfied thanks to Equation \eqref{eq:bound:tilde}, assumption (iii) is satisfied thanks to Equations \eqref{omegancons1} and \eqref{ineq:max_variance_of_rarbbarLab_rarbbarCab} because $\left< V^N, v_3^N \right> = (r_e^N)^{-1} \overline{\ell^{N,eb}}\,  \overline{V^N}$.

Hence, $\E[ (\eta^{N,ab}_e)^2] \leq K$ which in turn implies that $\E[ (\eta^{N,ab})^2] \leq K N^{-2}$.

\medskip
{\it Step 6.} Here we prove Equation \eqref{eq:limit:c:bar}.

Starting from 
\begin{equation*}
c^{N}=(Q^{N})^{\intercal}1_{N} = \left(I_{N}-(1-\lambda)\left(A^{N}\right)^{\intercal}\right)^{-1}1_{N} ,
\end{equation*}
we deduce that $c^{N,+\bullet}+c^{N,-\bullet}=c^N= 1_{N} + (1-\lambda) \left(A^N\right)^{\intercal} c^N$, so that
 \begin{eqnarray*}
\overline{c^{N,a\bullet}}&= & r^N_a + (1-\lambda) N^{-1} \sum_{i\in\mathcal{P}_a} \sum_{j=1}^{N} A^N(j,i) c_{j}^N\\
&=& r^N_a + (1-\lambda) N^{-1} \sum_{j=1}^{N} L_{j}^{N,\bullet a}c_{j}^N\\
&=& r^N_a + (1-\lambda)(a p r^N_a)\overline{c^N} + \xi^{N,a},
\end{eqnarray*}
where $\xi^{N,a} = (1-\lambda) N^{-1} \left< L^{N,\bullet a} - (apr^N_a)1_{N}, c^N \right>$.
Since $\overline{c^N}=\overline{\ell^N}$, to conclude the proof of this step it suffices to use Equation \eqref{eq:limit:ell:bar} and to show that $\E[(\xi^{N,a})^2] \leq K N^{-2}$.

Proceeding as before, we can write $\xi^{N,a}= (1-\lambda) N^{-1} (\xi^{N,a}_+ + \xi^{N,a}_-)$ where, for $e\in\{-,+\}$, $\xi^{N,a}_e = \left< L^{N,e a} - (ap)r^N_a1_{\mathcal{P}_e}, c^{N, e \bullet} \right>$. Remind that $c^{N, e b} = \widetilde{c}^{N, e b} + (r_e^N)^{-1} \overline{c^{N,eb}} 1_{\mathcal{P}_e}$. Then, Lemma \ref{lem:technical:V:v} can be applied with $V^N = L^{N,e a} - (ap)r^N_a1_{\mathcal{P}_e}$, $v^N_1 = 0$ , $v_2^N= \widetilde{c}^{N,e \bullet}$ and $v_3^N = (r_e^N)^{-1} \overline{c^{N,e \bullet}}1_{\mathcal{P}_e}$:  assumption (i) is satisfied thanks to Equation \eqref{ineq:moments_l2norm_Lcentered_wrt_environment} and the fact that $\| L^{N,e a} \|_{\infty} \leq 1$, assumption (ii) is satisfied thanks to Equation \eqref{eq:bound:tilde}, assumption (iii) is satisfied thanks to Equations \eqref{omegancons1} and \eqref{ineq:max_variance_of_rarbbarLab_rarbbarCab} because $\left< V^N, v_3^N \right> = (r_e^N)^{-1} \overline{c^{N,e \bullet}}\,  \overline{V^N}$.

Hence, $\E[ (\xi^{N,a}_e)^2] \leq K$ which in turn implies that $\E[ (\xi^{N,a})^2] \leq K N^{-2}$.

\medskip
{\it Step 7.} Here, we prove that $\E\left[ B^{N,\bullet b}  \right]\leq K N^{-1/2}$ where
\begin{equation*}
    B^{N,\bullet b} 
    = \left| \|\widetilde{\ell}^{N, \bullet b}\|_2^2 - \|(1-\lambda)\sum_{a\in \{-,+\}} (r^N_a)^{-1}\overline{\ell^{N,ab}}\widetilde{L}^{N,\bullet a}\|_2^2 \right|.
\end{equation*}

Denoting $y^{N,\bullet b} = \widetilde{\ell}^{N, \bullet b} - (1-\lambda)\sum_{a\in \{-,+\}} (r^N_a)^{-1}\overline{\ell^{N,ab}}\widetilde{L}^{N,\bullet a}$, one can use Equation \eqref{eq:tilde:ell:bullet:b} to get
\begin{equation*}
    y^{N,\bullet b} = (1-\lambda)\left[ 
        A^{N}y^{N,\bullet b} + 
        (1-\lambda)\sum_{a\in \{-,+\}} (r^N_a)^{-1}\overline{\ell^{N,ab}} (A^{N} \widetilde{L}^{N,\bullet a})
        + \sum_{a\in \{-,+\}} \epsilon^{N,ab} 1_{\mathcal{P}_a}
     \right].
\end{equation*}
Using the same kind of arguments as in the beginning of Step 4 (except the use of the event $\mathcal{A}_N$), we have
\begin{multline*}
    ((1-\lambda)^{-1}-1) \|y^{N,\bullet b}\|_2 \leq  
    4 (1-\lambda) \lambda^{-1} \max_{a,e\in\{-,+\}}\left\{\|A^{N} \widetilde{L}^{N,a e}\|_2\right\} \\
    + \frac{2}{r^N_+\wedge r^N_-}N^{-1/2} \| \widetilde{\ell}^{N,\bullet b} \|_2 \max_{a\in\{-,+\}} \| C^{N,\bullet a} - pr_a(1_{\mathcal{P}_+} - 1_{\mathcal{P}_-}) \|_2.
\end{multline*}
Furthermore, using Cauchy-Schwarz inequality, Step 4 of the current Lemma and Equation \eqref{ineq:moments_l2norm_Lcentered_wrt_environment} of Lemma \ref{lemma_collection_of_bounds}, we have
\begin{equation*}
    \E\left[ \| \widetilde{\ell}^{N,\bullet b} \|_2^2 \max_{a\in\{-,+\}} \| C^{N,\bullet a} - pr_a(1_{\mathcal{P}_+} - 1_{\mathcal{P}_-}) \|_2^2 \right] \leq K.
\end{equation*}
Combining the two equations above by convexity of the square function and then Equation \eqref{ineq_l2_norm_of_AtildeLab} of Lemma \ref{lemma_collection_of_bounds}, we prove that $y^{N,\bullet b}$ is negligible in the sense that
\begin{equation*}
    \E\left[ \|y^{N,\bullet b}\|_2^2 \right] \leq  
    K \left\{ \E\left[ \max_{a,e\in\{-,+\}}\left\{\|A^{N} \widetilde{L}^{N,a e}\|_2^2\right\} \right]     + N^{-1} \right\} \leq \frac{K}{N}.
\end{equation*}
In turn, we can prove that $B^{N,\bullet b} $ is negligible. More precisely, we factorize
\begin{eqnarray*}
    B^{N,\bullet b} 
    &=& \left| \|\widetilde{\ell}^{N, \bullet b}\|_2 - \|(1-\lambda)\sum_{a\in \{-,+\}} (r^N_a)^{-1}\overline{\ell^{N,ab}}\widetilde{L}^{N,\bullet a}\|_2 \right|\\
    && \times \left( \|\widetilde{\ell}^{N, \bullet b}\|_2 + \|(1-\lambda)\sum_{a\in \{-,+\}} (r^N_a)^{-1}\overline{\ell^{N,ab}}\widetilde{L}^{N,\bullet a}\|_2 \right)\\
    &\leq& \|y^{N,\bullet b}\|_2 \left( \|\widetilde{\ell}^{N, \bullet b}\|_2 + K \max_{a\in \{-,+\}} \|\widetilde{L}^{N,\bullet a}\|_2 \right),
\end{eqnarray*}
so that, by Cauchy-Schwarz inequality, and then using Step 4 of the current Lemma and Equations \eqref{ineq:tildeL_Lcentered_wrt_environment} and
\eqref{ineq:moments_l2norm_Lcentered_wrt_environment} of Lemma \ref{lemma_collection_of_bounds}, we get
\begin{equation*}
    \E\left[ B^{N,\bullet b}  \right] \leq K \sqrt{\E\left[ \|y^{N,\bullet b}\|_2^2 \right]} \sqrt{\E\left[ \|\widetilde{\ell}^{N, \bullet b}\|_2^2 +  \max_{a\in \{-,+\}} \|\widetilde{L}^{N,\bullet a}\|_2^2 \right]}\leq \frac{K}{\sqrt{N}}.
\end{equation*}

\medskip
{\it Step 8.} 
Using the same arguments as Step 7, one can prove that
\begin{equation*}
    \E\left[ \left| \|\widetilde{\ell}^{N}\|_2^2 - \|(1-\lambda)\sum_{a\in \{-,+\}} (r^N_a)^{-1}\overline{\ell^{N,a \bullet}}\widetilde{L}^{N,\bullet a}\|_2^2 \right| \right] \leq \frac{K}{\sqrt{N}}.
\end{equation*}

\medskip
{\it Step 9.} 
Here we prove, for all $a_1,a_2, b\in \{-,+\}$, that 
\begin{equation*}
    \E\left[\left| \frac{\overline{\ell^{N,a_1b}}}{r^N_{a_1}} \frac{\overline{\ell^{N,a_2b}}}{r^N_{a_2}} \left< \widetilde{L}^{N,\bullet a_1}, \widetilde{L}^{N,\bullet a_2} \right> 
    - \delta_{a_1a_2} \frac{\overline{\ell^{\infty,a_1b}}}{r_{a_1}} \frac{\overline{\ell^{\infty,a_2b}}}{r_{a_2}} r_{a_1}p(1-p) \right|\right] \leq \frac{K}{\sqrt{N}},
\end{equation*}
where $\overline{\ell^{\infty,ab}} := \frac{r_a\delta_{ab} + b (1-\lambda)pr_+r_- }{1-(1-\lambda)p(r_+-r_-)}$ is the limit appearing in Equation \eqref{eq:limit:ell:bar}.

Since the coordinates of $\ell^{N,ab}$ are bounded by inequality \eqref{omegancons1}, we know that there exists a constant $K$ such that
\begin{equation*}
    \left| \frac{\overline{\ell^{N,a_1b}}}{r^N_{a_1}} \frac{\overline{\ell^{N,a_2b}}}{r^N_{a_2}} 
    - \frac{\overline{\ell^{\infty,a_1b}}}{r_{a_1}} \frac{\overline{\ell^{\infty,a_2b}}}{r_{a_2}} \right| 
    \leq K \left( \left| \frac{\overline{\ell^{N,a_1b}}}{r^N_{a_1}} - \frac{\overline{\ell^{\infty,a_1b}}}{r_{a_1}} \right| 
    + \left| \frac{\overline{\ell^{N,a_2b}}}{r^N_{a_2}} - \frac{\overline{\ell^{\infty,a_2b}}}{r_{a_2}} \right| \right).
\end{equation*}
Hence, using Equation \eqref{eq:limit:ell:bar}, Cauchy-Schwarz inequality twice and finally Equations \eqref{ineq:tildeL_Lcentered_wrt_environment} and
\eqref{ineq:moments_l2norm_Lcentered_wrt_environment} of Lemma \ref{lemma_collection_of_bounds}, we have
\begin{multline*}
    \E\left[ \left| \frac{\overline{\ell^{N,a_1b}}}{r^N_{a_1}} \frac{\overline{\ell^{N,a_2b}}}{r^N_{a_2}} - \frac{\overline{\ell^{\infty,a_1b}}}{r_{a_1}} \frac{\overline{\ell^{\infty,a_2b}}}{r_{a_2}} \right| \left| \left< \widetilde{L}^{N,\bullet a_1}, \widetilde{L}^{N,\bullet a_2} \right> \right|   \right] \\
    \leq  \frac{K}{N} \E\left[ \left< \widetilde{L}^{N,\bullet a_1}, \widetilde{L}^{N,\bullet a_2} \right>^2   \right]^{1/2} \leq \frac{K}{N} \max_{a\in\{-,+\} }\E\left[ \|\widetilde{L}^{N,\bullet a}\|_2^2 \right]^{1/2} \leq \frac{K}{N}.
\end{multline*}
Finally, we conclude the step by combining the above equation with Equation \eqref{ineq_l2_norm_of_tildeLab} of Lemma \ref{lemma_collection_of_bounds}.

\medskip
{\it Step 10.} 
Using the same arguments as in Step 9, one can prove that, for all $a_1,a_2\in \{-,+\}$,
\begin{equation*}
    \E\left[\left| \frac{\overline{\ell^{N,a_1 \bullet}}}{r^N_{a_1}} \frac{\overline{\ell^{N,a_2 \bullet}}}{r^N_{a_2}} \left< \widetilde{L}^{N,\bullet a_1}, \widetilde{L}^{N,\bullet a_2} \right> 
    - \delta_{a_1a_2} \frac{\overline{\ell^{\infty,a_1 \bullet}}}{r_{a_1}} \frac{\overline{\ell^{\infty,a_2 \bullet}}}{r_{a_2}} r_{a_1}p(1-p) \right|\right] \leq \frac{K}{\sqrt{N}},
\end{equation*}
where $\overline{\ell^{\infty,a \bullet}} := \overline{\ell^{\infty,a +}} + \overline{\ell^{\infty,a -}} = \frac{r_a}{1-(1-\lambda)p(r_+-r_-)}$.

\medskip
{\it Step 11.} 
Here we prove Equation \eqref{eq:limit:ell.b:tilde}. According to Step 7, the limit of $\|\widetilde{\ell}^{N,\bullet b}\|_2^2$ is related to the limit of $\|(1-\lambda)\sum_{a\in \{-,+\}} (r^N_a)^{-1}\overline{\ell^{N,ab}}\widetilde{L}^{N,\bullet a}\|_2^2$ which can be expanded as
\begin{equation*}
    (1-\lambda)^2 \sum_{a_1,a_2\in \{-,+\}} \frac{\overline{\ell^{N,a_1b}}}{r^N_{a_1}} \frac{\overline{\ell^{N,a_2b}}}{r^N_{a_2}} \left< \widetilde{L}^{N,\bullet a_1}, \widetilde{L}^{N,\bullet a_2} \right>.
\end{equation*}
Hence, combining Steps 7 and 9, we have
\begin{equation*}
    \E\left[ \left| \|\widetilde{\ell}^{N, \bullet b}\|_2^2 
    - 
    (1-\lambda)^2 p(1-p) \sum_{a\in \{-,+\}} \left( \frac{\overline{\ell^{\infty,ab}}}{r_{a}} \right)^2\, r_{a} \right| \right] \leq \frac{K}{\sqrt{N}}.
\end{equation*}
In order to conclude this step, it suffices to simplify
\begin{eqnarray*}
    \sum_{a\in \{-,+\}} \left( \frac{\overline{\ell^{\infty,ab}}}{r_{a}} \right)^2\, r_{a}
    &=& \sum_{a\in \{-,+\}} \left( \frac{\delta_{ab} + b(1-\lambda)p(1-r_a)}{1-(1-\lambda)p(r_+-r_-)} \right)^2\, r_{a}\\ 
    &=& \frac{\left( 1 + b(1-\lambda)p(1-r_b) \right)^2\, r_{b} + \left( b(1-\lambda)pr_b \right)^2\, (1-r_{b})}{(1-(1-\lambda)p(r_+-r_-))^2}\\ 
    &=& \frac{r_b + 2b(1-\lambda)pr_b(1-r_b) + (1-\lambda)^2p^2r_b(1-r_b)}{(1-(1-\lambda)p(r_+-r_-))^2}\\ 
    &=& \frac{r_b + (1-\lambda)pr_+r_-(2b + (1-\lambda)p)}{(1-(1-\lambda)p(r_+-r_-))^2}.
\end{eqnarray*}

\medskip
{\it Step 12.} 
Following the lines of Step 11 (and using Steps 8 and 10), one can prove Equation \eqref{eq:limit:ell:tilde}. Let us mention that the final simplification here is
\begin{eqnarray*}
    \sum_{a\in \{-,+\}} \left( \frac{\overline{\ell^{\infty,a \bullet}}}{r_{a}} \right)^2\, r_{a}
    &=& \sum_{a\in \{-,+\}} \left( \frac{1}{1-(1-\lambda)p(r_+-r_-)} \right)^2\, r_{a}\\ 
    &=& (1-(1-\lambda)p(r_+-r_-))^{-2}.
\end{eqnarray*}

\subsection{Proof of Lemma \ref{lem:control:ell-ellbar:elltilde}}
\label{sec:proof:lem:control:ell-ellbar:elltilde}
It suffices to show that these inequalities hold for all $N$ sufficiently large. Recall that $r^N_a=|{\cal{P}}_a|/N$, for $a\in\{-,+\}$. As in the previous proofs, in what follows we assume that $N$ is large enough ($N\geq N_0$) ensuring that $r^N_+\wedge r^N_-\geq r_{\rm min}>0$ for some $r_{\rm min}$ sufficiently small depending only on the choice of $r_+$ and $r_-$. 
Also, we shall denote $K$ a constant which may depend on $\lambda$ and which may change from one line to another. The proof is divided in several steps. 

We will first prove \eqref{eq:control:ell-ellbar:elltilde}. To see that, we start by observing that
\begin{align*}
\|\ell^N-\overline{\ell^N}1_N\|^2_2-\|\widetilde{\ell}^N\|^2_2 &=\sum_{a\in\{-,+\}}\sum_{i\in\mathcal{P}_a}(\ell^{N,a\bullet}_i-\overline{\ell^N})^2-(\ell^{N,a\bullet}_i-(r^N_a)^{-1}\overline{\ell^{N,a\bullet}})^2\\
&= \sum_{a\in\{-,+\}} ((r^N_a)^{-1}\overline{\ell^{N,a\bullet}}-\overline{\ell^N})\sum_{i\in\mathcal{P}_a}(\ell^{N,a\bullet}_i-\overline{\ell^{N}}),\\
&=N\sum_{a\in\{-,+\}} r^N_a \left((r^N_a)^{-1}\overline{\ell^{N,a\bullet}}-\overline{\ell^N}\right)^2,
\end{align*} 
where in the second equality we have used that $x^2-y^2=(x-y)(x+y)$ for all $x,y\in\mathbb{R}$ and $\sum_{i\in\mathcal{P}_a}(\ell^{N,a\bullet}_i-(r^N_a)^{-1}\overline{\ell^{N,a\bullet}})=0$. 
Then, we combine \eqref{eq:limit:ell:bar}, \eqref{eq:limit:ell:bar:full} and \eqref{omegancons1} together with Jensen inequality to obtain that 
\begin{multline*}
N\E\left[\left((r^N_a)^{-1}\overline{\ell^{N,a\bullet}}-\overline{\ell^N}\right)^2\right]\leq KN\left(\E\left[\left((r_a)^{-1}\overline{\ell^{N,a\bullet}}-\frac{1}{1-(1-\lambda)p(r_+-r_-)} \right)^2\right] \right. \\ \left.
+\E\left[|r^N_a-r_a|^2\right] + \E\left[\left(\overline{\ell^{N}}-\frac{1}{1-(1-\lambda)p(r_+-r_-)} \right)^2\right]\right)  \leq KN^{-1},
\end{multline*}
so that 
$$
\E\left[\left|\|\ell^N-\overline{\ell^N}1_N\|^2_2-\|\widetilde{\ell}^N\|^2_2\right|\right]\leq KN^{-1}\sum_{a\in\{-,+\}}r^N_a=KN^{-1},
$$
which proves \eqref{eq:control:ell-ellbar:elltilde}. 

We will now prove \eqref{eq:control:product:ell-ellbar:elltilde}. To that end, first note that
\begin{equation}
\label{decomp:QL-QLbar}
(1-\lambda)(Q^NL^{N,\bullet-}-\overline{Q^NL^{N,\bullet-}})=\ell^{N,\bullet -}-\overline{\ell^{N,\bullet -}}1_N+r^N_-1_{\mathcal{P}_+}-r^N_+1_{\mathcal{P}_-}.
\end{equation}
Then, note that 
\begin{align*}
\langle \ell^N-\overline{\ell^N}1_N, r^N_-1_{\mathcal{P}_+}-r^N_+1_{\mathcal{P}_-}\rangle &=r^N_-\langle \ell^{N,+\bullet}-\overline{\ell^N}1_{\mathcal{P}_+},1_{\mathcal{P}_+} \rangle-r^N_+\langle \ell^{N,-\bullet}-\overline{\ell^N}1_{\mathcal{P}_-},1_{\mathcal{P}_-} \rangle\\
  &= Nr^N_{-}\left[\overline{\ell^{N,+\bullet}}-\overline{\ell^N}r^N_+\right]-Nr^N_{+}\left[\overline{\ell^{N,-\bullet}}-\overline{\ell^N}r^N_-\right].
\end{align*}
Next, by using that $\ell^{N}-\overline{\ell^N}1_N=\widetilde{\ell}^{N}+\sum_{a\in\{-,+\}}1_{\mathcal{P}_a}((r^N_a)^{-1}\overline{\ell^{N,a\bullet}}-\overline{\ell^N})$ and that 
$\ell^{N,\bullet -}-\overline{\ell^{N,\bullet -}}1_N=\widetilde{\ell}^{N,\bullet -}+\sum_{a\in\{-,+\}}1_{\mathcal{P}_a}((r^N_a)^{-1}\overline{\ell^{N,a - }}-\overline{\ell^{N,\bullet -}})$,
one can check that
\begin{multline*}
\langle \ell^{N}-\overline{\ell^N}1_N, \ell^{N,\bullet -}-\overline{\ell^{N,\bullet -}}1_N \rangle= \langle \widetilde{\ell}^N, \widetilde{\ell}^{N,\bullet -} \rangle\\
+N\sum_{a\in\{-,+\}}r^N_a\left[(r_a^N)^{-1}\overline{\ell^{N,a\bullet}}-\overline{\ell^{N}}\right]\left[(r_a^N)^{-1}\overline{\ell^{N,a -}}-\overline{\ell^{N,\bullet -}}\right].   
\end{multline*}
Now, observe that using the fact that 
$$\frac{\delta_{a-}-(1-\lambda)pr_+r_-(r_a)^{-1}}{1-(1-\lambda)p(r_+-r_-)}+ar_{-a}=\frac{r_--2(1-\lambda)pr_-r_+}{1-(1-\lambda)p(r_+-r_-)},
$$
we can write 
$$
\left[(r_a^N)^{-1}\overline{\ell^{N,a\bullet}}-\overline{\ell^{N}}\right]\left[(r_a^N)^{-1}\overline{\ell^{N,a -}}-\overline{\ell^{N,\bullet -}}\right]=-ar_{-a}\left[(r_a^N)^{-1}\overline{\ell^{N,a\bullet}}-\overline{\ell^{N}}\right]+\xi^N_a,
$$
where 
\begin{multline*}
\xi^N_a=\left[(r_a^N)^{-1}\overline{\ell^{N,a\bullet}}-\overline{\ell^{N}}\right]\left[\left((r^N_a)^{-1}\overline{\ell^{N,a-}}-\frac{(\delta_{a-}-(1-\lambda)pr_+r_-r^{-1}_{-a})}{1-(1-\lambda)p(r_+-r_-)}\right)\right.\\ \left.
+\left(\frac{(r_--2(1-\lambda)pr_+r_-)}{1-(1-\lambda)p(r_+-r_-)}\right)
\right].
\end{multline*}
Combining \eqref{eq:limit:ell:bar} and \eqref{eq:limit:ell:bar:full} with Jensen inequality, one can show that $\E\left[|\xi^N_a|\right]\leq KN^{-2}$,  
so that
\begin{multline*}
N\sum_{a\in\{-,+\}}r^N_a\left[(r_a^N)^{-1}\overline{\ell^{N,a\bullet}}-\overline{\ell^{N}}\right]\left[(r_a^N)^{-1}\overline{\ell^{N,a -}}-\overline{\ell^{N,\bullet -}}\right]=-Nr_{-}\left[\overline{\ell^{N,+\bullet}}-\overline{\ell^{N}}r_+^N\right] \\
+Nr_{+}\left[\overline{\ell^{N,-\bullet}}-\overline{\ell^{N}}r_-^N\right]+\xi^N
\end{multline*}
where $\xi^N=N\sum_{a\in\{-,+\}}r^N_a\xi^N_a$ satisfies $\E\left[|\xi^N|\right]\leq KN^{-1}.$
Therefore, putting together all previous identities, we deduce that 
\begin{multline*}
\langle \ell^N-\overline{\ell^N}1_N, (1-\lambda)(Q^NL^{N,\bullet-}-\overline{Q^NL^{N,\bullet-}})  \rangle -\langle \widetilde{\ell}^{N}
, \widetilde{\ell}^{N,\bullet -}\rangle=N(r^N_{-}-r_-)\left[\overline{\ell^{N,+\bullet}}-\overline{\ell^N}r^N_+\right]\\
N(r_+-r^N_{+})\left[\overline{\ell^{N,-\bullet}}-\overline{\ell^N}r^N_-\right]+\xi^N,
\end{multline*}
and the result follows from \eqref{eq:limit:ell:bar}, \eqref{eq:limit:ell:bar:full} and the assumption that $|r^N_a-r_a|\leq KN^{-1}$.

Hence, it remains to prove only \eqref{eq:control:QL-QL:elltilde}. Starting from \eqref{decomp:QL-QLbar}, one can check that (recall the definition of $\widetilde{\ell}^{N,ab}$ given in \eqref{def:centered_versions_of_ellab_and_cab})
$$
(1-\lambda)(Q^NL^{N,\bullet-}-\overline{Q^NL^{N,\bullet-}})=\widetilde{\ell}^{N,\bullet -}+\sum_{a\in\{-,+\}}1_{\mathcal{P}_a}\left[(r^N_a)^{-1}\overline{\ell^{N,a-}}-\overline{\ell^{N,\bullet-}}+ar^N_{-a}\right],
$$
which together with the fact that $\langle 1_{\mathcal{P}_+}, 1_{\mathcal{P}_-} \rangle=\langle 1_{\mathcal{P}_a}, \widetilde{\ell}^{N,\bullet -} \rangle=0$ implies that  
\begin{align*}
(1-\lambda)^2\|(Q^NL^{N,\bullet-}-\overline{Q^NL^{N,\bullet-}})\|^2_2-
\widetilde{\ell}^{N,\bullet -}=\sum_{a\in\{-,+\}}|\mathcal{P}_a|\left[(r^N_a)^{-1}\overline{\ell^{N,a-}}-\overline{\ell^{N,\bullet-}}+ar^N_{-a}\right]^2.
\end{align*}
Proceeding similarly as in the proof of \eqref{eq:control:product:ell-ellbar:elltilde}, one can show that
$$
\sum_{a\in\{-,+\}}|\mathcal{P}_a|\E\left[\left[(r^N_a)^{-1}\overline{\ell^{N,a-}}-\overline{\ell^{N,\bullet-}}+ar^N_{-a}\right]^2\right]\leq KN^{-1},
$$
concluding the proof of the lemma.

\subsection{Proof of Proposition \ref{prop:control:N:infty}}

We are now able to prove the three inequalities. 

\begin{proof}[Proof of Inequality \eqref{eq:convergence:m}]
First, remind that $\overline{m^N} = \mu \overline{\ell^N} - (1-\lambda)\overline{Q^NL^{N,\bullet-}}$.
Thanks to Remark \ref{rem:limit:ell:bar:full}, it suffices to prove that 
\begin{equation*}
    \E\left[\left|\overline{Q^NL^{N,\bullet-}}+\frac{pr_-}{1-(1-\lambda)p(r_+-r_-)}\right|^2\right]\leq \frac{K}{N^2}.
\end{equation*}
First, note that 
$$
Q^NL^{N,\bullet-}=-pr^N_-\ell^N+Q^N(L^{N,\bullet-}+pr^N_-1_{N}).
$$
Hence, it follows that
\begin{eqnarray*}
\overline{Q^NL^{N,\bullet-}}&=&-pr^N_-\overline{\ell^N}+N^{-1}\sum_{i=1}^{N}\sum_{j=1}^{N}Q^N(i,j)(L_{j}^{N,\bullet-}+pr^N_-)\\
&=&-pr^N_-\overline{\ell^N}+N^{-1}\sum_{j=1}^{N}(L_{j}^{N,\bullet-}+pr^N_-)c^N_{j}\\
&=&-pr^N_-\overline{\ell^N}+(1-\lambda)^{-1} \xi^{N,-},
\end{eqnarray*}
where $\xi^{N,-}=(1-\lambda)N^{-1} \left< L^{N,\bullet-} +pr^N_-1_{N}, c^N \right>$ is already defined in Step 6 of the proof of Lemma \ref{lem:ell:and:c}. From there, we know that $\E\left[|\xi^{N,-}|^2\right]\leq KN^{-2}$. As a consequence, it follows that 
\begin{multline*}
\E\left[\Big|\overline{Q^NL^{N,\bullet-}}+\frac{pr_-}{1-(1-\lambda)p(r_+-r_-)}\Big|^2\right]\leq\\
K\left(\E\left[\Big|\overline{\ell^N}-\frac{1}{1-(1-\lambda)p(r_+-r_-)}\Big|^2\right]+N^{-2}\right)\leq  KN^{-2},
\end{multline*}
where in the last inequality we used Equation \eqref{eq:limit:ell:bar:full}.

\end{proof}

\begin{proof}[Proof of Inequality \eqref{eq:convergence:v}]
Expanding the scalar product $v_\infty^N = \left\| m^N - \overline{m^N}1_{N} \right\|_2^2$ and using Lemma \ref{lem:control:ell-ellbar:elltilde}, we have
\begin{equation*}
    \E\left[\left| \left\| m^N - \overline{m^N}1_{N} \right\|_2^2 - \left( \mu^2 \|\widetilde{\ell}^N\|_2^2 -2\mu\left< \widetilde{\ell}^N, \widetilde{\ell}^{N,\bullet -} \right> + \|\widetilde{\ell}^{N,\bullet -}\|_2^2 \right) \right| \right] \leq \frac K {N}.
\end{equation*}
Yet, by the polarization identity, $-2\left< \widetilde{\ell}^N, \widetilde{\ell}^{N,\bullet -} \right> = \left( \|\widetilde{\ell}^{N,\bullet +}\|_2^2 - \|\widetilde{\ell}^{N}\|_2^2 - \|\widetilde{\ell}^{N,\bullet -}\|_2^2 \right)$, so that 
\begin{equation*}
    \E\left[\left| \left\| m^N - \overline{m^N}1_{N} \right\|_2^2 - \left( (\mu^2-\mu) \|\widetilde{\ell}^N\|_2^2 + \mu \|\widetilde{\ell}^{N,\bullet +}\|_2^2 + (1-\mu) \|\widetilde{\ell}^{N,\bullet -}\|_2^2 \right) \right| \right] \leq \frac K {N},
\end{equation*}
and we conclude this step thanks to Equations \eqref{eq:limit:ell.b:tilde} and \eqref{eq:limit:ell:tilde} as soon as we check the simplification:
\begin{eqnarray*}
    I &=& (\mu^2-\mu) + \mu \left[ r_+ + (1-\lambda)pr_+r_-(2 + (1-\lambda)p) \right] \\
    && +(1-\mu) \left[ r_- + (1-\lambda)pr_+r_-(-2 + (1-\lambda)p) \right]\\
    &=& \mu^2 + \mu(-1 + r_+ - r_-) + r_- \\
    && + (1-\lambda)pr_+r_- [\mu (2+(1-\lambda)p +2 -(1-\lambda)p) -2 + (1-\lambda)p]\\
    &=& \mu^2 - 2\mu r_- + r_- +(1-\lambda)pr_+r_- (4\mu - 2 +(1-\lambda)p)\\
    &=& (\mu+(1-\lambda)pr_-)^2+r_-(1-(1-\lambda)p(r_+-r_-))(1-2\mu-(1-\lambda)p)
\end{eqnarray*}
and the identity
$$
\frac{1-2\mu-(1-\lambda)p}{1-(1-\lambda)p(r_+-r_-)}=1-2m.
$$
\end{proof}

\begin{proof}[Proof of Inequality \eqref{eq:convergence:w}]
The proof is divided in the 5 steps below.

\medskip
{\it Step 1.} Here we prove that
\begin{equation}
    \E\left[\left|\frac{1}{N} \sum_{i=1}^{N} \ell^N_{i}(c^N_{i})^2  - \frac{1+4(1-\lambda)^2 p^2 r_-r_+}{(1-(1-\lambda)p(r_+-r_-))^3} \right|^2 \right]  \leq  \frac K  {N^2} \label{eq:product:c2:ell}.
\end{equation}

Notice that $\sum_{i=1}^{N} \ell_{i}^N(c_{i}^N)^2 = \sum_{a\in\{-,+\}} \left< \ell^{N,a\bullet}, (c^{N,a\bullet})^2 \right>$ and $(c^{N,a\bullet})^2=(\widetilde{c}^{N,a\bullet})^2+2(r^N_a)^{-1}\overline{c^{N,a\bullet}}\widetilde{c}^{N,a\bullet}+((r^N_a)^{-1}\overline{c^{N,a\bullet}})^2 1_{\mathcal{P}_a}$ for each $a\in\{-,+\}$. We argue that Lemma \ref{lem:technical:u:v} can be applied to
\begin{equation*}
    \begin{cases}
        u_1^N = 0,\, u_2^N = \widetilde{\ell}^{N,a\bullet},\, u_3^N = (r_a^N)^{-1}\overline{\ell^{N,a\bullet}},\\
        u_3^\infty = (1-(1-\lambda)p(r_+-r_-))^{-1},\\
        v_1^N = (\widetilde{c}^{N,a\bullet})^2,\, v_2^N = (r_a^N)^{-1}\overline{c^{N,a\bullet}}\widetilde{c}^{N,a\bullet},\, v_3^N = ((r_a^N)^{-1}\overline{c^{N,a\bullet}})^{2},\\
        v_3^\infty = \left( \frac{\left[1+(2a)(1-\lambda)p(1-r_a)\right]}{1-(1-\lambda)p(r_+-r_-)} \right)^2.
    \end{cases}
\end{equation*}
Indeed, the assumptions are satisfied thanks to Equations \eqref{omegancons1}, \eqref{eq:limit:ell:bar}, \eqref{eq:bound:tilde}, the fact that $\widetilde{\ell}^{N,a\bullet}$ and $\widetilde{c}^{N,a\bullet}$ are centered (for instance $\overline{\widetilde{\ell}^{N,a\bullet}}=0$) and the following argument: using the fact that $\overline{c^{N,a\bullet}} \leq 1$ and the previous step, we have
\begin{equation*}
    \E\left[ \left| v_3^N - v_3^\infty \right|^2 \right] \leq K \E\left[\left|\overline{c^{N,a\bullet}} - r_a\frac{\left[1+(2a)(1-\lambda)p(1-r_a)\right]}{1-(1-\lambda)p(r_+-r_-)}\right|^2  \right] \leq \frac K {N^{2}}.
\end{equation*}
The conclusion of Lemma \ref{lem:technical:u:v} is
\begin{equation*}
    \E\left[ \left| \frac{1}{N} \left< \ell^{N,a\bullet}, (c^{N,a\bullet})^2 \right> - r_a \frac{\left[1+(2a)(1-\lambda)p(1-r_a)\right]^2}{(1-(1-\lambda)p(r_+-r_-))^3} \right|^2 \right] \leq \frac{K}{N^2}.
\end{equation*}
Finally, the conclusion of this step follows from the fact that
\begin{equation}\label{eq:sum:c:bar}
    \sum_{a\in\{-,+\}} r_a \left[1+(2a)(1-\lambda)p(1-r_a)\right]^2 = 1+4(1-\lambda)^2 p^2 r_-r_+.
\end{equation}

\medskip
{\it Step 2.} Here we prove that
\begin{equation}
    \E\left[\left|\frac{1}{N} \sum_{i=1}^{N} (\ell^N_{i}c^N_{i})^2  - \frac{1+4(1-\lambda)^2 p^2 r_-r_+}{(1-(1-\lambda)p(r_+-r_-))^4}\right|^2 \right]  \leq  \frac K  {N^2}. \label{eq:product:c2:ell2}
\end{equation}

Notice that $\sum_{i=1}^{N} (\ell_{i}^N c_{i}^N)^2 = \sum_{a\in\{-,+\}} \left< (\ell^{N,a\bullet})^2, (c^{N,a\bullet})^2 \right>$. We argue that Lemma \ref{lem:technical:u:v} can be applied to
\begin{equation*}
    \begin{cases}
        u_1^N = (\widetilde{\ell}^{N,a\bullet})^2,\, u_2^N = (r_a^N)^{-1}\overline{\ell^{N,a\bullet}}\widetilde{\ell}^{N,a\bullet},\, u_3^N = ((r_a^N)^{-1}\overline{\ell^{N,a\bullet}})^{2},\\
        u_3^\infty = (1-(1-\lambda)p(r_+-r_-))^{-2},\\
        v_1^N = (\widetilde{c}^{N,a\bullet})^2,\, v_2^N = (r_a^N)^{-1}\overline{c^{N,a\bullet}}\widetilde{c}^{N,a\bullet},\, v_3^N = ((r_a^N)^{-1}\overline{c^{N,a\bullet}})^{2},\\
        v_3^\infty = \left( \frac{\left[1+(2a)(1-\lambda)p(1-r_a)\right]}{1-(1-\lambda)p(r_+-r_-)} \right)^2.
    \end{cases}
\end{equation*}
Indeed, the assumptions are satisfied thanks to Equations \eqref{omegancons1}, \eqref{eq:limit:ell:bar}, \eqref{eq:bound:tilde}, the fact that $\widetilde{\ell}^{N,a\bullet}$ and $\widetilde{c}^{N,a\bullet}$ are centered (for instance $\overline{\widetilde{\ell}^{N,a\bullet}}=0$) and the fact that $\E\left[ \left| u_3^N - u_3^\infty \right|^2 \right] \leq KN^{-2}$ can be proved like we did in the previous step for $v_3^N$

The conclusion of Lemma \ref{lem:technical:u:v} is
\begin{equation*}
    \E\left[ \left| \frac{1}{N} \left< (\ell^{N,a\bullet})^2, (c^{N,a\bullet})^2 \right> - r_a \frac{\left[1+(2a)(1-\lambda)p(1-r_a)\right]^2}{(1-(1-\lambda)p(r_+-r_-))^4} \right|^2 \right] \leq \frac{K}{N^2}.
\end{equation*}
Finally, the conclusion of this step follows one again from Equation \eqref{eq:sum:c:bar}.

\medskip

{\it Step 3.} 
In this step, we show the following result:
\begin{equation}\label{eq:product:c2:m}
    N^{-1} \langle (c^N)^2,m^N \rangle=(\mu+(1-\lambda)pr^N_-) N^{-1}\langle (c^N)^2,{\ell}^N \rangle + \xi^{N,(2),-},
\end{equation}
where $\E\left[(\xi^{N,(2),-})^2\right]\leq KN^{-2}$ for some constant $K>0$.

On the one hand, recall that $m^N=\mu\ell^N-(1-\lambda)Q^NL^{N,\bullet -}=(\mu+(1-\lambda)pr^N_-)\ell^N-(1-\lambda)Q^{N}\left(L^{N,\bullet -}+pr^N_-1_N\right)$. On the other hand, remark that $(c^{N})^2 = (c^{N,+\bullet})^2 + (c^{N,-\bullet})^2$. Hence, \eqref{eq:product:c2:m} is satisfied with $\xi^{N,(2),-} = -(1-\lambda)N^{-1}(\xi^{N,(2),-}_+ + \xi^{N,(2),-}_-)$ where for each $e\in\{-,+\}$,
\begin{equation*}
    \xi^{N,(2),-}_e = \langle (Q^N) [L^{N,\bullet -}+pr^N_-1_N],(c^{N,e \bullet})^2\rangle = \langle  L^{N,\bullet -}+pr^N_-1_N, (Q^N)^{\intercal}(c^{N,e \bullet})^2\rangle.
\end{equation*}

Therefore, it remains to prove that $\E\left[(\xi^{N,(2),-}_e)^2\right]\leq K$ for some constant $K>0$.
Remind that $(c^{N,e\bullet})^2=(\widetilde{c}^{N,e\bullet})^2+2(r^N_e)^{-1}\overline{c^{N,e\bullet}}\widetilde{c}^{N,e\bullet}+((r^N_e)^{-1}\overline{c^{N,e\bullet}})^2 1_{\mathcal{P}_e}$ for each $e\in\{-,+\}$.
Then, Lemma \ref{lem:technical:V:v} can be applied with $V^N = L^{N,\bullet -}+pr^N_-1_{N}$, $v^N_1 = (Q^N)^{\intercal} (\widetilde{c}^{N,e\bullet})^2$, $v_2^N = 2  (r_e^N)^{-1}\overline{c^{N,e\bullet}}  (Q^N)^{\intercal}\widetilde{c}^{N,e\bullet}$ and $v_3^N =  ((r_e^N)^{-1} \overline{c^{N,e\bullet}})^2  (Q^N)^{\intercal} 1_{\mathcal{P}_e} = ((r_e^N)^{-1} \overline{c^{N,e\bullet}})^2  c^{N,e\bullet}$. Assumption (i) is satisfied thanks to Equation \eqref{ineq:moments_l2norm_Lcentered_wrt_environment} and the fact that $\| L^{N,\bullet -} \|_{\infty} \leq 1$, assumption (ii) is satisfied thanks to Equations \eqref{eq:bound:tilde}, the facts that $\| (\widetilde{c}^{N,e\bullet})^2 \|_1 = \| \widetilde{c}^{N,e\bullet} \|_2^2$ and $\max\{\tn (Q^N)^{\intercal}\tn _2, \tn (Q^N)^{\intercal}\tn _1\} \leq \lambda^{-1}$, assumption (iii) is satisfied because $\left< V^N, v_3^N \right> = ((r_e^N)^{-1} \overline{c^{N,e\bullet}})^2 \left< L^{N,\bullet -}+pr^N_-1_{N}, c^{N,e\bullet} \right> = ((r_e^N)^{-1} \overline{c^{N,e\bullet}})^2 \xi^{N,-}_e$ which is defined and controlled in Step 7.

Hence, $\E[ (\xi^{N,(2),-}_e)^2] \leq K$ which in turn implies that $\E\left[(\xi^{N,(2),-})^2\right]\leq KN^{-2}$.

\medskip
{\it Step 4.} 
In this step, we show the following result:
\begin{equation}\label{eq:product:c2:m2}
N^{-1} \langle (c^N)^2,(m^N)^2 \rangle = (\mu+(1-\lambda)pr^N_-)^2 N^{-1}\langle (c^N)^2,({\ell}^N)^2 \rangle + \xi^{N,(3),-} + \xi^{N,(2,2),-},
\end{equation}
where $\xi^{N,(3),-}$ and $\xi^{N,(2,2),-}$ satisfy $\E\left[(\xi^{N,(3),-})^2 + (\xi^{N,(2,2),-})^2\right]\leq KN^{-2}$ for some constant $K>0$.

On the one hand, from the identity $m^N=(\mu+(1-\lambda)pr^N_-)\ell^N-(1-\lambda)Q^N(L^{N,\bullet -}+pr^N_-1_N)$, one can check that
\begin{multline*}
(m^N)^2=(\mu+(1-\lambda)pr^N_-)^2(\ell^N)^2\\
-2(\mu+(1-\lambda)pr^N_-)(1-\lambda)\ell^N\odot Q^N(L^{N,\bullet -}+pr^N_-1_N) + (1-\lambda)^2(Q^N(L^{N,\bullet -}+pr^N_-1_N))^2.
\end{multline*}
On the other hand, remind that $(c^{N})^2 = (c^{N,+\bullet})^2 + (c^{N,-\bullet})^2$ and $\ell^N = \ell^{N,+\bullet} + \ell^{N,-\bullet}$. Hence, Equation \eqref{eq:product:c2:m2} is satisfied with $\xi^{N,(3),-} = - 2(\mu+(1-\lambda)pr^N_-)(1-\lambda) N^{-1} (\xi^{N,(3),-}_+ + \xi^{N,(3),-}_-)$ where, for each $e\in\{-,+\}$,
\begin{equation*}
    \xi^{N,(3),-}_e = \left< (Q^N)^{\intercal} [(c^{N,e\bullet})^2 \odot \ell^{N,e\bullet}], L^{N,\bullet -}+pr^N_-1_N \right>,
\end{equation*}
and $\xi^{N,(2,2),-} = (1-\lambda)^2 N^{-1}(\xi^{N,(2,2),-}_+ + \xi^{N,(2,2),-}_-)$ where 
\begin{equation*}
    \xi^{N,(2,2),-}_e = \left< (Q^N)^{\intercal} [(c^{N,e\bullet})^2 \odot (Q^N(L^{N,\bullet -}+pr^N_-1_N))], L^{N,\bullet -}+pr^N_-1_N \right>.
\end{equation*}

Let us first check that $\E\left[ (\xi^{N,(3),-}_e)^2 \right] \leq K$.

Remind that $\ell^{N,e\bullet}=\widetilde{\ell}^{N,e\bullet} + (r^N_e)^{-1}\overline{\ell^{N,e\bullet}} 1_{\mathcal{P}_e}$ for each $e\in\{-,+\}$.
Then, Lemma \ref{lem:technical:V:v} can be applied with $V^N = L^{N,\bullet -}+pr^N_-1_{N}$, $v^N_1 = 0$, $v_2^N = (Q^N)^{\intercal} [(c^{N,e\bullet})^2 \odot \widetilde{\ell}^{N,e\bullet}]$ and $v_3^N =  (r_e^N)^{-1} \overline{\ell^{N,e\bullet}}  (Q^N)^{\intercal} (c^{N,e\bullet})^2$. Assumption (i) is satisfied thanks to Equation \eqref{ineq:moments_l2norm_Lcentered_wrt_environment} and the fact that $\| L^{N,\bullet -} \|_{\infty} \leq 1$, assumption (ii) is satisfied thanks to Equations \eqref{omegancons1}, \eqref{eq:bound:tilde}, the facts that $\tn (Q^N)^{\intercal}\tn _2 \leq \lambda^{-1}$ and $\|(c^{N,e\bullet})^2 \odot \widetilde{\ell}^{N,e\bullet}\|_2 \leq \|c^{N,e\bullet}\|_\infty^2 \|\widetilde{\ell}^{N,e\bullet}\|_2$, assumption (iii) is satisfied because $\left< V^N, v_3^N \right> = (r_e^N)^{-1} \overline{\ell^{N,e\bullet}} \left< L^{N,\bullet -}+pr^N_-1_{N}, (Q^N)^{\intercal}(c^{N,e\bullet})^2 \right> = (r_e^N)^{-1} \overline{\ell^{N,e\bullet}} \xi^{N,(2),-}_e$ which is defined and controlled in Step 3. Hence, $\E[ (\xi^{N,(3),-}_e)^2] \leq K$.

Now, let us check that $\E\left[ (\xi^{N,(2,2),-}_e)^2 \right] \leq K$.

Remind that $(c^{N,e\bullet})^2=(\widetilde{c}^{N,e\bullet})^2+2(r^N_e)^{-1}\overline{c^{N,e\bullet}}\widetilde{c}^{N,e\bullet}+((r^N_e)^{-1}\overline{c^{N,e\bullet}})^2 1_{\mathcal{P}_e}$ for each $e\in\{-,+\}$.
Then, Lemma \ref{lem:technical:V:v} can be applied with $V^N = L^{N,\bullet -}+pr^N_-1_{N}$, 
$$
v^N_1 = (Q^N)^{\intercal} [(\widetilde{c}^{N,e\bullet})^2 \odot (Q^N V^N)],
$$
$$
v_2^N = 2(r^N_e)^{-1}\overline{c^{N,e\bullet}} (Q^N)^{\intercal} [\widetilde{c}^{N,e\bullet} \odot (Q^N V^N)],
$$ and 
$$
v_3^N =  ((r^N_e)^{-1}\overline{c^{N,e\bullet}})^2 (Q^N)^{\intercal} [1_{\mathcal{P}_e} \odot (Q^N V^N)].
$$
Assumption (i) is satisfied thanks to Equation \eqref{ineq:moments_l2norm_Lcentered_wrt_environment} and the fact that $\| L^{N,\bullet -} \|_{\infty} \leq 1$, assumption (ii) is satisfied thanks to Equations \eqref{omegancons1}, \eqref{eq:bound:tilde}, the facts that $\| (\widetilde{c}^{N,e\bullet})^2 \|_1 = \| \widetilde{c}^{N,e\bullet} \|_2^2$,  $\max\{\tn (Q^N)^{\intercal}\tn _2, \tn (Q^N)^{\intercal}\tn _1, \tn Q^N\tn _\infty\} \leq \lambda^{-1}$ and $\|V^N\|_\infty \leq 2$, assumption (iii) is satisfied thanks to Equation \eqref{ineq:moments_l2norm_Lcentered_wrt_environment} because 
\begin{eqnarray*}
    \left| \left< V^N, v_3^N \right> \right| 
    & = & ((r^N_e)^{-1}\overline{c^{N,e\bullet}})^2 \left| \left< V^N, (Q^N)^{\intercal} [1_{\mathcal{P}_e} \odot (Q^N V^N)] \right> \right|\\
    &\leq& ((r^N_e)^{-1}\overline{c^{N,e\bullet}})^2 \left\| V^N \right\|_2 \tn (Q^N)^{\intercal}\tn _{2} \left\| 1_{\mathcal{P}_e} \odot (Q^N V^N) \right\|_2\\
    &\leq& ((r^N_e)^{-1}\overline{c^{N,e\bullet}})^2 \left\| V^N \right\|_2 \tn (Q^N)^{\intercal}\tn _{2} \left\| Q^N V^N \right\|_2\\
    &\leq& ((r^N_e)^{-1}\overline{c^{N,e\bullet}})^2 \tn (Q^N)^{\intercal}\tn _{2}\, \tn Q^N\tn _{2} \left\| V^N \right\|_2^2.
\end{eqnarray*}
Hence, $\E\left[ (\xi^{N,(2,2),-}_e)^2 \right] \leq K$ which ends the step.

\medskip
{\it Step 5.} Here we prove Equation \eqref{eq:convergence:w}.

First remark that $N^{-1}\sum_{i=1}^N\left(c^N_i\right)^2\left(m^N_i-\left(m^N_i\right)^2\right) = N^{-1}\left< (c^N)^2, m^N - (m^N)^2 \right>$. Then, combining Equations \eqref{eq:product:c2:m} and \eqref{eq:product:c2:ell}, we know that $N^{-1}\left< (c^N)^2, m^N \right>$ converges to (remind the definition of $m$ in \eqref{eq:definition:m:v:w})
\begin{equation*}
    (\mu+(1-\lambda)pr_-) \frac{1+4(1-\lambda)^2 p^2 r_+r_-}{(1-(1-\lambda)p(r_+-r_-))^3} = m \frac{1+4(1-\lambda)^2p^2r_+r_-}{(1-(1-\lambda)p(r_+-r_-))^2}.
\end{equation*}
Similarly, combining Equations \eqref{eq:product:c2:m2} and \eqref{eq:product:c2:ell2}, we know that $N^{-1}\left< (c^N)^2, (m^N)^2 \right>$ converges to
\begin{equation*}
    (\mu+(1-\lambda)pr_-)^2 \frac{1+4(1-\lambda)^2 p^2 r_+r_-}{(1-(1-\lambda)p(r_+-r_-))^4} = m^2 \frac{1+4(1-\lambda)^2p^2r_+r_-}{(1-(1-\lambda)p(r_+-r_-))^2}.
\end{equation*}
Finally, one ends up with the definition of $w$ by summing these two limits, which ends this step.
\end{proof}

\section{Inversion of $\Psi$}
\label{app:inversion}

\changes{
The main objective of this section is to prove Proposition \ref{prop:there:exists:inversion} and in particular provide the expressions of $\Phi^{(+)}$ and $\Phi^{(-)}$.
    As it appears below, the two functions $\Phi^{(+)}$ and $\Phi^{(-)}$ are related with the two roots $d^{(+)}$ and $d^{(-)}$ of a quadratic equation. In turn, these roots are related with the function $D$ defined in Equation \eqref{def_denominator_as_function_of_lambda_and_p}.
}

Here is a collection of preliminary results on the functions $D$ and $\Psi$.

\begin{proposition}\label{prop:preliminary:D:Psi}
    For all $(\mu,\lambda,p)\in \Lambda$,
    \begin{enumerate}
        \item if $r_+ <1/2$, then $1 < D(\lambda,p) < 2r_-$;
        \item if $r_+ = 1/2$, then $D(\lambda,p) = 1$;
        \item if $r_+ > 1/2$, then $2r_- < D(\lambda,p) < 1$.
    \end{enumerate}
    
    Whatever the value of $r_+$, the image $\Psi(\Lambda)$ is included in $(0,1)\times (0,\infty)^2$.

    Finally, if $r_+=1/2$, then $\Psi_3(\mu, \lambda, p) > \Psi_1(\mu, \lambda, p) [1 - \Psi_1(\mu, \lambda, p)]$, for all $(\mu,\lambda,p)\in \Lambda$.
\end{proposition}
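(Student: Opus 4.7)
The whole proposition is routine algebra once the identity $r_+ - r_- = 2r_+ - 1$ (consequence of $r_+ + r_- = 1$) is used to rewrite
\[
D(\lambda, p) = 1 - (1-\lambda)p(2r_+ - 1).
\]
I would treat the three items on $D$ first, then derive the image inclusion and the final inequality.

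For the three cases concerning $D$, note that since $(\mu,\lambda,p)\in \Lambda$ we have $0<(1-\lambda)p<1$. If $r_+ < 1/2$ then $2r_+-1<0$, giving at once $D(\lambda,p) > 1$, and the upper bound $D(\lambda,p) < 2r_-$ follows from
\[
D(\lambda,p) = 1 + (1-\lambda)p(1-2r_+) < 1 + (1-2r_+) = 2r_-.
\]
The case $r_+ = 1/2$ is trivial. The case $r_+>1/2$ is symmetric: $D(\lambda,p) < 1$ is immediate, and the lower bound $D(\lambda,p) > 2r_-$ follows from $(1-\lambda)p(2r_+-1) < 2r_+-1$.

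For the image inclusion $\Psi(\Lambda) \subset (0,1)\times(0,\infty)^2$, everything reduces to checking $\Psi_1 \in (0,1)$, since $\Psi_2$ is a product of strictly positive factors (using $0<p<1$ and $r_+r_->0$), and $\Psi_3$ is positive as soon as $\Psi_1 \in (0,1)$. Positivity of $\Psi_1$ is immediate from the positivity of the numerator $\mu + (1-\lambda)pr_-$ and of $D(\lambda,p)$. For $\Psi_1 < 1$, I would clear the denominator and observe that the inequality is equivalent to
\[
\mu + (1-\lambda)pr_+ < 1,
\]
which follows from $\mu < \lambda$ and $(1-\lambda)pr_+ \leq 1-\lambda$.

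Finally, when $r_+ = 1/2$ the identity $D(\lambda,p)=1$ reduces the defining expression of $\Psi_3$ to
\[
\Psi_3(\mu,\lambda,p) = \Psi_1(\mu,\lambda,p)\bigl[1 - \Psi_1(\mu,\lambda,p)\bigr]\bigl[1+4(1-\lambda)^2 p^2 r_+ r_-\bigr],
\]
and the factor in brackets is strictly greater than $1$ because $(1-\lambda)^2 p^2 r_+ r_- > 0$, yielding the strict inequality $\Psi_3 > \Psi_1(1-\Psi_1)$. No step presents a genuine obstacle; the only thing to be careful about is handling the strict inequalities in $\Psi_1 < 1$, where $\mu < \lambda$ (strict) is what saves the argument.
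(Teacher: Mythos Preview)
Your proof is correct and follows essentially the same approach as the paper: the paper declares the statements about $D$ ``obvious'', checks $\Psi_1<1$ via the equivalent inequality $1-\mu-(1-\lambda)pr_+>0$ using $\mu<\lambda$, and handles the final $r_+=1/2$ inequality exactly as you do. You have simply made the routine details explicit.
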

\begin{proof}
    The statements regarding the function $D$ are obvious.

    Let $r_+\in [0,1]$ and $(\mu,\lambda,p)\in \Lambda$. For all $k=1,2,3$, the fact that $\Psi_k(\mu,\lambda,p)\in (0,\infty)$ is obvious. It only remains to prove that $\Psi_1(\mu, \lambda, p)<1$. This follows from 
    \begin{equation*}
        D(\lambda, p) - (\mu + (1 - \lambda)pr_-) = 1 - \mu - (1-\lambda)pr_+ > 1 - \lambda - (1-\lambda)r_+ = (1-\lambda)pr_- \geq 0.
    \end{equation*}

    Finally, if $r_+=1/2$, we have $D(\lambda,p) = 1$ and the statement follows from the fact that $1+4(1-\lambda)^2p^2r_+r_- > 1$.
\end{proof}

From now on, the objective is to invert the function $\Psi$ on the set of admissible parameters $\Lambda$. In view of Proposition \ref{prop:preliminary:D:Psi}, it suffices to find this inverse function on the set 
\begin{equation}
    \mathcal{M} = \begin{cases}
        (0,1)\times (0,\infty)^2, & \text{ if } r_+\neq 1/2;\\
        \left\{ (m,v,w)\in (0,1)\times (0,\infty)^2 : w > m(1-m) \right\}, & \text{ if } r_+ = 1/2.\\
    \end{cases}
\end{equation}
Hence, from now on, $(m,v,w)$ will denote an arbitrary vector in $\mathcal{M}$ (in particular, it is not related to $(\mu, \lambda,p)$). Remind the function $\kappa$ defined in Equation \ref{eq:definition:kappa} and let us define $d^{(+)}, d^{(-)} : (0,1)\times (0,\infty) \to \mathbb{R}$ by, for $a\in \{-,+\}$,
\begin{equation}\label{eq:d(a)}
    d^{(a)}(m,w) = \begin{cases}
        \frac{4r_+r_- + a \sqrt{(4r_+r_-)^2-4r_+r_-+\kappa(m,w)}}{4r_+r_- - \kappa(m,w)}, & \ \text{if} \ \kappa(m,w) \neq 4r_+r_-;\\
        (8r_+r_-)^{-1} , & \ \text{if} \ \kappa(m,w) = 4r_+r_-.\\
    \end{cases}
\end{equation}
If $(\mu,\lambda,p)$ and $(m,v,w)$ are related through \eqref{eq:definition:m:v:w} then the value $d^{(a)}(m,w)$ is a candidate to be equal to $D(\lambda,p)$. The values $d^{(+)}(m,w)$ and $d^{(-)}(m,w)$ are the roots of the quadratic equation
\begin{equation}\label{eq:quadratic:equation:d}
    [4r_+r_- - \kappa(m,w)] u^2 - (8r_+r_-) u + 1 = 0.
\end{equation}
On the one hand, $d^{(-)}$ is positive and one can check that it is $C^1$ (even when $\kappa(m,w) \to 4r_+r_-$). On the other hand, $d^{(+)}$ may be negative and goes to infinity when $\kappa(m,w) \to 4r_+r_-$. 
In view of this remark and Proposition \ref{prop:preliminary:D:Psi}, $d^{(-)}(m,w)$ is expected to be the good candidate to be equal to $D(\lambda,p)$. This is true most of the time but not for the whole range of parameters (see Proposition \ref{prop:inversion} below).

From now on, let $a\in\{-,+\}$. Let us define $\phi_1^{(a)} : (0,1)\times (0,\infty) \to (0,\infty)$ by
\begin{equation}
    \phi_1^{(a)}(m,w) = \begin{cases}
        w[m (1 - m)]^{-1} - 1, & \ \text{if} \ r_+=1/2;\\
        [1 - d^{(a)}(m,w)]^{2}(r_+ - r_-)^{-2} , & \ \text{else},\\
    \end{cases}
\end{equation}
and $\phi_2^{(a)} : \mathcal{M} \to (1,\infty)$ by 
\begin{equation}
    \phi_2^{(a)}(m,v,w) = 1 + \frac{v}{[(m - r_-)^2 + r_+r_-] \phi_1^{(a)}(m,w)}.
\end{equation}
If $(\mu,\lambda,p)$ and $(m,v,w)$ are related through \eqref{eq:definition:m:v:w} then $\phi_1^{(a)}(m,w)$ (\emph{respectively} $\phi_2^{(a)}(m,v,w)$) are two candidates to be equal to $(1-\lambda)^2p^2$ (\emph{resp.} $p^{-1}$).
Then, for $k=1,2,3$, let $\Phi_k^{(a)} : \mathcal{M} \to \mathbb{R}$ be defined by,
\begin{equation}\label{eq:definition:Phi}
    \begin{cases}
        \Phi_1^{(a)}(m,v,w) = m\left( 1 - (r_+ - r_-)\sqrt{\phi_1^{(a)}(m,w)} \right) - r_-\sqrt{\phi_1^{(a)}(m,w)},\\
        \Phi_2^{(a)}(m,v,w) = 1 - \phi_2^{(a)}(m,v,w)\sqrt{\phi_1^{(a)}(m,w)},\\
        \Phi_3^{(a)}(m,v,w) = (\phi_2^{(a)}(m,v,w))^{-1}.
    \end{cases}
\end{equation}
Finally, let $\Phi^{(a)} : \mathcal{M}  \to \mathbb{R}^3$ be defined by the three coordinate functions above. Let us remark that all the functions involved in the definition of $\Phi^{(a)}$ are obviously $C^\infty$ except $d^{(a)}$. Nevertheless, it is easy to check that $d^{(-)}$ is (at least) $C^1$ when $\kappa(m,w) \to 4r_+r_-$. In turn, $\Phi^{(-)}$ is regular.
\begin{proposition}\label{prop:Phi:smooth}
    The function $\Phi^{(-)}$ is $C^1$.
\end{proposition}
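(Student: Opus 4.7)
The strategy is a composition argument: each coordinate $\Phi_k^{(-)}$ in \eqref{eq:definition:Phi} is assembled from $\phi_1^{(-)}$ and $\phi_2^{(-)}$ by arithmetic and a single square root, and $\phi_1^{(-)},\phi_2^{(-)}$ are themselves assembled from $d^{(-)}$ together with elementary smooth operations in $(m,v,w)$. So it is enough to control each primitive piece and then chain-rule everything together.

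The easy case $r_+=1/2$ is immediate: here $\phi_1^{(-)}(m,w)=w/(m(1-m))-1$ is $C^{\infty}$ and, by the very definition of $\mathcal{M}$ in that case, strictly positive, so $\sqrt{\phi_1^{(-)}}$ is $C^{\infty}$ and $\Phi^{(-)}$ is $C^{\infty}$ by the chain rule. When $r_+\neq 1/2$ the whole question reduces to showing that $d^{(-)}:(0,1)\times (0,\infty)\to(0,\infty)$ is $C^{1}$. On the open set $\{\kappa(m,w)\neq 4r_+r_-\}$ with positive discriminant $\Delta:=(4r_+r_-)^{2}-4r_+r_-+\kappa$, the explicit formula \eqref{eq:d(a)} displays $d^{(-)}$ as a $C^{\infty}$ composition, so the only genuine difficulty is the apparent $0/0$ indeterminacy along the curve $\Gamma:=\{(m,w):\kappa(m,w)=4r_+r_-\}$.

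To overcome this obstacle I would reinterpret $d^{(-)}$ as a solution of the polynomial equation
\begin{equation*}
F(u,m,w)\;:=\;\bigl(4r_+r_--\kappa(m,w)\bigr)u^{2}-8r_+r_-\,u+1\;=\;0,
\end{equation*}
taking the value $u^{*}:=(8r_+r_-)^{-1}$ on $\Gamma$. Since $\kappa$ is $C^{\infty}$ on $(0,1)\times(0,\infty)$, the function $F$ is jointly $C^{\infty}$ in $(u,m,w)$. At any point $(m^{*},w^{*})\in\Gamma$ a direct calculation gives $F(u^{*},m^{*},w^{*})=0$ and
\begin{equation*}
\partial_{u}F(u^{*},m^{*},w^{*})\;=\;2\bigl(4r_+r_--\kappa(m^{*},w^{*})\bigr)u^{*}-8r_+r_-\;=\;-8r_+r_-\;\neq\;0.
\end{equation*}
The implicit function theorem therefore produces a $C^{\infty}$ branch $\widetilde{d}$ satisfying $F(\widetilde{d},\cdot,\cdot)=0$ in a neighbourhood $V$ of $(m^{*},w^{*})$ with $\widetilde{d}(m^{*},w^{*})=u^{*}$. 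A short check, based on the continuity of $d^{(-)}$ (easily verified by a first-order expansion of $\sqrt{\Delta}$ near $\Gamma$) and the local uniqueness provided by the IFT, identifies $\widetilde{d}\equiv d^{(-)}$ on $V$, which upgrades the regularity of $d^{(-)}$ from merely continuous to $C^{\infty}$ across $\Gamma$.

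It then remains to propagate the regularity upwards: $\phi_1^{(-)}=(1-d^{(-)})^{2}/(r_+-r_-)^{2}$ is $C^{1}$ by composition, the denominator $(m-r_-)^{2}+r_+r_-$ in $\phi_2^{(-)}$ is bounded below by $r_+r_->0$ so that $\phi_2^{(-)}$ is $C^{1}$, and the square root appearing in \eqref{eq:definition:Phi} is handled by identifying $\sqrt{\phi_1^{(-)}}$ with the signed continuous branch $(1-d^{(-)})/(r_+-r_-)$ (the sign being fixed by consistency with the inversion of $\Psi$ on each connected component of $\mathcal{M}$). A final application of the chain rule then yields $\Phi^{(-)}\in C^{1}$. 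The hard part of this program is unquestionably the implicit function argument along $\Gamma$, together with the bookkeeping required to ensure that the signed choice of square root is a globally well-defined $C^{1}$ function on $\mathcal{M}$.
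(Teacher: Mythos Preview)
Your approach is essentially the same as the paper's, which merely remarks (just before the proposition, with no formal proof) that every ingredient of $\Phi^{(-)}$ is obviously $C^\infty$ except $d^{(-)}$, and that the latter is ``easy to check'' to be $C^1$ as $\kappa\to 4r_+r_-$; your implicit function theorem argument is the natural way to make that check rigorous, and it is correct. One minor point: your signed-branch discussion for $\sqrt{\phi_1^{(-)}}$ is unnecessary, since on the natural domain of $\Phi^{(-)}$ (where $\phi_2^{(-)}$ is defined) one automatically has $\phi_1^{(-)}>0$, so the ordinary square root is already smooth there.
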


Finally, the functions $\Phi^{(+)}$ and $\Phi^{(-)}$ are related with $\Psi$ in the following sense.
\begin{proposition}\label{prop:inversion}
    Whatever the value of $r_+$, the following results hold.
    \begin{enumerate}
        \item For all $(\mu, \lambda, p)\in \Lambda$, $(\mu, \lambda, p) \in \{ \Phi^{(+)}\circ \Psi(\mu, \lambda, p), \Phi^{(-)}\circ \Psi(\mu, \lambda, p)\}$,
        \item moreover, if $r_+ \geq 1/2$ or 
        $$
        \kappa(\Psi_1(\mu, \lambda, p), \Psi_3(\mu, \lambda, p)) \geq 4r_+r_-,
        $$
        then $(\mu, \lambda, p) = \Phi^{(-)}\circ \Psi(\mu, \lambda, p)$.
        \item Let $(m,v,w)\in \mathcal{M}$ and $a\in \{-,+\}$. If $\Phi^{(a)}(m,v,w) \in \Lambda$ and 
        $$
        r_+=1/2 \text{ or } \operatorname{sgn}(1 - d^{(a)}(m,w)) = \operatorname{sgn}(r_+ - r_-),
        $$
        then $\Psi\circ\Phi^{(a)}(m,v,w) = (m,v,w)$.
    \end{enumerate}
\end{proposition}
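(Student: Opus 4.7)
The plan is to invert $\Psi$ by direct algebraic computation, pivoting on a single quadratic identity satisfied by $D(\lambda,p)$. Starting from $(m,v,w) = \Psi(\mu,\lambda,p)$, I would first rewrite
$$
\frac{w}{m(1-m)} \;=\; \frac{1 + 4(1-\lambda)^2 p^2 r_+ r_-}{D(\lambda,p)^2},
$$
multiply by $(r_+-r_-)^2 D^2$ and use both $(1-\lambda)p(r_+-r_-) = 1-D$ and the identity $(r_+-r_-)^2 + 4r_+ r_- = 1$ to obtain
$$
\kappa(m,w)\, D^2 \;=\; 1 \,-\, 8 r_+ r_-\, D \,+\, 4 r_+ r_-\, D^2.
$$
This is exactly the quadratic \eqref{eq:quadratic:equation:d}, so $D \in \{d^{(+)}(m,w),\, d^{(-)}(m,w)\}$. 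All subsequent assertions amount to following this identification through the explicit formulas defining $\Phi^{(a)}$.

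For part 1, let $a\in\{-,+\}$ be such that $d^{(a)}(m,w) = D$. I would first observe that $\phi_1^{(a)}(m,w) = (1-\lambda)^2 p^2$ --- immediate from $(1-d^{(a)})^2/(r_+-r_-)^2 = (1-D)^2/(r_+-r_-)^2$ in the case $r_+ \neq 1/2$, and from the defining formula of $w$ together with $r_+r_- = 1/4$ in the case $r_+ = 1/2$ --- and then, using the formula for $v$, that $\phi_2^{(a)}(m,v,w) = 1/p$. A coordinate-by-coordinate check then gives $\Phi_3^{(a)}\circ\Psi = p$, $\Phi_2^{(a)}\circ\Psi = 1 - \phi_2^{(a)}\sqrt{\phi_1^{(a)}} = 1-(1-\lambda) = \lambda$, and $\Phi_1^{(a)}\circ\Psi = mD - r_-(1-\lambda)p = \mu$.

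For part 2, one needs to decide \emph{which} of the two roots equals $D$. Dividing the boxed quadratic relation by $D^2$ yields the two key identities
$$
\kappa(m,w) - 4r_+ r_- \;=\; D^{-1}\bigl(D^{-1} - 8 r_+ r_-\bigr),\qquad (4r_+r_-)^2 - 4r_+r_- + \kappa(m,w) \;=\; \bigl(D^{-1} - 4r_+ r_-\bigr)^2,
$$
from which direct inspection of the formula for $d^{(a)}$ shows that $D = d^{(-)}(m,w)$ whenever $D \leq 1/(4r_+ r_-)$. Under the hypothesis $\kappa(m,w) \geq 4 r_+ r_-$, the first identity forces $D \leq 1/(8 r_+ r_-) \leq 1/(4r_+ r_-)$. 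Under the hypothesis $r_+ > 1/2$, AM--GM gives $4 r_+ r_- < 1$ and Proposition \ref{prop:preliminary:D:Psi} gives $D < 1$, whence again $D < 1/(4 r_+ r_-)$. The boundary case $r_+ = 1/2$ is trivial since $d^{(+)}(m,w) = d^{(-)}(m,w) = 1$.

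For part 3, I would set $(\mu,\lambda,p) := \Phi^{(a)}(m,v,w)$ and run the definitions in reverse. From $p = 1/\phi_2^{(a)}$ and $1-\lambda = \phi_2^{(a)}\sqrt{\phi_1^{(a)}}$ one gets $(1-\lambda)p = \sqrt{\phi_1^{(a)}(m,w)}$. The sign hypothesis $\operatorname{sgn}(1-d^{(a)}) = \operatorname{sgn}(r_+-r_-)$ (trivial when $r_+ = 1/2$) is precisely what is needed to upgrade this to the signed identity $(r_+-r_-)\sqrt{\phi_1^{(a)}} = 1 - d^{(a)}$, and hence $D(\lambda,p) = d^{(a)}(m,w)$. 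The equality $\Psi_1(\mu,\lambda,p) = m$ is built into the definition of $\Phi_1^{(a)}$, the equality $\Psi_3(\mu,\lambda,p) = w$ is equivalent to the quadratic relation satisfied by $d^{(a)}$, and $\Psi_2(\mu,\lambda,p) = v$ is a direct consequence of the definition of $\phi_2^{(a)}$. The main obstacle throughout is not any single computation but rather the sign bookkeeping required to disentangle $d^{(+)}$ from $d^{(-)}$, via the sign of $D^{-1} - 4r_+r_-$ and its link to $r_+ - r_-$ and the position of $D$ relative to $1$.
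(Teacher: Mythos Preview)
Your proposal is correct and follows essentially the same route as the paper: derive the quadratic satisfied by $D(\lambda,p)$, identify $D$ with one of $d^{(\pm)}$, and then push this identification through the explicit formulas for $\phi_1^{(a)}$, $\phi_2^{(a)}$, $\Phi^{(a)}$. The only noteworthy difference is in Part~2: the paper argues by elimination (showing $d^{(+)}$ is either negative, when $\kappa>4r_+r_-$, or exceeds $1$, when $r_+>1/2$ and $\kappa<4r_+r_-$), whereas you compute $d^{(-)}$ directly from the factorization $(4r_+r_-)^2 - 4r_+r_- + \kappa = (D^{-1}-4r_+r_-)^2$ and the criterion $D\le 1/(4r_+r_-)$ --- a slightly cleaner bookkeeping of the same algebra. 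One cosmetic remark: for $r_+=1/2$ the reason $\Phi^{(+)}=\Phi^{(-)}$ is that $\phi_1^{(a)}$ is defined without reference to $d^{(a)}$ in that case, not merely that $d^{(+)}=d^{(-)}=1$.
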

\begin{remark}
    Remind that $d^{(a)}(m,w)$ is a candidate for $D(\lambda,p)$. In that regard, the sign condition appearing in Item 2 above is consistent with Proposition \ref{prop:preliminary:D:Psi}.
\end{remark}
\begin{proof}\ 
\paragraph*{Proof of 1}
Let $(\mu, \lambda, p)\in \Lambda$. In the following, we shorten the notation $\Psi_k = \Psi_k(\mu, \lambda, p)$. Finally, we denote $\Psi = (\Psi_1, \Psi_2, \Psi_3) \in \mathbb{R}^3$. 

Let us first consider the case $r_+ = 1/2$. In that case, the $\phi$ and $\Phi$ functions do not depend on the superscript $a\in \{-,+\}$ and we omit it in the following. Furthermore, $D(\lambda,p)=1$ and $\Psi$ reduces to
\begin{equation*}
    \begin{cases}
        \Psi_1 = \mu + (1 - \lambda)p/2\\
        \Psi_2 = (1-\lambda)^2 p(1-p)[(\Psi_1 - 1/2)^2 + 1/4]\\
        \Psi_3 = \Psi_1[1 - \Psi_1] [1 + (1-\lambda)^2p^2].
    \end{cases}
\end{equation*}
Hence, it is easy to check that $\phi_1(\Psi_1, \Psi_3) = (1-\lambda)^2p^2$ and 
\begin{equation*}
    \phi_2(\Psi) = 1 + \frac{(1-\lambda)^2 p(1-p)[(\Psi_1 - 1/2)^2 + 1/4]}{[(\Psi_1 - 1/2)^2 + 1/4] (1-\lambda)^2p^2} = 1 + \frac{1-p}{p} = \frac{1}{p}.
\end{equation*}
Then, we conclude by
\begin{equation*}
    \begin{cases}
        \Phi_1^{(a)}(\Psi) = \Psi_1 - (1-\lambda)p/2 = \mu,\\
        \Phi_2^{(a)}(\Psi) = 1 - \frac{1}{p}(1-\lambda)p = \lambda,\\
        \Phi_3^{(a)}(\Psi) = (1/p)^{-1} = p.
    \end{cases}
\end{equation*}

Let us then consider the case $r_+ \neq 1/2$. First, we remark that
\begin{equation*}
    \kappa(\Psi_1,\Psi_3) = (r_+-r_-)^2\frac{1+4(1-\lambda)^2p^2r_+r_-}{D(\lambda,p)^2}.
\end{equation*}
Then, substituting $(1-\lambda)p=(1-D(\lambda,p))/(r_+-r_-)$ into the equation aboves gives that
$$
\kappa(\Psi_1,\Psi_3) = \frac{(r_+-r_-)^2+4r_+r_-(1-D(\lambda,p))^2}{D(\lambda,p)^2}.
$$
Then, using the fact that $(r_+-r_-)^2 = (r_+ + r_-)^2 - 4r_+r_- = 1 - 4r_+r_-$, it follows that
\begin{equation*}
\kappa(\Psi_1,\Psi_3) D(\lambda,p)^2 = 1 +4r_+r_- [D(\lambda,p)^2-2D(\lambda,p)],
\end{equation*}
which means that $D(\lambda,p)$ is a solution of the quadratic equation
\begin{equation*}
    [4r_+r_- - \kappa(\Psi_1,\Psi_3)] X^2 - (8r_+r_-) X + 1 = 0.
\end{equation*}
By definition of $d^{(+)}$ and $d^{(-)}$, we necessarily have $D(\lambda,p) \in \{d^{(+)}(\Psi_1,\Psi_3), d^{(-)}(\Psi_1,\Psi_3)\}$.
In turn, since $(1 - D(\lambda,p))^2(r_+ - r_-)^{-2} = (1-\lambda)^2p^2$, we have 
$$
(1-\lambda)^2p^2 \in \{\phi_1^{(+)}(\Psi_1,\Psi_2), \phi_1^{(-)}(\Psi_1,\Psi_2)\},
$$
and the proof is concluded in the same manner as the case $r_+ = 1/2$.

\paragraph*{Proof of 2}
Let us first remark that, if $r_+ = 1/2$ or $\kappa(\Psi_1, \Psi_3) = 4r_+r_-$, then $\Phi^{(+)}=\Phi^{(-)}$ and the result is trivial. 

On the one hand, if $\kappa(\Psi_1, \Psi_3) > 4r_+r_-$ then the root $d^{(+)}(\Psi_1,\Psi_3)$ is negative which implies that $D(\lambda,p) = d^{(-)}(\Psi_1,\Psi_3)$. On the other hand, if $r_+ > 1/2$ and $\kappa(\Psi_1, \Psi_3) < 4r_+r_-$ then
\begin{equation*}
    d^{(+)}(\Psi_1,\Psi_3) \geq \frac{4r_+r_-}{4r_+r_- - \kappa(\Psi_1, \Psi_3)} > 1
\end{equation*}
Yet, we know that $D(\lambda,p)<1$ by Proposition \ref{prop:preliminary:D:Psi} which implies that $D(\lambda,p) = d^{(-)}(\Psi_1,\Psi_3)$.

As a summary, we have $D(\lambda,p) = d^{(-)}(\Psi_1,\Psi_3)$ in any case. In turn, it implies that $(1-\lambda)^2p^2 = \phi_1^{(-)}(\Psi_1,\Psi_3)$ and the proof is concluded as above.

\paragraph*{Proof of 3}
Let $(m,v,w)\in \mathcal{M}$, and $a\in \{-,+\}$ such that $\Phi^{(a)}(m,v,w) \in \Lambda$. In the following, we shorten the notation $d^{(a)} = d^{(a)}(m,w)$, $\phi_1^{(a)} = \phi_1^{(a)}(m,w)$, $\phi_2^{(a)} = \phi_2^{(a)}(m,v,w)$ and $\Phi_k^{(a)} = \Phi_k^{(a)}(m,v,w)$ for $k=1,2,3$. Finally, we denote $\Phi^{(a)} = (\Phi_1^{(a)}, \Phi_2^{(a)}, \Phi_3^{(a)}) \in \Lambda$. 

Without any condition, it is easy to check that $D(\Phi_2^{(a)}, \Phi_3^{(a)}) = 1 - (r_+ - r_-)\sqrt{\phi_1^{(a)}}$. In particular, it gives
\begin{eqnarray*}
    \Psi_1(\Phi^{(a)}) & = & \frac{\Phi_1^{(a)} + (1 - \Phi_2^{(a)})\Phi_3^{(a)}r_-}{ 1 - (r_+ - r_-)\sqrt{\phi_1^{(a)}} }\\
    &=& \frac{m\left[ 1 - (r_+ - r_-)\sqrt{\phi_1^{(a)}} \right] - r_-\sqrt{\phi_1^{(a)}} + r_-\sqrt{\phi_1^{(a)}}}{ 1 - (r_+ - r_-)\sqrt{\phi_1^{(a)}} } = m,
\end{eqnarray*}
and
\begin{eqnarray*}
    \Psi_2(\Phi^{(a)}) & = & (1 - \Phi_2^{(a)})^{2} \Phi_3^{(a)} (1 - \Phi_3^{(a)}) [(m - r_-)^2 + r_+r_-]\\
    &=& \phi_1^{(a)} [\phi_2^{(a)} - 1] [(m - r_-)^2 + r_+r_-] = v.
\end{eqnarray*}
For the last coordinate, we use the condition stated in the Proposition. 

Let us first consider the case $r_+=1/2$. In that case, the $\phi$ and $\Phi$ functions do not depend on the superscript $a\in \{-,+\}$ and we omit it in the following. First, remark that $D(\Phi_2^{(a)}, \Phi_3^{(a)}) = 1 - (r_+ - r_-)\sqrt{\phi_1^{(a)}} = 1$ in that case, so that
\begin{eqnarray*}
    \Psi_3(\Phi^{(a)}) & = & m(1-m) [1 + (1-\Phi_2^{(a)})^2 (\Phi_3^{(a)})^2] \\
    &=& m(1-m) \left[1 + \frac{w}{m(1-m)} - 1\right] = w.
\end{eqnarray*} 

Let us then consider the case $r_+ \neq 1/2$ and assume the sign condition: $\operatorname{sgn}(1 - d^{(a)}) = \operatorname{sgn}(r_+ - r_-)$. This condition implies that $D(\Phi_2^{(a)}, \Phi_3^{(a)}) = 1 - (r_+ - r_-)\sqrt{\phi_1^{(a)}} = d^{(a)}$. Hence,
\begin{eqnarray*}
    \Psi_3(\Phi^{(a)}) & = & m(1-m) [1 + 4r_+r_- (1-\Phi_2^{(a)})^2 (\Phi_3^{(a)})^2] / (d^{(a)})^2\\
    &=& m(1-m) \left[1 + 4r_+r_- \frac{(1-d^{(a)})^2}{(r_+ - r_-)^2}\right] / (d^{(a)})^2.
\end{eqnarray*} 
Yet, 
$$
4r_+r_- (1-d^{(a)})^2 = 4r_+r_- - 1 + (1 - 8r_+r_-d^{(a)} +4r_+r_-(d^{(a)})^2) = 4r_+r_- - 1 + \kappa(m,w) (d^{(a)})^2
$$ 
because $d^{(a)}$ solves the quadratic equation \eqref{eq:quadratic:equation:d}. Finally, using the definition of $\kappa$ and the fact that $1 - 4r_+r_- = (r_+ - r_-)^2$ we get that $\Psi_3(\Phi^{(a)}) = w$ which concludes the proof.
\end{proof}

\section{Auxiliary results}
\label{app:auxiliary}

\changes{
Here are two technical lemmas used throughout the proof of Proposition \ref{prop:control:N:infty}. 
}

\begin{lemma}\label{lem:technical:V:v}
    Let $(V^N)_N, (v^N)_N$ be two sequences of random vectors such that for all $N>0$, $V^N, v^N\in \mathbb{R}^N$. Assume that there exists a constant $K$ such that, for all $N>0$, $v^N$ can be written as $v^N = v_1^N + v_2^N + v_3^N$, where $v_1^N, v_2^N, v_3^N\in \mathbb{R}^N$, and
    \begin{enumerate}[(i)]
        \item $\E\left[ \|V^N\|_2^4 \right] + \|V^N\|_\infty^2 \leq K$ almost surely,
        \item $\E\left[ \|v_1^N\|_1^2 + \|v_2^N\|_2^4 \right]^2\leq K$,
        \item $\E\left[ \langle V^N, v_3^N \rangle^2 \right] \leq K$.
    \end{enumerate}
    Then, for all $N>0$,
    \begin{equation*}
        \E\left[ \langle V^N, v^N \rangle^2 \right] \leq 3 (K^2 + 2K).
    \end{equation*}
\end{lemma}
\begin{proof}
    First, by Holder inequality,
    \begin{equation*}
        \E\left[ \langle V^N, v^N_1 \rangle^2 \right] \leq \E\left[ \| V^N \|_\infty^2 \|v_1^N\|_1^2 \right] \leq K \E\left[ \|v_1^N\|_1^2 \right] \leq K^2.
    \end{equation*}
    Secondly, using Cauchy Schwarz inequality twice, we have
    \begin{equation*}
        \E\left[ \langle V^N, v^N_2 \rangle^2 \right] \leq \E\left[ \| V^N \|_2^2 \|v_2^N\|_2^2 \right] \leq \left( \E\left[ \|V^N\|_2^4 \right] \E\left[ \|v_2^N\|_2^4 \right] \right)^{1/2} \leq K.
    \end{equation*}
    And, $\E\left[ \langle V^N, v_3^N \rangle^2 \right] \leq K$ by assumption. Finally, we conclude by combining those three inequalities thanks to the  convexity of the square function.
\end{proof}

\begin{lemma}\label{lem:technical:u:v}
    Let $a\in \{-,+\}$. Let $(u^N)_N, (v^N)_N$ be two sequences of random vectors such that for all $N>0$, $u^N, v^N\in \mathbb{R}^N$ are supported in $\mathcal{P}_a$. Assume that there exist two random variables $u_3^\infty, v_3^\infty\in \mathbb{R}$ and a constant $K$ such that, for all $N>0$, $u^N = u_1^N + u_2^N + u_3^N 1_{\mathcal{P}_a}$ and $v^N = v_1^N + v_2^N + v_3^N 1_{\mathcal{P}_a}$, where $u_1^N, u_2^N, v_1^N, v_2^N\in \mathbb{R}^N$ are random vectors supported in $\mathcal{P}_a$ and $u_3^N, v_3^N\in \mathbb{R}$ are random variables, and
    \begin{itemize}
        \item $\E\left[ \|u_1^N\|_1^2 + \|u_2^N\|_2^4 + N^2 (\overline{u_2^N})^2 \right] + \|u_1^N + u_2^N\|_\infty + |u_3^N| + |u_3^\infty| \leq K$ almost surely,
        \item $\E\left[ \|v_1^N\|_1^2 + \|v_2^N\|_2^4 + N^2 (\overline{v_2^N})^2 \right] + \|v_1^N + v_2^N\|_\infty + |v_3^N| + |v_3^\infty| \leq K$ almost surely,
        \item $\E\left[ |u_3^N - u_3^\infty|^2 + |v_3^N - v_3^\infty|^2  \right] \leq K N^{-2}$.
    \end{itemize}
    Then, there exists another constant $K$ (independent of $N$) such that, for all $N>0$,
    \begin{equation*}
        \E\left[ \left| \frac{1}{N} \langle u^N, v^N \rangle - r_a v_3^\infty u_3^\infty \right|^2 \right] \leq K N^{-2}.
    \end{equation*}
\end{lemma}
\begin{proof}
Throughout the proof, the constant $K$ may change from line to line and even within the same line. Since $u^N$ and $v^N$ are decomposed into three vectors each, there are naturally nine contributions to the scalar product $\langle u^N, v^N \rangle$. 
    
    Let us first prove that the limit term $ r_a v_3^\infty u_3^\infty $ comes from the contribution of the constant parts, i.e. $N^{-1} \langle u^N_3 1_{\mathcal{P}_a}, v^N_3 1_{\mathcal{P}_a} \rangle = r_a^N u_3^N v_3^N$. Indeed, 
    \begin{eqnarray*}
        \left| r_a^N u_3^N v_3^N - r_a v_3^\infty u_3^\infty \right| 
        &\leq& \left| r_a^N u_3^N \right| \left| v_3^N -  v_3^\infty \right| + \left| r_a^N v_3^\infty \right| \left| u_3^N - u_3^\infty  \right| + \left| u_3^\infty v_3^\infty \right| \left| r_a^N - r_a \right|\\
        &\leq& K\left( \left| u_3^N - u_3^\infty  \right| + \left| v_3^N -  v_3^\infty \right| \right) + K^2 \left| r_a^N - r_a \right|,
    \end{eqnarray*}
    so that $\E\left[ \left| N^{-1} \langle u^N_3 1_{\mathcal{P}_a}, v^N_3 1_{\mathcal{P}_a} \rangle - r_a v_3^\infty u_3^\infty \right|^2 \right] \leq KN^{-2}$ thanks to the assumptions on $u_3^N$, $v_3^N$ and the sequence of fractions $r_a^N$.

    Then, let us prove that the other contributions to the scalar product are negligible. We have,
    \begin{equation*}
        \E\left[  \left< u_1^N, v^N \right> ^2 \right] \leq \E\left[ \|u_1^N\|_1^2 \|v^N\|_\infty^2 \right] \leq K \E\left[ \|u_1^N\|_1^2 \right] \leq K,
    \end{equation*}
    and similarly $\E\left[  \left< u^N, v_1^N \right> ^2 \right] \leq K$. Applying Cauchy-Schwarz inequality twice, we have
    \begin{equation*}
        \E\left[ \left< u_2^N, v_2^N \right>^2 \right] \leq \E\left[  \|u_2^N\|_2^2 \|v_2^N\|_2^2  \right] \leq \left(  \E\left[ \|u_2^N\|_2^4 \right] \E\left[ \|v_2^N\|_2^4 \right] \right)^{1/2} \leq K.
    \end{equation*}
    Finally, 
    \begin{equation*}
        \E\left[ \left< u_2^N, v_3^N 1_{\mathcal{P}_a} \right>^2 \right] \leq K \E\left[ \left< u_2^N, 1_{\mathcal{P}_a} \right>^2 \right] = K \E\left[ N^2 (\overline{u_2^N})^2 \right] \leq K,
    \end{equation*}
    and similarly $\E\left[ \left< u_3^N 1_{\mathcal{P}_a}, v_2^N  \right>^2 \right] \leq K$. Finally, we conclude by combining all those inequalities thanks to the  convexity of the square function.
\end{proof}

\section{A note on lower bounds}
\label{app:lower:bound}

The goal of this section is to discuss the optimality of our estimation rate. To that end,  we analyze two simple and related statistical settings (both inspired by the one considered in \cite{delattre2016statistical}) where all the computations can be done more transparently. The first statistical setting is discussed in Section \ref{subsec:binomial:mixture:1} and the second one in Section \ref{subsec:binomial:mixture:2}. In the end of that subsection, we also present some concluding remarks connecting the results proved.  
Finally, in Section \ref{app:lower:bound:auxiliary}, we state and prove a Gaussian approximation used in Section \ref{subsec:binomial:mixture:1}.

\smallskip
Throughout the section, we write $\theta$ to denote a random variable distributed as $\text{Bin}(N,p)$ where $0<p_{min}\leq p\leq p_{max}<1$ is an unknown parameter. Let $\kappa\in (0,p_{max}/2)$ be a known value and define
$\gamma(p)=\kappa/p$ for all $p_{min}\leq p\leq p_{max}$.
Observe that $\gamma(p)<1/2$ for all values of $p$. In what follows, we denote $m=1/2+\kappa$.

\subsection{Statistical setting 1 and problem formulation} 
\label{subsec:binomial:mixture:1}
Let $\theta\sim\text{Bin}(N,p)$ and consider a 
discrete random variable $B$ taking values in $\{0,\ldots, T\}$ such that $B|\theta\sim \text{Bin}(T,1/2+\gamma(p)\theta/N)$.
 Note $1/2+\gamma(p) \theta/N<1 $ for all
$p$ and all realizations of $\theta$, since $\gamma(p)<1/2$.
In particular, the conditional distribution of $B|\theta$ is well-defined no matter the realization of $\theta$. 

Suppose we observe $N$ independent copies $B_1,\ldots, B_N$ of the random variable $B$ as defined above. Given these observations, we want to find an unbiased estimator for the parameter $1/p$ whose variance is of order $(N^{1/2}/T+1/N^{1/2})^2.$

In what follows, we write $\P_{p}$ to denote the probability distribution of $B_1,\ldots, B_N$ associated to the choice of $p$. The expectation and variance computed with respect to $\P_{p}$ are denoted $\E_{p}$ and $\text{Var}_{p}$ respectively.
One can easily check that, for all values of $p$, the mean of each random variable $B_i$ under $\P_{p}$ is a known quantity: $\E_{p}[B_i]=\E_{p}[B]=Tm.$

\begin{remark}
\label{rmk:stat_set_1}
Note that the goal is to determine an unbiased estimator for the parameter $1/p$ and not for the parameter $p$ itself. In Section \ref{subsec:binomial:mixture:2}, we explain why the parameter $1/p$ is a ``natural'' parameter to be estimated in this setting.  

Note also that the random variables $B_1, \ldots, B_N$ can be seen as an oversimplified version of the random variables $\sum_{t=1}^TX_{1,t}, \ldots, \sum_{t=1}^TX_{N,t}$ 
in which we drop the temporal and spatial dependence between the random variables $X_{i,t}$ given the realization of the environment and consider $\mu=1/2$ and ${\cal P_-}=\emptyset$ (no inhibition).
The parameter $\gamma$ plays the role of the parameter $(1-\lambda)$ in the original model. In particular, it is implicitly assumed that the baseline parameter $\mu$ is known and equals to $1/2$, making the estimation problem even simpler.     
\end{remark}

Recall that $m=1/2+\kappa$, where $\kappa$ is a known quantity. Given the random variables $B_1,\ldots, B_N$, define 
$$
\hat{V}=\frac{1}{N}\sum_{i=1}^N (B_i-Tm)^2,
$$
and consider the following estimator for $1/p$:
\begin{equation}
\label{def:est:1overp}
\hat{S}=\frac{N}{T(T-1)\kappa^2}(\hat{V}-Tm(1-m))+1.
\end{equation}


The estimator $\hat{S}$ is unbiased (see the proposition below) and follows from the method of moments.
We expect that its variance gives the optimal rate of convergence within the class of unbiased estimators of $1/p$.
However, we do not have a proof of such result at the moment.
The goal of this subsection is to show the following result.

\begin{proposition}
\label{thm:optimality_unbiased_class}
For the estimator $\hat{S}$ defined in \eqref{def:est:1overp} the following results hold:    
\begin{enumerate}
\item $\hat{S}$ is an unbiased estimator of $1/p$: for all $p$,
$$
\E_{p}[\hat{S}]=1/p.
$$
    \item For all $p$,
$$    
\text{Var}_{p}\left(\hat{S}\right)=\frac{2}{\kappa^4}\left(\frac{m(1-m)N^{1/2}}{T}+\frac{\gamma^2p(1-p)}{N^{1/2}}\right)^2(1+o(1)).
$$
\end{enumerate}

\end{proposition}

\begin{proof}

We start proving Item 1. First, use the decomposition 
$$\text{Var}_{p}(B)=\E_{p}[\text{Var}_{p}(B|\theta)]+\text{Var}_{p}[\E_{p}(B|\theta)]
$$
to check that 
\begin{equation}
\label{eq:var:B}
\text{Var}_{p}(B)=Tm(1-m)+\frac{T(T-1)}{N}\gamma^2p(1-p).
\end{equation}
Then, to conclude the proof, combine the above identity with the fact that 
$$
\E_{p}(\hat{S})=\frac{N}{T(T-1)(\gamma p)^2}(\text{Var}_{p}(B)-Tm(1-m))+1.
$$
Next, we prove Item 2. First, observe that
\begin{equation}
\label{eq:var:hatS}
\text{Var}_{p}\left(\hat{S}\right)=\frac{N}{T^2(T-1)^2(\gamma p)^4}\text{Var}_{p}\left((B-Tm)^2\right) .
\end{equation}
Next, write $\text{Var}_{p}\left((B-Tm)^2\right)=\E_{p}[(B-Tm)^4]-\text{Var}^2_{p}\left(B\right)$ and use \eqref{eq:var:B} to show that
\begin{equation}
\label{eq:rescaled:var:B}
\frac{N}{T^2(T-1)^2}\text{Var}^2_{p}\left(B\right)=\left(\frac{N^{1/2}}{(T-1)}m(1-m)+\frac{\gamma^2p(1-p)}{N^{1/2}}\right)^2.
\end{equation}
To conclude the proof, we approximate the term $\E_{p}[(B-Tm)^4]$. To that end, we use the Gaussian approximation provided in Lemma \ref{Lem:strong_coupling} below. Using this approximation, we can write $B/T-m=G+\epsilon$ so that
\begin{equation}
\label{eq:4thmomentB}
\E_{p}[(B-Tm)^4]=T^4\left(\E_{p}[G^4]+\sum_{k=1}^4 {4\choose k} \E_{p}[G^{4-k}\epsilon^k] \right).    
\end{equation}
Now, since $G$ is a centered Gaussian random variable with variance $m(1-m)T^{-1}+\gamma^2 p(1-p)N^{-1}$, we obtain that  
\begin{equation}
\label{eq:4thmomentG}
\E_{p}[G^4]=3\left(\frac{m(1-m)}{T}+\frac{\gamma^2p(1-p)}{N}\right)^2.
\end{equation}
Moreover, using Cauchy-Schwarz inequality, the estimate \eqref{uniformly_bounded_second_moment_of_errors}, and the fact that G is a centered Gaussian random variable, one can show that there exists a constant $C_k$ depending only on $k$ such that
\begin{multline*}
\E_{p}[G^{4-k}\epsilon^k]\leq C_k \left(\frac{m(1-m)}{T}+\frac{\gamma^2p(1-p)}{N}\right)^{(4-k)}\times \\ 
\left(\frac{1}{T^{k/2}N^{k/4}}+\frac{\log^k(T)}{T^k}+\frac{\log^k(N)}{N^k}\right).
\end{multline*}
Combining the above inequality with Jensen inequality, we then conclude that
\begin{multline*}
N\E_{p}[G^{4-k}\epsilon^k]\leq \tilde{C}_k\left(\frac{N^{(4-k)/4}}{T^{4-k/2}}+\frac{N\log^k(T)}{T^4}+\frac{\log^k(N)}{N^{(k-1)}T^{(4-k)}}+\frac{1}{T^{k/2}N^{3-3k/4}}\right. \\ \left. 
+\frac{\log^k(T)}{N^{(3-k)}T^{k}}+\frac{\log^k(N)}{N^{3}}\right),    
\end{multline*}
where $\tilde{C}_k$ is a constant depending only on $k$.
Since 
$\frac{N^{(4-k)/4}}{T^{4-k/2}}\frac{T^2}{N}=\frac{1}{T^{2-k/2}N^{k/4}}\to 0$ and
$\frac{N\log^k(T)}{T^{4}}\frac{T^2}{N}=\frac{\log^k(T)}{T^{2}}\to 0$ as $N,T\to \infty$, 
one can deduce from the above inequality that
\begin{equation}
\label{eq:estimates:covG_error}
\max_{k=1,\ldots, 4}N\E_{p}[G^{4-k}\epsilon^k]=o\left(\left(\frac{m(1-m)N^{1/2}}{T}+\frac{\gamma^2p(1-p)}{N^{1/2}}\right)^2\right) .
\end{equation}
Combing \eqref{eq:var:hatS}, \eqref{eq:rescaled:var:B}, \eqref{eq:4thmomentB}, \eqref{eq:4thmomentG} and \eqref{eq:estimates:covG_error}, the result follows.

\end{proof}

\subsection{Statistical setting 2 and problem formulation}
\label{subsec:binomial:mixture:2}
As in the previous subsection, let $\theta\sim\text{Bin}(N,p)$.
Here, we assume that the random variable $B$ takes values in $\{0,1,2\}$ and is such that $B|\theta\sim \text{Bin}(2,1/2+\gamma(p)\theta/N)$.




Suppose that $ T$ is even and that we observe $n:=(NT)/2$ independent copies $B_1,\ldots, B_n$ of the random variable $B$ as defined above. We want to show that the variance of any
unbiased estimator of the parameter $1/p$ is larger than $K(N/T)$ for some positive universal constant $K$. Hence,   
if $N/T$ remains bounded away from $0$, then no unbiased estimator of the parameter $1/p$ is consistent.   

With a slight abuse of notation, we also write $\P_{p}$ to denote the probability distribution of $B_1,\ldots, B_n$ associated to the choice of $p$. The expectation and variance computed with respect to $\P_{p}$ are denoted $\E_{p}$ and $\text{Var}_{p}$ respectively. In this setting, observe that $\E_{p}[B_i]=2(1/2+\kappa)=2m$ for all values of $p$.

\begin{remark}
\label{rmk:stat:set:2}
In this setting, the random variables $B_1, \ldots, B_n$ are distributed as the random variables $(X_{1,1}+X_{1,2}), (X_{1,3}+X_{1,4}), \ldots,  (X_{1,T-1}+X_{1,T}),\ldots, (X_{N,1}+X_{N,2}), (X_{N,3}+X_{N,4}), \ldots, (X_{N,T-1}+X_{N,T})$ 
under the assumptions that: 1) $\mu=1/2$ and ${\cal P_-}=\emptyset$ (no inhibition), 2) for all $1\leq i\leq N$ and $1\leq t\leq T/2$, the random variables $X_{i,2t-1}$ and $X_{i,2t}$ are conditionally independent given the realization of the environment and 3) all the other dependencies are dropped.
Note that the random variables $X_{i,t}$ are less dependent under this set of assumptions than under the assumptions made in the statistical setting 1. 
\end{remark}

While in our original problem, the ``natural'' parameter to estimate is $p$, in this statistical setting the ``natural'' parameter is $1/p$ (as well as in the statistical setting 1). We can see this by checking that the probability mass function $f_{p}$ of each random variable $B_i$ under $\P_{p}$ can be written as 
\begin{equation*}
f_{p}(b)={2\choose b}\left[m^b(1-m)^{(2-b)}+\frac{\kappa^2}{N}\left(\frac{1}{p}-1\right)s(b)\right],  \ b\in\{0,1,2\}, 
\end{equation*}
where $s(b)=1$ if $b\in\{0,2\}$ and $s(b)=-1$ if $b=1$. The expression above for $f_p(b)$ follows from the fact that
$$
f_{p}(b)=\E_p\left[{2\choose b}(1/2+\gamma(p)\theta/N)^b(1/2-\gamma(p)\theta/N)^{2-b}\right]
$$
and simple calculations involving binomial random variables.

To alleviate the notation, we parametrize the model by $\eta=1/p$ in what follows. In particular, we write $\P_{\eta} = \P_{1/p}$, $\E_{\eta} = \E_{1/p}$ and $\text{Var}_{\eta} = \text{Var}_{1/p}$. Therefore, we assume that we observe $n$ independent copies $B_1,\ldots, B_n$ of a random variable $B$ taking values in $\{0,1,2\}$ having as probability mass function $g_{\eta}(b)=f_{1/\eta}(b)$. 
We want to show that no unbiased estimator of $\eta$ based on these observations is consistent whenever $N/T$ remains bounded away from $0.$ This is the content of the proposition below. 

\begin{proposition}
\label{prop:lower:bound:stat:set2}
For all $N$ large enough,  we have for any unbiased estimator $\hat{\eta}(B_1,\ldots, B_n)$ of $\eta$ that 
$$
\text{Var}_{\eta}\left(\hat{\eta}(B_1,\ldots, B_n)\right)\geq \frac{NC^2}{T\kappa^4},
$$
where $C:= \min\{m^2, m(1-m), (1-m)^2\}>0$.  
In particular, if $N/T$ remains bounded away from $0$, then no unbiased estimator of $\eta$ is consistent.
\end{proposition}
\begin{proof}
First, verify that the Fisher information of a single observation is given by
$$
I_1(\eta):=-\E_{\eta}\left[\frac{\partial^2\log g_{\eta}(B)}{\partial \eta^2}\right]=\frac{\kappa^4}{N^2}\E_{\eta}\left[\frac{1}{(m^B(1-m)^{(2-B)}+\frac{\kappa^2}{N}(\eta-1)s(B))^2}\right].
$$
Then, use Corollary 5.9 of \cite{lehmann1998theory} to deduce that the Fisher information of $n=NT/2$ i.i.d observations is given by
$$
I_n(\eta)=nI_1(\eta)=\frac{T\kappa^4}{2N}\E_{\eta}\left[\frac{1}{(m^B(1-m)^{(2-B)}+\frac{\kappa^2}{N}(\eta-1)s(B))^2}\right].
$$
Next, observe that
\begin{multline*} (m^B(1-m)^{(2-B)}+\frac{\kappa^2}{N}(\eta-1)s(B))^2\\
\geq m^B(1-m)^{2- B}\left(m^B(1-m)^{2- B} + 2\kappa^2 (\eta-1)/N s(B) \right) 
\end{multline*}
and that $(\eta-1)/N s(B) \geq 0$ for $ B = 0, 2.$ Combining these inequalities with the definition of $C= \min\{m^2, m(1-m), (1-m)^2\}$, we deduce that 
$$
(m^B(1-m)^{(2-B)}+\frac{\kappa^2}{N}(\eta-1)s(B))^2\geq C\left(C-2\kappa^2 (\eta-1)/N\right),
$$
from which 
it follows that for $N\geq 4(\eta-1)\kappa^2/C$, 
$$
(m^B(1-m)^{(2-B)}+\frac{\kappa^2}{N}(\eta-1)s(B))^2\geq C^2/2 ,
$$
which, in turn, implies that  
$$
I_n(\eta)\leq \frac{T\kappa^4}{NC^2},
$$
as long as $N\geq 4(\eta-1)\kappa^2/C$. By Assumption 1 (and restricting the range of $p$ if necessary), we have that $(\eta-1)\leq M$ for some suitable constant $M$ so that for all $N\geq 4M\kappa^2/C$, we have that  
$$
I_n(\eta)\leq \frac{T\kappa^4}{NC^2}.
$$
Therefore, by Cramér-Rao lower bound (see Theorem 5.10 of \cite{lehmann1998theory}), it then follows that for all unbiased estimators $\hat{\eta}(B_1,\ldots, B_n)$ of $\eta$, 
$$
\text{Var}_{\eta}\left(\hat{\eta}(B_1,\ldots, B_n)\right)\geq (I_n(\eta))^{-1}=\frac{NC^2}{T\kappa^4},
$$
and the result follows.
\end{proof}

{\it Concluding remarks.}
Both statistical settings 1 and 2 consider a statistical model with a hidden layer described by the random variable $\theta$, similar as in the original model.
As indicated in Remarks \ref{rmk:stat_set_1} and \ref{rmk:stat:set:2}, the laws of the statistical models considered in these settings are related to the law of the original model under different sets of assumptions dropping many dependencies. Moreover, more dependencies of the original model are preserved in setting 1. 

Both Propositions \ref{thm:optimality_unbiased_class} and  \ref{prop:lower:bound:stat:set2} highlight the interplay between the dimension of the original model (number $N$ of observed nodes) and the sample size (the number $T$ of time units observed per node). In the regime $N^2\gg T$ (which implies that $N/T\gg (N^{1/2}/T+1/N^{1/2})^2$), the results of Proposition  \ref{thm:optimality_unbiased_class} and  \ref{prop:lower:bound:stat:set2} combined indicate that  estimating the model parameter becomes easier as more dependencies of the original model are preserved.
This partially explains why the lower bound in Proposition \ref{prop:lower:bound:stat:set2} suggests that the condition $T\gg N$ is necessary for our original estimation problem, while Item 2 of Proposition \ref{thm:optimality_unbiased_class} indicates that the weaker condition $T\gg N^{1/2}$ might suffice for consistently estimating the model parameters. 
Proving such a result in the original model remains an interesting open question.

\subsection{Auxiliary results}
\label{app:lower:bound:auxiliary}

In this section, we state and prove the Gaussian approximation used to prove Item 2 of Proposition \ref{thm:optimality_unbiased_class}. This result follows almost immediately from the KMT approximation theorem (see for example \cite{komlos1975approximation}). 

\begin{lemma}
\label{Lem:strong_coupling}
Let $B$ be a discrete random variable taking values in $\{0,\ldots, T\}$ such that $B|\theta\sim \text{Bin}(T,1/2+\gamma\theta/N)$ where $\theta\sim\text{Bin}(N,p)$, $0<p_{min}\leq p\leq p_{max}<1$ and $0<\gamma_{min}\leq \gamma\leq \gamma_{max}<1/2$.
Denote $m=1/2+\gamma p$. There exists a probability space $(\Omega, {\cal F},\P)$ on which almost surely 
\begin{equation*}
\frac{B}{T}-m=G+\epsilon,    
\end{equation*}
where $G$ is a centered Gaussian random variable with variance $m(1-m)T^{-1}+\gamma^2p(1-p)N^{-1}$ and the error term $\epsilon$ satisfies for all integer $\alpha\geq 1$,
\begin{equation}
\label{uniformly_bounded_second_moment_of_errors}
 \E[|\epsilon|^{\alpha}]\leq K_{\alpha}\left(\frac{1}{T^{\alpha/2}N^{\alpha/4}}+\left(\frac{\log(T)}{T}\right)^{\alpha}+\left(\frac{\log(N)}{N}\right)^{\alpha}\right), 
 \end{equation}
for some positive constant $K_{\alpha}$ depending only $\alpha$.   
\end{lemma}
  
\smallskip

\begin{proof}
Throughout the proof, we assume we work on a probability space $(\Omega, {\cal F},\P)$ rich enough so that all the processes below are well-defined. 

Let $(U_{t})_{1\leq t\leq T}$ and $(V_{j})_{j\in [N]}$ be two independent collections of i.i.d random variables uniformly distributed on $(0,1)$. For each $u\in [0,1]$, define
$$
F_{T}(u)=\frac{1}{T}\sum_{t=1}^T1_{U_{t}\leq u} \ \text{and} \ F_{N}(u)=\frac{1}{N}\sum_{j=1}^N 1_{V_{j}\leq u},
$$
and 
$$
\alpha_{T}(u)=\sqrt{T}(F_{T}(u)-u) \ \text{and} \ \alpha_{N}(u)=\sqrt{N}(F_{N}(u)-u).
$$
With this notation, one can check that
\begin{equation}
\label{proof_lower_bound_eq_1}
\frac{B}{T}=m+\frac{1}{\sqrt{T}}\alpha_{T}(1/2+\gamma\theta/N)+\frac{\gamma}{\sqrt{N}}\alpha_{N}(p).    
\end{equation}
Next, consider two independent Brownian Bridges $(B_{T}(t))_{0\leq u\leq 1}$ and $(B_{N}(t))_{0\leq u\leq 1}$ and define for each $u\in [0,1]$,
\begin{equation}
\label{proof_lower_bound_eq_2}
\epsilon_{T}(u)=\alpha_{T}(u)-B_{T}(u) \ \text{and} \ \epsilon_{N}(u)=\alpha_{N}(u)-B_{N}(u).
\end{equation}
We also suppose that the first Brownian Bridge $(B_{T}(t))_{0\leq t\leq 1}$ is independent of $\theta$ as well.
Furthermore, we follow the construction used in the KMT approximation (see \cite{komlos1975approximation}) in such a way that we can also assume that for $z>0$,
\begin{equation}
\label{proof_lower_bound_eq_3}
\P\left(\sup_{0\leq u\leq 1}|\epsilon_{T}(u)|\geq \frac{1}{T^{1/2}}(a\log(T)+z)\right)\leq b e^{-cz},
\end{equation}
and 
$$
\P\left(\sup_{0\leq u\leq 1}|\epsilon_{N}(u)|\geq \frac{1}{N^{1/2}}(a\log(N)+z)\right)\leq b e^{-cz},
$$
for some positive universal constants $a, b$, and $c$.
Using that $B_{N}(u)\sim N(0,u(1-u))$ for each $u\in [0,1]$, we immediately see that
$$
\frac{\gamma}{\sqrt{N}}B_{N}(p)\sim N\left(0,\frac{\gamma^2p(1-p)}{N}\right). 
$$
By similar arguments, we also have that 
$\frac{1}{\sqrt{T}}B_{T}(m)\sim N(0,\frac{m(1-m)}{T})$, so that it follows from the independence between $B_{T}(p)$ and $B_{N}(p)$ that
\begin{equation}
\label{proof_lower_bound_eq_4}
\frac{1}{\sqrt{T}}B_{T}(m)+\frac{\gamma}{\sqrt{N}}B_{N}(p)\sim N\left(0,\frac{m(1-m)}{T}+\frac{\gamma^2p(1-p)}{N}\right).
\end{equation}
Combining identities \eqref{proof_lower_bound_eq_1}, \eqref{proof_lower_bound_eq_2} and \eqref{proof_lower_bound_eq_4}, we then deduce that
\begin{equation*}
\frac{B}{T}-m=G+\epsilon,
\end{equation*}
 where 
 $$
G=\frac{1}{\sqrt{T}}B_{T}(m)+\frac{\gamma}{\sqrt{N}}B_{N}(p)
 $$
is a centered Gaussian random variable with variance $m(1-m)T^{-1}+\gamma^2p(1-p)N^{-1}$ and 
\begin{equation}
\label{proof_lower_bound_eq_5}
\epsilon=\frac{1}{\sqrt{T}}\left((B_{T}(1/2+\gamma\theta/N)-B_{T}(m))+\epsilon_{T}(1/2+\gamma\theta/N)\right)
+\frac{\gamma}{\sqrt{N}}\epsilon_{N}(p).   
\end{equation}

Next, we will prove that the error term $\epsilon$ defined in  \eqref{proof_lower_bound_eq_5} satisfies the bound stated in \eqref{uniformly_bounded_second_moment_of_errors}. By combining the tail integral formula for the moments with \eqref{proof_lower_bound_eq_3}, one can show that for any integer $\alpha\geq 1$  
\begin{equation}
\label{proof_lower_bound_eq_6}
\E\left[\left(\sup_{0\leq u\leq 1}|\epsilon_{T}(u)|\right)^{\alpha}\right]\leq K\frac{\log^{\alpha}(T)}{T^{\alpha/2}},
\end{equation}
for some constant $K$ depending only on $a,b, c$ and $\alpha$.
Similarly, one can also show that for any integer $\alpha\geq 1$  
\begin{equation}
\label{proof_lower_bound_eq_7}
\E\left[\left(\sup_{0\leq u\leq 1}|\epsilon_{N}(u)|\right)^{\alpha}\right]\leq K\frac{\log^{\alpha}(N)}{N^{\alpha/2}},
\end{equation}
for some constant $K$ depending only on $a,b, c$ and $\alpha$.
Moreover, by the independence between $(B_{T}(u))_{0\leq u\leq 1}$ and $\theta$, we can write
$$
\E[(B_{i,T}(1/2+\gamma\theta/N)-B_{i,T}(m))^{\alpha}]=\E[\varphi_{\alpha}(1/2+\gamma\theta/N)],
$$
where $\varphi_{\alpha}(q)=\E[(B_{T}(q)-B_{T}(m))^{\alpha}]$, $q\in [0,1]$.
Next, using properties of a Brownian Bridge, we can check that 
$$\varphi_{\alpha}(q)\leq 2^{\alpha-1}|q-m|^{\alpha/2}\E\left[|N(0,1)|^{\alpha}\right](1-|q-m|^{\alpha/2})$$ 
so that denoting $K_{\alpha}=2^{\alpha-1}\E\left[|N(0,1)|^{\alpha}\right]$, we obtain that
$$
\E[(B_{T}(1/2+\gamma\theta/N)-B_{T}(m))^{\alpha}]\leq K_{\alpha}\gamma\E[|\theta/N-p|^{\alpha/2}],
$$
where we have also used that $0\leq |q-m|\leq 1$ which implies that $(1-|q-m|^{\alpha/2})\leq 1$.
 
By applying Lemma F.7 of the main text, we then deduce that
\begin{equation}
\label{proof_lower_bound_eq_8}
\E[(B_{T}(1/2+\gamma\theta/N)-B_{T}(m))^{\alpha}]\leq KN^{-\alpha/4}
\end{equation}
for $K=K_{\alpha}\gamma_{max}^{\alpha}C_{\alpha}$, where $C_{\alpha}$ is a constant depending only on  $\alpha$.
Combining \eqref{proof_lower_bound_eq_6}, \eqref{proof_lower_bound_eq_7}, \eqref{proof_lower_bound_eq_8} with Jensen inequality, we then obtain that
\begin{equation}
\label{proof_lower_bound_eq_9}
\E[|\varepsilon|^{\alpha}]\leq K\left(\frac{1}{T^{\alpha/2}N^{\alpha/4}}+\left(\frac{\log(T)}{T}\right)^{\alpha}+\left(\frac{\log(N)}{N}\right)^{\alpha}\right),
\end{equation}
for some constant $K$ depending only on $a, b, c, \gamma_{max}$ and $\alpha$. 

\end{proof}

\end{appendix}

\end{document}